\newcommand{\qee} {\hspace*{2mm}\hfill \ding{109}}
\renewcommand{\iff}{\leftrightarrow}
\renewcommand{\leq}{\leqslant}
\renewcommand{\geq}{\geqslant}
\renewcommand{\phi}{\varphi}
\renewcommand{\Theta}{\varTheta}
\renewcommand{\Phi}{\varPhi}
\renewcommand{\Psi}{\varPsi}
\renewcommand{\Xi}{\varXi}
\renewcommand{\Gamma}{\varGamma}
\newcommand{\qedright}{\belowdisplayskip=-12pt}
\newtheorem{theorem}{Theorem}[section]
\newtheorem{prop}[theorem]{Proposition}
\newtheorem{define}[theorem]{Definition}
\newenvironment{definition}{\begin{define} \rm}{\qee\end{define}}
\newtheorem{exa}[theorem]{Example}
\newtheorem{exerc}[theorem]{Exercise}
\newtheorem{conj}[theorem]{Conjecture}
\newtheorem{ques}[theorem]{Open Question}
\newenvironment{question}{\begin{ques} \rm}{\qee\end{ques}}
\newtheorem{jques}[theorem]{Question}
\newenvironment{jquestion}{\begin{jques} \rm}{\qee\end{jques}}
\newtheorem{lem}[theorem]{Lemma}
\newtheorem{lemma}{Lemma}[section]
\newtheorem{cor}[theorem]{Corollary}
\newtheorem{rem}[theorem]{Remark}
\newcommand{\mc}[1]{\mathcal #1}
 \newcommand{\tupel}[1]{{\langle #1 \rangle}}
\newcommand{\verz}[1]{\{ #1 \}}
\newcommand{\To}{\Rightarrow}
\newcommand{\Iff}{\Leftrightarrow}
\newcommand{\bles}{\mathbin{<}}
\newcommand{\bleq}{\mathbin{\leq}}
\newcommand{\bin}{\mathbin{\in}}
\newcommand{\num}[1]{{\underline {#1}}}
\newcommand{\mf}[1]{{\mathfrak {#1}}}
\newcommand{\hyph}{\mbox{-}}
\newcommand{\pam}{\ensuremath{{\sf PA}^{-}}}
\newcommand{\pa}{{\sf PA}}
\newcommand{\belowz}{\sqsubset}
\newcommand{\below}[2]{#1\sqsubset #2}
\newcommand{\bbelow}[2]{#1\mathop{\sqsubset}#2}
\newcommand{\X}{{\sf X}}
\newcommand{\odev}{{\sf odev}}
\newcommand{\jer}{Je\v{r}\'abek}
\newcommand{\contheta}{\text{\textup\straighttheta}}
\newcommand{\fol}[1]{{\sf Th}_{#1}}
\newcommand{\iopen}{{\sf IOpen}}
\newcommand{\irred}{{\sf irred}}
\newcommand{\sg}{{\sf sg}}
\newcommand{\mbbs}[1]{\ensuremath{\mathsmaller{\mathbb{#1}}}}
\newcommand{\Y}{{\sf Y}}
\newcommand{\sm}{\hspace{0.025cm}}
\newcommand{\psv}{{\sf pvar}}
\newcommand{\psvar}{pseudo-variables}
\newcommand{\QS}{\mathbb{B}}
\newcommand{\m}{{\sf m}}
\newcommand{\rc}{{\sf rc}}
\newcommand{\zxg}{\ensuremath{\mathbb Z[\X]^{\mathsmaller{\geq 0}}}}
\newcommand{\zxxg}{\ensuremath{\mathbb Z[\,\X_q\mathop\mid q \in \mathbb{Q}]^{\mathsmaller{\geq 0}}}}
\newcommand{\T}{{\mathcal T}}
\newcommand{\skippy}{}
\newcommand{\shef}{\mathfrak{S}}
\newcommand{\fsvar}{Y}
\newcommand{\fsvard}{Y}
\title[Completions of Restricted Complexity]{Completions of Restricted Complexity I\\
Weak Arithmetical Theories}
\author{Ali Enayat}
\address{Department of Philosophy, Linguistics, and Theory of Science, University of Gothenburg, Sweden \texttt{(e-mail:~ali.enayat@gu.se)}}
\author{Mateusz \L{}e\l{}yk}
\address{Faculty of Philosophy, University of Warsaw, Poland (\texttt{e-mail:~mlelyk@uw.edu.pl})}
\author{Albert Visser}
\address{Department of Philosophy and Religious Studies, Utrecht University, The Netherlands (\texttt{e-mail: a.visser@uu.nl})}
\date{\today}
\begin{document}

\keywords{discretely ordered commutative ring, weak arithmetical theories, interpretations, restricted complexity, Open induction, Collection}

\subjclass[2010]{03F25,%relative consistency and interpretations
03F30,%first order arithmetic and fragments
03F40,%g\"odel numberings and issues of incompleteness
}

\begin{abstract}
Given a first-order theory $T$ formulated in the usual language of first-order arithmetic, we say that $T$ is of 
\emph{restricted complexity} if there is some natural number $n$ and some set $\mc{A}$
of $\Sigma_n$-sentences such that $T$ can 
be axiomatized by $\mc{A}$. Motivated by the fact that no consistent arithmetical theory extending $\mathrm{I}\Delta _{0}+\mathsf{Exp}$ 
has a consistent completion that is of restricted complexity, we construct models of arithmetic whose complete theories are 
of restricted complexity. Our strongest result shows that there is a model of $\mathsf{IOpen + Coll}$ 
whose complete theory is of restricted complexity, where $\mathsf{Coll}$ is the full collection scheme.
\end{abstract}

\maketitle
%\tableofcontents

\section{Introduction}\label{sec_introd}
Given a first order theory $T$ formulated in the usual language of first order arithmetic using the 
signature  $\{0, 1, +, \times, <\}$, we say that $T$ is of \emph{restricted complexity} if there is some natural number $n$ 
and some set $\mc{A}$ of $\Sigma_n$-sentences such that $T$ can be axiomatized by $\mc{A}$; here $\Sigma _{n}$ refers 
to the usual hierarchy of arithmetical formul{\ae}, in which formul{\ae} all of whose quantifiers are bounded are 
classified as $\Sigma _{0}=\Delta_{0}$, and the other classes are obtained by prefixing $\Delta_{0}$-formulae with appropriate strings 
of alternating blocks of quantifiers. Note that there are two independent ways to calibrate the complexity of an axiom set $\mc{A}$ 
of arithmetical sentences: the complexity of the set of G\"{o}del-numbers of $\mc{A}$ versus the $\Sigma_n$-complexity of each 
individual axiom in $\mc{A}$. It is the latter (and not the former) that is at stake here.

\skippy
The point of departure for the work presented here is the following recent incompleteness result. A proof is given in \cite [Theorem B]{Enayat-Visser2024}. 
As pointed out in \cite {Enayat-Visser2024}, an abstract version of the result was formulated by Emil \jer~ on MathOverflow in 2016.
We feel that such a fundamental insight should have been known much earlier, but we did not find it.

\begin{theorem} \label{motivatingTheorem}  Suppose $T$ is a consistent theory formulated in the language of arithmetic such that $\mathrm{I}\Delta _{0}+\mathsf{Exp}$ is provable in $T$. If $T$ is of restricted complexity, then $T$ is incomplete.
\end{theorem}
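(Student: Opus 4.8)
The plan is to argue by contradiction. Assume $T$ is consistent, complete, proves $\mathrm{I}\Delta_0+\mathsf{Exp}$, and is axiomatized by a set $\mc{A}$ of $\Sigma_n$-sentences; we may assume without loss of generality that $n$ is as large as we wish and that $\mc{A}$ is closed under finite conjunction. Fix a model $M\models T$, which exists by consistency. The point is that, although $T$ need not be recursively axiomatized, the bound on the complexity of its axioms lets us manufacture, inside $M$, a $\Sigma_n$-definable surrogate for ``provability in $T$''. The tool is the partial truth predicate $\mathrm{True}_{\Sigma_n}$ provided by $\mathrm{I}\Delta_0+\mathsf{Exp}$: recall that $\mathrm{I}\Delta_0+\mathsf{Exp}$ proves the Tarski biconditionals $\mathrm{True}_{\Sigma_n}(\ulcorner\sigma\urcorner)\leftrightarrow\sigma$ for standard $\Sigma_n$-sentences $\sigma$, proves the compositional clauses for $\mathrm{True}_{\Sigma_n}$ and $\mathrm{True}_{\Pi_n}$ on the respective fragments, and proves their mutual coherence. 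Once the surrogate is in place, a Tarski-style diagonalization will contradict completeness.

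Let $\mathrm{Pr}(x)$ be the $\Sigma_n$-formula asserting: there are a conjunction $d$ of axioms of $\mathrm{I}\Delta_0+\mathsf{Exp}$ and a $\Sigma_n$-sentence $a$ with $\mathrm{True}_{\Sigma_n}(a)$ such that the sentence $(d\wedge a)\to x$ is cut-free derivable in pure first-order logic; here $\wedge$ and $\to$ denote the elementary operations on G\"odel numbers, and the leading existential quantifiers keep the formula $\Sigma_n$. The core of the argument is the claim that for every \emph{standard} $\Pi_n$-sentence $\phi$ we have $M\models\phi$ if and only if $M\models\mathrm{Pr}(\ulcorner\phi\urcorner)$. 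The left-to-right direction is easy: if $M\models\phi$ then $T\vdash\phi$ by completeness, so a derivation of $\phi$ from $T$ uses only finitely many axioms of $\mathrm{I}\Delta_0+\mathsf{Exp}$, which conjoin into a standard $d$, and finitely many members of $\mc{A}$, which conjoin (by closure under conjunction) into a single $\Sigma_n$-sentence $\alpha\in\mc{A}$; then $(d\wedge\alpha)\to\phi$ is logically valid, hence (being standard) cut-free derivable, and this derivability is a true $\Sigma_1$-statement that $M$ verifies, while $M\models\alpha$ (since $\alpha\in\mc{A}\subseteq T$) gives $M\models\mathrm{True}_{\Sigma_n}(\ulcorner\alpha\urcorner)$; so $d,\alpha$ witness $\mathrm{Pr}(\ulcorner\phi\urcorner)$.

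The right-to-left (\emph{soundness}) direction is the heart of the matter, and the step I expect to be the main obstacle. Suppose $M\models\mathrm{Pr}(\ulcorner\phi\urcorner)$; we want $M\models\phi$, so suppose instead $M\models\lnot\phi$, and fix inside $M$ witnesses $d,a$ and a cut-free derivation of $(d\wedge a)\to\phi$ in pure logic. One must now show that $M$ can soundly \emph{evaluate} this derivation. The naive route — convert ``$T$-provability'' into ``$\mathrm{I}\Delta_0+\mathsf{Exp}$-provability'' and apply reflection for $\mathrm{I}\Delta_0+\mathsf{Exp}$ — is closed off, since $\mathrm{I}\Delta_0+\mathsf{Exp}$ does not prove $\Pi_n$-reflection for itself (that would entail $\mathrm{Con}(\mathrm{I}\Delta_0+\mathsf{Exp})$, contradicting G\"odel's second theorem); this is exactly why the axioms of $\mathrm{I}\Delta_0+\mathsf{Exp}$ are absorbed into the antecedent $d$, so that only reflection for \emph{pure} first-order logic is needed. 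Taking $n$ large, the axioms of $\mathrm{I}\Delta_0+\mathsf{Exp}$ are of complexity far below $\Sigma_n$, so the sentence $(d\wedge a)\to\phi$, together with every subformula appearing in a cut-free derivation of it, has quantifier-alternation depth $\leq n$; for derivations built from such sentences $\mathrm{I}\Delta_0+\mathsf{Exp}$ does prove soundness with respect to the partial truth predicates (a standard but delicate form of partial reflection for pure logic, where $\mathsf{Exp}$ is used to carry out the syntactic and semantic bookkeeping inside a theory that has only $\Delta_0$-induction), and it proves that each of its own axioms — hence, via the compositional clauses, any conjunction $d$ of them — is true. Combining: $M$ concludes that $(d\wedge a)\to\phi$ is true and that $d$ and $a$ are true, hence that $\phi$ is true, i.e. $M\models\mathrm{True}_{\Pi_n}(\ulcorner\phi\urcorner)$; the Tarski biconditional for the standard $\Pi_n$-sentence $\phi$ then gives $M\models\phi$, contradicting $M\models\lnot\phi$. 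Getting the complexity accounting exactly right, and invoking the appropriate provable partial reflection for pure logic, is where essentially all the technical weight of the proof sits.

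Granting the claim, the conclusion follows in one line. Since $\lnot\mathrm{Pr}(x)$ is a $\Pi_n$-formula, the diagonal lemma (available already in $\mathrm{I}\Delta_0+\mathsf{Exp}$) yields a $\Pi_n$-sentence $G$ with $\mathrm{I}\Delta_0+\mathsf{Exp}\vdash G\leftrightarrow\lnot\mathrm{Pr}(\ulcorner G\urcorner)$; as $M\models\mathrm{I}\Delta_0+\mathsf{Exp}$ and $G$ is a standard $\Pi_n$-sentence, the claim gives $M\models G\ \Leftrightarrow\ M\models\mathrm{Pr}(\ulcorner G\urcorner)\ \Leftrightarrow\ M\not\models G$, which is absurd. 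Hence $T$ cannot be complete, proving the theorem. (Equivalently: the claim exhibits $\mathrm{Pr}$ as a correct truth predicate, inside $M$, for the standard $\Pi_n$-sentences, which is already enough to run Tarski's undefinability argument.)
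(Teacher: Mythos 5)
The paper does not prove Theorem~\ref{motivatingTheorem} itself; it defers to \cite[Theorem~B]{Enayat-Visser2024}, which (as the introduction notes) has two proofs — one via the existence of a $\Sigma_1$-satisfaction predicate, i.e.\ partial truth predicates, and one via the internal provability of MRDP. Your proposal follows the first of these two routes: manufacture a $\Sigma_n$-definable ``provability'' predicate $\mathrm{Pr}$ inside a fixed model $M\models T$, show that for standard $\Pi_n$-sentences it coincides with truth in $M$, and then diagonalize. That is the right strategy, and the left-to-right leg of the claim (truth implies $\mathrm{Pr}$, using completeness of $T$, closure of $\mc{A}$ under conjunction, $\Sigma_1$-completeness of $\mathrm{I}\Delta_0+\mathsf{Exp}$, and the Tarski biconditional for the standard axiom $\alpha$) is fine as written. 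Two cosmetic points: since $T$ is \emph{axiomatized} by $\mc{A}$, a derivation of $\phi\in T$ need use only members of $\mc{A}$, so the conjunct $d$ of $\mathrm{I}\Delta_0+\mathsf{Exp}$-axioms in your $\mathrm{Pr}$ is actually dispensable; and you should say a word about why the diagonal fixed point $G$ can itself be taken to be $\Pi_n$ (the standard diagonal construction in $\mathrm{I}\Delta_0+\mathsf{Exp}$ produces a sentence of the same complexity class as the diagonalized formula, so this is routine, but it is being used).

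The genuine weight of the argument is exactly where you place it: the right-to-left (soundness) leg, which needs that $\mathrm{I}\Delta_0+\mathsf{Exp}$ proves partial reflection for cut-free first-order derivations all of whose formulae are confined to $\Sigma_n\cup\Pi_n$, with respect to the partial truth predicates $\mathrm{True}_{\Sigma_n}$, $\mathrm{True}_{\Pi_n}$. You invoke this as ``a standard but delicate form of partial reflection for pure logic'' without giving the argument, and it deserves a flag: the naive induction on the length of a (possibly nonstandard) derivation has a $\Sigma_n$-complexity invariant, so one cannot simply use $\Delta_0$-induction; one must instead exploit the availability of $\mathsf{Exp}$ to bound and internalize the bookkeeping (e.g.\ via a satisfaction predicate with explicit witness-tracking, or a suitable Herbrand/free-variable normal form). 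This \emph{is} a known and correct fact, so your proposal is a sound sketch of one of the two published proofs rather than a complete self-contained argument; the point where you write ``is where essentially all the technical weight of the proof sits'' is an accurate self-assessment, and a careful writeup would have to supply the reflection lemma, citing or reproducing the relevant development from the literature on partial truth in $\mathrm{I}\Delta_0+\mathsf{Exp}$.
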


\noindent In the above, $\mathrm{I}\Delta _{0}$ is the result of restricting the induction scheme in the usual axiomatization of $\mathsf{PA}$ to $\Delta _{0}$-formulae, and $\mathsf{Exp}$ is the sentence asserting the totality of the exponential function $x\mapsto 2^{x}$. Theorem \ref{motivatingTheorem} naturally prompts the following question: 

\begin{jquestion} \label{motivatingQ}
Let $T_{0}$ be a consistent  theory formulated in the language of
arithmetic that extends $\mathsf{Q}$ (Robinson arithmetic) such that  $\mathrm{I}\Delta _{0}+\mathsf{Exp}$ is not provable in 
$T_{0}$\textsf{.} Is there a consistent completion of $T_{0}$ that is of restricted complexity?\footnote{It is known that Presburger arithmetic has a restricted axiomatization; this is a consequence of the elimination of quantifiers for Presburger arithmetic (see, for example, 4.5 on pp316, 317 and Exercise 9 on p328 in Craig Smory\'nski's book \cite{Smorynski_Logical_NT}). This also shows that  Question 1.2 has an easy answer if we remove the condition that $T_0$ includes $\mathsf{Q}$, since we can simply add an axiom to Presburger arithmetic stipulating that the multiplication of any two numbers equals 0.}
\end{jquestion}

For $T_{0}=\mathsf{I}\Delta _{0}$, the answer to Question \ref{motivatingQ}  appears to be beyond 
current methods, since the two different proofs of Theorem \ref{motivatingTheorem} as presented in \cite{Enayat-Visser2024} make it clear that the analogous result for $\mathsf{I}\Delta _{0}$ implies that $\mathrm{I}\Delta _{0}$ neither proves 
that there is a $\Sigma _{1}$-satisfaction
predicate (which is known to be a tall order, as demonstrated by Zofia Adamowicz, Leszek Ko\l odziejczyk, and Jeff
Paris in \cite {adam:truth12}), nor proves the MRDP theorem on the Diophantine representation of recursively enumerable sets (which is known to imply that NP = co-NP, as noted by Alex Wilkie \cite{Wilkie-Delta_0-def}). 

\skippy
In this paper, we show that Question \ref{motivatingQ} has a \emph{positive answer}
for certain well-known arithmetical theories $T_{0}$. Our strongest result provides a positive answer to Question \ref{motivatingQ} for $T_{0}=%
\mathsf{IOpen}+\mathsf{Coll}$, where $\mathsf{IOpen}$ is the result of
restricting the induction scheme in the usual axiomatization of $\mathsf{PA}$ to open
formul{\ae} (i.e., quantifier-free arithmetical formul{\ae}), and $\mathsf{Coll}$ is the full scheme of collection.\footnote{This is to be contrasted with the well-known fact that $\mathrm{I}\Delta
_{0}+\mathsf{Coll}$ axiomatizes $\mathsf{PA}$. By careful inspection of the proof
of this last insight in \cite[Lemma 7.4, p84]{kaye:mode91}, we see that this result
works already for the theory $\mathrm I{\sf E}_1$, which is, in a sense, 
\emph{the next theory} above
{\sf IOpen}. See Appendix~\ref{collectionsmurf} for a brief exposition.}

\skippy
Our work in this project was set in motion upon obtaining a positive answer to Question \ref{motivatingQ} for $T_{0}= \mathsf{PA}^{-}$ (\pam\ is the theory of the non-negative
part of an ordered commutative ring). More specifically we managed to demonstrate that the complete theory of the 
model \zxg\ of $\mathsf{PA}^{-}$ is of restricted complexity.
Here \zxg\ is the non-negative part of the ordered polynomial ring 
$\mathbb Z[\X]$, where $\X$ is treated as a positive infinite element. At the macro level, the proof of Theorem \ref{uitslagsmurf} employs the
framework of interpretability theory, while at the micro level, it relies on sequentiality of $\mathsf{PA}^{-}$ 
(demonstrated by Emil \jer~ \cite{jera:sequ12}) and
certain basic algebraic insights about \zxg. This led us to obtain an analogous positive result for 
$T_{0}=\mathsf{IOpen}$ with the same macro strategy as our first result. More specifically, we showed that the complete theory of 
the model of $\mathsf{IOpen}$ built by John Shepherdson using Puiseux series
in his ground-breaking paper \cite{shep:nons64} is of restricted complexity.

\skippy
The interpretability framework used to settle Question \ref{motivatingQ} for $T_{0}=\mathsf{PA}^{-}$ and $T_{0}=\mathsf{IOpen}$ employ definable injections of the universe into bounded range, and therefore a different strategy is needed for $T_{0}=\mathsf{PA}^-+\mathsf{Coll}$ and $T_{0}=\mathsf{IOpen}+\mathsf{Coll}$, since such definable injections are ruled out by the collection scheme. This led us to formulate a modified interpretability framework in which the notion of isomorphism is replaced with the notion of \emph{partial isomorphism} (in the back-and-forth style of Ehrenfeucht-Fra\"{\i}ss\'{e}
games). We used this new framework at the macro level, together with refined algebraic book-keeping arguments at the micro level to establish a positive answer to Question \ref{motivatingQ}  for $T_{0}=\mathsf{PA}^{-}+\mathsf{Coll}$ using the model \zxxg\ of $\mathsf{PA}^{-}+\mathsf{Coll}$ that was recently introduced in 
\cite{enlel:cat24} (here $\mathbb{Q}$ is the ordered set of rationals). This advance, in turn, inspired us to successfully apply the aforementioned modified interpretability framework based on partial isomorphisms, together with the necessary algebraic considerations, to build a model of $\mathsf{IOpen}+\mathsf{Coll}$ whose complete theory is of restricted complexity. Intuitively speaking, the relevant model is obtained by iterating the aforementioned Shepherdson construction $\mathbb{Q}$-times (as a direct limit). It is noteworthy that in all the models analyzed in this paper whose complete theory is of restricted complexity, the witnessing set $\mc{A}$ of $\Sigma_n$-sets consists of a single complex sentence plus an infinite set of $\Sigma_1$-sentences.\footnote{With more effort, these completions can be arranged to be also computably as simple as incompleteness allows, since each of the constructions in this paper can be carried out within a model of $\pa$ whose elementary diagram is $\Delta^0_2$ (instead of performing the construction within the standard model of $\pa$). This allows us to obtain a restricted completion that is additionally $\Delta^0_2$ as a set of natural numbers. Moreover, by taking advantage of Craig's trick, such a $\Delta^0_2$ completion can be axiomatized by a collection of axioms $\mathcal{A}$ such that $\mathcal{A}$ is $\Pi^0_1$ as a set of natural numbers.}
\skippy

There is a parallelism between the following three projects and our work here in constructing models of arithmetical theories whose theories are of restricted complexity.
\begin{enumerate}[$1$.]
    \item The construction of recursive (computable) nonstandard models of arithmetical theories, a topic boasting a large literature that originated with Shepherdson's pioneering paper \cite {shep:nons64} (which came on the heels of Tennenbaum's classical theorem on the nonexistence of computable models of Peano Arithmetic). Shahram Mohsenipour's \cite{MohseniIOpen} includes the latest known results and key references. 
    \item The determination of the arithmetical theories in which MRDP is provable. Here by MRDP we refer to the abstract version of the Matijasevi\v c-Robinson-Davis-Putnam theorem on the Diophantine representability of computably enumerable sets that states that every $\Sigma_1$-formula is provably equivalent to an existential formula, as in Richard Kaye's \cite{Kaye_Tennenbaum}. A recent contribution in this direction (and the one below) is by Yijia Chen, Moritz M{\"uller} and Keita Yokoyama \cite{Chen_Muller_Yokoyama}.
    \item Delimiting arithmetical theories that support a definable truth-predicate for $\Delta_0$-formulae. A key paper exploring this topic is \cite {adam:truth12}, which was mentioned earlier (right after Question \ref{motivatingQ}); a more recent contribution in this direction is by Konrad Zdanowski \cite{Zdanowski-Delta_0-truth}.
    \end{enumerate}
    In order to elaborate on the aforementioned parallelism, consider the set $\mathcal{T}_{\mathsf{Q}}$ of consistent theories formulated in the 
    language of arithmetic that extend $\mathsf{Q}$, and consider the following subclasses of theories $\mathcal{T}_{\mathsf{Q}}$ : 
\begin{itemize}
\item $\T_{\mathrm{MRDP}}$ is the set of $T\in \mathcal{T}_{\mathsf{Q}}$ such that MRDP is provable in $T$.
\item $\T_{\mathrm{truth}}$ is the set of $T\in \mathcal{T}_{\mathsf{Q}}$ such that $T$ supports a definable truth-predicate for $\Delta_0$-formulae.
\item $\T_{\mathrm{nrec}}$ is the set of $T\in \mathcal{T}_{\mathsf{Q}}$ such that $T$ does not have a nonstandard recursive (computable) model. 
\item $\T_{\mathrm{nrestr}}$ is the set of $T\in \mathcal{T}_{\mathsf{Q}}$ such that no consistent completion of $T$ is of restricted complexity.
\end{itemize}
As shown by Costas Dimitracopoulos and Haim Gaifman \cite{gaif:frag82} all arithmetical theories that extend $\mathrm{I}\Delta_0+\mathsf{Exp}$ are members of $\T_{\mathrm{truth}}\cap \T_{\mathrm{MRDP}}$. On the other hand, Kaye \cite [Theorem 3.3]{Kaye_Tennenbaum}, showed that $\T_{\mathrm{MRDP}}\subseteq\T_{\mathrm{nrec}}$. The two proofs of \cite [Theorem B]{Enayat-Visser2024} show that $ \T_{\mathrm{truth}} \subseteq \T_{\mathrm{nrestr}} $, and $ \T_{\mathrm{MRDP}} \subseteq T_{\mathrm{nrestr}} $. This is to be contrasted with the strongest result in this paper that shows that $\mathsf{IOpen +Coll}\notin \T_{\mathrm{nrestr}}$.
Constructing separating examples that distinguish the three classes $\T_{\mathrm{MRDP}}$, $\T_{\mathrm{truth}}$, and $\T_{\mathrm{nrec}}$ from each other appears to be a long-standing open question. We do not know whether the construction of examples that separate the class $\T_{\mathrm{nrestr}}$ from the aforementioned three is within reach. 

\skippy
A word about the organization of the paper is in order. Section 2 is devoted to relevant preliminaries concerning the arithmetical theories considered in this paper (Subsection 2.1), interpretability theory (Subsection 2.2), and salient algebraic facts (Subsection 2.3).  Section 3 is short but very important since it sets up the interpretability framework for all the constructions of the paper.  In Section 4, we show that the theory of \zxg\ is of restricted complexity, and in Section 5 we do the same for Shepherdson's model of $\mathsf{IOpen}$.  In Section 5, we show that the theory of the model \zxxg\
of $\mathsf{PA}^{-}+\mathsf{Coll}$ is of restricted complexity. At the technical level this section is considerably more involved than 
Sections 3 and 4 due to the complexity of the salient algebraic ideas and the nuances of the modified interpretability framework based on partial isomorphisms.  
Section 5 can be seen as a dress rehearsal for Section 6 in which we establish the culminating result of the paper by exhibiting a model of $\mathsf{IOpen}+\mathsf{Collection}$ whose theory is of restricted complexity.  

\section{Basics}

\subsection{Theories}
All our theories are theories of predicate logic of finite signature.
We write $\fol\Theta$ for the deductive closure of the axioms of predicate logic of signature $\Theta$.

For the purposes of this paper, an \emph{extensional} view of theories suffices. 
So, a theory will be given by a signature and a deductively closed set of sentences of that
signature.

\subsection{\pam\ and Extensions}\label{pamandextensions}
In this paper we are specifically interested in the theory \pam\ and weak theories extending it. \pam\ is the theory of the non-negative parts of discretely ordered
commutative rings. The signature of our theory is the signature with 0, 1, $+$, $\times$, $<$. We will follow the usual notational conventions for omitting brackets and in some cases
omitting $\times$ writing, e.g., $xy$ for $x\times y$.
Here are the axioms of the theory as given by Richard Kaye in \cite[Chapter 2.1]{kaye:mode91}.

\begin{enumerate}[{\sf pam}1.]
    \item 
    $\vdash (x+y)+z= x+(y+z)$
    \item $\vdash x+y = y+x$
    \item $\vdash (x\times y)\times z = x\times (y \times z)$
    \item $\vdash x\times y = y \times x$
    \item $\vdash x \times (y+z) = x\times y + x\times z$
    \item $\vdash x+0=x \wedge x\times 0 = 0$
    \item $\vdash x\times 1 = x$
    \item $\vdash (x< y \wedge y < z) \to x< z $
    \item $\vdash \neg\, x< x$
    \item $\vdash x<y \vee x=y \vee y<x$
    \item $\vdash x< y \to x+z< y+z$
    \item $\vdash (0<z \wedge x<y) \to x\times z < x\times y$
    \item \label{subtractionax} $\vdash x< y \to \exists z\; x+z=y$
    \item $\vdash 0< 1 \wedge (0<x \to (x=1 \vee 1<x))$
    \item $\vdash x=0 \vee 0<x$
\end{enumerate}

The theory \pam\ turns out to be a good base for weak arithmetical theories. We note that all axioms except {\sf pam}\ref{subtractionax} are universal; the system without this substraction axiom, sometimes jokingly called ${\sf PA}^{\mathsmaller{---}}$ (\pam\ -minus-minus), was studied by Emil  \jer\ in \cite{jera:sequ12}.
\jer\ verifies that we have a good sequence coding in this theory, i.o.w., that this
theory is sequential. For detailed information on sequential
theories, see \cite{viss:what13}. Sequentiality of a theory is inherited by its extensions. Thus, all theories studied in the present paper have the property.

We briefly present some of \jer's results for \pam, noting that they already hold in the weaker theory without subtraction.
First, \jer\ proves that $\tupel{x,y} := (x+y)^2+x$ is a pairing function in \pam.
We note that, in \pam, we have $x\leq \tupel{x,y}$ and $y \leq \tupel{x,y}$.
Then, \jer\ defines his version of the $\beta$-function:
\begin{itemize}
    \item {\small
   $\beta(x,i,w) :\iff \exists u,v,q\, (w=\tupel{u,v} \wedge u= q(1+(i+1)v)+x \wedge x \leq (1+(i+1)v))$. 
   }
\end{itemize}
Clearly, $u$, $v$ and $q$ can be bounded by $w$. So, this gives us a $\Delta_0$-definition.
Also, necessarily $x \leq w$.
\cite[Lemma 4(iii)]{jera:sequ12} tells us that $\beta$ is functional from $i,w$ to $x$.

We define:
\begin{itemize}
    \item 
    ${\sf seq}^\ast(s) :\iff \exists n,w \leq s\, (s = \tupel{n,w} \wedge \forall i \bles n \,\exists x\bleq w  \, \beta(x,i,w))$. 
\end{itemize}
In case $s$ is an $\ast$-sequence (i.e., $\mathsf{seq}^\ast(s)$ holds), 
%\alinote{I added "(i.e.,$\sf {seq}^\ast(s)$ holds)".}we write ${\sf length}(s)$ for the first component of $s$.
Clearly, the length function has a $\Delta_0$-graph. We also write $\pi(s,i) =x$, whenever $s$ is an $\ast$-sequence $\tupel{n,w}$, $i<n$, and
$\beta(x,i,w)$. We note that the graph of $\pi$ is $\Delta_0$ and $\pi$ is total on $i<n$, whenever $s$ is an $\ast$-sequence.

A crucial result is \cite[Lemma 8]{jera:sequ12}. In our terminology, this says that, in \pam, for a certain definable inductive $J$, we have: (\dag)
for all $\ast$-sequences $s$ where the length of $s$ is $n$ in $J$ and for all $x$, there is an $\ast$-sequence $s'$ of length $n+1$, such that
$\pi(s,i)=\pi(s',i)$ for all $i<n$ and $\pi(s',n)=x$. 

In the nonstandard models of \pam we will consider, the standard cut $\omega$ is $\Delta_0$-definable, say by $\eta$. The property given by (\dag) is clearly inherited by definable inductive $J'$
that are shorter than $J$. Since $\omega$ is initial in any inductive $J'$, we will have (\dag) for $\omega$ as defined by $\eta$.
This inspires the following definition: ${\sf seq}(s)$ iff ${\sf length}(s) \in \eta$ and  ${\sf seq}^\ast(s)$.
Thus, {\sf seq} will deliver, inside $\mc M$, sequences in the usual sense.

The theory \pam\ and the theory of discretely ordered commutative rings have a close connection. We will give an explication of this connection in
Example~\ref{pamsyn}.

We will also consider the following extensions of \pam.
\begin{itemize}
    \item $\mathsf{IOpen}$ (\emph{Open Induction}). This is the extension of \pam obtained by augmenting \pam with the induction scheme for \emph{quantifier-free formulas} (parameters allowed).
    \item $\mathsf{PA}^{-} + \mathsf{Coll}$, where $\mathsf{Coll}$ is the \emph{collection scheme} consisting of formulae of the 
    following form in which parameters are allowed:\[\forall x \bles v\, \exists y\; \phi(x,y,\vv z) \to 
    \exists w \, \forall x\bles v\, \exists y\bles w\; \phi(x,y,\vv z).\]
    \item 
    $\mathsf{IOpen} + {\sf Coll}$.
\end{itemize}

\subsection{Basics of Interpretations}\label{interpretations}

In this section, we briefly sketch the basics of interpretations. 

\subsubsection{Translations and Interpretations}
We present the notion of \emph{$m$-dimensi\-onal interpretation without parameters}.
There are two extensions of this notion: we can  consider piecewise interpretations
and we can add parameters. We will not treat the piecewise case, but will briefly indicate how to
add parameters.

Consider two signatures $\Sigma$ and $\Theta$. An $m$-dimensional translation $\tau:\Sigma \to \Theta$
is a quadruple $\tupel{\Sigma,\delta,\mathcal F,\Theta}$, where $\delta(v_0,\ldots,v_{m-1})$ is a $\Theta$-formula and where,
for any $n$-ary predicate $P$ of $\Sigma$, $\mathcal F(P)$ is a $\Theta$-formula $\phi(\vv v_{0},\ldots, \vv v_{n-1})$ in the language
of signature $\Theta$, where
$\vv v_i = v_{i,0},\ldots, v_{i,(m-1)}$.
Both in the case of $\delta$ and $\phi$ all free variables are among the variables shown.
Moreover, if $i\neq j$ or $k \neq\ell$, then $v_{i,k}$ is syntactically different from $v_{j,\ell}$.

We demand that we have $\vdash {\mathcal F}(P)(\vv v_0,\ldots, \vv v_{n-1}) \to \bigwedge_{i<n} \delta(\vv v_i)$.
Here $\vdash$ is provability in predicate logic. This demand is inessential, but it is convenient to have.

We define $\psi^\tau$ as follows:
\begin{itemize}
\item
$(P(x_0,\ldots,x_{n-1}))^\tau :=  \mathcal F(P)(\vv x_0,\ldots, \vv x_{n-1})$.
\item
$(\cdot)^\tau$ commutes with the propositional connectives.
\item
$(\forall x\, \phi)^\tau := \forall \vv x \,( \delta(\vv x\,) \to \phi^\tau)$.
\item
$(\exists x\, \phi)^\tau := \exists \vv x \,( \delta(\vv x\,) \wedge \phi^\tau)$.
\end{itemize}
There are two worries about this definition. 
First, what variables $\vv x_i$ on the side of the translation $A^\tau$
correspond with $x_i$ in the original formula $A$? 
 The second worry is that  substitution of variables in $\delta$
and $\mathcal F(P)$ may cause variable-clashes.
These worries are never important in practice: we choose
 `suitable' sequences $\vv x$ to correspond to variables $x$, and we avoid clashes by re-naming bound variables, i.e., we apply $\alpha$-conversion. 
 However, if we want to give precise definitions of translations and, for example, of composition of translations, these
 problems come into play. The problems are clearly solvable in a systematic way, but this endeavor is beyond the scope of this paper.
 
 We allow the identity predicate to be translated to a formula that is not identity.

 A translation $\tau$ is \emph{identity preserving} if it translates identity to identity, i.e., if $(x=x)^\tau = (x=x)$. A translation is \emph{unrelativised}
 if its domain is the full domain of the theory, i.e., if $\delta_\tau(x) = (x=x)$.
 It is \emph{direct}, if it is one-dimensional and identity preserving and unrelativised.

There are several important operations on translations. 
\begin{itemize}
\item
${\sf id}_\Sigma$ is the identity translation. We take $\delta_{{\sf id}_\Sigma}(v) := v=v$ and $\mathcal F(P) := P(\vv v\,)$.
\item
We can compose translations. Suppose $\tau:\Sigma\to \Theta$ and $\nu:\Theta \to \Lambda$.
Then $\nu\circ \tau$ or $\tau\nu$ is a translation from $\Sigma$ to $\Lambda$. We define:
\begin{itemize}
\item
$\delta_{\tau\nu}(\vv v_0,\ldots,\vv v_{m_\tau-1}) := \bigwedge_{i< m_\tau}\delta_\nu(\vv v_i) \wedge (\delta_\tau(v_0,\ldots,v_{m_\tau-1}))^\nu$.
\item
$P_{\tau\nu}(\vv v_{0,0},\ldots,\vv v_{0,m_\tau-1},\ldots \vv v_{n-1,0},\ldots,\vv v_{n-1,m_\tau-1}) := \\
{\bigwedge_{i<n, j< m_\tau} \delta_\nu(\vv v_{i,j})}\, \wedge (P(v_0,\ldots, v_{n-1})^\tau)^\nu $.
\end{itemize}
\end{itemize}

A translation relates signatures; an interpretation relates theories.
An interpretation $\imath:U\to V$ is a triple $\tupel{U,\tau,V}$, where $U$ and $V$ are theories and $\tau:\Sigma_U \to \Sigma_V$.
We demand: for all theorems $\phi$ of $U$, we have $V\vdash \phi^\tau$. In a context where we have an interpretation $\imath$ based on $\tau$, we will often write
$\phi^\imath$ for $\phi^\tau$.

Here are some further definitions.

\begin{itemize}
\item
${\sf ID}_U:U\to U$ is the interpretation $\tupel{U,{\sf id}_{\Sigma_U},U}$.
\item
Suppose $\imath:U\to V$ and $\jmath:V\to W$. Then, $\imath\jmath := \jmath\circ \imath:U\to W$ is $\tupel{U,\tau_\jmath\circ \tau_\imath,W}$.
\end{itemize}

\noindent
A translation $\tau$ maps a model $\mathcal M$ to an internal model $\widetilde \tau (\mathcal M)$ provided that 
$\mathcal M$ satisfies the $\tau$-translations of the identity axioms (including $\exists x\, x=x$). Thus, an interpretation $\imath:U\to V$ gives us a
mapping $\widetilde \imath$ from ${\sf MOD}(V)$,  the class of models of $V$, to ${\sf MOD}(U)$, the class
of models of $U$. If we build a category of theories and interpretations, usually {\sf MOD}
with ${\sf MOD} (\imath):= \widetilde \imath$ will be a contravariant functor.

We can extend our notion of interpretation by adding a parameter-domain $\alpha(\vv v)$. The dimension of this domain is
unrelated to the dimension of the corresponding translation. The variables $\vv v$ of the parameter domain are admitted
as parameters in the translations for domain and predicate symbols (including identity). We demand that the interpreting theory
proves that the parameter domain is non-empty.
We say that $\imath = \tupel{U,\tau,\alpha,V}$ is an interpretation if, for all sentences $\phi$ of the signature of
$U$, if $U \vdash \phi$, then
$V \vdash \forall \vv v\, (\alpha(\vv v) \to \phi^{\tau,\vv v})$.

 We write $U^\tau$ for $\verz{\phi^\tau \mathop\mid U \vdash \phi}$.
 We note that if $A$ is finitely axiomatisable, then so is $A^\tau$. Suppose $\phi$ is a single axiom for
 $A$. We axiomatise $A^\tau$ by $\phi^\tau$ plus the $\tau$-translations of the identity axioms for
 the signature of $A$. These axioms include $\exists x\, x=x$ ensuring the non-emptiness of the domain.
 The addition of the translations of the identity axioms is necessitated, in many cases, by the
 translation being relativised and the fact that identity does not translate to identity.
 
 We note that, trivially, we have an interpretation $\imath$ of $U$ in $U^\tau$ based on translation $\tau$.
 In fact $U^\tau$ is the minimal theory $V$ such that there is an interpretation of $U$ in $V$ based on $\tau$.

In case our translation has parameters with parameter domain $\alpha$, the theory $U^\tau$ has to contain
$\exists \vv v\, \alpha(\vv v)$ and $\forall \vv v\, (\alpha(\vv v) \to \phi^{\tau,\vv v})$, whenever $U \vdash \phi$.

\subsubsection{Categories of Theories and Interpretations}

We introduce some relevant categories of theories and interpretations. The objects of these categories are theories \emph{considered as sets of theorems}.
We already introduced identity interpretations and composition of
interpretations, however these do not yet yield a category since e.g. ${\sf ID}_V \circ \imath = \imath$ fails (where $\imath:U \to V$). The solution is to
define an appropriate notion of sameness of interpretation. It turns out that there are quite different choices of these notions of sameness which lead
a.o. to quite different isomorphisms between theories. This  idea was studied in \cite{viss:cate06} for the one-dimensional case. In the present paper,
we will need two new such categories. The following categories are well-known.

\begin{itemize}
    \item 
${\sf INT}_0$ is the category where two interpretations $\imath,\jmath:U \to V$ are
the same if $V$ proves that the domains provided by both interpretations are the same and that the translations of the predicate symbols of $V$ are the same on these domains.
Isomorphism between theories in this category is \emph{definitional equivalence} or \emph{synonymy}.
\item ${\sf INT}_1$ is the category where two interpretations $\imath,\jmath:U \to V$ are
the same if there is a $V$-definable, $V$-verifiable isomorphism $F$ between $\imath$ and $\jmath$. 
Isomorphism in this category is \emph{bi-interpretability}.
\item 
${\sf INT}_2$ is the category where two interpretations $\imath,\jmath:U \to V$ are
the same if, for all $V$-models $\mc M$, the internal models $\mc M^\imath$ and
$\mc M^\jmath$ are isomorphic. Isomorphism in this category is \emph{iso-congruence}.
\item 
${\sf INT}_3$ is the category where two interpretations $\imath,\jmath:U \to V$ are
the same if, for all $U$-sentences $\phi$, we have $V \vdash \phi^\imath \iff \phi^\jmath$.
Isomorphism in this category is \emph{elementary congruence} or \emph{sentential congruence}.
\end{itemize}
Note that the above categories involve parameter-free interpretations, but they readily can be adapted to the parametric context. 

We note that, e.g., in ${\sf INT}_1$, 
there are various choices on how to set up the case of parameters. 
Moreover, in the parametric ${\sf INT}_1$-case,  the definable isomorphism
$F$ may contain extra parameters; in particular $F$ will be allowed to have parameters even if $\imath$ 
and $\jmath$ are parameter-free. This will be the case for our definable isomorphisms in 
Sections \ref{pamitself} and \ref{openitself}). 

Since `$F_x$ is an isomorphism', where $x$ is the parameter, can be presented as a single
formula, we can take the virtual class of $x$ such that $F_x$ is an isomorphism as parameter domain for
$F$. The only thing we need to verify is that there is an $x$ such that $F_x$ is an isomorphism.
So, in a sense, we can do without parameter domain.\footnote{This is analogous to the treatment of parameters
for interpretations of finitely axiomatized theories in \cite[p3]{myci:latt90}.} Our treatment 
in Sections \ref{pamitself} and \ref{openitself}) will follow this idea.

\begin{exa}\label{pamsyn}
     Consider \pam\ and the theory of discretely ordered commutative rings, denoted {\sf Docr}.
    We interpret \pam\ in {\sf Docr} simply by domain restriction, and we interpret {\sf Docr} in \pam, e.g., by
taking as domain the odd-or-evens and by
    letting $2n$ pose as $n$ and $2n+1$ as $-n-1$.\footnote{Alternatively, we can use the pair $\tupel{0,n}$ to represent the number $n$ and the pair $\tupel{1,n}$ to represent $-n-1$.}
    These two interpretations form a bi-interpretation.
    Using the main result of \cite{viss:biin25}, we can use this bi-interpretation to construct a
    definitional equivalence.
\end{exa}

We introduce two new categories. We treat the one-dimensional parameter-free case. %\albnote{I am here.}

The first, ${\sf INT}_3^{\sf fw}$ or
${\sf INT}_{2{\tt b}}$, is the category of
finitely witnessed elementary equivalence. Here two interpretations $\imath,\jmath:U\to V$
are the same if there is a finite sub-theory $V_0$ of $V$, such that
or all $U$-sentences $\phi$, we have $V_0 \vdash \phi^\imath \iff \phi^\jmath$.
It is easy to see that this gives indeed a category.
Suppose  $V_0$ witnesses the sameness of $\imath_0,\imath_1:U \to V$ and $W_0$ witnesses the
sameness of $\jmath_0,\jmath_1:V \to W$.
Then:
\begin{eqnarray*}
    W_0+ V_0^{\jmath_0} \vdash \phi^{\imath_0\jmath_0} & \iff & \phi^{\imath_1\jmath_0} \\
    & \iff & \phi^{\imath_1\jmath_1}
\end{eqnarray*}
Clearly, ${\sf INT}_3^{\sf fw}$ is between ${\sf INT}_3$ and ${\sf INT}_1$. It
is \emph{prima facie} incomparable with ${\sf INT}_2$.
Isomorphism in this category is \emph{finitely witnessed elementary congruence} or \emph{finitely witnessed sentential congruence}.

The second new category ${\sf INT}_{2{\tt a}}$ is the category where sameness
between $K,M: U \to V$ is witnessed by an Ehrenfeucht-Fra\"{\i}ss\'{e}  game. We give the definition
for the one-dimensional case without parameters. To get the definition off the
ground we need to assume that $V$ is sequential. So, the reasonable thing to
do is to demand that all theories in the category are sequential.
Thus, the correct name of our category really should be ${\sf INT}^{\sf seq}_{2{\tt a}}$.
The Ehrenfeucht-Fra\"{\i}ss\'{e}  game is given by a unary predicate $\eta$ such that the following properties
are $V$-verifiable.
\begin{itemize}
\item 
If $f\in \eta$ then $f$ represents a partial finite bijection between $\delta_\imath$ and $\delta_\jmath$.
Specifically, if $fxy$ and $fx'y'$, then $x,x' \in \delta_\imath$ and $y,y'\in \delta_\jmath$ and ($x=_\imath x'$ iff $y=_\jmath y')$.
    \item 
    Suppose $P$ is a $U$-predicate, $f\in \eta$, and $\vv x$ is in the domain of $f$.
    Then, $P_\imath(\vv x)$ iff $P_\jmath(f\vv x)$.
\item 
Suppose $f\in \eta$ and $x \in \delta_\imath$ and $x$ is not $\imath$-equal to an element in the domain of $f$. Then, there is a $g\in \eta$ with
\[\forall z \, \bigl(z\in {\sf dom}(g) \iff  (z\in {\sf dom}(f) \vee z=x)\bigr)\] and $\forall z\in {\sf dom}(f)\;f(z)=g(z)$.
(Note that two different $f$ can code the same input-output relation.)
    \item 
    Suppose $f\in \eta$ and $y \in \delta_\jmath$ and $x$ is not $\jmath$-equal to an element in the range of $f$. Then, there is a $g\in \eta$ with
\[\forall u \, \bigl(z\in {\sf range}(g) \iff  (u\in {\sf range}(f) \vee u=y)\bigr)\] and $\forall z\in {\sf dom}(f)\;f(z)=g(z)$.
\end{itemize}

By the usual argument, if $\imath$ and $\jmath$ are  equal, then they are {\sf fw}-elementary equivalent.
So, ${\sf INT}_{2{\tt a}}$ is between ${\sf INT}_{2{\tt b}}$ and ${\sf INT}_1$.

Isomorphism in ${\sf INT}_{2{\tt a}}$ will be: \emph{Ehrenfeucht-Fra\"{\i}ss\'{e} congruence}. 

\subsubsection{Some basic Results}

We start with two results about the category ${\sf INT}^{\sf fw}_3$. The first result will be useful in this paper.
The second will illustrate the power of sameness in the category.

\begin{theorem}\label{supersmurf}
    Let $\tau:\Xi\to \Theta$ and $\nu:\Theta \to \Xi$ be parameter-free translations. Let $\Phi$ be a $\Theta$-sentence such that
    $\Phi \vdash \fol{\Xi}^\tau$ and $\Phi \vdash \fol{\Theta}^{\nu\tau}$ and such that, for all $\Theta$ sentences $\phi$,
    we have $\Phi \vdash \phi^{\nu\tau}\iff \phi$. Consider any $\Theta$-theory $U$ that extends $\Phi$. Let $V$ be axiomatised by
    the $\Xi$-sentences $\psi$ such that $U\vdash \psi^\tau$. Then, there are interpretations $\imath:V \to U$ based on $\tau$ and
    $\jmath:U \to V$ based on $\nu$, that witness that $U$ is an ${\sf INT}^{\sf fw}_3$-retract of $V$. 
\end{theorem}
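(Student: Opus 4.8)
The plan is to take $\imath\colon V\to U$ to be the interpretation based on $\tau$ and $\jmath\colon U\to V$ the interpretation based on $\nu$, and then to verify two things: that each of $\imath$ and $\jmath$ really is an interpretation, and that the round trip $\jmath\imath\colon U\to U$ equals ${\sf ID}_U$ in ${\sf INT}^{\sf fw}_3$, witnessed by the finitely axiomatised subtheory $U_0$ of $U$ generated by the single sentence $\Phi$ (a subtheory of $U$ since $U$ extends $\Phi$). Granting these, $\jmath$ is a section and $\imath$ a retraction, so $\imath,\jmath$ exhibit $U$ as an ${\sf INT}^{\sf fw}_3$-retract of $V$, as required.

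Before the verifications I would isolate the standard bookkeeping facts about translations to be used: $(\cdot)^\tau$ commutes with the propositional connectives; the composite interpretation $\jmath\imath$ is based on the composite translation $\nu\tau$, so that $\phi^{\jmath\imath}$ and $\phi^{\nu\tau}$ agree up to provable equivalence in any theory proving $\fol{\Xi}^\tau$; and, more generally, that if a theory $W$ of the appropriate signature proves $\fol{B}^\sigma$ for a translation $\sigma$ with source signature $B$, then $\sigma$ transports $B$-provability into $W$ and $W\vdash (\phi^\rho)^\sigma\iff\phi^{\rho\sigma}$ for every composable $\rho$. Since $U$ extends $\Phi$ and $\Phi\vdash\fol{\Xi}^\tau$, all of this applies with $W=U$ and $\sigma=\tau$. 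Now for $\imath$: by construction $V$ is the deductive closure of $S:=\verz{\psi \mathop\mid U\vdash\psi^\tau}$; if $\psi$ is any $V$-theorem then it follows logically from finitely many members of $S$, and applying $(\cdot)^\tau$ together with $U\vdash\fol{\Xi}^\tau$ gives $U\vdash\psi^\tau$, which is exactly the condition for $\imath$ to be an interpretation based on $\tau$. For $\jmath$: suppose $U\vdash\phi$; then $U\vdash\phi^{\nu\tau}$ from the hypothesis $\Phi\vdash\phi^{\nu\tau}\iff\phi$ together with $U\vdash\Phi$, hence $U\vdash(\phi^\nu)^\tau$ by the composition fact, so $\phi^\nu\in S$ and therefore $V\vdash\phi^\nu$, which is exactly the condition for $\jmath$ to be an interpretation based on $\nu$. (The conditions $\Phi\vdash\fol{\Xi}^\tau$ and $\Phi\vdash\fol{\Theta}^{\nu\tau}$ also underwrite the translation-composition facts and the interpretation of the translated identity axioms, so that $\imath$ and $\jmath$ are honest interpretations and $S$ behaves as expected as a set of axioms.)

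It remains to check the retract identity. For any $\Theta$-sentence $\phi$ we have, modulo $U$-provable equivalence, $\phi^{\jmath\imath}=\phi^{\nu\tau}$, while $\phi^{{\sf ID}_U}=\phi$; since $\Phi\vdash\phi^{\nu\tau}\iff\phi$ by hypothesis, the subtheory $U_0$ generated by $\Phi$ satisfies $U_0\vdash\phi^{\jmath\imath}\iff\phi^{{\sf ID}_U}$ for every $\Theta$-sentence $\phi$, which is precisely the defining condition for $\jmath\imath={\sf ID}_U$ in ${\sf INT}^{\sf fw}_3$. At bottom the whole argument is an unwinding of the definition of $V$; the steps that require genuine care are the translation-composition bookkeeping --- identifying $\phi^{\jmath\imath}$ with $\phi^{\nu\tau}$ and $(\phi^\nu)^\tau$ with $\phi^{\nu\tau}$, which is where the assumption $\Phi\vdash\fol{\Xi}^\tau$ is really doing work --- and I expect the main (if modest) obstacle to be the well-definedness of $\jmath$, since that is the single step in which all three hypotheses on $\Phi$ are brought to bear.
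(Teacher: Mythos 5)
Your proof is correct and follows essentially the same route as the paper's: take $\imath$ and $\jmath$ to be based on $\tau$ and $\nu$, use the defining property of $V$ together with the three hypotheses on $\Phi$ to check that both are honest interpretations, and observe that $\Phi$ itself is the finite witness of ${\sf INT}^{\sf fw}_3$-sameness between $\jmath\imath$ and ${\sf ID}_U$. You are somewhat more explicit than the paper about the translation-composition bookkeeping and the deductive-closure step for $\imath$, which is a reasonable amount of added detail rather than a different argument.
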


\begin{proof}
    Since $U \vdash \fol {\Xi}^\tau$, we indeed have an interpretation $\imath:V \to U$ based on $\tau$. We have $V \vdash \fol {\Theta}^\nu$. Suppose
    $U \vdash \phi$. Then, $U \vdash \phi^{\nu\tau}$ and, hence, $V \vdash \phi^\nu$. So, we indeed have an interpretation $\jmath:V \to U$ based on $\nu$.
    Finally, $\Phi$ witnesses that ${\sf id}_U$ and $\jmath\circ \imath$ are ${\sf INT}^{\sf fw}_3$-equal.
\end{proof}

\begin{theorem}
    Suppose $V$ is an ${\sf INT}^{\sf fw}_3$-retract of a finitely axiomatised theory $A$. Then, $V$ can be finitely axiomatised.
\end{theorem}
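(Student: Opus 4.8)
The plan is to unpack the retract hypothesis and then exhibit an explicit finite axiomatisation of $V$. Without loss of generality assume that $A$ is axiomatised by a single sentence $\alpha$. That $V$ is an ${\sf INT}^{\sf fw}_3$-retract of $A$ unfolds to the following data: interpretations $\imath:V\to A$, based on a translation $\sigma$, and $\jmath:A\to V$, based on a translation $\rho$, such that the composite interpretation $\imath\jmath:V\to V$ (first $\sigma$, then $\rho$) is ${\sf INT}^{\sf fw}_3$-equal to ${\sf ID}_V$. By the definition of ${\sf INT}^{\sf fw}_3$, this equality is witnessed by a \emph{finite} sub-theory $V_0$ of $V$: for every $V$-sentence $\chi$ we have $V_0\vdash (\chi^\sigma)^\rho\iff \chi$.

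The candidate finite axiomatisation of $V$ is $W:= V_0 + A^\rho$, where $A^\rho=\verz{\phi^\rho\mathop\mid A\vdash\phi}$. Here $A^\rho$ is finitely axiomatisable because $A$ is (by the observation recorded in Subsection~\ref{interpretations}: one may take $\alpha^\rho$ together with the $\rho$-translations of the identity axioms of the signature of $A$); fixing such a finite axiom set for $A^\rho$ and a finite axiom set for $V_0$ makes $W$ a finite theory. I would then verify the two inclusions between $W$ and $V$. For $\mathrm{Th}(W)\subseteq V$: we have $V\vdash V_0$ since $V_0$ is a sub-theory of $V$, and $V\vdash A^\rho$ because $\jmath$ being an interpretation of $A$ in $V$ means precisely that $V\vdash\phi^\rho$ for every $A$-theorem $\phi$; hence $V\vdash W$.

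For the reverse inclusion $V\subseteq\mathrm{Th}(W)$, take any $\chi$ with $V\vdash\chi$. Since $\imath$ is an interpretation of $V$ in $A$, we get $A\vdash\chi^\sigma$; so $\chi^\sigma$ is an $A$-theorem, whence $(\chi^\sigma)^\rho\in A^\rho$ and therefore $W\vdash(\chi^\sigma)^\rho$. As $W\vdash V_0$ and $V_0\vdash (\chi^\sigma)^\rho\iff \chi$, we conclude $W\vdash\chi$. Combining the two inclusions gives $\mathrm{Th}(W)=V$, so $W$ finitely axiomatises $V$.

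There is no deep technical obstacle here; the argument is essentially bookkeeping, and the two points that carry the weight are (i) using the \emph{finiteness} of the witness $V_0$ — this is exactly the feature separating ${\sf INT}^{\sf fw}_3$ from ${\sf INT}_3$, and the reason the analogous statement fails for plain ${\sf INT}_3$-retracts, where the congruence is in general witnessed only by all of $V$ — and (ii) the closure fact that $A^\rho$ inherits finite axiomatisability from $A$. The only thing requiring care is keeping straight the directions of $\imath$ and $\jmath$ and the order of composition of $\sigma$ and $\rho$, so that the sentence $(\chi^\sigma)^\rho$ occurring in the witness is literally the one manufactured as a member of $A^\rho$.
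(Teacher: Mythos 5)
Your proof is correct and takes essentially the same approach as the paper: you exhibit $V_0 + A^\rho$ as the finite axiomatisation, which is exactly the paper's $\Phi + A^\jmath$ in different notation, and your two inclusions unwind to the paper's cyclic chain $V \vdash \phi \To A \vdash \phi^\imath \To A^\jmath \vdash \phi^{\imath\jmath} \To A^\jmath + \Phi \vdash \phi \To V \vdash \phi$. You are a bit more explicit than the paper about why $A^\rho$ is finitely axiomatisable, but the paper records that observation in Subsection 2.3 and relies on it tacitly.
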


\begin{proof}
Suppose $\imath:V\to A$ and $\jmath: A\to V$. Let $\Phi$ witness the sameness of ${\sf ID}_V$ and $\jmath\circ \imath$.
Then, we have:\qedright
\begin{eqnarray*}
    V \vdash \phi & \To & A\vdash \phi^\imath \\
    & \To & A^\jmath\vdash  \phi^{\imath\jmath} \\
    & \To & A^\jmath+\Phi \vdash \phi \\
    & \To & V \vdash \phi
\end{eqnarray*}
\end{proof}

The following theorem tells us that if we have enough arithmetic, then interpretations can be reduced to a simpler form.
\begin{theorem}\label{smulpaapsmurf}
    Suppose $U$ proves that its domain is infinite and
    $\imath: U \to {\sf PA}$. Then there is a one-dimensional, identity preserving interpretation
    $\jmath : U \to {\sf PA}$ such that $\jmath$ is verifiably, definably isomorphic to $\imath$.
    Since in {\sf PA} we have a definable tuple of parameters, we can restrict the parameters of $\jmath$ to obtain
    a one-dimensional, identity preserving, parameter-free interpretation $\jmath^\ast : U \to {\sf PA}$.
\end{theorem}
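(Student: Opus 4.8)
The plan is to push $\imath$ through two reductions — first collapsing its dimension to one, then forcing the interpretation of equality to be genuine equality — and finally, if parameters are present, to absorb them. Assume first that $\imath$ is parameter-free, of dimension $m$. Working inside ${\sf PA}$, fix a parameter-free definable bijection $x\mapsto(p_0 x,\dots,p_{m-1}x)$ of the universe onto the set of $m$-tuples (e.g. the $m$-fold iterate of the Cantor pairing, which is a bijection provably in a weak fragment of ${\sf PA}$). Let $\imath_1$ be the one-dimensional translation obtained from $\imath$ by substituting, in $\delta_\imath$, in each $P_\imath$, and in the translation of $=$, the terms $p_0 x,\dots,p_{m-1}x$ (on a fresh single variable $x$) for the $m$ coordinates of each block-variable. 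Since $x\mapsto(p_0 x,\dots,p_{m-1}x)$ is a ${\sf PA}$-provable bijection, $\imath_1$ is an interpretation $U\to{\sf PA}$, and the pairing map, restricted to $\delta_\imath$, is a ${\sf PA}$-definable, ${\sf PA}$-verified isomorphism between the internal models of $\imath$ and $\imath_1$.

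Next I make equality be equality. Put $E(x,y):\iff x=_{\imath_1}y$; since the identity axioms of $U$ are valid, ${\sf PA}$ proves their $\imath_1$-translations, so ${\sf PA}$ proves that $E$ is an equivalence relation on $\delta_{\imath_1}$ and a congruence for each $P_{\imath_1}$. Invoking the least number principle of ${\sf PA}$ — this is the one point at which full arithmetical induction is genuinely used, as $E$ can be a syntactically complicated formula — each nonempty $E$-class has a $<$-least element. Set
\[
\delta_\jmath(x):\iff\delta_{\imath_1}(x)\wedge\forall y<x\,\bigl(\delta_{\imath_1}(y)\to\neg E(x,y)\bigr),\qquad
P_\jmath(x_1,\dots,x_n):\iff\bigwedge_i\delta_\jmath(x_i)\wedge P_{\imath_1}(x_1,\dots,x_n),
\]
and translate $=$ to $=$. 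The map $r(x)=$ ``the $<$-least member of the $E$-class of $x$'' is ${\sf PA}$-provably a surjection of $\delta_{\imath_1}$ onto $\delta_\jmath$ with $r(x)=r(x')\iff E(x,x')$ that carries each $P_{\imath_1}$ onto $P_\jmath$; hence $r$ is a ${\sf PA}$-definable, ${\sf PA}$-verified isomorphism from the internal model of $\imath_1$ to that of $\jmath$. Since (provably in ${\sf PA}$, by meta-induction on the sentence) isomorphic internal models verify the same sentences, $\jmath$ is an interpretation of $U$ in ${\sf PA}$; it is one-dimensional, its domain is nonempty (as $U$ proves the domain infinite, hence nonempty), and equality is interpreted as equality, so it is identity preserving. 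Composing the two isomorphisms, $\jmath$ is verifiably, definably isomorphic to $\imath$; explicitly,
\[
F(\vv{x},y):\iff\delta_\imath(\vv{x})\wedge\delta_\jmath(y)\wedge\vv{x}=_\imath(p_0 y,\dots,p_{m-1}y)
\]
is the witnessing isomorphism.

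Finally, if $\imath$ — and therefore $\jmath$, which inherits its parameters — carries parameters, code the parameter tuple as a single parameter $v$ by pairing and then replace $v$ everywhere by the closed term ``the $<$-least $v$ satisfying the parameter domain $\alpha$''; since ${\sf PA}\vdash\exists v\,\alpha(v)$, it proves that this term falls in $\alpha$, so the resulting parameter-free translation $\jmath^\ast$ still interprets $U$ in ${\sf PA}$ and is definably isomorphic to $\imath$ via the specialization of $F$ at that term — equivalently, as remarked above, one takes the virtual class of parameters making $F$ an isomorphism as its parameter domain and notes that ${\sf PA}$ proves it inhabited. The only nonroutine ingredient in the whole argument is the appeal to the full least number principle to choose canonical representatives; the remaining steps are the routine verifications that coordinate substitution and restriction to $\delta_\jmath$ send $U$-theorems to ${\sf PA}$-theorems and that $F$ is provably a predicate-preserving bijection.
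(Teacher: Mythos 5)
The paper does not give a proof of Theorem~\ref{smulpaapsmurf}: it is stated as a known fact and used as a black box, so there is no official argument to compare yours against. Your proof is correct and is essentially the canonical argument that the theorem silently invokes: (i) reduce dimension with a provably bijective pairing; (ii) restore genuine identity by using the least-number principle to pick canonical representatives of $=_{\imath_1}$-classes (this is indeed where full arithmetical induction is used, since the interpreted equality can be an arbitrarily complex formula); (iii) eliminate parameters either by a definable descriptor or, as the paper itself suggests after Theorem~\ref{smulpaapsmurf}, by taking the virtual class of witnessing parameters and noting ${\sf PA}$ proves it inhabited. Two small remarks: your argument never actually uses the hypothesis that $U$ proves its domain infinite --- your construction works as written regardless, and the hypothesis is presumably there because in some presentations one also wants $\delta_{\jmath}$ to be $\Pi_1$ or even the whole universe, which does require unboundedness; and your final display for $F$ is a correct description of the composed isomorphism, though it is worth observing that the witness $y$ is automatically unique because $\delta_\jmath$ contains exactly one element per $E$-class. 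Neither point affects the correctness of the proof.
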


\subsection{Bounded Quantifiers}
Here are some notational conventions and definitions. Let a signature $\Theta$ with designated binary relation symbol $\belowz$ be given.
\begin{itemize}
\item 
We write $\below{\vv x}{\vv y}$ for $\bigwedge_{i<n} \below{x_i}{y_i}$, where $n$ is the length of the sequences $\vv x$ and $\vv y$.
    \item 
    We  write $\forall \bbelow {\vv y}{\vv x}\, \phi$ for $\forall {\vv y}\, ( \below {\vv y}{\vv x} \to \phi)$ and 
    $\exists \bbelow {\vv y}{\vv x}\, \phi$ for $\exists {\vv y}\, ( \below {\vv y}{\vv x} \wedge \phi)$.
    \item 
    A uniform quantifier block is bounded if it can be written as $\forall \bbelow {\vv y}{\vv x}\, \phi$ or
    $\exists \bbelow {\vv y}{\vv x}\, \phi$.
    \item 
    A formula is $\Delta_0$, $\Sigma_0$ or $\Pi_0$ iff all its quantifiers are bounded.
    If $\phi$ is $\Sigma_n$, then the result of prefixing $\phi$ with a (possibly empty) block of bounded universal quantifiers
    is $\Pi_{n+1}$. If $\psi$ is $\Pi_n$, then the result of prefixing $\psi$ with a (possibly empty) block of bounded existential quantifiers
    is $\Sigma_{n+1}$. 
    \item 
    Let $U$ be a theory in our language. A formula is $\Delta_n[U]$ iff it is $U$-provably equivalent both to a $\Sigma_n$- and to a $\Pi_n$-formula.
    \item 
    Let $\Gamma$ be any formula-class of signature $\Theta$. The class
    $\Delta_0(\Gamma)$ is the result of closing off  $\Gamma$ under the propositional connectives and bounded quantification.
    \item 
    Let $\tau$ be a translation from $\Xi$ to $\Theta$. Let $\Gamma$ be any formula class. We say that $\tau$ is a $\Gamma$-translation iff
    $\alpha_\tau$, $\delta_\tau$ and each of the $P_\tau$ are in $\Gamma$.
\end{itemize}

\subsection{Dorroh-Rings}

Given a commutative ring $\mc R$, We say that $\mc R$ is a \emph{Dorroh-ring}, if there is an ideal $J$ of $\mc R$ such that
$\mc R/J \simeq \mathbb Z$. Such an ideal $J$ is said to \emph{witness} that $\mc R$ is a Dorroh ring. 
The polynomial ring $\mathbb Z [\X]$ is an example of a Dorroh ring, as witnessed by the ideal $J$ consisting of $p(\X)\in \mathbb Z [\X]$ such that $p(0)=0$. 

\begin{rem}\label{Dorroh_explanation}
    A \emph{non-unital ring}, aka \emph{rng}, is a ring that has no identity element. In Theorem 1 of \cite{dorr:conc32}, Joe Lee Dorroh gives a construction that adjoins an identity element to a non-unital ring; the proof makes it clear that the range of the Dorroh extension construction applied to all non-unital rings precisely consists, modulo isomorphism,
    of the Dorroh rings. This is the reason, we opted for the name \emph{Dorroh ring}.
\end{rem}

\begin{lem}\label{lem_char_dorroh}
Suppose $J$ is an ideal of the ring $\mc R$. Then, $J$ witnesses that  $\mc R$ is a Dorroh ring iff, for every element $a$ of $\mc R$, there is
    a unique pair $j\in J$ and $z\in \mathbb Z$ such that $a=j+z$.
\end{lem}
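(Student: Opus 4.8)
The plan is to first fix the reading of the statement. Since $\mc R$ is a unital commutative ring, there is a unique ring homomorphism $\iota\colon\mathbb Z\to\mc R$ with $\iota(1)=1_{\mc R}$, and the integer ``$z$'' occurring in the decomposition $a=j+z$ is to be understood as the ring element $\iota(z)$. With this convention in place, both implications reduce to bookkeeping with the first isomorphism theorem, and I would organise the proof into the two directions.

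For the direction ($\Rightarrow$), I would start from an isomorphism $\phi\colon\mc R/J\to\mathbb Z$, compose it with the quotient map $\pi\colon\mc R\to\mc R/J$ to obtain a surjective ring homomorphism $f:=\phi\circ\pi\colon\mc R\to\mathbb Z$ with $\ker f=J$, and record the key observation that $f\circ\iota$ is a unital ring endomorphism of $\mathbb Z$, hence equals $\mathrm{id}_{\mathbb Z}$. Given $a\in\mc R$, existence of the decomposition then follows by setting $z:=f(a)$ and $j:=a-\iota(z)$: here $f(j)=f(a)-f(\iota(z))=z-z=0$, so $j\in J$ and $a=j+\iota(z)$. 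Uniqueness follows by applying $f$ to a putative second decomposition $j'+\iota(z')=a$ and using that $f$ kills $J$, which forces $z'=f(\iota(z'))=f(a)=z$ and hence $j'=j$.

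For the direction ($\Leftarrow$), I would use the hypothesised decomposition to define $\psi\colon\mc R\to\mathbb Z$ sending each $a$ to its integer component $z$ in $a=j+\iota(z)$; well-definedness of $\psi$ is precisely the uniqueness clause of the hypothesis. Verifying that $\psi$ is a ring homomorphism is a short computation: writing $a=j+\iota(z)$ and $a'=j'+\iota(z')$ one gets $a+a'=(j+j')+\iota(z+z')$ with $j+j'\in J$, and $aa'=\bigl(jj'+\iota(z)j'+\iota(z')j\bigr)+\iota(zz')$, where the parenthesised term lies in $J$ because $J$ is an ideal; by uniqueness $\psi(a+a')=z+z'$ and $\psi(aa')=zz'$, and $\psi(1_{\mc R})=1$ since $1_{\mc R}=0+\iota(1)$. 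Surjectivity of $\psi$ is immediate from $\psi(\iota(z))=z$ (as $\iota(z)=0+\iota(z)$ and $0\in J$), and $\ker\psi=J$ because $a\in J$ gives the decomposition $a=a+\iota(0)$ with integer component $0$, while $\psi(a)=0$ means $a=j+\iota(0)=j\in J$. The first isomorphism theorem then yields $\mc R/J=\mc R/\ker\psi\cong\mathrm{im}\,\psi=\mathbb Z$, i.e.\ $J$ witnesses that $\mc R$ is a Dorroh ring.

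I do not expect any genuine obstacle here; the lemma is essentially a reformulation of the defining property of Dorroh rings. The only point that deserves a moment's care is the multiplicative step in the ($\Leftarrow$) direction, where one must recognise the cross terms $\iota(z)j'$ and $\iota(z')j$ as lying in $J$ — which is immediate from $J$ being an ideal — so that the uniqueness hypothesis legitimately reads off $\psi(aa')=zz'$.
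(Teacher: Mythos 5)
Your proof is correct and follows essentially the same route as the paper's: identify the copy of $\mathbb Z$ inside $\mc R$ via the canonical unital map, read off existence and uniqueness of the decomposition from the quotient map in one direction, and recover the isomorphism $\mc R/J\cong\mathbb Z$ from the unique integer representative of each coset in the other. Your write-up is somewhat more explicit than the paper's (which merely notes that each $J$-class contains exactly one integer and leaves the homomorphism verification implicit), but the underlying argument is the same.
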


\begin{proof}
Suppose $\mc R$ is a Dorroh ring. By induction, we show that the witnessing isomorphism maps $\num n:=\overbrace{1+\ldots+1}^{n\mathrm{-times}}$ to $n$. It follows that
    every $J$- equivalence class contains either $\num n$ or $-\num n$. Moreover, $\num m\neq 0$, for $m\neq 0$ in
    $\mc R/J$, and, \emph{a fortiori}, in $\mc R$.  We identify the $\num n$ and $-\num n$ in $\mc R$ with $\mathbb Z$.
    It follows that every $a$ in $\mc R$ can be written as $j+z$, for some $j \in J$ and $z\in \mathbb Z$. 
    Suppose $j+z=j'+z'$, for $j,j'\in J$ and $z,z'\in \mathbb Z$. It follows that $j-j' = z'-z$. 
    So, $z=z'$ in $\mc R/J$, and hence in $\mc R$. So, also $j=j'$.

    Conversely,  suppose that, for every element of $a$ of $\mc R$, there is
    a unique pair $j\in J$ and $z\in \mathbb Z$ such that $a=j+z$.
    Ergo, there will be an element of $z$ in each equivalence class.
    There cannot be two integers in one equivalence class by uniqueness.
\end{proof}

\begin{lem}
    The witnessing ideal of a Dorroh ring is unique.
\end{lem}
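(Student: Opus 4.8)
The plan is to prove that any two ideals $J$ and $J'$ witnessing that $\mc R$ is a Dorroh ring must coincide. By Lemma~\ref{lem_char_dorroh}, every $a\in\mc R$ has a unique expression $a=j+z$ with $j\in J$ and $z\in\mathbb Z$; set $\pi(a):=z$. Then $\pi\colon\mc R\to\mathbb Z$ is a surjective ring homomorphism with kernel $J$: surjectivity holds because $\pi(\num n)=n$, and the homomorphism property follows from the uniqueness of the decomposition together with the fact that $J$ is an ideal. Define $\pi'$ from $J'$ in the same way. Since $J=\ker\pi$ and $J'=\ker\pi'$, it suffices to show $J=J'$.

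The first step is a rigidity observation: if $J\subseteq J'$, then already $J=J'$. Indeed, the inclusion induces a surjective ring homomorphism $\mc R/J\twoheadrightarrow\mc R/J'$, which via the isomorphisms $\mc R/J\cong\mathbb Z\cong\mc R/J'$ becomes a surjective ring endomorphism of $\mathbb Z$; the only such endomorphism is the identity, so the map is injective, whence $J'/J=0$. Hence it is enough to establish the single inclusion $J\subseteq J'$, that is, $\pi'(a)=0$ for every $a\in J$. To approach this I would look at the ideal $J+J'$: it contains $J'=\ker\pi'$, so $\mc R/(J+J')\cong\mathbb Z/\pi'(J)$, and $\pi'(J)$ is an ideal of $\mathbb Z$, hence of the form $m\mathbb Z$ for some $m\ge 0$. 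If $m=0$, then $\pi'$ vanishes on $J$, i.e.\ $J\subseteq J'$, and the rigidity observation finishes the proof; this settles the ``generic'' case.

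The main obstacle is therefore the case $m\ge 1$: ruling out that some $j\in J$ satisfies $\pi'(j)\neq 0$, i.e.\ that two distinct witnessing ideals can be ``coprime-like''. This step cannot be carried out by ring theory alone, since for a general commutative ring the conclusion fails — in $\mathbb Z[\X]$ both $(\X)$ and $(\X-1)$ witness that the ring is a Dorroh ring, while $(\X)+(\X-1)=\mathbb Z[\X]$ — so the argument must exploit a specific feature of the Dorroh rings at issue: either an intrinsic, choice-free description of the witnessing ideal, or an extra hypothesis on $\mc R$ supplied by the surrounding development. Once the case $m\ge 1$ is excluded, the proof closes via the rigidity observation together with the symmetric argument that interchanges $J$ and $J'$.
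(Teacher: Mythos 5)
Your analysis is correct, and it uncovers a genuine defect in the source: the lemma as stated is false, and the paper's own proof contains a gap at precisely the step you identified. Your setup is sound: writing $\pi(a)$ for the unique integer $z$ in the $J$-decomposition $a=j+z$ yields a surjective ring homomorphism $\mc R\to\mathbb Z$ with kernel $J$; your rigidity observation correctly reduces the problem to showing $\pi'(J)=0$ (since the only ring endomorphism of $\mathbb Z$ is the identity); and your counterexample then refutes this. In $\mathbb Z[\X]$ both $(\X)$ and $(\X-1)$ are witnessing ideals, yet $\X=\X+0=(\X-1)+1$, so the two integer parts of $\X$ differ; here $\pi'(J)=\mathbb Z$, i.e.\ $m=1$ in your notation. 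Strikingly, $\mathbb Z[\X]$ is the paper's own illustrative example of a Dorroh ring.

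The paper's proof observes that the two composites $f\circ\pi$ and $f'\circ\pi'$ (both maps $\mc R\to\mathbb Z$) restrict to the identity on the copy of $\mathbb Z$ inside $\mc R$, and from this infers that for \emph{arbitrary} $a\in\mc R$ the decompositions $a=j+z$ with $j\in J$ and $a=j'+z'$ with $j'\in J'$ satisfy $z=z'$. That inference is a non-sequitur: agreement on $\mathbb Z\subseteq\mc R$ says nothing about the values elsewhere, and having the two integer-part maps agree on all of $\mc R$ is precisely the claim to be proved. Your diagnosis that some extra hypothesis would be needed is exactly right; note in passing that even the ``ordered Dorroh ring'' condition introduced a few lines later does not repair matters, since both $(\X)$ and $(\X-1)$ satisfy it relative to the ordering of $\mathbb Z[\X]$ in which $\X$ is infinite. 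Fortunately the lemma is never actually invoked in what follows; every later argument fixes a single witnessing ideal $J$ and uses only the decomposition relative to that fixed $J$, so the error is contained.
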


\begin{proof}
    Let $\pi$, $\pi'$ be the projection functions, respectively, from $\mc R$ to $\mc R/J$ and
    from $\mc R$ to $\mc R/J'$ and let $f$ and $f'$ be the witnessing isomorphisms
    from $\mc R/J$ to $\mathbb Z$ and from $\mc R/J'$ to $\mathbb Z$.
    We have seen that $\pi\circ f$ and $\pi'\circ f'$ coincide on $\mathbb Z$
    as part of $\mc R$. It follows that any $a$ is uniquely of the form
    $j+z$ and $j'+z$. So, $j=j'$ and, thus, $J=J'$.
\end{proof}
In our context, an ordered commutative ring $\mathcal{R}$ comes equipped with a \emph{strict} total order $<$ that satisfies: %\alinote{Following Mateusz's advice, I moved the paragraph about ordered commutative rings from the next page to here.} 
(1) if $x<y$ then $x+z<y+z$; and (2) if $x<y$ and $z>0$, then $xz<yz$. Under this definition, an ordered ring is easily seen to be an integral domain.\footnote{Surprisingly, if (1) and (2) are reformulated by replacing $<$ with $\leq$, then the resulting definition no longer guarantees that the ring is an integral domain. One such example is the ring $\mathcal{R}=\mathbb{R}[\X]/J$, where $J$ is the ideal generated by $\X^{2}$, and the ordering is defined by $a+b\X  \leq c+d\X$ iff $a<c$, or $a=c$ and $b\leq d$. Note that $\X^2=0$, so $\mathcal{R}$ is not an integral domain.}
An \emph{ordered Dorroh ring} is an ordered commutative ring with the extra stipulation that, if $j>0$, then  $z<j$, for 
$z\in \mathbb Z$ and $j\in J$. We immediately see:

\begin{lem}
    An ordered Dorroh ring is discrete.
\end{lem}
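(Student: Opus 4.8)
The plan is to read off the concrete shape of the elements of $\mathcal R$ from Lemma~\ref{lem_char_dorroh} and then use the extra order stipulation to show that $1$ is the least positive element, i.e. that $0<a$ implies $a\geq 1$; this is exactly what discreteness asks for. First I would record the two harmless preliminaries. Since $\mc R/J\simeq\mathbb Z$, the ring $\mc R$ is nontrivial, so $1\neq 0$, and then $0<1$ (in an ordered ring $1=1\cdot 1$ is a nonzero square, hence positive — or argue directly from trichotomy and monotonicity of multiplication). By translation invariance together with $0<1$, the copy of $\mathbb Z$ sitting inside $\mc R$ (the elements $\num n$ and $-\num n$ identified in the proof of Lemma~\ref{lem_char_dorroh}) is ordered in the standard way, $\cdots<-\num 2<-\num 1<0<\num 1<\num 2<\cdots$, with all these elements pairwise distinct; in particular the embedding $\mathbb Z\hookrightarrow\mc R$ is order preserving, and $\mathbb Z\cap J=\{0\}$. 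Finally, Lemma~\ref{lem_char_dorroh} gives, for each $a\in\mc R$, a unique $j\in J$ and $z\in\mathbb Z$ with $a=j+z$.

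Now let $a\in\mc R$ with $0<a$, and write $a=j+z$ as above. I would split on the sign of the integer $z$. If $z\leq 0$, then $j$ must be positive: otherwise $j\leq 0$ and $a=j+z\leq 0$, contradicting $0<a$. So the stipulation applies to $j$, and applied to the integer $1-z$ (which is $\geq 1$) it yields $1-z<j$, hence $1<j+z=a$, so $a\geq 1$. If instead $z\geq 1$, then $j$ must be nonnegative: otherwise $-j$ is a positive element of $J$, and the stipulation applied to $-j$ and the integer $z$ gives $z<-j$, hence $a=j+z<0$, again contradicting $0<a$. With $j\geq 0$ we get $a=j+z\geq z\geq 1$. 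In either case $a\geq 1$, so there is no element of $\mc R$ strictly between $0$ and $1$; together with $0<1$ this is precisely the assertion that $\mc R$ is discretely ordered. (Translation invariance then also gives an immediate successor above every element, but this one interval already captures discreteness in the sense of {\sf pam}14.)

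I do not expect a genuine obstacle here: once Lemma~\ref{lem_char_dorroh} is available the argument is a short case analysis. The only points needing a little care are (i) verifying that the internal $\mathbb Z$ is ordered the standard way, so that inequalities between integers transfer into $\mc R$, and (ii) feeding the right integer into the stipulation in each case — namely $1-z$ when $j>0$ and $z$ when $-j>0$ — and tracking signs correctly.
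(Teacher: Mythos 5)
Your proof is correct, and it is exactly the argument the paper has in mind (the paper gives no explicit proof, introducing the lemma with ``We immediately see''): decompose $a=j+z$ via Lemma~\ref{lem_char_dorroh}, use that positive elements of $J$ dominate all integers to rule out $0<a<1$, and check the sign bookkeeping. The case split on $z$ and the choice of integers $1-z$ and $z$ fed into the stipulation are handled correctly, so there is nothing to add.
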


We write $\mc R^{\mathsmaller{\geq 0}}$, for the non-negative part of an ordered ring $\mc R$.
The following theorem plays a key role in the proofs of our main results:
\begin{theorem}\label{Dorroh_fourquares}
Suppose $\mc R$ is an ordered Dorroh ring. Then,
    the natural numbers are $\Delta_0$-definable in $\mc R^{\mathsmaller{\geq 0}}$.
    It follows that the integers are definable in $\mc R$.
\end{theorem}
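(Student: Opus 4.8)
The plan is to exhibit a single $\Delta_0$-formula which, read in $\mc R^{\mathsmaller{\geq 0}}$, defines the standard cut, with Lagrange's four-square theorem supplying the arithmetic and the Dorroh decomposition (Lemma~\ref{lem_char_dorroh}) supplying the structural leverage. Explicitly, I would take
\[
\phi(x)\;:\equiv\;\forall y\bleq x\;\exists u_1,u_2,u_3,u_4\bleq y\;\bigl(y = u_1^2+u_2^2+u_3^2+u_4^2\bigr),
\]
which is $\Delta_0$ since every quantifier displayed is bounded, and then argue that in $\mc R^{\mathsmaller{\geq 0}}$ the formula $\phi$ holds of exactly the standard natural numbers.

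The forward inclusion is immediate from Lagrange: if $a$ is a standard natural, then so is every $b\bleq a$, hence $b$ is a sum of four squares of naturals $m_1,\dots,m_4$, and since $m_i\leq m_i^2\leq b$ whenever $m_i\geq 1$ (and trivially $0\leq b$), the witnesses $m_i$ already respect the bound $u_i\bleq y$; so $\phi(a)$ holds.

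The substance is the converse. Fix a witnessing ideal $J$ of $\mc R$. First I would record the two structural facts that make everything run: (i) $J^{>0}$ consists entirely of infinite elements, which is immediate from the ordered-Dorroh stipulation that $z<j$ for all $z\in\mathbb Z$ and $j\in J^{>0}$; and (ii) consequently, for nonstandard $a\in\mc R^{\mathsmaller{\geq 0}}$, writing $a=j+z$ in its (unique) Dorroh decomposition forces $j\in J^{>0}$ — if $j=0$ then $a=z\in\mathbb Z$ is standard, and if $j<0$ then $-j\in J^{>0}$, so $-j>z$, forcing $a=j+z<0$. Given such an $a$, I would pick a standard natural $k>z$ (say $k=\lvert z\rvert+1$) and set $b:=a-k$; since $j$ is infinite we have $0\leq b\bleq a$, while the $\mathbb Z$-part of $b$ is $z-k<0$. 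The key point is then that no sum of four squares in $\mc R$ can have a negative $\mathbb Z$-part: if $b=\sum_i u_i^2$ and $u_i=j_i+z_i$ is the Dorroh decomposition of $u_i$, then $u_i^2=(j_i^2+2j_iz_i)+z_i^2$ with $j_i^2+2j_iz_i\in J$ (as $J$ is an ideal) and $z_i^2\in\mathbb Z$, so by uniqueness of the Dorroh decomposition the $\mathbb Z$-part of $b$ equals $\sum_i z_i^2\geq 0$ — a contradiction. Hence $b\bleq a$ witnesses $\neg\phi(a)$, and $\phi$ defines precisely the standard cut.

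The remaining assertion, that $\mathbb Z$ is definable in $\mc R$, is then a formality: $\mc R^{\mathsmaller{\geq 0}}$ is the $\Delta_0$-definable subset $\{x:0\leq x\}$ of $\mc R$, so relativizing $\phi$ to it yields a formula $\phi^{\ast}$ defining $\mathbb N$ inside $\mc R$, whence $\mathbb Z$ is defined by $\phi^{\ast}(x)\vee\phi^{\ast}(-x)$. I do not expect a genuine obstacle here: the mathematical weight is carried entirely by Lagrange's theorem and by the already-established uniqueness of the Dorroh decomposition. The one place calling for a little care is that one should \emph{not} hope that an arbitrary nonstandard $a$, or the associated $j\in J$, is itself non-representable as a sum of four squares (in a ring such as $\zxxg$ many infinite elements are sums of squares); the fix is exactly the subtraction of a standard constant, which pushes the integer part strictly below $0$ and makes non-representability transparent.
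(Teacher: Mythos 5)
Your proof is correct and follows essentially the same route as the paper's: the same $\Delta_0$ four-squares formula, the forward direction via Lagrange, and the converse via the observation that squares (hence sums of squares) have non-negative $\mathbb Z$-part in the Dorroh decomposition, so that subtracting a suitable standard constant from a nonstandard $a$ produces an element $\leq a$ with negative $\mathbb Z$-part. Your extra checks (that $j>0$ for nonstandard $a$, that $b\geq 0$, and the explicit definition of $\mathbb Z$ at the end) merely spell out steps the paper leaves implicit.
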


\begin{proof}
    We work in $\mc R^{\mathsmaller{\geq 0}}$. We define:
    \begin{itemize}
        \item 
   $x\in \omega_{\mathsmaller{\mc R}} $ iff
    $\forall y \bleq x\; \exists u_0 \bleq y\;\exists u_1\bleq y\;\exists u_2 \bleq y\,\exists u_3 \bleq y\;\; 
    y= u_0^2+u_1^2+u_2^2+u_3^2$.
     \end{itemize}
     Clearly, every natural number $n$ in $\mc R^{\mathsmaller{\geq 0}}$ does satisfy the condition, by
     Lagrange's Four Squares Theorem. 
     
     Consider any $a$ in $\mc R^{\mathsmaller{\geq 0}}$.  The element $a$ can be written as $j+z$, where $j\in J$ and $z\in \mathbb Z$.
     So, $a^2 = j^2+2zj + z^2$. Since,  $(j^2+2zj) \in J$, we find that $a^2$ is of the form $j'+z'$, where $z'\geq 0$.
     It follows that, no element $j^\ast+z^\ast$, for $z^\ast<0$, can be a sum of squares. Consider any element $b =j^\star +z^\star$,
     for $0\neq j^\star\in J$. Clearly, 
     $b-|z^\star|-1<b$ and $b-|z^\star| -1$ is not a sum of squares. So, $b$ is not in  $\omega_{\mathsmaller{\mc R}}$. 
\end{proof}

If we call a number \emph{four-squarable} if it is the sum of four squares, then $ \omega_{\mathsmaller{\mc R}} $ consists of the
\emph{hereditarily four-squarable} numbers of $\mc R$.\footnote{If $P$ is any property of numbers, $x$ is \emph{hereditarily $P$} if
$\forall y\bleq x\, P(y)$ and  $x$ is \emph{essentially $P$} if
$\forall y\mathop{\geq} x\, P(y)$.}

\begin{rem}
    Clearly, the only Dorroh ring of which the non-negative part satisfies Lagrange's Four Squares Theorem is
    $\mathbb Z$. 
\end{rem}

\begin{rem}\label{recursionsmurf}\
    The definability of $\omega$ in a Dorroh ring allows us to unravel recursive definitions. In this way we can obtain $\Sigma_1$ explicit definitions of functions defined by recursion such as the exponentiation \textup(with natural exponents\textup), generalized sum \textup(of true finite length\textup) et al.
\end{rem}

\subsection{Real Closure}\label{realsmurf} There are several equivalent formulations of the notion of a real closed field. For definiteness, we say that a real closed field is an ordered field $\mc K$ in which (1) every non-negative element has a square root, and every odd-degree polynomial in $\mc K[\X]$ has a root in $\mc K$. Given ordered fields $\mc K$ and $\overline{\mc K}$, 
we say that $\overline{\mc K}$ is \emph{a real closure of} $\mc  K$ if (1) $\overline{\mc K}$  extends $\mc K$ 
as an ordered field, (2) every element of $\overline{\mc K}$ is algebraic over $\mc K$, and 
(3) $\overline{\mc K}$ is real closed. By a classical theorem of Emil Artin and Otto Schreier, every ordered field has a real closure; moreover we have the following uniqueness feature of real closures:

\begin{theorem} \label{rc_is_unique} \cite[Theorem 2.9, Chapter XI]{lang:algebra2002} Any two real closures of an ordered field $\mc K$ are isomorphic over $\mc K$. 
\end{theorem}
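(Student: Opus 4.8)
The plan is to build an isomorphism over $\mc K$ between two given real closures $\overline{\mc K}_1$ and $\overline{\mc K}_2$ by a back-and-forth (Zorn's lemma) argument on finitely generated ordered subfields, using the fact that the order on a real closed field is definable from the field structure (an element is non-negative iff it is a square), so that any field isomorphism over $\mc K$ between real closed fields automatically preserves the order. First I would recall the Artin--Schreier machinery: in a real closed field $\mc R$, for any polynomial $p(\X)\in\mc R[\X]$ and any $a<b$ in $\mc R$ with $p(a)p(b)<0$, there is a root of $p$ strictly between $a$ and $b$ (the intermediate value property for polynomials), which is the key analytic input. This yields that if $\mc K\subseteq\mc R$ with $\mc R$ real closed, and $\alpha\in\mc R$ is algebraic over $\mc K$ with minimal polynomial $q$, then the position of $\alpha$ among the (finitely many) roots of $q$ in $\mc R$ is determined by finitely many sign conditions on elements of $\mc K[\X]$ evaluated via Sturm's theorem, i.e.\ it is determined by the order type of $\mc K$ together with $q$ alone, not by $\mc R$.

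The core step is then the following extension lemma: let $F_1\subseteq\overline{\mc K}_1$ and $F_2\subseteq\overline{\mc K}_2$ be subfields containing $\mc K$, each generated over $\mc K$ by finitely many elements algebraic over $\mc K$, and let $\sigma\colon F_1\to F_2$ be an order-preserving field isomorphism fixing $\mc K$ pointwise. Given any $\alpha\in\overline{\mc K}_1$, I would show $\sigma$ extends to an order-preserving isomorphism on $F_1(\alpha)$ into $\overline{\mc K}_2$. To do this, let $q$ be the minimal polynomial of $\alpha$ over $F_1$ and $q^\sigma$ its image over $F_2$; since $\overline{\mc K}_2$ is real closed (hence $F_2$'s real closure inside it, being algebraic, supplies roots), $q^\sigma$ has a root in $\overline{\mc K}_2$, and Sturm's algorithm applied in $F_1$ vs.\ $F_2$ (transported by $\sigma$) picks out a root $\beta$ of $q^\sigma$ occupying the same position relative to the elements of $F_1$ that $\alpha$ occupies, so that $\alpha\mapsto\beta$ gives an order-preserving embedding $F_1(\alpha)\hookrightarrow\overline{\mc K}_2$. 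Symmetrically, any element of $\overline{\mc K}_2$ can be absorbed into the range. A Zorn's lemma argument over the poset of such partial order-isomorphisms, whose union of a chain is again one, produces a maximal $\sigma\colon F_1^\ast\to F_2^\ast$; by the extension lemma applied in both directions, $F_1^\ast=\overline{\mc K}_1$ and $F_2^\ast=\overline{\mc K}_2$, since every element of each closure is algebraic over $\mc K$ and hence over $F_i^\ast$. Finally, order-preservation of the limit map is automatic once we know it is a field isomorphism of real closed fields, via the squares characterization of $\le$.

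The main obstacle is the extension lemma — specifically, verifying that the root $\beta\in\overline{\mc K}_2$ of $q^\sigma$ can be chosen so that the assignment $\alpha\mapsto\beta$ respects the order, not merely the field operations. This is where one genuinely needs that real closed fields are \emph{elementarily} rigid in the relevant sense: the right tool is Sturm's theorem, which expresses the number of roots of $q$ in any interval with endpoints in $F_1$ as a count of sign changes in the Sturm sequence, a purely algebraic datum that $\sigma$ carries from $F_1$ to $F_2$ verbatim; combined with the intermediate value property this pins down $\beta$ uniquely and forces order-compatibility. One must be slightly careful that $q$ stays separable (automatic in characteristic $0$, which holds since an ordered field has characteristic $0$) and that repeated roots do not occur, so that the Sturm count is exact. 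Everything else — the Zorn's lemma scaffolding, the chain-union verification, the reduction of order-preservation to the squares criterion — is routine.
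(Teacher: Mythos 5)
The paper does not give its own proof of this statement; it is quoted as a classical fact with the citation to Lang's \emph{Algebra} \cite[Theorem 2.9, Chapter XI]{lang:algebra2002}. Your sketch is the standard Artin--Schreier argument (Sturm's theorem to locate the matching root, Zorn's lemma to assemble order-preserving partial isomorphisms on finitely generated subfields), which is indeed the proof Lang gives, and it is correct in its essentials---one small remark is that the \emph{existence} of a root of $q^{\sigma}$ in $\overline{\mc K}_2$ is itself a consequence of the $\sigma$-invariance of the Sturm count (the fact that $q$ has at least one root in $\overline{\mc K}_1$ forces $q^{\sigma}$ to have at least one in $\overline{\mc K}_2$), not merely of $\overline{\mc K}_2$ being real closed, as your first clause could be read to suggest.
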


Thus, every ordered field $\mc K$ has a unique real closure (up to isomorphism), which we will denote by ${\sf rc}(\mc K)$. 

Given an ordered commutative ring $\mc R$, we write ${\sf rc}(\mc R)$ for ${\sf rc}(\mathsf{fc}(\mc R))$, 
where $\mathsf{fc}(\mc R)$ is the field of quotients of $\mc R$.

Given certain ordered commutative rings $\mc R$ that are interpretable in the standard model $\mathbb{N}$ of arithmetic, we will need (in Sections \ref{openitself} and \ref{iterated_shepherdson}) to interpret $\mathsf{rc}(\mc R)$ in $\mathbb{N}$. Among the many ways of accomplishing this, we find James Madison's representation of ${\sf rc}(\mc R)$ in 
\cite{madison:comrcf70}) to be the most convenient for our purposes:
we start with all elements of the form $(f,n)$, where $f\in \mc R[\X]$ 
(here $f$ is represented as a finite sequence of elements of $\mc R$) and $n$ is a natural number, such that
\[{\sf rc}(\mc R)\models \underbrace{``\textnormal{There exists at least $n$ different roots of $f$}"}_{(*)}.\]
We observe that the above property can readily be verified already in $\mc R$: by quantifier elimination, for each $(f,n)$ 
as above there is a quantifier-free formula $\psi(\bar{x})$ such that 
${\sf rc}(\mc R)\models {(*)}\leftrightarrow \psi(\bar{b})$,
where $\bar{b}\in \mc R$ are all the coefficients of $f$. Since $\psi(\bar{b})$ is quantifier-free and $\bar{b}\in\mc  R$, the sentence $\psi(\bar{b})$ 
is absolute between ${\sf rc}(\mc R)$ and $\mc R$.  Let us call a pair $(f,n)$ satisfying $(*)$ an $\mc R$-notation. 
On the set of $\mc R$-notations, we consider the equivalence relation $\sim_{\mc R}$ defined as follows
\[(f,n) \sim_{\mc R} (g,k) \;:\Iff\; {\sf rc}(\mc R)\models ``\textnormal{The $n$-th root of $f$ is equal to the $k$-th root of $g$}".\]
Once again, whether $(f,n) \sim_{\mc R} (g,k)$ holds depends only on the atomic diagram of $\mc R$. 
Similarly, one can define addition and multiplication on the set of equivalence classes of $\sim_{\mc R}$. 
We define the inclusion map $\iota_{\mc R}: {\mc R}\rightarrow {\sf rc}(\mc R)$, called \textit{the standard embedding},  to be the mapping
\[\iota_{\mc R}(r):= [(\tupel{1,-r},1)]_{\sim_{\mc R}}.\]
We will follow the usual practice of identifying the elements from $\mc R$ 
with their images under $\iota_{\mc R}$.\footnote{This
is justified by an elementary fact of model theory that states that given an embedding 
$e:\mathcal{M}\hookrightarrow \mathcal{N}$ of a structure $\mathcal{M}$ as a submodel of a 
structure $\mathcal{N}$, there is a structure $\mathcal{N}^*$ such that $\mathcal{M}$ is a 
\emph{submodel} of $\mathcal{N}^*$, and there is an isomorphism 
$f:\mathcal{N}^{*} \rightarrow \mathcal{N}$ such that $f\circ i = e$, where 
$i:\mathcal{M} \hookrightarrow \mathcal{N}^{*}$ is the inclusion embedding} 
In the following, we will use the standard notation for some elements of the 
real closure of $\mc R$ and omit the brackets denoting the equivalence class. For example, instead of writing 
$[\tupel{1,0,-r},2]_{\sim_{\mc R}}$ we will be writing $\sqrt{r}$.

We observe that if $\mc R\subseteq \mc R'$ are both ordered commutative rings, then thanks to elimination of quantifiers in real closed fields, $\rc(\mc R)$ is an elementary submodel of $\rc(\mc R')$. In particular,
for each polynomial $f$ with coefficients in $R$, and for each $n\in\mathbb{N}$,
we find that $(f,n)$ is an $\mc R$-notation iff $(f,n)$ is an $\mc R'$-notation. 
Similarly, if $(f,m)$ and $(g,n)$ are two $\mc R$-notations, 
then $(f,m)\sim_{\mc R}(g,n)$ iff $(f,m)\sim_\mathsf{\mc R'}(g,n)$.

The above construction can be executed in the case of the ordered field of rational numbers $\mathbb{Q}$ and the presentation should make it clear that both the construction of $\mathbb{Q}$ and ${\sf rc}(\mathbb{Q})$ can be done in $\mathbb{N}$ (in fact the presentations of both fields are decidable).

\begin{rem}
The literature gives several ways to construct the real algebraic numbers in $\mathbb N$. What is more, they provide interpretations
of the theory {\sf RCF} of real closed fields in Robinson's arithmetic {\sf Q}. See the papers \cite{fern:groun22}, \cite{ferr:inte13} by Fernando Ferreira and Gilda Ferreira, the thesis \cite{dimi:firs19} (available online) by Anna Dmitrieva, and the following FOM posts by Harvey Friedman:\\
\begin{tabular}{l}
 {\tt https://cs.nyu.edu/pipermail/fom/1999-August/003345.html}\\
   {\tt https://cs.nyu.edu/pipermail/fom/1999-December/003523.html}
   \end{tabular}
\end{rem}

\section{The Framework}\label{smurfdebouwer}
In this section, we will build the foundation on which the results of our paper are built.
We will treat the case of one-dimensional parameter-free interpretations and morphisms between
interpretations with one parameter. 

\subsection{Interpretations that live below a Bound}
Consider a signature $\Theta$ with a designated binary predicate symbol $\belowz$ and an arbitrary signature $\Xi$.
Let $\tau$ be a parameter-free one-dimensional translation from $\Xi$ to $\Theta$ and
let $\theta(x)$ be a $\Theta$-formula with one free variable.\footnote{Even though in this paper $\belowz$ can be safely replaced with $\leq$ (or with $<$), we have opted for this more general approach to bounded quantifiers because the sequel of this paper uses $\in$ for bounding purposes.} 

We compute $\phi^{\tau,[x]}$ as follows: we follow the clauses of the inductive definition
of $\phi^\tau$ in the usual way, only we take:
\begin{itemize}
    \item $(\forall y\,\psi)^{\tau,[x]} := \forall \bbelow yx\,(\delta_\tau(y) \to \psi^{\tau,[x]})$,
    \item $(\exists y\,\psi)^{\tau,[x]} := \exists \bbelow yx\,(\delta_\tau(y) \wedge \psi^{\tau,[x]})$.
\end{itemize}

We define $\mf U(\tau,\theta)$ as the theory axiomatised by:
\begin{enumerate}[A.]
    \item $\fol\Xi^\tau$,
    \item $ \exists x\, x\in\theta \wedge
    \forall  y \bin \theta\, \forall  z\bin \delta_\tau \; \below zy$. 
\end{enumerate}

Clause (A) guarantees that $\tau$ carries an interpretation of Predicate Logic of signature $\Xi$ in $\mf U(\tau,\theta)$.
We have the following insight.

We say that a translation $\tau$ is a $\Gamma$-translation iff $\delta_\tau$ and the ${\sf P}_\tau$
are all in $\Gamma$.
\begin{lem}\label{bescheidensmurf}
Suppose $\tau$ is a $\Gamma$-translation and $\theta$ is a $\Gamma$-formula.
Let $\mf U:= \mf U(\tau,\theta)$.
Then $\phi^\tau$ is $\Delta_1[\mf U](\Gamma)$, for any $\Xi$-formula $\phi$.
\end{lem}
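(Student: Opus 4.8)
The plan is to use clause~(B) in the axiomatisation of $\mf U:=\mf U(\tau,\theta)$ --- which says that $\theta$ is nonempty and that every element satisfying $\theta$ is a $\belowz$-bound for the whole domain $\delta_\tau$ --- to trade the unbounded relativised quantifiers of $\phi^\tau$ for quantifiers bounded by such a witness. Accordingly I would work throughout with the relativised--bounded translation $\phi^{\tau,[x]}$ introduced just above the lemma, choosing the bounding variable $x$ fresh for $\phi$.

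\emph{Step 1} (syntactic complexity of $\phi^{\tau,[x]}$). A routine induction on $\phi$ gives $\phi^{\tau,[x]}\in\Delta_0(\Gamma)$ for every $\Xi$-formula $\phi$: the atomic subformulas of $\phi^{\tau,[x]}$ are the translated atoms and $\delta_\tau$, all of which lie in $\Gamma\subseteq\Delta_0(\Gamma)$ since $\tau$ is a $\Gamma$-translation, and each quantifier of $\phi$ contributes exactly one $\belowz$-bounded quantifier together with a $\delta_\tau$-guard. Closure of $\Delta_0(\Gamma)$ under the propositional connectives and under bounded quantification then does the rest. Note that $\theta$ does not occur in $\phi^{\tau,[x]}$.

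\emph{Step 2} (the key equivalence). I claim $\mf U\vdash\theta(x)\to\bigl(\phi^\tau\leftrightarrow\phi^{\tau,[x]}\bigr)$, by induction on $\phi$. For atomic $\phi$ this is trivial, as $\phi^\tau$ and $\phi^{\tau,[x]}$ then coincide; the propositional cases follow at once from the induction hypothesis. For $\phi=\forall y\,\psi$ (the case $\exists y\,\psi$ being dual), work under the hypothesis $\theta(x)$, which by clause~(B) yields $\forall z\,(\delta_\tau(z)\to\below zx)$. To pass from $\phi^\tau$ to $\phi^{\tau,[x]}$ one simply drops the now-redundant restriction $\below yx$ and applies the induction hypothesis; conversely, the restriction $\below yx$ needed to instantiate $\phi^{\tau,[x]}$ is supplied by~(B) from $\delta_\tau(y)$, after which the induction hypothesis again closes the case.

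\emph{Step 3} (assembling the conclusion). Clause~(B) also gives $\mf U\vdash\exists x\,\theta(x)$, so Steps~1 and~2 combine to yield
\[\mf U\vdash\phi^\tau\leftrightarrow\exists x\,\bigl(\theta(x)\wedge\phi^{\tau,[x]}\bigr)\qquad\text{and}\qquad\mf U\vdash\phi^\tau\leftrightarrow\forall x\,\bigl(\theta(x)\to\phi^{\tau,[x]}\bigr).\]
Since $\theta\in\Gamma$ and $\phi^{\tau,[x]}\in\Delta_0(\Gamma)$, the matrices on the right are in $\Delta_0(\Gamma)$; hence the first right-hand side is a $\Sigma_1(\Gamma)$-formula and the second a $\Pi_1(\Gamma)$-formula, so $\phi^\tau$ is $\Delta_1[\mf U](\Gamma)$. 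I do not expect a genuine obstacle here: the only thing that needs care is the bookkeeping in the induction of Step~2 --- lining up the relativised quantifiers of $\phi^\tau$ with the bounded ones of $\phi^{\tau,[x]}$ --- together with the trivial but easily-overlooked requirement that $x$ be chosen not to occur in $\phi$.
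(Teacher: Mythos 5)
Your proof is correct and follows essentially the same route as the paper's: establish $\phi^{\tau,[x]}\in\Delta_0(\Gamma)$, prove the key equivalence $\mf U+\theta(x)\vdash\phi^\tau\leftrightarrow\phi^{\tau,[x]}$, and then use nonemptiness of $\theta$ to exhibit $\phi^\tau$ as both $\Sigma_1(\Gamma)$ and $\Pi_1(\Gamma)$ (the paper packages the last step as a three-way cycle of implications rather than two separate biconditionals, but this is cosmetic). The only difference is that you spell out the routine induction of Step~2, which the paper leaves implicit.
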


\begin{proof}
    We assume the conditions of the theorem. So,
    $\mf U + \theta(x) \vdash \phi^{\tau}\iff \phi^{\tau,[x]}$. Moreover,
    $\phi^{\tau,[x]}$ is $\Delta_0(\Gamma)$. 
So, we have, using that $\theta$ is $\mf T$-provably non-empty:
\begin{eqnarray*}
    \mf U \vdash  \exists x \bin \theta\;  \phi^{\tau,[x]} & \to & \phi^\tau \\
    & \to & \forall x\bin \theta\,  \phi^{\tau,[x]} \\
    & \to &  \exists  x\bin\theta \;  \phi^{\tau,[x]}
\end{eqnarray*}
It follows that $\phi^\tau$ is, modulo $\mf U$-provable equivalence, both in $\Sigma_1(\Gamma)$ and in $\Pi_1(\Gamma)$.
\end{proof}

\begin{rem}
    Let   $\contheta_\tau(x)$ be the formula $\forall y \in \delta_\tau\;  \below yx$.
Then, $\mf U(\tau,\theta)$ extends $\mf U(\tau,\contheta_\tau)$.
We work with a $\theta$ that possibly differs from $\contheta_\tau$, since such a $\theta$ is conceivably less complex than $\contheta_\tau$.
\end{rem}

\subsection{Bounds and Retracts}
 We have the following theorem.

\begin{theorem}\label{grotesmurf}
Let $\Gamma$ be a set of $\Theta$-formulas. Suppose that $\tau:\Xi \to \Theta$ is a $\Gamma$-translation and $\nu:\Theta\to \Xi$.
Suppose $\Psi$ is a $\Theta$-sentence such that $\Psi \vdash \fol{\Xi}^\tau$ and $\Psi \vdash \fol{\Theta}^{\nu\tau}$ and
$\Psi \vdash \phi \iff \phi^{\nu \tau}$, for all $\Theta$-sentences $\phi$. 
Let $\theta(x)$ be in $\Gamma$. Let $\Phi := \Psi \wedge \exists x\, x\in\theta \wedge
    \forall  y \bin \theta\, \forall  z\bin \delta_\tau \; \below zy$. 
Then, all $\Theta$-sentences are $\Delta_1[\Phi](\Gamma)$.
\end{theorem}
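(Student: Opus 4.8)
The plan is to deduce the statement directly from Lemma~\ref{bescheidensmurf}, using two elementary observations. First I would check that $\Phi$ proves every axiom of $\mf U(\tau,\theta)$: the theory $\mf U(\tau,\theta)$ is axiomatised by (A)~$\fol\Xi^\tau$ and (B)~the sentence $\exists x\, x\in\theta \wedge \forall y \bin\theta\, \forall z\bin\delta_\tau\;\below zy$; by hypothesis $\Psi \vdash \fol\Xi^\tau$, and $\Phi$ is by definition $\Psi$ conjoined with precisely clause (B), so $\Phi \vdash \mf U(\tau,\theta)$. Consequently every formula that is $\Delta_1[\mf U(\tau,\theta)](\Gamma)$ is also $\Delta_1[\Phi](\Gamma)$, since $\Phi$ proves the axioms of $\mf U(\tau,\theta)$ and hence proves all the witnessing equivalences.

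Next, since $\tau$ is a $\Gamma$-translation and $\theta\in\Gamma$, Lemma~\ref{bescheidensmurf} applies with $\mf U := \mf U(\tau,\theta)$, telling us that $\psi^\tau$ is $\Delta_1[\mf U(\tau,\theta)](\Gamma)$, hence $\Delta_1[\Phi](\Gamma)$, for every $\Xi$-formula $\psi$. I would then finish via the ``round-trip'' commutation property: given an arbitrary $\Theta$-sentence $\phi$, the formula $\phi^\nu$ is a $\Xi$-sentence (as $\nu:\Theta\to\Xi$), so, applying the previous step to $\psi := \phi^\nu$ and using the composition convention $\phi^{\nu\tau} = (\phi^\nu)^\tau$, the sentence $\phi^{\nu\tau}$ is $\Delta_1[\Phi](\Gamma)$. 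Finally $\Phi \vdash \Psi$ and $\Psi \vdash \phi \iff \phi^{\nu\tau}$, so $\Phi \vdash \phi \iff \phi^{\nu\tau}$; chaining this with the two $\Phi$-provable equivalences witnessing the $\Delta_1[\Phi](\Gamma)$-status of $\phi^{\nu\tau}$ shows that $\phi$ is itself $\Phi$-provably equivalent both to a $\Sigma_1(\Gamma)$- and to a $\Pi_1(\Gamma)$-sentence, i.e.\ $\phi$ is $\Delta_1[\Phi](\Gamma)$.

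This is essentially bookkeeping once Lemma~\ref{bescheidensmurf} is available, so I do not anticipate a serious obstacle; the two points that deserve attention are (i) handling the composition convention for translations correctly, in particular verifying $\phi^{\nu\tau} = (\phi^\nu)^\tau$ and that $\phi^\nu$ is a genuine $\Xi$-sentence, and (ii) observing that, in contrast to Theorem~\ref{supersmurf}, this result never uses the hypothesis $\Psi \vdash \fol\Theta^{\nu\tau}$: only $\Psi \vdash \fol\Xi^\tau$ (to obtain $\Phi \vdash \mf U(\tau,\theta)$) and the commutation $\Psi \vdash \phi \iff \phi^{\nu\tau}$ are actually needed. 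It is also worth recording in the write-up that the bound is uniform, in the sense that the single sentence $\Phi$ and the single formula-class $\Gamma$ work simultaneously for all $\Theta$-sentences.
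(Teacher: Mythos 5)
Your proof is correct and follows essentially the same route as the paper's: apply Lemma~\ref{bescheidensmurf} with $\mf U := \mf U(\tau,\theta)$ to conclude that $\phi^{\nu\tau} = (\phi^\nu)^\tau$ is $\Delta_1[\Phi](\Gamma)$, then transfer this to $\phi$ via the $\Phi$-provable equivalence $\phi \iff \phi^{\nu\tau}$. Your side observation that the hypothesis $\Psi \vdash \fol\Theta^{\nu\tau}$ plays no role here (it is needed in Theorem~\ref{supersmurf} but not in Theorem~\ref{grotesmurf}) is accurate; the paper carries it along presumably because the two theorems are meant to be applied under the same hypotheses.
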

\begin{proof}
We assume the conditions of the theorem. By Lemma~\ref{bescheidensmurf}, the sentence
$\phi^{\nu\tau}$ is $\Delta_1[\Phi](\Gamma)$. By the $\Phi$-provable equivalence of
$\phi$ and $\phi^{\nu\tau}$, we find that $\phi$ is also in $\Delta_1[\Phi](\Gamma)$.
\end{proof}
We give the corollary that we will use in our arguments.

\begin{cor}\label{cor_mod_retract}
Let  $\mc{M}$ be a $\Theta$-structure, where $\Theta$ has a designated binary relation $\belowz$,
and let $\mc{N}$ be a structure in any signature.
Suppose we have one-dimensional parameter-free interpretations $\imath: \mc{N}\to \mc M$ and 
$\jmath: \mc M\to \mc{N}$ that witness that $\mc M$ is an ${\sf INT}_3^{\sf fw}$-retract of $\mc {N}$.
 Let $\tau$ and $\theta$ be in some prescribed formula class $\Gamma$.
Suppose that 
$\mc M \models  \exists x\, \theta(x) \wedge \forall x\bin \theta\, \forall y\bin \delta_{\imath}\; \below {x}{y}$.

Then, there are a $\Theta$-sentence $\Phi$, and a set 
of $\Delta_1[\Phi](\Gamma)$-formulas $\mc X$, such 
that $\Phi+\mc X$ axiomatises  $\mathrm{Th}(\mc{M})$.
\end{cor}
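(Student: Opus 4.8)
The plan is to combine Theorem~\ref{grotesmurf} with Theorem~\ref{supersmurf} (the ${\sf INT}_3^{\sf fw}$-retract packaging) and then perform the usual passage from syntactic theories to a model by relativising everything to the model $\mc M$. First I would extract from the hypothesis that $\mc M$ is an ${\sf INT}_3^{\sf fw}$-retract of $\mc N$ via $\imath:\mc N\to\mc M$ and $\jmath:\mc M\to\mc N$ the data needed for Theorem~\ref{grotesmurf}: set $\Xi:=\Sigma_{\mc N}$ (the signature of $\mc N$), $\Theta:=\Sigma_{\mc M}$, let $\tau$ be the translation underlying $\imath$ (so $\tau:\Xi\to\Theta$, and by assumption $\tau\in\Gamma$), and let $\nu$ be the translation underlying $\jmath$ (so $\nu:\Theta\to\Xi$). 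The ${\sf INT}_3^{\sf fw}$-sameness of ${\sf ID}_{\mc M}$ and $\jmath\circ\imath$ is witnessed by a finite subtheory; concretely, since $\mc M$ is the object, sameness is witnessed by a single $\Theta$-sentence $\Psi$ true in $\mc M$ such that $\Psi\vdash\fol{\Xi}^{\tau}$, $\Psi\vdash\fol{\Theta}^{\nu\tau}$, and $\Psi\vdash\phi\iff\phi^{\nu\tau}$ for all $\Theta$-sentences $\phi$. (Here I am using that ${\sf INT}_3^{\sf fw}$-sameness between interpretations into $\mc M$ means there is a finite $V_0\subseteq\mathrm{Th}(\mc M)$ witnessing sentential congruence; conjoin its finitely many axioms into $\Psi$, and note the identity-axiom clauses are exactly what guarantee $\imath,\jmath$ are genuine interpretations, hence $\Psi$ can be taken to subsume $\fol{\Xi}^{\tau}$ and $\fol{\Theta}^{\nu\tau}$.)

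Next I would apply Theorem~\ref{grotesmurf} with this $\Gamma$, $\tau$, $\nu$, $\Psi$, and the given $\theta\in\Gamma$. Define
\[
\Phi \;:=\; \Psi \wedge \exists x\, x\in\theta \wedge \forall y\bin\theta\,\forall z\bin\delta_{\tau}\;\below zy.
\]
The hypothesis $\mc M\models\exists x\,\theta(x)\wedge\forall x\bin\theta\,\forall y\bin\delta_{\imath}\;\below xy$ says precisely that $\mc M\models\Phi$ (recall $\delta_{\imath}=\delta_{\tau}$). Theorem~\ref{grotesmurf} then yields that every $\Theta$-sentence is $\Delta_1[\Phi](\Gamma)$; in particular, for every $\phi\in\mathrm{Th}(\mc M)$ there is a $\Sigma_1(\Gamma)$-sentence $\phi^{\sharp}$ with $\Phi\vdash\phi\iff\phi^{\sharp}$.

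Finally I would assemble the restricted axiomatisation. Take $\mc X:=\{\phi^{\sharp}:\phi\in\mathrm{Th}(\mc M)\}$, a set of $\Delta_1[\Phi](\Gamma)$-sentences. Since $\mc M\models\Phi$ and $\mc M\models\phi$ for each such $\phi$, we get $\mc M\models\phi^{\sharp}$, so $\mc M\models\Phi+\mc X$. Conversely, any model of $\Phi+\mc X$ satisfies $\phi^{\sharp}$ and, via $\Phi\vdash\phi\iff\phi^{\sharp}$, satisfies $\phi$, for every $\phi\in\mathrm{Th}(\mc M)$; hence it is elementarily equivalent to $\mc M$. Therefore $\Phi+\mc X$ axiomatises $\mathrm{Th}(\mc M)$, and every member is $\Delta_1[\Phi](\Gamma)$ — $\Phi$ trivially so, and each $\phi^{\sharp}\in\mc X$ by construction. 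This gives the claimed $\Phi$ and $\mc X$.

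The main obstacle, such as it is, is bookkeeping rather than mathematics: one must be careful that the ${\sf INT}_3^{\sf fw}$ witness for the retract really can be taken as a single $\Theta$-sentence $\Psi$ satisfying the three conditions of Theorem~\ref{grotesmurf} verbatim — in particular that $\Psi$ absorbs $\fol{\Xi}^{\tau}$ and $\fol{\Theta}^{\nu\tau}$ (which is fine, since being an interpretation of $\mc N$ into $\mc M$ means $\mc M$ validates $\fol{\Xi}^{\tau}$, and finitely many instances suffice once we note that $\phi^{\nu\tau}\iff\phi$ for \emph{all} $\Theta$-sentences is witnessed finitely in the retract category by definition). Everything else is a direct invocation of Theorem~\ref{grotesmurf} followed by the routine ``relativise to the model and close under $\mathrm{Th}(\mc M)$'' move.
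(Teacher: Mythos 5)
Your proof is correct and is essentially the argument the paper has in mind: the paper states Corollary~\ref{cor_mod_retract} without proof as an immediate consequence of Theorem~\ref{grotesmurf}, and your write‑up simply unpacks that deduction — extracting $\tau,\nu$ from $\imath,\jmath$, conjoining the finite ${\sf INT}_3^{\sf fw}$ witness with the finitely many sentences needed for $\fol{\Xi}^{\tau}$ and $\fol{\Theta}^{\nu\tau}$ into $\Psi$, forming $\Phi$ per the hypothesis on $\theta$, and then reading off $\mc X$ as the $\Sigma_1(\Gamma)$ normal forms (one could even take $\mc X={\rm Th}(\mc M)$ itself, since Theorem~\ref{grotesmurf} makes every $\Theta$-sentence $\Delta_1[\Phi](\Gamma)$, but your choice also yields the $\Sigma_1$-form the paper uses downstream).
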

So,  specifically, if a theory $U$ has a model $\mc M$ satisfying the conditions of Corollary~\ref{cor_mod_retract}.
then, there is a completion of $U$ that is of the specified complexity.

\subsection{How to witness an  ${\sf INT}_3^{\sf fw}$-Retract}\label{Sec_witness}

In Sections \ref{pamitself} and \ref{openitself} we employ our first way to witness that a pair $\tau$, $\nu$ supports an 
${\sf INT}_3^{\sf fw}$-retract by constructing an ${\sf INT}_1$-retract.
In the paper, we will consider a definable isomorphism $\digamma_p(x,y)$ with parameter $p$
between $\tau \circ \nu$ and the identity translation on $\Theta$. The sentence $\Psi$ will
state that $\digamma_p$ is an isomorphism as desired, for some $p$.

In Sections \ref{smurferella} and \ref{iterated_shepherdson} we employ our second way, in which we construct an ${\sf INT}_{2{\tt a}}$-retract.
Here the sentence $\Psi$ consists of the atomic clauses and the
back-and-forth clauses of the Ehrenfeucht-Fra\"{\i}ss\'{e}  game between $\tau \circ \nu$ and the identity translation on $\Theta$.

\begin{question}\label{Q_about_retracts}
    Is there yet another way to construct a witnessing sentence for an ${\sf INT}_3^{\sf fw}$-Retract?
\end{question}

\section{A Restricted Completion of $\pam$}\label{pamitself}
Our basic argument for the simplest case works as follows. We produce a model $\mathbb M$ of \pam, in which
we can define the standard cut $\omega$ and specify a non-empty set of elements above $\omega$. 
We build an internal copy of $\mathbb M$ inside $\omega$. Using sequentiality, we construct
a definable isomorphism between the internal copy and the original model. This makes the framework of
Section~\ref{smurfdebouwer} applicable.

\subsection{The Second Standard Model}\label{secondstandard}
We use the terminology \emph{the second standard model \textup(of $\pam$\textup)} 
to refer to the model $\mathbb M:= \mathbb Z[\X]^{\mathsmaller{\geq 0}}$, i.e., 
the non-negative part of $\mathbb Z[\X]$ considered as an ordered ring. 
Specifically, $\mathbb M$ consists of 
the polynomials  $p(\X)=a_n \X^n+ a_{n-1} \X^{n-1} + \cdots + a_0$, with integer 
coefficients, where $a_n\geq 0$ and ($a_n\neq 0$, if $n \neq  0$), with the obvious operations. 
The ordering is determined by stipulating that $\X > \omega$. More formally, we first deem $p(\X)>0$ if $a_n>0$, and then we define the full ordering  by:  $p_1(\X)>p_2(\X)$ iff $p_1(\X)-p_2(\X)>0.$
We identify the standard numbers $\omega$ in the usual way
with constant polynomials. 

\subsection{$\Delta_0$-definitions of Salient Classes}
Since $\mathbb M$ is the non-negative part of the Dorroh ring $\mathbb Z[\X]$, we can give a $\Delta_0$-definition of the natural numbers $\omega_{\mbbs M}$
using the four squares theorem. In other words, these will be the hereditarily four-squarable numbers.
This definition is just one of the many ways to define the naturals by a $\Delta_0$-formula. Here is an alternative way of doing it.
%We give some salient $\Delta_0$-definitions in $\mathbb M$.
We define the natural numbers as the hereditarily odd-or-even numbers. Formally:
\begin{itemize}
\item
The set of numbers that are odd or even is defined as follows:\\
  $x\in \odev := \exists y \bleq x\, (x =  2 y \vee x =  2 y+1) $.
 \item
 We now can define $\omega_{\mbbs{M}}$ by:
 $x\in \omega_{\mbbs{M}} := \forall y \bleq x\; x\in \odev$.
 The elements of $\omega_{\mbbs{M}}$ are the \emph{hereditarily odd-or-even numbers}.
\end{itemize}

We define sequences in our model $\mathbb M$ using the $\Delta_0$-defined $\omega_{\mbbs{M}}$
as discussed in Section~\ref{pamandextensions}.

\subsection{Interpreting $\mathbb N$ in $\mathbb M$.}
We simply do this by relativising the quantifiers to $\omega_{\mbbs M}$. Let us call this interpretation {\sf N}.

\subsection{Interpreting $\mathbb M$ in $\mathbb N$.}
We can build a copy of $\mathbb M$ inside $\mathbb N$ in any number of ways, seeing that
we have the full power of $\mathbb N$ at our disposal. Our development will not depend on any specific
feature of how we do it. 

Here is one sketch of one way to do it. We first construct an internal model of $\mathbb Z$
inside $\mathbb N$. This can be done in many ways. For specificity, we use the following one.
We employ $\tupel{0,n}$ for $+n$ and $\tupel{1,m+1}$ for $-(m+1)$. We note that this interpretation is identity preserving.

We describe a polynomial $a_n{\sf X}^n+\dots+ a_0$ as a sequence $(z_n,\dots, z_0)$, where
the $z_i$ are the representatives in our copy of $\mathbb Z$ of the integers corresponding to the $a_i$.
We demand $a_n \geq 0$, and $a_n>0$, if $n\neq 0$.
We can use the sequences given by the $\beta$-function or, alternatively, the
the sequences can defined by iterated pairing or using any other trick to construct sequences. 
However, we will employ the same notations for the projection- and length-functions
as we used in the $\beta$-case.

Defining addition and multiplication for these sequences corresponding to addition and multiplication of polynomials is clearly within the resources
of $\mathbb N$. This gives us an $\mathbb N$-internal model $\mathbb M^\ast$ that is
isomorphic to $\mathbb M$.  
Let the interpretation of $\mathbb M^\ast$ in $\mathbb N$ be {\sf M}. 
By Theorem~\ref{smulpaapsmurf}, we can make our interpretation identity-preserving, but nothing in what we are doing depends on that.

\subsection{The Application Function}\label{toepassingsgerichtesmurf}
We define the function {\sf app}. 
This function takes as input a representation $a$ of a polynomial and an element $x$ and gives as output the
value of the polynomial applied to $x$.
\begin{itemize}
\item 
$x\oplus \tupel{i,n} := x+n$, if $i=0$ and $x\oplus \tupel{i,n} := x-n$, if $i=1$ and $0<n\leq x$ and
$x \oplus \tupel{i,n} = 0$, otherwise. We note that minus is definable here by the subtraction axiom of
\pam.
\item
    ${\sf app}(a,x,b)$ iff, there is a sequence $s$ of the same length as $a$, such that $\pi(s,0) = \pi(a,0)$ and, 
    for all $i$ such that $i+1 <{\sf length}(a)$, 
    $\pi(s,i+1)= (\pi(s,i)\cdot x) \oplus \pi(a,i+1)$, and
    $\pi(s,{\sf length}(a)-1)=b$.
\end{itemize}
It is easy to see that {\sf app} indeed defines application as long as $x$ is so large that
the third clause of the definition of $\oplus$ is never active in the computation of the value.
This will certainly be the case if $x\not \in \omega_{\mbbs{M}}$ and this is the case we are interested in.

We note that ${\sf app}(\overbrace{\tupel{1_{\mbbs{Z}},0_{\mbbs{Z}},\dots,0_{\mbbs{Z}}}}^{\text{length } n+1},x) = x^n$. Since the function
$n \mapsto \overbrace{\tupel{1_{\mbbs{Z}},0_{\mbbs{Z}},\dots,0_{\mbbs{Z}}}}^{\text{length } n+1}$ is clearly arithmetically definable,
the function $n \mapsto x^n$ is $\mathbb M$-definable.

\subsection{Completing \pam}
We  apply Corollary~\ref{cor_mod_retract} with:
\begin{itemize}
    \item 
$\mathbb M$ in the role of $\mc M$, 
\item 
$\leq$ in the role of $\belowz$ ,\footnote{Here we can equally well use $<$ instead of $\leq$, since we are interested in bounding the standard numbers by a non-standard one.}  
\item 
$\mathbb N$ in the role of
$\mc N$, 
\item 
{\sf N} in the role of $\imath$,
\item 
{\sf M} in the role of $\jmath$. 
\item
$\omega_{\mbbs M}^{\sf c}$, the complement of $\omega_{\mbbs M}$ in $\mathbb M$, in the role of $\theta$.
\item 
Let $\mf F_{x}(a) := {\sf app}(a,x)$. Then, $\mf F_{\X}(a)$  is an isomorphism between
$\mathbb M^\ast$ and $\mathbb M$. The statement that $\mf F_x$ is an isomorphism
for some $x$ delivers our desired witness of an ${\sf INT}^{\sf fw}_3$-retract.
\end{itemize}

It is easy to see that our choices realise the conditions of the Corollary.
We note that $\tau_{\sf N}$ and $\omega_{\mbbs M}^{\sf c}$ are $\Delta_0$.
So it follows that we have a sentence $\Phi$, and a set $\mc X$
of $\Delta_1$-formulas $X$  (over $\Phi$), such 
that $\Phi + \mc X$ axiomatises  $\mathsf{Th}(\mathbb{M})$.

We conclude:

\begin{theorem}\label{uitslagsmurf}
    The theory \pam\ has a completion that is axiomatised by a sentence $\Phi$ plus a set of $\Sigma_1$-sentences. 
\end{theorem}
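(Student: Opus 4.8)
The plan is to derive Theorem~\ref{uitslagsmurf} directly from Corollary~\ref{cor_mod_retract}, whose hypotheses have essentially already been set up in the preceding subsections. First I would recall what is needed: a $\Theta$-structure $\mc M$ (here the second standard model $\mathbb M = \mathbb Z[\X]^{\mathsmaller{\geq 0}}$, with $\belowz$ read as $\leq$), an auxiliary structure $\mc N$ (here $\mathbb N$), and a pair of one-dimensional parameter-free interpretations $\imath : \mc N \to \mc M$ and $\jmath : \mc M \to \mc N$ witnessing that $\mc M$ is an ${\sf INT}_3^{\sf fw}$-retract of $\mc N$. For $\imath$ I would take the interpretation ${\sf N}$ obtained by relativising quantifiers to the $\Delta_0$-definable cut $\omega_{\mbbs M}$ (the hereditarily four-squarable, equivalently hereditarily odd-or-even, elements), using Theorem~\ref{Dorroh_fourquares} since $\mathbb M$ is the non-negative part of the Dorroh ring $\mathbb Z[\X]$. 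For $\jmath$ I would take the interpretation ${\sf M}$ of the internal copy $\mathbb M^\ast$ of $\mathbb M$ built inside $\mathbb N$ by coding polynomials as finite sequences of (coded) integers, which is unproblematic since all of $\mathbb N$'s resources are available.

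Next I would verify the retract condition, i.e.\ that ${\sf id}_{\mathbb M}$ and $\jmath \circ \imath$ are ${\sf INT}_3^{\sf fw}$-equal. The key device is the application function ${\sf app}$ of Subsection~\ref{toepassingsgerichtesmurf}: setting $\mf F_x(a) := {\sf app}(a,x)$, the map $\mf F_{\X}$ sends a coded polynomial $a$ (an element of the internal model $\mathbb M^\ast \cong \mc M^{\jmath\circ\imath}$) to its value at $\X$, and I would argue that $\mf F_{\X}$ is an isomorphism from $\mathbb M^\ast$ onto $\mathbb M$ — surjectivity is clear since every element of $\mathbb M$ is a polynomial in $\X$, injectivity uses that $\X$ is infinite so distinct polynomials take distinct values, and the homomorphism property follows because ${\sf app}$ respects the polynomial operations provided $x \notin \omega_{\mbbs M}$ (so the truncating third clause of $\oplus$ never fires). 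The witnessing sentence $\Psi$ then simply asserts ``$\mf F_x$ is an isomorphism for some $x$'', which is expressible as a single $\Theta$-sentence; this is the ${\sf INT}_1$-style witness described in Subsection~\ref{Sec_witness}. I would also note $\mathbb M \models \exists x\, \theta(x) \wedge \forall x \bin \theta\, \forall y \bin \delta_{\imath}\; \below x y$ with $\theta := \omega_{\mbbs M}^{\sf c}$: any nonstandard element (e.g.\ $\X$) lies in $\omega_{\mbbs M}^{\sf c}$ and bounds every element of $\omega_{\mbbs M} = \delta_{\sf N}$.

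With all hypotheses of Corollary~\ref{cor_mod_retract} in place — and crucially observing that both $\tau_{\sf N}$ (the relativisation to $\omega_{\mbbs M}$) and $\theta = \omega_{\mbbs M}^{\sf c}$ are $\Delta_0$-formulas, so we may take $\Gamma$ to be the $\Delta_0$-formulas — the Corollary yields a $\Theta$-sentence $\Phi$ and a set $\mc X$ of $\Delta_1[\Phi]$-formulas such that $\Phi + \mc X$ axiomatises $\mathrm{Th}(\mathbb M)$. Since $\mathbb M \models \pam$, this $\mathrm{Th}(\mathbb M)$ is a completion of $\pam$. Finally I would convert to the advertised $\Sigma_1$ form: each $X \in \mc X$ is $\Phi$-provably equivalent to a $\Sigma_1$-sentence $X'$, so replacing each $X$ by $X'$ (folding $\Phi$ in as one extra non-$\Sigma_1$ axiom, or absorbing the finitely many extra equivalences) gives an axiomatisation of the completion by the single sentence $\Phi$ together with a set of $\Sigma_1$-sentences, which is exactly the statement of Theorem~\ref{uitslagsmurf}.

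I expect the main obstacle to be the verification that $\mf F_{\X}$ really is a (definable, $\mathbb M$-verifiable) isomorphism between $\mathbb M^\ast$ and $\mathbb M$ and that the assertion ``$\mf F_x$ is an isomorphism for some $x$'' can be packaged as a genuine ${\sf INT}_3^{\sf fw}$-witnessing sentence — i.e.\ tracking that ${\sf app}$ behaves correctly on nonstandard arguments, that the coding of $\mathbb M^\ast$ via $\beta$-sequences inside $\omega_{\mbbs M}$ is faithful, and that the homomorphism and bijectivity clauses are all expressible and provable. The rest is bookkeeping: confirming the complexity bounds ($\Delta_0$ for $\tau_{\sf N}$ and $\theta$) and the routine passage from $\Delta_1[\Phi]$-axioms to $\Sigma_1$-axioms.
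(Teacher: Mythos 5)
Your proposal follows the paper's own proof essentially verbatim: the same model $\mathbb M = \mathbb Z[\X]^{\mathsmaller{\geq 0}}$, the same two interpretations $\mathsf{N}$ (relativising to the $\Delta_0$-definable cut $\omega_{\mbbs M}$) and $\mathsf{M}$ (coding polynomials in $\mathbb N$), the same parameter-indexed definable isomorphism $\mf F_\X$ via the application function, and the same appeal to Corollary~\ref{cor_mod_retract} with $\theta := \omega_{\mbbs M}^{\sf c}$. The small amount of extra commentary you give on injectivity/surjectivity of $\mf F_\X$ and on the passage from $\Delta_1[\Phi]$ to $\Sigma_1$ axioms is correct and consistent with the paper.
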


A nice consequence of our development is:
\begin{theorem}
    In $\mathbb Z[\X]$, we can define composition $P(\X) \circ Q(\X) := P(Q(\X))$ in parameter $\X$.
\end{theorem}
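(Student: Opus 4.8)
The plan is to reduce composition to two definable ingredients that are essentially already in place: the interpretation of $\mathbb N$ inside $\mathbb Z[\X]$, and a Horner-style evaluation function. One works throughout in $\mathbb Z[\X]$, using that $\mathbb Z[\X]$ is sequential (cf.\ Example~\ref{pamsyn} together with the sequentiality of $\pam$), so that it has honest sequence coding for finite sequences of its own elements; that its non-negative part $\mathbb M=\mathbb Z[\X]^{\geq 0}$ is the second standard model, in which $\omega_{\mathbb M}$ is $\Delta_0$-definable, so that $\mathbb N$ is interpretable in $\mathbb Z[\X]$; and that the standard integers $\mathbb Z$ are definable (Theorem~\ref{Dorroh_fourquares}), with each element of the internal copy of $\mathbb Z$ (built inside the internal $\mathbb N$) definably identified with the corresponding element of $\mathbb Z\subseteq\mathbb Z[\X]$.

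First I would set up the Horner evaluation relation $\widehat{\sf app}(a,y,b)$, the variant of the function ${\sf app}$ of Subsection~\ref{toepassingsgerichtesmurf} adapted to all of $\mathbb Z[\X]$: for $a$ a code of an internal finite sequence $(z_n,\dots,z_0)$ of integers and an arbitrary $y\in\mathbb Z[\X]$, put $\widehat{\sf app}(a,y)=b$ iff there is a sequence $(s_0,\dots,s_n)$ of elements of $\mathbb Z[\X]$ with $s_0=z_n$, $s_{i+1}=s_i\cdot y+z_{n-i-1}$ for $i+1\leq n$, and $s_n=b$ (silently identifying each internal integer $z_i$ with the element of $\mathbb Z\subseteq\mathbb Z[\X]$ it codes). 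The difference from ${\sf app}$ is that here the partial sums $s_i$ are genuine elements of $\mathbb Z[\X]$, coded by the honest sequence coding of the sequential theory $\mathbb Z[\X]$, so that no truncated subtraction $\oplus$ is needed; consequently $\widehat{\sf app}(a,y)$ equals $z_ny^n+\dots+z_1y+z_0$ exactly, and the relation ``$\widehat{\sf app}(a,y)=b$'' is $\mathbb Z[\X]$-definable without parameters.

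Next I would define, with parameter $\X$, the coefficient-code function ${\sf cf}$: $a={\sf cf}(P)$ iff $a$ canonically codes an internal finite sequence $(z_n,\dots,z_0)$ of integers, normalized by requiring $z_n\neq 0$ or $n=0$, such that $\widehat{\sf app}(a,\X)=P$. This is a total definable function of $P$. \emph{Existence}: $\mathbb Z[\X]$ is the honest polynomial ring, so $P$ literally equals $z_n\X^n+\dots+z_0$ of genuinely finite degree with standard-integer coefficients; its coefficient list is an honest finite sequence with a canonical code $a$, and $\widehat{\sf app}(a,\X)=\sum_i z_i\X^i=P$. \emph{Uniqueness}: if $\widehat{\sf app}(a,\X)=\widehat{\sf app}(a',\X)$ for normalized $a,a'$, then the associated integer sequences yield equal polynomials in $\X$, and since $\X$ is transcendental over $\mathbb Z$ they coincide, hence $a=a'$. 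It then remains to put
\[ P\circ Q \ :=\ \widehat{\sf app}\bigl({\sf cf}(P),\,Q\bigr), \]
since if ${\sf cf}(P)$ codes $(z_n,\dots,z_0)$ then $\widehat{\sf app}({\sf cf}(P),Q)=z_nQ^n+\dots+z_1Q+z_0=P(Q)$, which is precisely $P(Q(\X))$. Since ${\sf cf}$ is definable in the parameter $\X$ and $\widehat{\sf app}$ is parameter-free, the graph of $\circ$ is definable in $\mathbb Z[\X]$ with the single parameter $\X$. I expect the one step that really needs care to be the passage from ${\sf app}$, which lives in $\mathbb M$ and truncates subtraction at $0$, to $\widehat{\sf app}$ over $\mathbb Z[\X]$: one needs that $\mathbb Z[\X]$ codes finite sequences of its own, possibly negative, elements, which is what sequentiality of $\mathbb Z[\X]$ delivers (alternatively, code each $s_i$ as a sign bit together with an element of $\mathbb M$ and use sequence coding inside $\mathbb M$); everything else is routine bookkeeping inside the interpreted $\mathbb N$.
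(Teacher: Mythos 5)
Your proof is correct and takes essentially the same approach as the paper: both recover the coefficient code of $P$ by inverting evaluation-at-$\X$ (your ${\sf cf}$ is exactly the paper's $\mf F^{-1}_{\X}$), then Horner-evaluate that code at $Q$. Your version is a bit more careful than the paper's one-line proof in that you explicitly move the Horner computation from $\mathbb M$ to $\mathbb Z[\X]$, replacing ${\sf app}$ (with its truncated $\oplus$) by $\widehat{\sf app}$ with honest subtraction, which is what guarantees the correct value when $P\circ Q$ or its partial Horner sums are negative — a point the paper leaves implicit.
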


\begin{proof}
    We consider a one-dimensional identity-preserving version of {\sf M}. Then $\mf F_{\X}$ is a bijection.
    We take $P(\X)\circ Q(\X) := {\sf app}(\mf F^{-1}_{\X}(P(\X)),Q(\X))$.
\end{proof}

\subsection{Free Riders}

We stated Theorem~\ref{uitslagsmurf} for ${\sf PA}^-$, but ${\sf Th}(\mathbb M)$
satisfies many arithmetically true extensions of ${\sf PA}^-$, for example, any
arithmetically true sentence relativised to {\sf N}. These are the free riders
of the argument.  

However, many of such examples are somewhat contrived. Here is an example that is not
contrived. Since $\mathbb Z$ is a unique factorisation domain, so is $\mathbb Z[\X]$.
Clearly, the units in this case are $+1$ and $-1$. It follows that the elements of
$\mathbb M$ have unique factorisation with unit $1$. Using the existence of sequences,
as given by ${\sf seq}^\ast$,
we can define in $\mathbb M$ a partial function $\Pi$ that computes the product of the
elements of an $\ast$-sequence. 
We define:
\begin{itemize}
\item 
${\sf pseq}^\ast$ iff 
$s$ is a $\ast$-sequence and the elements
of $s$ are all prime and, for all $i<j<{\sf length}(s)$, we have $\pi(s,i)\leq \pi(s,j)$. 
\item 
$s\approx s' := {\sf length}(s) = {\sf length}(s') \wedge \forall i\bles {\sf length} (s) \; \pi(s,i)=\pi'(s',i)$.
    \item 
    ${\sf UF} := \forall x\, \exists s\bin {\sf pseq}^\ast \,\forall s'\bin {\sf pseq}^\ast \; (\Pi(s')=x \iff s \approx s')$.
\end{itemize}

Then we have: 
\begin{theorem}\label{uitslag2smurf}
    The theory $\pam+{\sf UF}$ has a completion that is axiomatised by a sentence $\Phi$ plus a set of $\Sigma_1$-sentences. 
    Specifically, this completion can be taken to be ${\sf Th}(\mathbb M)$.
\end{theorem}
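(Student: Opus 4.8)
The plan is to reuse the entire machinery already set up for Theorem~\ref{uitslagsmurf}, observing that the only missing ingredient is that the extra axiom $\mathsf{UF}$ is itself true in the model $\mathbb M=\mathbb Z[\X]^{\mathsmaller{\geq 0}}$. Since the theory whose completion we are exhibiting is $\mathrm{Th}(\mathbb M)$ in both cases, it suffices to verify that $\mathbb M\models\mathsf{UF}$; everything else (the $\mathrm{INT}^{\sf fw}_3$-retract of $\mathbb M$ onto $\mathbb N$ via the interpretations $\mathsf N$ and $\mathsf M$, the application function $\mathsf{app}$, the definable isomorphism $\mf F_{\X}$, and the appeal to Corollary~\ref{cor_mod_retract}) carries over verbatim, yielding a sentence $\Phi$ and a set $\mc X$ of $\Sigma_1$-sentences axiomatising $\mathrm{Th}(\mathbb M)$, which now contains $\mathsf{UF}$.

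So the substance of the proof is the claim $\mathbb M\models\mathsf{UF}$. First I would recall that $\mathbb Z$ is a UFD, hence by Gauss's lemma so is $\mathbb Z[\X]$, whose units are exactly $\pm 1$; passing to the non-negative part $\mathbb M$ and absorbing the sign, every nonzero element of $\mathbb M$ factors as a product of primes of $\mathbb M$, uniquely up to order. (One should note the primes of $\mathbb M$ are precisely the non-negative associates of the primes of $\mathbb Z[\X]$, i.e.\ positive rational primes together with primitive irreducible polynomials with positive leading coefficient.) Next I would check that the internal notion of $\ast$-sequence and the product function $\Pi$ defined via $\mathsf{seq}^\ast$ in $\mathbb M$ actually compute, for a genuine finite sequence of elements of $\mathbb M$, their honest product — this is where one uses that $\mathbb M$ is a Dorroh ring so that $\omega_{\mbbs M}$ is $\Delta_0$-definable and $\ast$-sequences of standard length behave as real finite sequences (Remark~\ref{recursionsmurf} and the discussion after Je\v{r}\'abek's Lemma~8). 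Then, given $x\in\mathbb M$, the ordinary prime factorisation of $x$, sorted into non-decreasing order under the ring ordering $<$, is a witness $s$ with $\mathsf{pseq}^\ast(s)$ and $\Pi(s)=x$; and if $s'$ is any other element of $\mathsf{pseq}^\ast$ with $\Pi(s')=x$, uniqueness of factorisation together with the fact that a $\mathsf{pseq}^\ast$ is prime-valued and $<$-sorted forces $s\approx s'$. Hence $\mathbb M\models\mathsf{UF}$.

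The main obstacle — though it is more bookkeeping than genuine difficulty — is confirming that the internally definable sorted-multiset structure really does classify factorisations: one must make sure that $\mathsf{pseq}^\ast$ ranges over all finite (standard-length) non-decreasing prime sequences, that $\Pi$ is total and correct on these, and that no nonstandard-length pseudo-sequence can sneak in to spoil the $\forall s'$ clause. The latter is handled by the requirement $\mathsf{length}(s)\in\eta$ hidden in our use of $\mathsf{seq}$ versus $\mathsf{seq}^\ast$; one has to be slightly careful about which of the two is meant in the definition of $\mathsf{pseq}^\ast$ above, but since any element of $\mathbb M$ has only finitely many (standardly many) prime factors, restricting to standard length loses nothing, and the uniqueness direction only becomes easier. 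I would also remark, as a free rider on top of the free rider, that the same argument shows any arithmetically true extension of $\mathsf{PA}^-+\mathsf{UF}$ that holds in $\mathbb M$ likewise has such a restricted completion, so in particular $\mathrm{Th}(\mathbb M)$ itself is the witnessing completion, as stated.
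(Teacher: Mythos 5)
Your proposal is correct and follows exactly the route the paper intends: the paper presents Theorem~\ref{uitslag2smurf} as a "free rider" — the completion is still $\mathrm{Th}(\mathbb M)$, for which Theorem~\ref{uitslagsmurf} already supplied a restricted axiomatization, so the only thing left to check is $\mathbb M\models\mathsf{UF}$, which holds because $\mathbb Z[\X]$ is a UFD with units $\pm1$. The paper states this verification without spelling it out; your additional care about $\ast$-sequences versus genuine finite sequences (and the observation that only standard-length prime factorisations of a fixed $x$ can have $\Pi$ defined and equal to $x$, so nonstandard-length pseudo-sequences cannot spoil uniqueness) is a reasonable and correct filling-in of that bookkeeping.
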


Of course, there are all kinds of consequences of unique factorisation that are easier to formulate. An example is the existence
of greatest common divisors.

\begin{rem}    
We note that we do \emph{not} have the Euclidean Division Axiom internally in our model: think of dividing $\X$ by 2. We can modify our model
by allowing the coefficients $a_i$ for $i\neq 0$ to be in $\mathbb Q$. Thus, we obtain the model
$(\X\cdot \mathbb Q[\X] +\mathbb Z)^{\mathsmaller{\geq 0}}$. We can repeat our development almost verbatim
for this model. It satisfies the Euclidean Division Axiom internally.
We note that the associated discretely ordered commutative ring is  \emph{not} a 
Euclidean Domain. If it were, it would also be a Factorisation Domain, but clearly $\X$ does not have a factorisation
in irreducible elements.
However, the ring is a B\'ezout domain, so we still have greatest common divisors.
See:
{\tt https://en.wikipedia.org/wiki/B\'ezout\_domain}
\end{rem}

\subsection{Definitional Equivalence}

We have found an interpretation {\sf N} of (an isomorphic copy of) $\mathbb N$ in $\mathbb M$ and an interpretation {\sf M} of
(an isomorphic copy of) $\mathbb M$ in $\mathbb N$. These two interpretations form a bi-interpretation as witnessed by $\mf F_{\sf X}$
and a suitable parameter-free $\mf G$. Clearly, {\sf N} is one-dimensional and identity-preserving. 
Theorem~\ref{smulpaapsmurf} tells us that we can 
always assume that {\sf M} is one-dimensional and identity-preserving. If we expand the signature of the 
language of $\mathbb M$ with
a constant for \X, the witnessing isomorphism $\mf F_\X$ becomes parameter-free. 
We may now apply the main theorem of \cite{viss:biin25}, and conclude that $\mathbb N$ and 
%\albnote{I updated the reference.}
the expansion $(\mathbb M,\X)$ of $\mathbb{M}$ are definitionally equivalent. 

\section{A restricted Completion of \iopen}\label{openitself}
Recall that \iopen\ is the theory in the arithmetical signature that is the result of strengthening $\pam$ with all 
instants of the induction scheme \emph{for quantifier-free formulae.} We redirect the reader to \cite[Chapter I]{haje:meta91} for information on how much arithmetic can be developed in \iopen.

\subsection{Shepherdson's Model}
In this subsection we analyze the recursive nonstandard model of $\iopen$ constructed by John Shepherdson in his pioneering paper \cite{shep:nons64}; we denote this model with $\mathbb{S}$. The elements of $\mathbb{S}$ are \emph{Puiseux polynomials} over the field $\rc(\mathbb{Q})$ of real algebraic numbers. More specifically, the elements of $\mathbb{S}$ are formal expressions of the form\footnote{Shepherdson interchanges the positive and negative
powers of Puiseux series in which $\X$ is deemed to be an infinitesimal (and thus $\X^{-1}$ becomes infinite). We will follow his lead here.}:
    \[p(\X)=a_n\X^{n/q}+a_{n-1}\X^{(n-1)/q} + \ldots + a_1 \X^{1/q} + a_0,\]
where each $a_i \in \rc(\mathbb{Q})$, $a_0\in \mathbb{Z}$, $n\in \mathbb{N}$, and $q\in \mathbb{N}\setminus\{0\}$; furthermore $a_n>0$ if $n\neq 0$; and $a_n\geq 0$ if $n=0$. As usual, we say that $a_n$ is the leading coefficient of $p$ as above. 
Addition and multiplication in $\mathbb{S}$ are given by the usual addition and multiplication operations on polynomials, and the ordering is given in such a way as to make $\X$ infinite. More specifically, for $p, q\in\mathbb{S}$, $p> q$ iff the leading coefficient of $p-q$ is positive.

The above presentation of elements of $\mathbb{S}$ is non-unique, because some of the $a_i$'s might equal $0$. By omitting the $0$ terms, 
we obtain the notion of a \textit{standard form}, which is an expression of the form
\[a_{n}\X^{q_n} + a_{n-1}\X^{q_{n-1}} + \ldots + a_1\X^{q_1} + a_0,\]
where

\begin{enumerate}[$1$.]
    \item each $a_i\in \rc(\mathbb{Q})$, and $a_i\neq 0$ if $i\neq 0$,
    \item $a_n \geq 0$,  
    \item $a_0\in\mathbb{Z}$, 
    \item each $q_i\in\mathbb{Q}^{\mathsmaller{> 0}}$, and  $q_i>q_j$ for $i>j$.    
\end{enumerate}

\subsection{The Natural Numbers in $\mathbb S$}
Since $\mathbb S$ is the non-negative part of a Dorroh ring (as witnessed by the ideal consisting of $p(\X)\in\mathbb{S}$ 
such that $p(0)=0$), we can define the natural numbers using the four squares theorem.
There are, of course, other possibilities that we briefly discus below.

Since the notions ``prime" and ``irreducible" coincide in $\mathbb{Z}$, the following proposition implies that the irreducible elements of $\mathbb{S}$ are precisely the prime numbers in $\mathbb{N}$: 
\begin{prop}\label{nijveresmurf}
    If $a$ is irreducible in $\mathbb{S}$, then $a\in \mathbb{N}$.
\end{prop}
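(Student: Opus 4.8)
The plan is to establish the contrapositive: every $a\in\mathbb{S}$ that is not a natural number is reducible in $\mathbb{S}$. Two preliminary observations make this suffice. First, the only unit of $\mathbb{S}$ is $1$: if $uv=1$ with $u,v\in\mathbb{S}$, then comparing leading exponents forces $u,v$ to be constants, hence non-negative integers, hence both $1$. Second, the constant elements of $\mathbb{S}$ are exactly the natural numbers, so an element of $\mathbb{S}\setminus\mathbb{N}$ is non-constant, whence it is neither $0$ nor $1$, and it will be enough to exhibit a factorization $a=uv$ with $u,v\in\mathbb{S}\setminus\{1\}$. I will freely use that multiplication in $\mathbb{S}$ is multiplication of Puiseux polynomials and that any $a\in\mathbb{S}$ can be written as $a=f(\X^{1/q})$ for some $q\in\mathbb{N}\setminus\{0\}$ and some $f\in\rc(\mathbb{Q})[Y]$ whose constant term $f(0)=a_0$ is an integer.

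Fix $a\in\mathbb{S}\setminus\mathbb{N}$ in standard form $a=a_n\X^{q_n}+\cdots+a_1\X^{q_1}+a_0$ with $n\geq 1$, $a_n>0$, and $q_n>\cdots>q_1>0$. Suppose first that $a_0=0$. Then I set $u:=\X^{q_1/2}$ and $v:=a/\X^{q_1/2}=a_n\X^{q_n-q_1/2}+\cdots+a_1\X^{q_1/2}$. All exponents occurring in $u$ and $v$ are positive rationals, so $u,v$ are honest Puiseux polynomials; both have constant term $0\in\mathbb{Z}$ and positive leading coefficient (namely $1$ and $a_n$), so $u,v\in\mathbb{S}$; both are non-constant, so $u,v\neq1$; and $uv=a$. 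Hence $a$ is reducible.

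Now suppose $a_0\neq0$. Write $a=f(\X^{1/q})$ with $f(0)=a_0$; replacing $q$ by $3q$ triples $\deg f$, so, as $\deg f\geq1$ to begin with, we may assume $\deg f\geq3$. Because $\rc(\mathbb{Q})$ is real closed, $f$ factors over $\rc(\mathbb{Q})$ into linear and irreducible quadratic factors, at least two of them since $\deg f\geq3$; grouping these factors yields $f=gh$ with $\deg g,\deg h\geq1$. From $g(0)h(0)=f(0)=a_0\neq0$ we get $g(0)\neq0$, so we may rescale: put $g^{\star}:=g(0)^{-1}g$ and $h^{\star}:=g(0)h$, whence $g^{\star}h^{\star}=f$, $g^{\star}(0)=1$ and $h^{\star}(0)=a_0$, both integers. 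Let $u:=g^{\star}(\X^{1/q})$ and $v:=h^{\star}(\X^{1/q})$. Then $uv=(g^{\star}h^{\star})(\X^{1/q})=a$; $u$ and $v$ are non-constant Puiseux polynomials with integer constant terms; and the product of their leading coefficients equals the leading coefficient $a_n>0$ of $a$, so either both leading coefficients are positive or, after replacing $(u,v)$ by $(-u,-v)$ (which keeps the constant terms integral), both are positive. In all cases $u,v\in\mathbb{S}\setminus\{1\}$ and $uv=a$, so $a$ is reducible.

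The crux is the second case, and it has two delicate points. One cannot in general split a polynomial over a real closed field into two factors of smaller degree — an irreducible quadratic is a genuine obstruction — which is exactly why I pass to the finer root $\X^{1/(3q)}$ to force $\deg f\geq3$ and thereby guarantee at least two irreducible factors. The other delicate point is the constant-term bookkeeping: the rescaling $g\mapsto g(0)^{-1}g$, $h\mapsto g(0)h$ is legitimate only because $a_0\neq0$ forces $g(0)\neq0$, and it works precisely because it pushes all the potential non-integrality into one factor, leaving the other with constant term exactly $a_0\in\mathbb{Z}$; since this device is unavailable when $a_0=0$, that case has to be handled separately by the (easier) trick of splitting off a half-power of $\X$.
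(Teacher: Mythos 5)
Your proof is correct and takes essentially the same route as the paper's Appendix A: handle $a_0=0$ by splitting off a fractional power of $\X$, and handle $a_0\neq 0$ by passing to $\X^{1/(3q)}$ so that the associated polynomial has degree $\geq 3$, then factor over the real closed field $\rc(\mathbb{Q})$ and rescale to make the constant terms integral. Your write-up is a bit slicker than the paper's: the paper first reduces to $a_0=\pm 1$ by factoring out a prime and then argues separately for a real root versus a complex-conjugate pair of roots, whereas your grouping of the linear/irreducible-quadratic factors into two nonconstant polynomials $g,h$ followed by the rescaling $g\mapsto g(0)^{-1}g$, $h\mapsto g(0)h$ treats both cases at once and dispenses with the $a_0=\pm 1$ reduction.
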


This result is well-known from the literature. For completeness, we provide a proof in Appendix~\ref{robuustesmurf}.
We can now alternatively define:
\begin{itemize}
    \item
$\omega_{\mbbs{S}}(x):= x=0 \vee \exists y \bleq 2x\,
    \bigl(x\leq y \wedge \irred(y)\bigr)$.
    \end{itemize}
    
    \noindent
    The above definition works by Proposition~\ref{nijveresmurf} in combination with \emph{Bertrand's postulate}: for
    every natural number $n>0$ there is a prime number between $n$ and $2n$. 
    We note that Bertrand's postulate is a classical theorem of Number Theory (which was established by Chebyshev).

\subsection{Representing the real algebraic Numbers in $\mathbb S$}\label{sect_repr_real_alg}
Locally, we will use $\alpha,\beta,\dots$ elements of $\rc(\mathbb{Q})$, i.e., for real algebraic numbers. 

We can take over the definition of $x^n$ (where $x$ ranges over $\mathbb S$), for $n\in \omega$ from Section~\ref{toepassingsgerichtesmurf}.
It follows that we can also define $x^{n/m}$ for $m\in \omega$ as the maximal solution $y$, if any, of $x^n = y^m$.
We note that this function does not need to be total.

We remind the reader that we constructed codes of real algebraic numbers in $\mathbb{N}$ in Section \ref{realsmurf}. As we can define $\mathbb{N}$ in $\mathbb{S}$, these codes are available to us in $\mathbb{S}$ as well. Now we would like to map the codes $a$ of the real algebraic numbers to the real algebraic number as they occur
in $\mathbb S$. However, a real algebraic number $\alpha$ that is not a natural number is not a first-class citizen in $\mathbb S$; such a number $\alpha$  only occurs
in contexts like $\alpha\sm\X^{q}$, where $q\in \mathbb{Q}^{\mathsmaller{> 0}}$. To solve this problem, we map an $a\in\omega_{\mathsmaller{\mathbb S}}$ that represents $\alpha\in\rc(\mathbb{Q})$ (as in our adopted coding of real closures discussed in Subsection \ref{realsmurf}) in combination
with $q,r\in \omega_{\mathsmaller {\mathbb S}}$ to first-class citizen $\alpha\sm\X^{q/r}$.
Let $a= \tupel{p,k}$ and $p=\tupel{z_n,\ldots,z_0}$. 
We define:
\[Q_{x,p}(\Y) := z_n \Y^n + z_{n-1}\Y^{n-1}x + \ldots + z_1\Y x^{n-1} + z_0x^{n}.\]
We can now define an application function ${\sf app}^\ast$ in analogy of {\sf app} in Section~\ref{toepassingsgerichtesmurf},
where ${\sf app}_{x}^\ast(p,y) = Q_{x,p}(y)$.

Clearly, $\alpha$ is a solution of 
$P_p(\Y)=0$ in \Y\ iff  $\alpha\sm\X^{q}$ is a solution of $Q_{\X^{q},p}(\Y)=0$ in \Y. Moreover, the order of the corresponding solutions is preserved. 

Let $a := \tupel{p,k}$. We define the partial function ${\sf ev}_z$ that is chosen in such a way that
${\sf ev}_{\X^q}(a)= \alpha\X^q$, where $q\geq 0$.
\begin{itemize}
    \item 
    ${\sf ev}_{z}(a) = w$ iff there is an $s\in {\sf seq}$ such that:
    \begin{itemize}
    \item
    ${\sf length}(s)=k$,
        \item 
    $\pi(s,k-1) = w$,
    \item 
    for all $i<k$, we have ${\sf app}^\ast_{z}(p,\pi(s,i)) =0 $,
    \item
    for all $i<j<k$, we have $\pi(s,i) < \pi(s,j)$,
    \item 
    if $y \leq w$ and 
    ${\sf app}^\ast_{z}(p,y) =0$, then, for some  $j<k$, we have $\pi(s,j)= y$.
    %\matnote{Can't we use $\alpha$ to bound the size of $y$ (instead of ${\sf La}_{\X,q}(p))$?}
    \end{itemize}
\end{itemize}
We note that ${\sf ev}$ is a partial function. If we apply it to the correct inputs where $a$ is
a representation of an algebraic number and $z=\X^q$, for $q\in \mathbb Q^{\mathsmaller{\geq 0}}$, then it will
deliver the desired value.

\subsection{Interpreting $\mathbb N$ in $\mathbb S$}
As in the case of $\pam$, the interpretation of $\mathbb{N}$ in $\mathbb{S}$, denoted ${\sf N_{\mbbs{S}}}$, 
is given by restricting the domain of $\mathbb{S}$ to $\omega_{\mbbs{S}}$. 

\subsection{Interpreting $\mathbb S$ in $\mathbb N$}
We construct an isomorphic copy $\mathbb S^\ast$ of $\mathbb S$ in $\mathbb N$.

An element of $\mathbb{S}$ will be represented via its standard form, that is as a (code of a) sequence of pairs
$\langle \tupel{a_0,0},\tupel{a_1,q_1}\ldots, \tupel{a_n,q_n}\rangle$, where the $a_i$ are representatives of real algebraic numbers, $q_i$'s are rational numbers both satisfying the criteria for the standard form. We shall often implicitly transform this representation to a pair of sequences, denoted $\tupel{\bar{a},\bar{q}}$.
Identity, the ordering, zero, one, addition and multiplication on these objects can be defined as a matter of course.  We call our interpretation {\sf S}.

\subsection{The Isomorphism}\label{lachendesmurf}
Finally, let us give the formula $\digamma_z(x,y)$, which will define the isomorphism for $z:\X$, from the
$\mathbb N$-internal model representing $\mathbb S$ to $\mathbb S$.
We define:

\skippy
\begin{itemize}
    \item $\digamma_{z}(x,y)$ iff $x = \tupel{\bar{a},\bar{q}}$ is an $\mathbb N$-representation of an element of $\mathbb S$ and\\
    $y = \sum_{j<{\sf length}(x)}{\sf ev}_{z^{q_j}}(a_j)$.
\end{itemize}

\noindent
Here the sum is defined in the obvious way by recursion, which we do have since we have standardly finite sequences.

\skippy
Note that \noindent $\digamma_{z}(x,y)$ is equivalent to a $\Sigma_1$-formula.

\begin{theorem}\label{eigenwijzesmurf}
    There is a completion of \iopen\ that is axiomatisable by a single sentence together with an infinite set of $\Sigma_1$-sentences.
\end{theorem}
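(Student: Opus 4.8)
The plan is to apply Corollary~\ref{cor_mod_retract} with the Shepherdson model $\mathbb{S}$ in the role of $\mc{M}$, exactly mirroring the structure of the argument for $\pam$ in Section~\ref{pamitself}. First I would take $\mathbb{N}$ in the role of $\mc{N}$, the interpretation ${\sf N_{\mbbs{S}}}$ (domain-restriction to $\omega_{\mbbs{S}}$) in the role of $\imath$, and the interpretation ${\sf S}$ of the $\mathbb{N}$-internal copy $\mathbb{S}^\ast$ in the role of $\jmath$. For $\belowz$ I would use $\leq$ (equivalently $<$, as in the $\pam$ case), and for $\theta$ I would take $\omega_{\mbbs{S}}^{\sf c}$, the complement in $\mathbb{S}$ of the hereditarily four-squarable (equivalently hereditarily odd-or-even) numbers; this is $\Delta_0$, and it is nonempty since $\X\notin\omega_{\mbbs{S}}$, and every element of $\delta_{\sf N_{\mbbs{S}}}=\omega_{\mbbs{S}}$ lies below $\X$. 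Thus the side condition $\mathbb{S}\models\exists x\,\theta(x)\wedge\forall x\bin\theta\,\forall y\bin\delta_{\imath}\;\below{x}{y}$ of Corollary~\ref{cor_mod_retract} holds, and the translation $\tau_{\sf N_{\mbbs{S}}}$ together with $\theta$ lies in $\Gamma=\Delta_0$.

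The one genuine point to check is that ${\sf N_{\mbbs{S}}}$ and ${\sf S}$ witness that $\mathbb{S}$ is an ${\sf INT}^{\sf fw}_3$-retract of $\mathbb{N}$, and following Section~\ref{Sec_witness} I would do this by exhibiting it as an ${\sf INT}_1$-retract: I claim $\digamma_{\X}(x,y)$, the formula defined in Section~\ref{lachendesmurf}, defines an isomorphism from $\mathbb{S}^\ast$ onto $\mathbb{S}$, so the single $\Theta$-sentence $\Psi$ asserting ``$\digamma_{z}$ is an isomorphism between ${\sf S}\circ{\sf N_{\mbbs{S}}}$ and the identity, for some $z$'' is the required witness. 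To verify this I would check: (i) $\digamma_{\X}$ is a total function on $\delta_{\sf S}=\mathbb{S}^\ast$ — here one needs that ${\sf ev}_{\X^{q_j}}(a_j)$ is defined for every coordinate of a legitimate standard-form code, which holds because $\X^{q_j}$ is exactly a correct input for ${\sf ev}$ in the sense discussed after the definition of ${\sf ev}_z$, and the finite sum is computed by recursion along a standardly finite sequence; (ii) $\digamma_{\X}$ is surjective, since every element of $\mathbb{S}$ is in standard form $\sum_{j}\alpha_j\X^{q_j}$ and its code maps to it; (iii) injectivity, from uniqueness of the standard form together with the injectivity of ${\sf ev}$ on valid notations; and (iv) that $\digamma_{\X}$ respects $0,1,+,\times,<$ — this reduces to the observation recorded in Section~\ref{sect_repr_real_alg} that evaluation at $\X^q$ carries roots of $P_p(\Y)$ to roots of $Q_{\X^q,p}(\Y)$ order-preservingly, so the algebraic and order structure of $\rc(\mathbb{Q})$ transported through the $a_i$'s matches the corresponding structure inside $\mathbb{S}$, plus routine bookkeeping for combining and sorting exponents under $+$ and $\times$.

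Granting the retract, Corollary~\ref{cor_mod_retract} delivers a $\Theta$-sentence $\Phi$ (which will be $\Psi$ conjoined with the bound clause $\exists x\,\theta(x)\wedge\forall y\bin\theta\,\forall z\bin\delta_{\sf N_{\mbbs{S}}}\;\below{z}{y}$) and a set $\mc{X}$ of $\Delta_1[\Phi](\Delta_0)$-formulas such that $\Phi+\mc{X}$ axiomatizes $\mathrm{Th}(\mathbb{S})$. Since $\mathbb{S}\models\iopen$ by Shepherdson's theorem, $\mathrm{Th}(\mathbb{S})$ is a completion of $\iopen$; and since each $\Delta_1[\Phi](\Delta_0)$-formula is in particular $\Sigma_1$ over $\Phi$, replacing each member of $\mc{X}$ by its $\Sigma_1$-equivalent (modulo $\Phi$, using its $\Sigma_1$ side) yields the claimed axiomatization by a single sentence plus an infinite set of $\Sigma_1$-sentences, proving Theorem~\ref{eigenwijzesmurf}.

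I expect the main obstacle to be step (iv) above — verifying in full that $\digamma_{\X}$ is a ring-and-order isomorphism — because addition and multiplication of Puiseux polynomials require merging and re-sorting the exponent sequences $\bar{q}$ and adding the transported coefficients, and one must confirm that the $\mathbb{N}$-internal operations on codes defined for ${\sf S}$ commute with $\digamma_{\X}$ under all the edge cases (cancellation of leading terms, coincident exponents, the constraint $a_0\in\mathbb{Z}$). None of this is deep, but it is the place where the ``refined algebraic book-keeping'' genuinely has to be carried out, whereas everything on the interpretability side is a verbatim instance of the $\pam$ argument.
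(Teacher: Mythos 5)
Your proposal matches the paper's proof exactly: the same instantiation of Corollary~\ref{cor_mod_retract} with $\mathbb{S}$, $\mathbb{N}$, ${\sf N}_{\mathbb{S}}$, ${\sf S}$, $\leq$, $\omega_{\mbbs{S}}^{\sf c}$, and the witnessing sentence asserting that $\digamma_z$ is an isomorphism for some $z$. Your expanded discussion of why $\digamma_{\X}$ is an isomorphism (points (i)--(iv)) and the explicit extraction of $\Sigma_1$ from $\Delta_1[\Phi]$ are faithful elaborations of what the paper leaves implicit, not a different route.
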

\begin{proof}

This follows immediately from Corollary \ref{cor_mod_retract}  with:
\begin{itemize}
    \item 
$\mathbb S$ in the role of $\mc M$, 
\item 
$\leq$ in the role of $\belowz$, 
\item 
$\mathbb N$ in the role of
$\mc N$, 
\item 
$\mathsf{N}_{\mathbb{S}}$ in the role of $\imath$,
\item 
{\sf S} in the role of $\jmath$. 
\item
$\omega_{\mbbs S}^{\sf c}$ in the role of $\theta$.
\item 
$\digamma_{\X}(x,y)$ is an $(\mathbb{S}, \X)$-definable isomorphism between
$\mathbb S^\ast$ and $\mathbb S$.
So, the statement that, for some $z$, the function $\digamma_z$ is an isomorphism
 delivers our desired witness of an ${\sf INT}^{\sf fw}_3$-retract.
\end{itemize}
\end{proof}
Shepherdson's construction of the model $\mathbb{S}$ led to a substantial body of work on the construction of other recursive models of $\mathsf{IOpen}$ that satisfy desirable algebraic properties, such as normality (which implies that $\sqrt{2}$ is irrational), the Bezout property, the GCD property, and having unboundedly many primes, all of which fail in $\mathbb{S}$. Mohsenipour's paper \cite{MohseniIOpen} contains the latest results in this direction\footnote{It is easy to see that the ring associated with $\mathbb{S}$ is not a factorization domain since $\X$ can be written as the product of $\frac{1}{2}\X$ and $2$ in $\mathbb{S}$, neither of which is a unit (invertible). So here there are no significant free riders. } This prompts the following question, which we suspect has a positive answer.
\begin{question} \label{IOpen_question} 
Is there a model of $\mathsf{IOpen}$ whose complete theory has a restricted axiomatization, and which satisfies normality and other desirable algebraic properties?
\end{question}

\section{A restricted Completion of \pam\ plus full Collection}\label{smurferella}
In this section, we present a model of $\pam+ {\sf Coll}$ whose complete theory is axiomatized by a collection of axioms consisting of a single sentence together with an infinite number of $\Sigma_1$-sentences. Note that ${\sf Coll}$ fails in the models $\mathbb{M}$ and $\mathbb{S}$ (using sequentiality, the definability of the standard cut in both models, and the fact that in both models finite powers of $\X$ are cofinal).

\subsection{Basics of the Model $\mathbb A$} 
Let $\mathbb{Q}$ be the ordered set of rationals. In this section we develop the basic algebraic properties of the discrete ordered semi-ring $\mathbb A$, defined as follows: $$\mathbb A :={\hat{\mathbb{A}}}^{\mathsmaller{\geq 0}},~\textrm{where~}\hat{\mathbb{A}}=
\mathbb Z[\X_q\mathop\mid q\in \mathbb Q].$$ In the above each $\X_q$ is an indeterminate, thus $\hat{\mathbb{A}}$ is a multivariable polynomial ring. 
The ordering on $\hat{\mathbb{A}}$ is obtained by treating each $\X_q$ as infinite, and stipulating that each $\X_q$ dominates all polynomial 
expressions whose indeterminates come from $\{\X_{q'}: q'<q\}$.  The model $\mathbb A$ was introduced in \cite{enlel:cat24}, where it was 
shown to satisfy $\mathsf{PA^-+Coll}.$\footnote{It has been known for some time that 
$\mathbb{Z}[\X_{\alpha}\mathop\mid \alpha\in \omega_1]^{\mathsmaller{\geq 0}}$ is a model $\mathsf{PA^-+Coll}$ in 
which the standard cut is definable (here $\omega_1$ be the first uncountable ordinal). See Exercise 7.7 of Kaye's 
textbook \cite[Chapter 7.2]{kaye:mode91}. However, in contrast to $\mathbb{A}$, this model cannot be interpreted in $\mathbb{N}$ since it is uncountable.} 

We present an alternative description of $\hat{\mathbb{A}}$ using the model-theoretic notion of \emph{directed union}.\footnote{The directed union construction is a special 
case of the \emph{direct limit}, in which the embeddings that ``glue" the constituent pieces of the model together are inclusion maps.} 
This might strike the reader as spurious duplication, but we 
find it useful in developing a `geometric' intuition about the model, and also because it 
provides a dress rehearsal for the analysis of the far more complicated model $\mathbb{B}$ of $\mathsf{IOpen+Coll}$ presented in Section \ref{iterated_shepherdson}.
 
 Recall that a partially ordered set $\mathbb{D}$ is said to be \emph{directed} if any pair of elements of $\mathbb{D}$ have a common upper bound. Given a structure $\mathcal{M}$, a directed set $\mathbb{D}$, and an indexed collection $\{\mathcal{M}_d: d\in \mathbb{D}\}$ of submodels of $\mathcal{M}$, $\mathcal{M}$ is said to be a \emph{directed union} of $\{\mathcal{M}_d: d\in \mathbb{B}\}$, if 
 \begin{enumerate}[$1$.]
 \item $\mathcal{M=}\bigcup\limits_{d\in \mathbb{D}}\mathcal{M}_{d}$, and
 \item $\mathcal{M}_{d}$
is a submodel of $\mathcal{M}_{d^{\prime }}$ for $d\leq _{\mathbb{D}}d^{\prime }.$ 
\end{enumerate}

 Given an ordered ring $\mc R$ and an indeterminate $\X$, we use $\mc R[\X]$ to refer to the ordered ring of polynomials with coefficients in $\mc R$, where $\X$ is greater than all elements of $\mc R$, precisely as in the special case where $\mc R=\mathbb{Z}$, discussed in Subsection \ref{secondstandard}.
 
 Let $[\mathbb{Q}]^{<\omega}$ be the directed set of finite subsets of $\mathbb{Q}$ (ordered by inclusion). For $\fsvar\in [\mathbb{Q}]^{<\omega}$, the ordered ring $\hat{\mathbb{A}}(\fsvar)$ is constructed by the following clauses: 
  \begin{itemize}
 \item $\hat{\mathbb{A}}({\varnothing}):=\mathbb{Z}$, and 
 \item $\hat{\mathbb{A}}({\fsvar\cup\{q\}}):=\hat{\mathbb{A}}(\fsvar)[\X_q]$, where $q'<q$ for all $q'\in \fsvar.$
 \end{itemize}

 Officially speaking, $$\hat{\mathbb{A}}:=\bigcup\limits_{\fsvar\in [\mathbb{Q}]^{<\omega}}\hat{\mathbb{A}}(\fsvar).$$ 
 Clearly, $\hat{\mathbb{A}}(\fsvar)$ is a submodel of $\hat{\mathbb{A}}(\fsvar')$ for $\fsvar\subseteq \fsvar'$. Coupled with the fact $[\mathbb{Q}]^{<\omega}$ is a directed set under containment, we have the following observation:
 
 \begin{lem} \label{hatA-directed_union} $\hat{\mathbb{A}}$ is a directed union of its constituent submodels $\hat{\mathbb{A}}(\fsvar)$, as $\fsvar$ ranges in $ [\mathbb{Q}]^{<\omega}$.
 \end{lem}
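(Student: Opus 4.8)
The claim is essentially a bookkeeping statement: $\hat{\mathbb{A}}$ was \emph{defined} as $\bigcup_{\fsvar\in[\mathbb{Q}]^{<\omega}}\hat{\mathbb{A}}(\fsvar)$, so the content to be verified is that (i) each $\hat{\mathbb{A}}(\fsvar)$ is genuinely a submodel of the union, and of $\hat{\mathbb{A}}(\fsvar')$ whenever $\fsvar\subseteq\fsvar'$, and (ii) the index set $[\mathbb{Q}]^{<\omega}$ is directed, so that the union really is a \emph{directed} union in the sense defined just above the lemma. The plan is to check these two points in turn and then invoke the definition of directed union.

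First I would treat the directedness of the index set: given $\fsvar_1,\fsvar_2\in[\mathbb{Q}]^{<\omega}$, the set $\fsvar_1\cup\fsvar_2$ is again a finite subset of $\mathbb{Q}$ and is an upper bound for both under $\subseteq$; this is immediate. Next I would verify the submodel relations. For $\fsvar\subseteq\fsvar'$, write $\fsvar'\setminus\fsvar=\{q_1<\dots<q_k\}$ and build $\hat{\mathbb{A}}(\fsvar')$ from $\hat{\mathbb{A}}(\fsvar)$ by the $k$ successive polynomial-ring extensions prescribed by the defining clauses (adjoining $\X_{q_1},\dots,\X_{q_k}$ in increasing order, relative to whatever indeterminates $\fsvar$ contributes below them). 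At each step $\hat{\mathbb{A}}(\,\cdot\,)\hookrightarrow\hat{\mathbb{A}}(\,\cdot\,)[\X_q]$ is the obvious inclusion of a coefficient ring into a polynomial ring over it, which preserves $0,1,+,\times$; and it preserves $<$ because the ordering on $\mc R[\X_q]$ restricts to the ordering on $\mc R$ (a polynomial of positive degree in $\X_q$ is positive iff its leading coefficient is, and a degree-zero polynomial is just its constant term, so the order relation between two elements of $\mc R$ is unchanged). Hence each such inclusion is an embedding of ordered rings, and composing them shows $\hat{\mathbb{A}}(\fsvar)$ is a submodel of $\hat{\mathbb{A}}(\fsvar')$; one subtlety worth a sentence is that this construction is independent of the order in which one adjoins the new indeterminates, which follows from the fact that the ordering on $\hat{\mathbb{A}}$ only depends on the index-comparisons among the $\X_q$'s. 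Taking $\fsvar'$ arbitrary and using clause (1) of the definition then gives that each $\hat{\mathbb{A}}(\fsvar)$ is a submodel of the union $\hat{\mathbb{A}}$ itself.

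There is no genuine obstacle here; the only point requiring any care is making the ``independence of the order of adjunction'' remark precise so that $\hat{\mathbb{A}}(\fsvar')$ is well-defined and the inclusions $\hat{\mathbb{A}}(\fsvar)\hookrightarrow\hat{\mathbb{A}}(\fsvar')$ are coherent (i.e. the inclusion for $\fsvar\subseteq\fsvar''$ factors through the one for $\fsvar\subseteq\fsvar'\subseteq\fsvar''$), which is what legitimizes forming the union along the directed system. Once these checks are in place, the two bullet conditions in the definition of \emph{directed union} ($\mathcal{M}=\bigcup_{d}\mathcal{M}_d$ and $\mathcal{M}_d$ a submodel of $\mathcal{M}_{d'}$ for $d\leq d'$) hold verbatim with $\mathcal{M}=\hat{\mathbb{A}}$, $\mathbb{D}=[\mathbb{Q}]^{<\omega}$, and $\mathcal{M}_{\fsvar}=\hat{\mathbb{A}}(\fsvar)$, which is exactly the assertion of the lemma.
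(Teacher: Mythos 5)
Your proposal is correct and matches the paper's reasoning: the paper gives no explicit proof, simply noting immediately before the lemma that $\hat{\mathbb{A}}(\fsvar)$ is a submodel of $\hat{\mathbb{A}}(\fsvar')$ when $\fsvar\subseteq\fsvar'$ and that $[\mathbb{Q}]^{<\omega}$ is directed under inclusion, which is exactly the content you verify. The one place you flag yourself — that the defining clauses only ever adjoin the new variable as the top one, so adjoining elements of $\fsvar'\setminus\fsvar$ that interleave with $\fsvar$ needs a word about reordering — is most cleanly dispatched by observing that, by induction, $\hat{\mathbb{A}}(\fsvar)$ is exactly the ordered multivariable polynomial ring $\mathbb{Z}[\X_q\mathop\mid q\in\fsvar]$, after which the inclusions are automatic.
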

 
 Therefore, based on classical facts about direct limits (as in \cite[Section 2.3]{hodg:mode93}), if $\varphi$ is a sentence in the language of ordered rings that can be written in the form %\matnote{This sounds like a repetition from what we had above (a sentence before Chang-Suszko theorem).} %\alinote{Thanks, I will soon take of it.}
 $\forall\exists$, and $\varphi$ holds in $\hat{\mathbb{A}}(\fsvar)$ for all $\fsvar\in [\mathbb{Q}]^{<\omega}$, 
 then $\varphi$ holds in $\hat{\mathbb{A}}$ as well. This makes it clear that $\hat{\mathbb{A}}$ is a discretely ordered ring, which in turn shows that its non-negative part, i.e., $\mathbb{A}$, satisfies $\mathsf{PA^-}$.\footnote{It is straightforward to verify that if all the constituent models $\mathcal{M}_d$ of a directed union $\mathcal{M}$ satisfy a sentence $\varphi$ that can be put in the form $\forall \exists$, then $\mathcal{M}\models \varphi.$ The classical Chang-Suzko theorem shows that the converse is also true, but we don't need this fact.}
 
\begin{definition}The \emph{support} of an element $p\in\hat{\mathbb{A}}$, written $\mathsf{supp}(p)$, is the smallest $\fsvar\in[\mathbb{Q}]^{<\omega}$ (in the sense of inclusion) such that $p\in \hat{\mathbb{A}}(\fsvar)$. Standard arguments show that $\mathsf{supp}(p)$ is well-defined (see Section \ref{Section_on_supports} of the Appendix).
\end{definition}

It will be useful to have names for certain subrings of  $\hat{\mathbb{A}}$, and certain initial segments of $\mathbb{A}$. 

\begin{definition}
    In what follows, given $q\in\mathbb{Q}$, we take $\mathbb{Q}_q:=\{q'\in \mathbb{Q}\mathop\mid q'<q\}$.
 \begin{itemize}
\item $\hat{\mathbb{A}}_{q}:=\mathbb {Z}[\X_{q'}\mathop\mid q'\in \mathbb Q_{q}]~=\bigcup\limits_{\fsvar\in [\mathbb{Q}_{q}]^{<\omega}}\hat{\mathbb{A}}(\fsvar)$, and
\item $\mathbb{A}_{q}:=\hat{\mathbb{A}}_{q}^{\mathsmaller{\geq 0}}.$
\end{itemize}
\end{definition}
It can be readily checked that $\mathbb{A}_{q}\subsetneq_{\mathrm{end}} \mathbb{A}_{q'}$ whenever $q<q'$. Indeed, as shown in Theorem \ref{eee_for_A}, $\mathbb{A}_{q}\prec_{\mathrm{end}} \mathbb{A}$ for each $q\in \mathbb{Q}$, which implies that $\mathbb{A}_{q}\prec_{\mathrm{end}} \mathbb{A}_{q'}$ for $q<q'$. 

Note that for $q<q'$ we have: $$\mathbb{A}_q\subsetneq_{\mathrm{end}} \hat{\mathbb{A}}_q [\X_q]^{\mathsmaller{\geq 0}}\subsetneq_{\mathrm{end}} \mathbb{A}_{q'}.$$ 

We also observe that $\{\X_{q}^{n} \mathop\mid n\in \mathbb{N}\}$ is cofinal in $\hat{\mathbb{A}}_q [\X_q]^{\mathsmaller{\geq 0}}$, and $\{\X_{q}-n \mathop\mid n\in \mathbb{N}\}$ is downward cofinal in $\hat{\mathbb{A}}_q [\X_q]^{\mathsmaller{\geq 0}} \setminus \mathbb{A}_q$.

\subsection{$\mathbb A$ is a model of $\pam$ plus full Collection}
We have already explained why $\mathbb{A}$ satisfies $\mathsf{PA^-}$. We now verify that $\mathbb{A}$ also satisfies
{\sf Coll}. This result was first proved in \cite[Corollary 34]{enlel:cat24}.

Recall that for $q\in \mathbb{Q}$, $\mathbb A_q$ is the initial segment of $\mathbb{A}$ that consists of elements whose support is a subset of  $\{\X_{q'} \mathop\mid q'<q\}$.

\begin{theorem}\label{eee_for_A}
   For each  $q\in \mathbb{Q}$ the model $\mathbb A_q$ is a downwards closed cut of $\mathbb A$ which is closed under the arithmetical
   operations. Moreover, $\mathbb A_q \prec_{\mathrm{end}} \mathbb{A}$.
\end{theorem}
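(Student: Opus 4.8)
The plan is to prove the statement by showing that $\mathbb{A}_q$ is the universe of $\hat{\mathbb{A}}_q^{\mathsmaller{\geq 0}}$ where $\hat{\mathbb{A}}_q = \bigcup_{\fsvar \in [\mathbb{Q}_q]^{<\omega}} \hat{\mathbb{A}}(\fsvar)$, exploiting the directed-union structure, and then to establish the elementarity by a quantifier-complexity (Tarski--Vaught style) argument that uses the order-theoretic fact that everything in $\hat{\mathbb{A}} \setminus \hat{\mathbb{A}}_q$ is dominated in absolute value by a suitable product of powers of the new indeterminates $\X_{q'}$ with $q' \geq q$, all of which exceed every element of $\hat{\mathbb{A}}_q$.

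First I would verify the easy structural claims: that $\hat{\mathbb{A}}_q$ is a subring of $\hat{\mathbb{A}}$ (immediate, since it is a directed union of the subrings $\hat{\mathbb{A}}(\fsvar)$ for $\fsvar \subseteq \mathbb{Q}_q$), hence $\mathbb{A}_q$ is closed under $+$ and $\times$; and that $\mathbb{A}_q$ is downward closed in $\mathbb{A}$. For the latter: if $p \in \mathbb{A}$ and $0 \leq p \leq a$ with $a \in \mathbb{A}_q$, I must show $p \in \hat{\mathbb{A}}_q$. Write $p$ in standard form; if $\mathsf{supp}(p)$ contained some $\X_{q'}$ with $q' \geq q$, then by the domination structure of the ordering (each $\X_{q'}$ dominates all polynomial expressions in indeterminates $\X_{q''}$ with $q'' < q'$, in particular all of $\hat{\mathbb{A}}_q$), the leading monomial of $p$ would exceed every element of $\hat{\mathbb{A}}_q$, contradicting $p \leq a$. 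This is essentially a support computation; the rigorous version invokes the support machinery relegated to the Appendix.

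The main content is the elementarity $\mathbb{A}_q \prec_{\mathrm{end}} \mathbb{A}$. Here the natural route is the Tarski--Vaught test: given a formula $\exists y\, \psi(y,\bar{a})$ with $\bar{a} \in \mathbb{A}_q$ and a witness $b \in \mathbb{A}$, produce a witness in $\mathbb{A}_q$. The clean way to do this is to exhibit, for each $b \in \mathbb{A}$, an automorphism (or at least an embedding over the relevant parameters) of $\hat{\mathbb{A}}$ moving $b$ into $\hat{\mathbb{A}}_q$ while fixing $\hat{\mathbb{A}}_q$ pointwise --- but since $\mathbb{A}_q$ is only a \emph{cut}, not an algebraically closed substructure, one should instead argue semantically: using that $\mathbb{A}$ is the directed union of the $\mathbb{A}(\fsvar)$ and that $\mathbb{Q}$ is a dense linear order, any finite tuple from $\mathbb{A}$ lives in some $\hat{\mathbb{A}}(\fsvar)^{\mathsmaller{\geq 0}}$ with $\fsvar \cap \mathbb{Q}_q$ controlling the ``low part,'' and an order-automorphism of $\mathbb{Q}$ fixing $\mathbb{Q}_q$ setwise and compressing the ``high part'' of $\fsvar$ below any chosen point induces a ring-automorphism of $\hat{\mathbb{A}}$ fixing $\hat{\mathbb{A}}_q$ and sending the high indeterminates to other indeterminates. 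Composing this with the observation that $\mathbb{A}_q$ is cofinal-from-below in each $\hat{\mathbb{A}}_{q'}[\X_{q'}]^{\mathsmaller{\geq 0}} \setminus \mathbb{A}_q$ in the sense noted before the theorem, one transports any witness down. I would package this as: (i) every finitely many parameters from $\mathbb{A}$ together with a candidate witness sit inside $\hat{\mathbb{A}}_{q'}^{\mathsmaller{\geq 0}}$ for some $q' > q$; (ii) there is an automorphism of the ordered ring $\hat{\mathbb{A}}$ fixing $\hat{\mathbb{A}}_q$ pointwise and mapping $\hat{\mathbb{A}}_{q'}$ into... this is where care is needed, since we cannot map $\hat{\mathbb{A}}_{q'}$ \emph{into} $\hat{\mathbb{A}}_q$.

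\textbf{The main obstacle} is exactly this last point: one cannot literally retract $\mathbb{A}$ onto the cut $\mathbb{A}_q$ by an automorphism, so the Tarski--Vaught witness must be found by a genuine back-and-forth / model-completeness-flavored argument rather than a symmetry argument. The honest approach is probably to use model completeness of the theory of the non-negative parts of discretely ordered rings of this polynomial type relative to a quantifier-elimination result --- or, more likely given the paper's style, to reduce to a known structural theorem: since $\hat{\mathbb{A}}_q$ and $\hat{\mathbb{A}}$ are both polynomial rings over $\mathbb{Z}$ with the induced ordering, and $\hat{\mathbb{A}}_q$ is a ``pure'' subring in the sense that any polynomial identity or inequality over $\hat{\mathbb{A}}_q$ that has a solution in $\hat{\mathbb{A}}$ has one in $\hat{\mathbb{A}}_q$ (provable by specializing the extra indeterminates $\X_{q'}$, $q' \geq q$, to large integers, which is legitimate because they behave like ``generic large elements'' --- any existential statement true of them is true of sufficiently large naturals, by the directed-union $\forall\exists$-preservation). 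I expect the cleanest writeup fixes a tuple and a witness in some $\hat{\mathbb{A}}(\fsvar)$, lets $\fsvar_0 = \fsvar \cap \mathbb{Q}_q$ and $\fsvar_1 = \fsvar \setminus \mathbb{Q}_q$, and replaces each $\X_{q'}$ for $q' \in \fsvar_1$ by a large natural number chosen so as to preserve all the (finitely many, quantifier-free) atomic facts in the relevant sub-tuple, thereby landing the witness in $\hat{\mathbb{A}}(\fsvar_0)^{\mathsmaller{\geq 0}} \subseteq \mathbb{A}_q$; verifying that such a specialization preserving the needed atomic facts exists is the technical heart, and it rests on the domination structure of the order plus the fact that $\omega$ is downward cofinal in $\hat{\mathbb{A}}_q[\X_q]^{\mathsmaller{\geq 0}} \setminus \mathbb{A}_q$. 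The ``end'' part of $\prec_{\mathrm{end}}$ then follows from downward closure, already established.
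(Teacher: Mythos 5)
Your proposal correctly identifies the obstacle --- no automorphism of $\hat{\mathbb{A}}$ can retract the model onto the cut $\hat{\mathbb{A}}_q$, since automorphisms permute the indeterminates bijectively and therefore cannot move indices $\geq q$ below $q$ while fixing indices $< q$ --- but it then misses the way around it. The paper's proof is not a Tarski--Vaught argument and does not try to find a witness inside $\mathbb{A}_q$. It instead observes that $\mathbb{A}_q$ and $\mathbb{A}$ are themselves \emph{isomorphic}: $\mathbb{Q}_q$ and $\mathbb{Q}$ are both countable dense linear orders without endpoints, hence order-isomorphic. Moreover, given a finite tuple of parameters $\bar{p}$ from $\mathbb{A}_q$, one can pick $q' < q$ with $\mathsf{supp}(p_i) \subseteq \mathbb{Q}_{q'}$ for each $i$ and then choose the order isomorphism $f : \mathbb{Q}_q \to \mathbb{Q}$ to be the identity on $\mathbb{Q}_{q'}$. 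This $f$ induces a ring isomorphism $F : \hat{\mathbb{A}}_q \to \hat{\mathbb{A}}$ that fixes $\hat{\mathbb{A}}_{q'}$, hence $\bar{p}$, pointwise, and $\mathbb{A}_q \models \varphi(\bar{p}) \iff \mathbb{A} \models \varphi(F(\bar{p})) = \varphi(\bar{p})$ follows at once because isomorphisms preserve all first-order formulas. No back-and-forth, no quantifier elimination, and no model completeness is needed. The essential idea you were circling but did not land on is to compare the cut with the whole model rather than trying to compress the model into the cut.

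The specialization fallback you propose has a gap that goes beyond being ``the technical heart'': substituting a concrete integer (or any fixed element of $\mathbb{A}_q$) for the indeterminates $\X_{q'}$ with $q' \geq q$ yields a ring homomorphism, not an elementary map, and so there is no reason for it to preserve the truth of a formula $\psi(y, \bar{a})$ that contains quantifiers. Matching finitely many atomic facts at the parameters only gives quantifier-free preservation --- which is indeed used elsewhere in the paper, via $\forall\exists$-preservation under directed unions, to show $\mathbb{A}\models\pam$ --- but it does not come close to the full elementarity the theorem asserts.
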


\begin{proof} 
We only prove $\mathbb A_q \prec \mathbb{A}$ 
 since it is straightforward to verify that $\mathbb A_q$ is a downwards closed cut of $\mathbb A$. Consider a formula 
 $\varphi (p_{0},\cdots ,p_{k-1})$, where $p_{0},\cdots ,p_{k-1}$ are parameters from $\mathbb{A}_q.$ Since the support of each 
 polynomial is finite, there is some $q'<q$ such that $\mathbb{Q}_{q'}$ contains the support of each $p_i$. By  a standard argument there is an order 
isomorphism $f:\mathbb{Q}_{q}\rightarrow \mathbb{Q}$ such that $f(x)=x$ for each $x<q'$. This isomorphism naturally induces an
isomorphism $F:\mathbb{A}_q \rightarrow 
\mathbb{A}$ such that $F$ fixes each element of $\mathbb{A}_{q'}$. 
Since the isomorphism $F$ fixes each $p_i$, this makes it clear that 
$\mathbb{A}_{q}\models \varphi (p_{0},\cdots ,p_{k-1})~\mathrm{iff~}
\mathbb{A}\models \varphi (p_{0},\cdot \cdot \cdot,p_{k-1})$. 
\end{proof}

\begin{theorem} \label{coll_in_A}
    $\mathbb A$ satisfies {\sf Coll}.
\end{theorem}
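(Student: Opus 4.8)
The plan is to deduce full collection in $\mathbb{A}$ from the elementary end-extension structure established in Theorem~\ref{eee_for_A}. The key observation is that $\mathbb{A}$ is the union of the chain of cuts $\mathbb{A}_q$ for $q\in\mathbb{Q}$, each of which is an elementary submodel of $\mathbb{A}$, and this chain is cofinal in $\mathbb{A}$ (since every element of $\mathbb{A}$ has finite support, hence lies in some $\mathbb{A}_q$), with no last element (as $\mathbb{Q}$ has no maximum). Collection then follows by a soft argument that is entirely analogous to the classical proof that $\mathrm{I}\Delta_0+\mathsf{BΣ}_1$-type principles hold when one has an abundance of elementary cuts.

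First I would fix an instance of $\mathsf{Coll}$: suppose $\mathbb{A}\models \forall x\bles v\,\exists y\;\varphi(x,y,\vv z)$ for some parameters $v,\vv z\in\mathbb{A}$ and some formula $\varphi$. Pick $q\in\mathbb{Q}$ large enough that $v$ and all of $\vv z$ lie in $\mathbb{A}_q$; this is possible since each has finite support. Now pick any $q'>q$ in $\mathbb{Q}$. I claim that every element $a\in\mathbb{A}_q$ with $a\bles v$ has a witness $y$ for $\varphi(a,y,\vv z)$ already inside $\mathbb{A}_{q'}$: the point is that the fixed element $\X_{q}$ (or more simply any element of $\mathbb{A}_{q'}\setminus\mathbb{A}_q$) is an upper bound for all of $\mathbb{A}_q$, but that alone does not bound the witnesses. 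So instead I would run the argument through elementarity directly. By $\mathbb{A}_q\prec\mathbb{A}$, for each such $a$ there is a witness $y_a\in\mathbb{A}_q$. But $\mathbb{A}_q$ is downward cofinal-bounded in $\mathbb{A}$ only in the sense that any element of $\mathbb{A}_{q'}\setminus\mathbb{A}_q$ bounds all of $\mathbb{A}_q$ — in particular $\mathbb{A}_q\models \varphi(a,y_a,\vv z)$ and $y_a\bles \X_q$ where $\X_q\in\mathbb{A}$. Wait — to get a bound that works \emph{uniformly}, I would argue: fix $w:=\X_{q}\in\mathbb{A}$ (any element of $\mathbb{A}\setminus\mathbb{A}_q$). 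Then for each $a\in\mathbb{A}_q$ with $a\bles v$, the witness $y_a$ found inside $\mathbb{A}_q$ satisfies $y_a\bles w$ since $\mathbb{A}_q$ is an initial segment of $\mathbb{A}$ below $w$. Hence $\mathbb{A}_q\models \forall x\bles v\,\exists y\bles w\;\varphi(x,y,\vv z)$, and by $\mathbb{A}_q\prec\mathbb{A}$ the same bounded-quantifier statement holds in $\mathbb{A}$, which is the conclusion of the collection instance (with this $w$).

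The one subtlety to handle carefully — and the step I expect to be the crux — is that the hypothesis $\forall x\bles v\,\exists y\;\varphi$ is asserted in $\mathbb{A}$, not in $\mathbb{A}_q$, so I must check that it reflects down: but it does, because $v\in\mathbb{A}_q$, every $x\bles v$ in $\mathbb{A}$ already lies in $\mathbb{A}_q$ (as $\mathbb{A}_q$ is downward closed), and for each such $x$ the existential $\exists y\,\varphi(x,y,\vv z)$ transfers from $\mathbb{A}$ to $\mathbb{A}_q$ by $\mathbb{A}_q\prec\mathbb{A}$. Thus $\mathbb{A}_q\models\forall x\bles v\,\exists y\,\varphi(x,y,\vv z)$, and now the witnesses $y_a\in\mathbb{A}_q$ are automatically below the external bound $w=\X_q$, giving $\mathbb{A}_q\models\forall x\bles v\,\exists y\bles w\,\varphi$, which pulls back up to $\mathbb{A}$ by elementarity. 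Since the instance of $\mathsf{Coll}$ was arbitrary, $\mathbb{A}\models\mathsf{Coll}$. I would present this cleanly by first isolating a lemma: \emph{if $\mathcal{M}$ has a cofinal family of elementary initial segments $\{\mathcal{M}_i\}$ with no maximal element, then $\mathcal{M}\models\mathsf{Coll}$} — the proof of Theorem~\ref{coll_in_A} is then just the instantiation of this lemma using Theorem~\ref{eee_for_A} together with the finiteness of supports.
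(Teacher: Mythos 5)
Your argument is essentially the paper's: reflect the hypothesis down to an elementary initial segment $\mathbb{A}_q$, take witnesses there, and use $\X_q$ to bound them in $\mathbb{A}$. The one step to correct is the line ``$\mathbb{A}_q\models \forall x\bles v\,\exists y\bles w\;\varphi(x,y,\vv z)$, and by $\mathbb{A}_q\prec\mathbb{A}$ the same bounded-quantifier statement holds in $\mathbb{A}$'': with $w=\X_q$, this sentence has a parameter outside $\mathbb{A}_q$, so it is not a statement about $\mathbb{A}_q$ and elementarity cannot be applied to it. Fortunately the detour through elementarity at that stage is unnecessary: since $\mathbb{A}_q$ is downward closed and $v\in\mathbb{A}_q$, every $a\bles v$ of $\mathbb{A}$ lies in $\mathbb{A}_q$ and has a witness $y_a\in\mathbb{A}_q$ with $\mathbb{A}_q\models\varphi(a,y_a,\vv z)$, hence (by elementarity of the submodel inclusion) $\mathbb{A}\models\varphi(a,y_a,\vv z)$; and $y_a\bles\X_q$ in $\mathbb{A}$ simply because $\X_q$ lies above the cut $\mathbb{A}_q$. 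So $\mathbb{A}\models\forall x\bles v\,\exists y\bles\X_q\,\varphi(x,y,\vv z)$ directly.

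Regarding organization: you pick $q$ large enough to absorb the parameters and invoke a cofinal family of elementary initial segments. The paper's proof is a touch slicker and more economical: it fixes an arbitrary $q$, proves that $\mathbb{A}_q$ itself satisfies $\mathsf{Coll}$ by the very same witness-bounding argument, and then transfers $\mathsf{Coll}$ to $\mathbb{A}$ in one stroke, since $\mathsf{Coll}$ is a set of sentences preserved by elementary equivalence. This isolates the general principle, recorded as Remark~\ref{eee_and_coll}, that having a \emph{single} proper elementary initial segment (or a proper elementary end extension) already forces collection in an ordered structure --- a cleaner lemma than your cofinal-chain-with-no-maximum hypothesis, and the one the paper later reuses for $\mathbb{B}$.
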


\begin{proof} 
Consider any $q \in \mathbb Q$. By Theorem \ref{eee_for_A}, (\dag) $\mathbb{A}_{q}\prec_{\mathrm{end}} \mathbb{A}$.
So it suffices to show that $\mathbb{A}_{q}$ satisfies {\sf Coll}.
Suppose we have $\mathbb{A}_q\models\forall x \bles a \,\exists y~\phi(x,y,\vv b)$. By (\dag), we find
$\mathbb{A}\models\forall x \bles a \,\exists y\bles \X_q ~\phi(x,y,\vv b)$. So, again by (\dag),
$\mathbb{A}_q\models\exists w\, \forall x \bles a \,\exists y\bles w~\phi(x,y,\vv b)$.
 \end{proof}

\begin{rem} \label{eee_and_coll} The reasoning used in the proof of Theorem \ref{coll_in_A} shows, more generally, that 
if $\mathcal{M}$ is an ordered structure that has an elementary initial segment \textup(or an elementary end extension\textup), 
then the collection scheme holds in $\mathcal{M}$.\footnote{This fact is well-known and due to Keisler, 
who also showed, using an omitting types theorem, that if $\mathcal{M}$ is a countable ordered structure 
in a countable language (signature) such that $\mathcal{M}$ satisfies the collection scheme, 
then $\mathcal{M}$ has an elementary end extension.} 
\end{rem}
    
\subsection{Standard forms and pseudo-variables}
We need the notion of \emph{standard form} for elements of $\hat{\mathbb{A}}$. We say that the expression $$a_n\X_{q_n}^{k_n} + a_{n-1}\X_{q_{n-1}}^{k_{n-1}} + \dots +a_0$$ is the standard form of $x\in\hat{\mathbb{A}}$, if $x=a_n\X_{q_n}^{k_n} + a_{n-1}\X_{q_{n-1}}^{k_{n-1}} + \dots +a_0$, and the following conditions are satisfied (note that (5) is a recursive condition). 

\begin{enumerate}[$1$.]
\item $a_i \neq 0$ for $0<i\leq n$.
\item $k_i\in \mathbb{N}^{\mathsmaller{>0}}$ for $0<i\leq n$.
\item
$a_0 \in \mathbb{Z}$. 
\item $a_i\in \hat{\mathsf{A}}_{q_i}$ for $0<i\leq n$.
 \item  $a_i$ is itself in standard form for $0<i\leq n$.
 \item  $q_{i+1} \geq q_i$ and, if
$q_{i+1}=q_i$, then $k_{i+1} > k_i$; here $0<i<n.$
\end{enumerate}

    A standard form expression $a_n\X_{q_n}^{k_n} + a_{n-1}\X_{q_{n-1}}^{k_{n-1}} + \dots +a_0$
    \emph{is the standard form of an element of} $\mathbb{A}$ if $a_n\geq 0$.

\noindent
Observe that if $x\in\hat{\mathbb{A}}$, then ${\sf supp}(x)$ consists of the $\X_q$ such that $\X_q$ occurs in the standard form of $x$. Also, note that if $x$ divides $x'$ in $\hat{\mathbb A}$, then ${\sf supp}(x)\subseteq {\sf supp}(x')$. 
This observation immediately shows that we have unique factorisation in prime factors in $\hat{\mathbb A}$,
since factorisation in the case of $y$ reduces to factorisation in $\mathbb Z[{\sf supp}(y)]$.

Suppose $a_n\X_{q_n}^{k_n}  + \dots a_0$ is the standard form of $x\in \mathbb{A}$. 

We define:
\begin{itemize}
\item
${\sf rank}_0(x) := q_n$, if $n>0$ and ${\sf rank}_0(x) := -\infty$ if $n=0$. .
\item
${\sf rank}(x) := (q_n,k_n)$, if 
$n>0$ and ${\sf rank}_0(x) := (-\infty,0)$ if $n=0$.\footnote{Here we use $(x,y)$ for the formation of the ordered pair of $x$ and $y$ in the metalanguage.}
\item
$(q,k) < (r,m)$ iff $q<r$ or ($q=r$ and $k<m$).
\end{itemize} 
 
 We say $y\X_q^k+z$ is the \emph{basic form} of $x\in\mathbb{A}$, if 
 $x=y\X_q^k+z$ and ${\sf rank}_0(y) < q$ and ${\sf rank}(|z|)<(q,k)$.

We now give intuitions about the above definitions. Observe that for $x\in\mathbb{A}$, ${\sf rank}_0(x) := q$ iff $q$ is the 
least element of $$\{q'\in \mathbb{Q}  \mathop\mid x\in \hat{\mathbb{A}}_{q'} [\X_{q'}]^{\mathsmaller{\geq 0}}\}.$$  
Thus, if ${\sf rank}_0(x) := q$, then $x$ can be written in the basic form $y\X_q^k+z$, where 
$y\in \mathbb{A}_{q}$, and $|z|$ is a polynomial in $\mathbb{A}_q [\X_q]$ of degree less than $k$. 

We observe that $\hat{\mathbb{A}}$ is a Dorroh ring, as witnessed by the ideal consisting of $x\in \hat{\mathbb{A}}$ such that $a_0 = 0$, where $x$ is written in standard form. 
Thus, $\omega$ is definable in $\mathbb{A}$. For $x$ and $y$ in $\mathbb{A}$ let us write:
\begin{itemize}
\item
  $y \ll x$ for $\forall n\bin \omega \; (y+2)^n< x$.
\item
$\psv(x) :\iff \forall y \bleq x ~ (y\ll x\vee(x-y) \ll x).$
\end{itemize}
We refer to the elements of $\mathbb{A}$ satisfying $\psv$ as the \emph{\psvar}. This terminology is motivated by 
Theorem \ref{autosmurf} which shows that the collection of pseudovariables `behaves like' the collection of indeterminates in $\mathbb{A}$.

\begin{theorem}
For all $x$ and $y$ in $\mathbb{A}$ we have: $y \ll x$ iff ${\sf rank}_0(y) < {\sf rank}_0(x)$.
\end{theorem}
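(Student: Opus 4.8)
The plan is to prove both implications by reading off, from the standard form, the basic equivalence that for $q\in\mathbb{Q}$ one has ${\sf rank}_0(z)=q$ iff $z$ lies in $\hat{\mathbb{A}}_q[\X_q]^{\mathsmaller{\geq 0}}$ and has positive degree as a polynomial in $\X_q$, together with ${\sf rank}_0(z)=-\infty$ iff $z\in\mathbb{N}$. The only further ingredient is the defining feature of the ordering on $\hat{\mathbb{A}}$: for each $q$ the indeterminate $\X_q$ is infinite over the subring $\hat{\mathbb{A}}_q$, so the sign of any nonzero element of $\hat{\mathbb{A}}_q[\X_q]$ is the sign of its $\X_q$-leading coefficient, exactly as in Subsection~\ref{secondstandard}; we shall call this the \emph{sign principle}. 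I will also use that, since $\mathbb{A}$ is externally the semiring $\hat{\mathbb{A}}^{\mathsmaller{\geq 0}}$, the cut $\omega$ occurring in the definition of $\ll$ consists of the genuinely standard naturals; hence ``$\forall n\in\omega$'' may be read semantically as ``for every standard $n$'', and for such $n$ the internal power $(y+2)^n$ (available by Remark~\ref{recursionsmurf}) is the honest $n$-fold product, hence a polynomial lying in $\hat{\mathbb{A}}$.

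For the implication $\Leftarrow$, suppose ${\sf rank}_0(y)<{\sf rank}_0(x)=:r$; necessarily $r\in\mathbb{Q}$. Since ${\sf rank}_0(y)<r$, the standard form of $y$ mentions only indeterminates of index $<r$, so $y\in\hat{\mathbb{A}}_r$, and hence $(y+2)^n\in\hat{\mathbb{A}}_r$ for every standard $n$. On the other hand, by the equivalence recalled above, $x$ is a polynomial in $\X_r$ of some degree $k\geq 1$ with coefficients in $\hat{\mathbb{A}}_r$ and positive leading coefficient. Thus for every $p\in\hat{\mathbb{A}}_r$ the element $x-p$ is again a polynomial in $\X_r$ of degree $k$ with the same (positive) leading coefficient, so $x-p>0$ by the sign principle. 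Taking $p=(y+2)^n$ gives $(y+2)^n<x$ for every $n\in\omega$, i.e.\ $y\ll x$.

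For the implication $\Rightarrow$ I argue the contrapositive: assuming ${\sf rank}_0(y)\geq{\sf rank}_0(x)$, I exhibit a standard $n$ with $(y+2)^n\geq x$. If ${\sf rank}_0(x)=-\infty$ then $x\in\mathbb{N}$, and already $(y+2)^x\geq 2^x\geq x$ since $y+2\geq 2$ and the exponent $x$ is standard. Otherwise ${\sf rank}_0(x)=r\in\mathbb{Q}$ and ${\sf rank}_0(y)=s\geq r$ is also rational. Writing $x$ as a polynomial in $\X_r$ of degree $k\geq 1$ over $\hat{\mathbb{A}}_r$, the element $\X_r^{k+1}-x$ has $\X_r$-leading coefficient $1$, so $x<\X_r^{k+1}\leq\X_s^{k+1}$, using $\X_s\geq\X_r>1$ because $s\geq r$. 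It remains to bound $\X_s$ below by a power of $y+2$. Since ${\sf rank}_0(y)=s$, the element $y$ has positive $\X_s$-degree $j\geq 1$ with some nonzero leading coefficient $b\in\hat{\mathbb{A}}_s$; hence $(y+2)^2$ is a polynomial in $\X_s$ of degree $2j\geq 2$ with leading coefficient $b^2>0$, and since $2j\geq 2>1$ the element $(y+2)^2-\X_s$ still has leading coefficient $b^2>0$, so $(y+2)^2>\X_s$ by the sign principle. Raising to the power $k+1$ gives $(y+2)^{2(k+1)}=\bigl((y+2)^2\bigr)^{k+1}>\X_s^{k+1}>x$, and $2(k+1)\in\omega$, so $y\not\ll x$.

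The point needing the most care is the passage between the syntactic notion ${\sf rank}_0$ (defined via standard forms) and the ring-theoretic picture: one must verify, from the standard-form conditions, that ${\sf rank}_0(z)=q$ really is equivalent to $z$ having positive $\X_q$-degree over $\hat{\mathbb{A}}_q$, and then keep honest track of $\X_q$-degrees and leading coefficients of sums, products and squares, remembering that these leading coefficients live in the ordered integral domain $\hat{\mathbb{A}}_q$ rather than in $\mathbb{Z}$. Once that bookkeeping is in place, everything reduces to the sign principle of Subsection~\ref{secondstandard} together with the evident monotonicity of $t\mapsto t^m$ for standard $m$, both available already in \pam\ and hence in $\mathbb{A}$.
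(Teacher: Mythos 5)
Your proof is correct and follows essentially the same route as the paper's: both directions come down to comparing the $\X_q$-degree and leading coefficient of $x$ with those of powers of $y+2$, and what you call the sign principle (positivity in $\hat{\mathbb{A}}_q[\X_q]$ is read off the $\X_q$-leading coefficient) does all the work. The paper's proof is considerably terser: it writes $x$ and $y$ in basic form $z\X_{q^\ast}^n+w$ and $u\X_{r^\ast}^m+v$, picks $k$ with $mk>n$, and argues directly from $y^k<x$ to $r^\ast<q^\ast$; it does not explicitly treat the degenerate case where $x$ (or $y$) is standard and hence has no basic form. You handle the $-\infty$ cases explicitly, argue the forward direction by contraposition, and make visible the leading-coefficient bookkeeping (including the step $(y+2)^2>\X_s$ and the use of $\X_r\leq\X_s$), which the paper leaves implicit. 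The content and method are the same; yours is just the fully unfolded version.
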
 

\begin{proof}
Let $q^\ast := {\sf rank}_0(x)$ and
$r^\ast:= {\sf rank}_0(y)$. 
It follows that $x$ has basic form $z\X^n_{q^\ast}+w$, 
and $y$ has basic form $u\X^m_{r^\ast}+v$.

Suppose $y \ll x$.
Let $mk > n$.
We find $y^k < x$, so $r^\ast < q^\ast$.

Conversely, if $r^\ast < q^\ast$, it is easy to see that for all $n$, $y^n < x$.
\end{proof}

\begin{theorem}\label{gargamel}
For each $x\in\mathbb{A}$, $\psv(x)$ holds\ iff $x= \X_q+y$, where $\X_q+y$ is in basic form 
\textup(so, $|y| \ll \X_q$\textup).
\end{theorem}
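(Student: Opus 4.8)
The plan is to translate the whole statement into the language of ${\sf rank}_0$ and comparisons of leading terms in $\hat{\mathbb A}$, using the preceding theorem, which says that $y\ll x$ iff ${\sf rank}_0(y)<{\sf rank}_0(x)$. Under this translation, $\psv(x)$ becomes the assertion that for every $u\in\mathbb A$ with $u\leq x$ one has ${\sf rank}_0(u)<{\sf rank}_0(x)$ or ${\sf rank}_0(x-u)<{\sf rank}_0(x)$. A preliminary observation I would record first is that a natural number is never a pseudo-variable: taking $u=0\leq x$, one of the two disjuncts would need a rank strictly below $-\infty={\sf rank}_0(n)$, which is impossible. Hence in both directions I may assume ${\sf rank}_0(x)=q$ for some $q\in\mathbb Q$ and write $x$ in its basic form $z\X_q^k+w$, where $z\geq 1$, ${\sf rank}_0(z)<q$, and ${\sf rank}(|w|)<(q,k)$.

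For the implication from the basic form $x=\X_q+y$ (so $|y|\ll\X_q$, hence $|y|+2<\X_q$ and $x<2\X_q$, and ${\sf rank}_0(x)=q$): given $u\leq x$, I first note that $u<2\X_q$ forces ${\sf rank}_0(u)\leq q$, since any indeterminate $\X_{q'}$ with $q'>q$ occurring in the standard form of $u$ would make $u$'s leading term dominate $2\X_q$. If ${\sf rank}_0(u)<q$ we are done by the preceding theorem. Otherwise ${\sf rank}_0(u)=q$; writing $u=z_1\X_q^{k_1}+w_1$ in basic form with $z_1\geq 1$, the bound $u<2\X_q$ first forces $k_1=1$ (if $k_1\geq 2$, then $u$'s leading term $z_1\X_q^{k_1}$ dominates $2\X_q$), and then forces $z_1=1$ (if $z_1\geq 2$, then $u-x$ has leading term $(z_1-1)\X_q>0$, contradicting $u\leq x$). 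With $u=\X_q+w_1$ and $x=\X_q+y$ we get $x-u=y-w_1$, which has ${\sf rank}_0<q={\sf rank}_0(x)$ (including the degenerate case $y=w_1$, where $x-u=0$), so $x-u\ll x$. Hence $\psv(x)$.

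For the converse, assuming $\psv(x)$ and $x=z\X_q^k+w$ in basic form as above, I would plug in two chosen test elements. Taking $u=\X_q$ — which is $\leq x$ because the leading term $z\X_q^k$ of $x$ dominates $\X_q$ — gives ${\sf rank}_0(u)=q={\sf rank}_0(x-u)$ whenever $k\geq 2$ (the leading term $z\X_q^k$ survives in $x-u$); since both disjuncts of $\psv$ then fail at this $u$, we conclude $k=1$. Next, with $k=1$ in hand, taking $u=\X_q+w$ — which lies in $\mathbb A$ because $|w|<\X_q$, and satisfies $u\leq x$ because $x-u=(z-1)\X_q\geq 0$ — again makes both ${\sf rank}_0(u)$ and ${\sf rank}_0(x-u)$ equal to $q$ whenever $z\geq 2$, so $\psv(x)$ forces $z=1$, i.e. $x=\X_q+w$ in basic form, which is the claim with $y:=w$. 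The key step is really just the choice of the two test elements $\X_q$ and $\X_q+w$; I expect no genuine obstacle, only the need to dispatch the degenerate cases ($x$ a natural number, $y=w_1$, cancellations) explicitly, the routine part being the bookkeeping of leading terms under the order of $\hat{\mathbb A}$ — e.g. that an element of $\X_q$-degree $\geq 2$ exceeds $2\X_q$, or that subtracting an element of strictly smaller rank does not change ${\sf rank}_0$ — all immediate from the fact that in $\hat{\mathbb A}$ the sign of a nonzero element is the sign of its leading coefficient.
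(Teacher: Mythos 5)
Your proof is correct and follows essentially the same strategy as the paper's: reduce $\psv$ to rank comparisons via the preceding theorem, rule out standard numbers, and then argue on basic forms by inspecting leading terms. The only stylistic difference is in the direction $\psv(x)\Rightarrow x=\X_q+y$: the paper uses the single test element $\X_q$ to dispatch both $k>1$ and $z>1$ at once (observing that $x-\X_q$ then still has $\mathsf{rank}_0=q$), whereas you split this into two tests, using $\X_q$ to force $k=1$ and then $\X_q+w$ to force $z=1$; your first test alone would have sufficed for both subcases, but the extra step is harmless.
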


\begin{proof}
Clearly, no element of $\omega_{\mathbb{A}}$ satisfies $\psv$. 

Suppose $x$ has basic form $z\X_q^n+y$.
If $z>1$ or $n>1$, neither $\X_q\ll x$ nor $x-\X_q \ll x$.
So, if $x$ is in
 \psv, we must have $x= \X_q+y$, where $|y|\ll \X_q$.

Conversely, suppose $x = \X_q+y$ 
Suppose $z \not\ll x$. Then, $z^m > x$, for some $m$. It follows that $z$ must have basic form $u\X_r^k+v$,
where $r\geq q$. If $u>1$ or $r>q$ or $k>1$, we find $x-z=0$ and we are done. Suppose $z=\X_q+v$. 
It follows that $x-z = y-v \leq y \ll x$.
\end{proof}

\subsection{Interpreting $\mathbb N$ in $\mathbb A$}
Recall that $\mathbb{A}=\hat{\mathbb{A}}^{\mathsmaller{\geq 0}}$. Since $\hat{\mathbb{A}}$ is clearly an ordered Dorroh ring, the standard cut $\omega_{\mbbs A}$ of $\mathbb A$ is definable in $\mathbb A$ by Theorem \ref{Dorroh_fourquares}. This gives us an interpretation, say {\sf N}, of $\mathbb N$ in $\mathbb A$. Note that {\sf N} is just domain restriction to $\omega_{\mbbs A}$.

\subsection{Interpreting $\mathbb A$ in $\mathbb N$}\label{translation_alpha}
Inside $\mathbb N$, we can build a copy of $\mathbb A$ in some
standard way. This gives rise to an interpretation {\sf A} of $\mathbb A$ in $\mathbb N$.

We use $\alpha$ for the underlying translation of ${\sf A} \circ {\sf N}$.
Thus, we  will call the internal copy $\mathbb A^\alpha$. 

We will use $\mf F$ for the canonical isomorphism from $\mathbb A^\alpha$ to $\mathbb A$. We note that
this isomorphism is not $\mathbb A$-internally definable. 

\subsection{Automorphisms}
In this section, we acquire some information on automorphisms of $\mathbb A$.

\begin{theorem}\label{autosmurf}
Suppose $x_0 \ll x_1 \ll \dots \ll x_{n-1}$ and $y_0 \ll y_1 \ll \dots \ll y_{n-1}$, where the $x_i$ and $y_i$  
are elements of $\mathbb{A}$ that are \psvar. Then, there is an $\mathbb A$-automorphism $\sigma$ that sends $x_i$ to $y_i$.
\end{theorem}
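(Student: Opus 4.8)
The plan is to build $\sigma$ as a composition of two elementary kinds of ordered-ring automorphisms of $\hat{\mathbb{A}}$; since any ordered-ring automorphism of $\hat{\mathbb{A}}$ clearly restricts to an automorphism of the ordered semiring $\mathbb{A}=\hat{\mathbb{A}}^{\mathsmaller{\geq 0}}$, this suffices. First I would normalize the data. By Theorem~\ref{gargamel}, a pseudo-variable is exactly an element of the form $\X_q + c$ with $q \in \mathbb{Q}$ and $|c| \ll \X_q$; and by the theorem immediately preceding Theorem~\ref{gargamel}, $|c| \ll \X_q$ is equivalent to ${\sf rank}_0(c) < q$, which in turn is equivalent to $c \in \hat{\mathbb{A}}_q = \mathbb{Z}[\X_{q'} \mathop\mid q' < q]$ (its only indeterminates have index below $q$). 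So I write $x_i = \X_{q_i} + c_i$ and $y_i = \X_{r_i} + d_i$ with $c_i \in \hat{\mathbb{A}}_{q_i}$ and $d_i \in \hat{\mathbb{A}}_{r_i}$; the hypotheses $x_0 \ll \cdots \ll x_{n-1}$ and $y_0 \ll \cdots \ll y_{n-1}$ then say precisely that $q_0 < \cdots < q_{n-1}$ and $r_0 < \cdots < r_{n-1}$. The goal becomes: produce an ordered-ring automorphism of $\hat{\mathbb{A}}$ sending $\X_{q_i}+c_i$ to $\X_{r_i}+d_i$ for each $i < n$.

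The two building blocks are \emph{relabelings} and \emph{translations}. For an order isomorphism $f\colon \mathbb{Q}\to\mathbb{Q}$, let $\hat f$ be the ring automorphism of $\hat{\mathbb{A}}$ determined by $\hat f(\X_q):=\X_{f(q)}$; this is order-preserving, and is the same device used in the proof of Theorem~\ref{eee_for_A}. For $q\in\mathbb{Q}$ and $c\in\hat{\mathbb{A}}_q$, let $\tau_{q,c}$ be the ring automorphism of $\hat{\mathbb{A}}$ determined by $\tau_{q,c}(\X_q):=\X_q+c$ and $\tau_{q,c}(\X_{q'}):=\X_{q'}$ for $q'\neq q$; its inverse is $\tau_{q,-c}$, and I claim it too is order-preserving. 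I would prove both claims by induction on $|{\sf supp}(p)|$, using that the sign of a nonzero $p\in\hat{\mathbb{A}}$ equals the sign of its leading coefficient with respect to its highest-index indeterminate, computed recursively (which follows from the definition of the ordering of $\hat{\mathbb{A}}$). For $\hat f$ this is immediate since $f$ preserves the maximal index. For $\tau_{q,c}$ there are three cases: if $\X_q\notin{\sf supp}(p)$ then $\tau_{q,c}$ fixes $p$; if the maximal index of $p$ equals $q$, then $p=\sum_{j\leq m} a_j\X_q^j$ with $a_m\neq 0$ and each $a_j\in\hat{\mathbb{A}}_q$ fixed by $\tau_{q,c}$, so $\tau_{q,c}(p)=\sum_{j\leq m} a_j(\X_q+c)^j$ still has leading $\X_q$-coefficient $a_m$ and hence the same sign as $p$; and if the maximal index of $p$ is some $q''>q$, one peels off the leading term $a\X_{q''}^{k}$ and applies the induction hypothesis to $a$, which lies in $\hat{\mathbb{A}}_{q''}$, has strictly smaller support, and is mapped by $\tau_{q,c}$ to the leading $\X_{q''}$-coefficient of $\tau_{q,c}(p)$ — since $\tau_{q,c}$ neither alters $\X_{q''}$ nor produces cancellation in the top term.

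With the two blocks in hand, I would finish by a bottom-up bookkeeping. Define recursively $c^{(0)}_i:=c_i$ and, having applied $\tau_{q_0,-c_0},\tau_{q_1,-c^{(1)}_1},\ldots,\tau_{q_{i-1},-c^{(i-1)}_{i-1}}$ in this order, let $c^{(i)}_i\in\hat{\mathbb{A}}_{q_i}$ be such that this partial composite has carried $x_i$ to $\X_{q_i}+c^{(i)}_i$ and has already carried $x_0,\ldots,x_{i-1}$ to $\X_{q_0},\ldots,\X_{q_{i-1}}$. The two facts that keep this going are: a translation at $\X_{q_i}$ fixes $\hat{\mathbb{A}}_{q_i}$ pointwise, hence fixes each already-normalized $\X_{q_j}$ with $j<i$; and it maps $\hat{\mathbb{A}}_{q_j}$ into itself for every $j>i$, since its shift lies in $\hat{\mathbb{A}}_{q_i}\subseteq\hat{\mathbb{A}}_{q_j}$, hence it preserves the property that $x_j$ has the form $\X_{q_j}+e$ with $e\in\hat{\mathbb{A}}_{q_j}$. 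Thus $\rho:=\tau_{q_{n-1},-c^{(n-1)}_{n-1}}\circ\cdots\circ\tau_{q_0,-c_0}$ satisfies $\rho(x_i)=\X_{q_i}$ for all $i<n$, and symmetrically there is $\rho'$ with $\rho'(y_i)=\X_{r_i}$. Finally, by homogeneity of the countable dense linear order $\mathbb{Q}$ there is an order isomorphism $f\colon\mathbb{Q}\to\mathbb{Q}$ with $f(q_i)=r_i$ for all $i<n$; then $\sigma:=(\rho')^{-1}\circ\hat f\circ\rho$ sends each $x_i$ to $(\rho')^{-1}(\hat f(\X_{q_i}))=(\rho')^{-1}(\X_{r_i})=y_i$, and the restriction of $\sigma$ to $\mathbb{A}$ is the required automorphism. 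I expect the main obstacle to be the order-preservation of the translations $\tau_{q,c}$: this is the one place where the recursive standard-form/rank description of the ordering of $\hat{\mathbb{A}}$ must be handled with care, while the remaining assembly is routine bookkeeping.
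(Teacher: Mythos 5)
Your proof is correct, and it follows the same broad outline as the paper's — normalize using an order-automorphism of $\mathbb{Q}$ (via Theorem~\ref{gargamel}) so that the hard case is translating pure variables to pseudo-variables of the same rank — but the core construction differs. The paper defines the automorphism in one shot as a simultaneous substitution $\sigma(z) := z[x_0:y_0,\ldots,x_{n-1}:y_{n-1}]$ and then exhibits its inverse $\mu$ via a recursive definition $\mu(\X_{q_i}) := \X_{q_i} - \mu(a_i)$, which is well-founded because $a_i$ involves only lower-indexed variables. You instead decompose the translation into a composition of one-variable elementary moves $\tau_{q_i,-c_i^{(i)}}$, applied bottom-up, each of which is trivially invertible ($\tau_{q,c}^{-1}=\tau_{q,-c}$). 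The trade-off: the paper's version is more compact, at the price of a recursively-defined inverse whose well-definedness and inverseness both need the induction on $i$; your version replaces that by straightforward bookkeeping, since each elementary move fixes everything already normalized and preserves the normal form of what remains. Your approach also has the virtue of making the order-preservation issue explicit — your induction on $|{\sf supp}(p)|$ with the leading-coefficient analysis is a clean way to see that the elementary translations preserve the ordering of $\hat{\mathbb{A}}$ — whereas the paper passes over this silently with ``clearly a morphism.'' Both proofs use the same relabeling device $\hat f$ coming from an order-automorphism of $\mathbb{Q}$, though you apply it in the middle of a three-stage composition $(\rho')^{-1}\circ\hat f\circ\rho$ rather than only as an initial normalization of the data; these are equivalent by the symmetry and transitivity of the ``conjugate by an automorphism'' relation on tuples.
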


\begin{proof}
Clearly, it is sufficient to prove this for the case that $x_i = \X_{q_i}$, where $q_i<q_j$ if $i<j$. 
Moreover, if we have $q_0 < q_1 \dots < q_{n-1}$ and $r_0 < r_1 \dots < r_{n-1}$, then there is
an order-automorphism $\theta$ of $\mathbb Q$ such that $\theta(r_i)=q_i$. 
So, we have an automorphism $\tau$ of $\mathbb A$ that moves $\X_{r_i}$ to $\X_{q_i}$.
By Theorem~\ref{gargamel}, it follows that it suffices to consider the case where $x_i= \X_{q_i}$ and $y_i = \X_{q_i}+a_i$.
We assume this simplification.

We take $\sigma(z) := z[x_0:y_0,\dots,x_{n-1}:y_{n-1}]$. Here, of course, $\sigma$ operates identically on all 
variables distinct from the $x_i$. This is clearly a morphism. It suffices to define
an inverse morphism.

We define the inverse  $\mu$ by:
\begin{itemize}
\item
$\mu$ operates identically on all variables not among the $\X_{q_i}$.
\item
$\mu(\X_{q_i}) := \X_{q_i} - \mu(a_i)$, for $i<n$.\\
We note that this is not circular since $a_i$ only contains
variables of lower index.
\end{itemize}
 We now have, assuming that we have the desired result already for $j<i$:
 \begin{eqnarray*}
 \sigma\mu (\X_{q_i}) & = &  \sigma(\X_{q_i} - \mu(a_i)) \\
 & = & \sigma(\X_{q_i}) - \sigma\mu(a_i) \\
 & = & \X_{q_i}+a_i -a_i \\
 &= & \X_{q_i}
 \end{eqnarray*}
 \text{and} \\
 \begin{eqnarray*}
  \mu\sigma (\X_{q_i}) & = &  \mu(\X_{q_i} + a_i) \\
  & = & \mu (\X_{q_i}) + \mu(a_i) \\
  & = & \X_{q_i}-\mu(a_i)+\mu(a_i) \\
  & = & \X_{q_i}.
 \end{eqnarray*}
 We note that we used our assumption that we have the result for $j<i$ to conclude  that $\sigma\mu(a_i) = a_i$.
\end{proof}

\begin{question}\label{Question_Aut(A)}
    Is there a characterization of the group $\mathrm{Aut}(\mathbb{A})$ of all automorphisms of $\mathbb{A}$?
\end{question}

\subsection{Extension Lemma}\label{sect_ext}
In this section, we prove an important lemma that will help to verify the
correctness of the  Ehrenfeucht-Fra\"{\i}ss\'{e} game that we will specify.

For a subset $W$ of $\mathbb{A}$ consisting of pseudo-variables, we say that $W$ is \emph{independent} when no two elements of $W$ are finitely apart, i.e., when no two elements of $W$ have the same $\mathsf{rank}_0$. We define the support of a set as the union of the supports of its elements.

\begin{lem}\label{spelendesmurf} Let $W$ be an independent finite set of \psvar. Then, there is a finite set of variables $V$ so that
 $W\cup V$ is independent and ${\sf supp}(W)$ is polynomially definable from $W\cup V$.
 \end{lem}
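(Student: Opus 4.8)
The plan is to use Theorem~\ref{gargamel} to display each element of $W$ in basic form $\X_q+y$, then to adjoin to $W$ exactly those indeterminates $\X_r$ that occur inside some ``tail'' $y$ but are not themselves the leading indeterminate of any member of $W$, and finally to recover the leading indeterminates of the members of $W$ by a triangular system of polynomial identities.

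Concretely, I would first enumerate $W=\{w_1,\dots,w_n\}$ so that ${\sf rank}_0(w_1)<\dots<{\sf rank}_0(w_n)$ — possible precisely because $W$ is independent — and by Theorem~\ref{gargamel} write $w_i=\X_{q_i}+y_i$ in basic form, so that $q_i={\sf rank}_0(w_i)$ and $|y_i|\ll\X_{q_i}$; the latter forces ${\sf rank}_0(|y_i|)<q_i$, hence $y_i\in\hat{\mathbb A}_{q_i}=\mathbb Z[\X_{q'}\mathop\mid q'\in\mathbb{Q}_{q_i}]$, so $y_i$ is an honest integer polynomial in the indeterminates indexed by ${\sf supp}(y_i)\subseteq\mathbb{Q}_{q_i}$. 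Reading off supports from standard forms gives ${\sf supp}(w_i)=\{q_i\}\cup{\sf supp}(y_i)$, hence ${\sf supp}(W)=\{\X_q\mathop\mid q\in S\}$, where $Q:=\{q_1,\dots,q_n\}$ and $S:=Q\cup\bigcup_i{\sf supp}(y_i)$, a finite set.

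Next I would set $V:=\{\X_q\mathop\mid q\in S\setminus Q\}$. Each such $\X_q$ is a pseudo-variable (Theorem~\ref{gargamel} with zero tail) with ${\sf rank}_0(\X_q)=q$; the set of ${\sf rank}_0$-values occurring in $W\cup V$ is thus $Q\cup(S\setminus Q)=S$ with no repetitions, so $W\cup V$ is independent (and in particular $W\cap V=\varnothing$). It remains to see that ${\sf supp}(W)=\{\X_q\mathop\mid q\in S\}$ is polynomially definable from $W\cup V$, i.e.\ that every $\X_q$ with $q\in S$ is an integer polynomial in the members of $W\cup V$: for $q\in S\setminus Q$ this is immediate since $\X_q\in V$, and for $q=q_i\in Q$ I would induct on $i$, using the identity $\X_{q_i}=w_i-y_i$ together with the fact that each indeterminate $\X_r$ occurring in $y_i$ has $r\in{\sf supp}(y_i)\subseteq S$ and $r<q_i$, so either $r\in S\setminus Q$ (whence $\X_r\in V$) or $r=q_{i'}$ with $i'<i$ (whence $\X_r$ is already a polynomial in $w_1,\dots,w_{i'}$ and the members of $V$, by the induction hypothesis); substituting these expressions into $y_i$ and subtracting from $w_i$ represents $\X_{q_i}$ as required.

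I do not expect a genuine obstacle, but it is worth flagging the one point of tension that dictates the construction: one cannot simply take $V$ large enough that ${\sf supp}(W)\subseteq W\cup V$, because each leading indeterminate $\X_{q_i}$ has the same ${\sf rank}_0$ as the corresponding $w_i$ whenever the tail $y_i$ is nonzero, so adjoining $\X_{q_i}$ to $V$ would destroy independence. The remedy is exactly to adjoin only the non-leading indeterminates and to recover the leading ones via the triangular identities $\X_{q_i}=w_i-y_i$, which terminates because every indeterminate inside $y_i$ has strictly smaller ${\sf rank}_0$ than $q_i$. A minor conceptual caveat: I read ``${\sf supp}(W)$ is polynomially definable from $W\cup V$'' as the existence of finitely many integer polynomials whose values on a fixed enumeration of $W\cup V$ are exactly the elements of ${\sf supp}(W)$; all the identities used above are identities in $\hat{\mathbb A}=\mathbb Z[\X_q\mathop\mid q\in\mathbb Q]$, so the partiality of subtraction on $\mathbb A$ causes no difficulty.
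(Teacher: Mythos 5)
Your proof is correct and follows essentially the same route as the paper's: the same choice $V = \mathsf{supp}(W)\setminus W^\star$ (with $W^\star$ the leading indeterminates $\X_{q_i}$), and the same course-of-values induction using $w_i = \X_{q_i}+y_i$ to recover each $\X_{q_i}$ from earlier-recovered indeterminates. You supply a bit more detail than the paper does — in particular the explicit verification that $W\cup V$ is independent via disjointness of rank values, and the correct observation that one cannot simply throw all of $\mathsf{supp}(W)$ into $V$ without breaking independence — but the argument is the same.
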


\begin{proof}
Consider an independent finite subset of \psvar\ $W$. Let
$W$ be $\verz{w_0, \dots w_{k-1}}$, where ${\sf rank}_0(w_i) < {\sf rank}_0(w_{i+1})$.
Let \[W^\star:= \verz{\X_{q_i}\mathop\mid {\sf rank}_0(w_i)=q_i\text{ and } i<k}.\]
Let 
$V :=  {\sf supp}(W) \setminus W^\star$ and let $Z := W\cup V$. We note that $Z$ is independent.

We have to show that the elements of ${\sf supp}(W)$ are polynomially definable from $Z$.
It clearly suffices to show that the elements of $W^\star$ are polynomially definable from $Z$.
Let these elements be $\X_{q_1},\dots,\X_{q_{k-1}}$.
Suppose, for each $j<i$, the element $\X_{q_i}$ is polynomially definable from $Z$.
We have $w_i = \X_{q_i}+b_i$, where $b_i$ contains at most elements of ${\sf supp}(W) \setminus W^\star$ plus
elements $\X_{q_j}$, for $j<i$. All these are definable from $Z$ by our assumption.
This tells us that $b_i$ and, hence, $\X_{q_i}$ is definable from $Z$.
By course of values induction, we are done.
\end{proof}

\begin{theorem}\label{smurfoludens}
Suppose $W$ is an independent finite set of \psvar\ in $\mathbb{A}$
and suppose $x$ is any element in $\mathbb{A}$. 
%\albnote{I changed the formulation.}
Then, there is an independent finite set of \psvar\ $U$  in $\mathbb{A}$ such that
$W\subseteq U$ and $x$ is polynomially definable from $U$.
\end{theorem}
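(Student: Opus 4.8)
The plan is to prove this by induction on the structure of the standard form of $x$, leveraging Lemma~\ref{spelendesmurf} at each step. First I would dispose of the trivial cases: if $x\in\omega_{\mbbs A}$ (i.e.\ $x$ is a natural number), then $x$ is polynomially definable from $\varnothing$, so we may take $U:=W$. More generally, note that if $x\in\hat{\mathbb{A}}_q[\X_q]^{\mathsmaller{\geq 0}}$ we may work by induction on ${\sf rank}(x)$, or—cleaner—on the number of indeterminates appearing in ${\sf supp}(x)$ together with a secondary induction on ${\sf rank}(x)$.

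The key idea is to reduce definability of $x$ to definability of the finitely many indeterminates in ${\sf supp}(x)$: since $x$ is a polynomial with integer-and-$\hat{\mathbb{A}}_{q_i}$-coefficients in the variables of ${\sf supp}(x)$, and by clause (5) of the standard form its coefficients are themselves in standard form over strictly lower-ranked indeterminates, $x$ is polynomially definable from ${\sf supp}(x)$ once we know each coefficient is (which follows by a routine sub-induction on the standard form since coefficients involve strictly fewer/lower indeterminates). So it suffices to produce an independent finite set $U\supseteq W$ of \psvar\ such that every element of ${\sf supp}(x)$ is polynomially definable from $U$. Write ${\sf supp}(x)=\{\X_{p_0},\dots,\X_{p_{\ell-1}}\}$ with $p_0<\dots<p_{\ell-1}$, and let $S:=W\cup\{\X_{p_0},\dots,\X_{p_{\ell-1}}\}$. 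Apply Lemma~\ref{spelendesmurf} to the independent set $W$ to obtain a finite set of variables $V$ with $W\cup V$ independent and ${\sf supp}(W)$ polynomially definable from $W\cup V$; but this is not quite what we want, since we need ${\sf supp}(x)$ definable. Instead, the right move is to enlarge $W$ one indeterminate at a time: we want to add each $\X_{p_j}$ (or a pseudo-variable finitely apart from it) into our set, being careful to preserve independence, i.e.\ to avoid adding something with the same ${\sf rank}_0$ as an element already present.

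So, processing $\X_{p_0},\dots,\X_{p_{\ell-1}}$ in increasing order, suppose we have built an independent finite set $U'$ of \psvar\ with $W\subseteq U'$ such that every $\X_{p_i}$ with $i<j$ is polynomially definable from $U'$, and moreover ${\sf supp}(U')$ is polynomially definable from $U'$. To handle $\X_{p_j}$: if some element of $U'$ has ${\sf rank}_0$ equal to $p_j$, then that element $w$ has basic form $\X_{p_j}+b$ with $|b|\ll\X_{p_j}$ by Theorem~\ref{gargamel}, and $b$ involves only indeterminates of rank below $p_j$, hence in ${\sf supp}(U')$ after possibly absorbing the lower $\X_{p_i}$'s already handled—so $\X_{p_j}=w-b$ is polynomially definable from $U'$ and we set $U'$ unchanged; if no element of $U'$ has ${\sf rank}_0$ equal to $p_j$, then $\X_{p_j}$ is itself a pseudo-variable (Theorem~\ref{gargamel}) whose ${\sf rank}_0$ differs from all elements of $U'$, so $U'\cup\{\X_{p_j}\}$ is independent, and we replace $U'$ by $U'\cup\{\X_{p_j}\}$; in either case ${\sf supp}$ of the new set is polynomially definable from it. After processing all $j<\ell$ we obtain the desired $U$, and then $x$ is polynomially definable from $U$ as explained above.

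The main obstacle I anticipate is the bookkeeping around the phrase ``polynomially definable''—in particular, making sure that when we peel off a coefficient $a_i\in\hat{\mathbb{A}}_{q_i}$ from the standard form of $x$, the set of indeterminates needed to recover $a_i$ is genuinely contained in the set we have already arranged to be definable, and that the nested induction (outer induction on the standard form of $x$, with an inner loop over ${\sf supp}(x)$, and a further sub-induction to pass from ${\sf supp}(x)$ to $x$ itself via the coefficients) is set up so it actually terminates. Getting the correct induction measure—say, lexicographic on (number of distinct indeterminates in ${\sf supp}(x)$, ${\sf rank}(x)$, complexity of coefficients)—and verifying that each reduction step strictly decreases it is the only real subtlety; everything else is a direct appeal to Theorem~\ref{gargamel}, Lemma~\ref{spelendesmurf}, and the structure of standard forms.
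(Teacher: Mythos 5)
Your proposal is on the right track and assembles the right ingredients, but it has one concrete gap and one unnecessary complication.

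\textbf{The gap.} Your iterative loop maintains the invariant that ``$\mathsf{supp}(U')$ is polynomially definable from $U'$'', and this invariant is essential at every step (you use it to show $b=w-\X_{p_j}$ is definable). But you never establish the base case. You write the inductive hypothesis as ``$U'$ with $W\subseteq U'$'', which suggests you start from $U':=W$ — and for $U':=W$ the invariant \emph{fails}: a single pseudo-variable such as $\X_0+\X_{-1}$ does not polynomially define $\X_0$ or $\X_{-1}$. That is precisely what Lemma~\ref{spelendesmurf} repairs by adjoining $V$. You invoke the Lemma, but then write ``this is not quite what we want\ldots{} Instead\ldots{}'', which reads as discarding $V$. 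You must keep it: initialize with $U':=W\cup V$. Then $\mathsf{supp}(U')=\mathsf{supp}(W)$ is polynomially definable from $U'$ by the Lemma, and your iteration goes through as you describe, with each new $\X_{p_j}\notin\mathsf{supp}(W)$ being a fresh rank, hence addable without breaking independence. Relatedly, the phrase ``hence in $\mathsf{supp}(U')$ after possibly absorbing the lower $\X_{p_i}$'s already handled'' is not needed and hints at the same confusion: $\mathsf{supp}(b)\subseteq\mathsf{supp}(w)\subseteq\mathsf{supp}(U')$ already, so once the invariant holds there is nothing to ``absorb''.

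\textbf{Comparison with the paper, and a simplification.} Once the base case is fixed, your iteration unwinds to exactly the paper's one-shot construction: the paper takes $U:=W\cup V\cup(\mathsf{supp}(x)\setminus\mathsf{supp}(W))$ directly, observes that $U$ is independent (all new elements are variables with ranks outside $\mathsf{supp}(W)$), and that every element of $\mathsf{supp}(x)$ is polynomially definable from $U$. There is no need for a loop. Finally, your nested induction to pass from definability of $\mathsf{supp}(x)$ to definability of $x$ (``peeling off coefficients'', the lexicographic termination measure) is unnecessary: $x\in\hat{\mathbb{A}}=\mathbb{Z}[\X_q\mid q\in\mathbb{Q}]$ is literally an integer-coefficient polynomial in the variables $\mathsf{supp}(x)$, so polynomial definability of $x$ from $U$ follows immediately by substitution once each element of $\mathsf{supp}(x)$ is polynomially definable from $U$. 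The standard-form apparatus and the coefficient structure in $\hat{\mathbb{A}}_{q_i}$ are not needed for this last step.
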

\begin{proof}
Let $W$ be an independent finite set of \psvar\ in $\mathbb{A}$ and let $x$ be any element of $\mathbb{A}$. 
Let $V$ be as promised in Lemma~\ref{spelendesmurf} for $W$. 
We take $U := W \cup V \cup ({\sf supp}(x) \setminus {\sf supp}(W))$.  Since $V$ consists of variables, 
$U$ is independent and, since ${\sf supp}(W)$ is polynomially definable from $W\cup V$,
 every element of ${\sf supp}(x)$ is polynomially definable from $U$.
\end{proof}
\noindent
We note that Theorem~\ref{smurfoludens} is a first-order statement in $\mathbb A$.

\subsection{An Ehrenfeucht-Fra\"{\i}ss\'{e} Game}\label{sect_pam_ehren}
Recall from Subsection \ref{translation_alpha} that $\alpha$ is the translation that delivers the $\mathbb{A}$-internal copy $\mathbb{A}^{\alpha}$ of $\mathbb{A}$. Clearly, $\mathbb A\models \phi \iff \phi^\alpha$, for all sentences $\phi$. 
However, we want to make 
 the argument for this fact $\mathbb A$-internal, in such a way that we can extract a finite subtheory $\Phi$ of
${\sf Th}(\mathbb A)$, such $\Phi \vdash \phi \iff \phi^\alpha$, for all sentences $\phi$. We note that
the isomorphism, say $\mf F$, from $\mathbb A^\alpha$ to $\mathbb A$ is not $\mathbb A$-definable,
so we need something else. We develop an Ehrenfeucht-Fra\"{\i}ss\'{e} game between $\mathbb A^\alpha$ and
$\mathbb A$.

In this subsection, we will use $x,y,\dots$ for general $\mathbb A$-elements and $u,v,\dots$ for
$\mathbb A^\alpha$-elements

We assume that the $\mathbb A^\alpha$-elements $u$ are represented by (something like)
numerical codes of their canonical representations by polynomials. 
We note that $\mathbb A^\alpha$ is entirely constructed inside the internal copy of $\omega$.
Inside $\omega$ we can, of course, also define things like the support function for 
the elements of $\mathbb A^\alpha$ qua coded polynomials, even if the real support function for
$\mathbb A$ is not $\mathbb A$-definable (since $\mathbb{A}$ has lots of automorphisms that send variables to pseudo-variables). 

We will think of $\alpha$ as a translation that also works
on an extended signature for $\mathbb A$ which includes the variables and the support function.
Thus, we will write $\X_q^\alpha$ and ${\sf supp}^\alpha$ for the $\mathbb A^\alpha$-internal variables
and the $\mathbb A^\alpha$-support function.

We define the promised Ehrenfeucht-Fra\"{\i}ss\'{e} game.
We say that a finite function $F$ is a \emph{witnessing function} if it has as domain a finite set of the $\X^\alpha_q$ and as range a
set of \psvar\ with the constraint that, if $\X^\alpha_q$ and $\X^\alpha_{q'}$ are in the domain and if $q<q'$, then
$F(\X^\alpha_q) \ll F(\X^\alpha_{q'})$.

Clearly, we can define ${\sf ev}_F(u)$ a function that evaluates $u$ qua coded
canonical polynomial by giving the variables $\X^\alpha_q$ the evaluation $F(\X^\alpha_q)$.

Suppose $G$ is a finite function from $\mathbb A^\alpha$ to $\mathbb A$.
We say that $G$ is \emph{good} if, there is a witnessing function $F$, such that
${\sf dom}(F) \supseteq {\sf supp}^\alpha({\sf dom}(G))$ and, for all $u \in {\sf dom}(G)$,
we have $G(u) = {\sf ev}_F(u)$.

Suppose $P(\vv x)$ is one of $x=y$, $x\leq y$, ${\sf Z}(x)$, ${\sf S}(x,y)$, ${\sf A}(x,y,z)$, ${\sf M}(x,y,z)$.
Here, e.g., {\sf A} is the relational version of addition. We have:

\begin{lem}[Atomic Clause]
Consider the formula $P$ and a sequence $\vv u$ whose length equals the arity of $P$. In $\mathbb A$, we have the following. Suppose $G$ is a good function and the domain of
$G$ extends $\vv u$.
Then, we have $P^\alpha(\vv u) \iff P(G(\vv u))$. 
\end{lem}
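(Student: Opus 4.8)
The plan is to prove the Atomic Clause by reducing everything to the fact that $\mf F$ — the canonical isomorphism from $\mathbb A^\alpha$ to $\mathbb A$ — and the concrete evaluation function ${\sf ev}_F$ agree in the way demanded by goodness, and then to transport atomic facts across an isomorphism. Concretely, fix a good function $G$ with witnessing function $F$, and let $\vv u$ be a tuple from ${\sf dom}(G)$ of the right arity for the predicate $P$. The key observation is that, since $F$ is a witnessing function — its domain contains ${\sf supp}^\alpha({\sf dom}(G))$ and it sends the internal variables $\X^\alpha_q$ occurring in any $u\in{\sf dom}(G)$ to \psvar\ of $\mathbb A$ in a $\ll$-order-preserving fashion — the map $u\mapsto {\sf ev}_F(u)$, restricted to the subring of $\mathbb A^\alpha$ generated by the relevant variables, is a ring embedding that also preserves the order. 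This is where Theorem~\ref{autosmurf} does the real work: the $\ll$-ordered image of the variables under $F$ can be realised by an automorphism of $\mathbb A$, so ${\sf ev}_F$ on this subring is the composite of the `honest' evaluation (sending $\X^\alpha_q$ to the genuine variable $\X_{q}$, which is literally how $\mf F$ acts) with that automorphism; hence it preserves all of $=$, $\leq$, $0$, successor, addition and multiplication.

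With that in hand, the argument runs predicate by predicate, but uniformly. For $P$ equal to $x=y$: $u_0 =_\alpha u_1$ holds in $\mathbb A^\alpha$ iff the coded canonical polynomials are equal, and since ${\sf ev}_F$ is injective on the subring in question, this is equivalent to ${\sf ev}_F(u_0)={\sf ev}_F(u_1)$, i.e. to $G(u_0)=G(u_1)$. For $x\leq y$: the internal ordering on $\mathbb A^\alpha$ is by definition the one read off the leading coefficient of the difference of the canonical forms, and ${\sf ev}_F$ sends that difference to the corresponding element of $\mathbb A$, whose sign is the same because $F$ respects $\ll$ (this is precisely the clause `$q<q'\Rightarrow F(\X^\alpha_q)\ll F(\X^\alpha_{q'})$', which guarantees that the internally-dominant monomial stays dominant after evaluation). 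For the zero predicate ${\sf Z}$ and the successor relation ${\sf S}$ this is immediate from the $=$ case applied to $u$ and the internal codes of $0$ and $u+1$. For ${\sf A}(x,y,z)$ and ${\sf M}(x,y,z)$: the internal addition and multiplication on $\mathbb A^\alpha$ were defined to mirror polynomial addition and multiplication, and ${\sf ev}_F$ is a ring homomorphism on the subring generated by ${\sf supp}^\alpha(\{u_0,u_1,u_2\})$ — note that ${\sf dom}(F)$ contains this support since $G$ is good — so ${\sf ev}_F(u_0 +_\alpha u_1) = {\sf ev}_F(u_0) + {\sf ev}_F(u_1)$, and similarly for multiplication; combined with the $=$ case this yields the equivalence.

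The only genuinely delicate point — and the one I would write out carefully — is the claim that ${\sf ev}_F$ really is order-preserving, i.e. that the $\ll$-ordering constraint on $F$ suffices. The subtlety is that the leading term of an element $u\in\mathbb A^\alpha$ in its internal canonical form need not have its internal $\X^\alpha_q$ sent to a genuine variable; $F(\X^\alpha_q)$ is only a pseudo-variable, so $F(\X^\alpha_q)=\X_{r}+c$ for some $c\ll \X_r$ by Theorem~\ref{gargamel}. One must check that evaluating a standard-form polynomial by such a substitution does not disturb which monomial dominates: this follows because the `tail' perturbations $c$ are $\ll$ the leading pseudo-variables, and the $\ll$-monotonicity of $F$ guarantees that distinct internal ranks map to distinct, strictly $\ll$-ordered, genuine ranks, so no cancellation of leading terms can occur. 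I expect the clean way to organise this is to first prove the purely algebraic statement `if $F$ is a witnessing function then ${\sf ev}_F$ is an order-embedding of the appropriate subring of $\mathbb A^\alpha$ into $\mathbb A$' as a standalone sublemma (invoking Theorems~\ref{autosmurf} and~\ref{gargamel}), after which all six atomic cases fall out mechanically. Everything here is visibly first-order and finitary in the coded data, so it is provable in a fixed finite fragment $\Phi$ of ${\sf Th}(\mathbb A)$, which is what the surrounding construction needs.
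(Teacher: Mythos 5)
Your proposal is correct and follows essentially the same route as the paper: both arguments invoke Theorem~\ref{autosmurf} to produce an automorphism of $\mathbb A$ relating the witnessing function $F$ to the canonical isomorphism $\mf F$, and then transport atomic facts along that automorphism. The paper handles all six atomic predicates uniformly in a single chain of equivalences $P^\alpha(\vv u) \iff P(\mf F(\vv u)) \iff \dots \iff P(G(\vv u))$, whereas you spell out the cases one by one and dwell on the order-preservation subtlety, which is already subsumed by the automorphism factorisation you yourself identify as the clean organising principle.
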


\begin{proof}
Let $F$ witness the goodness of $G$.
By Theorem~\ref{autosmurf}, there is an $\mathbb A$-automorphism $\sigma$ that sends  $F(\X_q^\alpha)$ to $\X_q$, for 
$\X_q^\alpha$ in the domain of $F$. 
We have:  \qedright
\begin{eqnarray*}
P^\alpha(\vv u) & \iff & P(\mf F(\vv u)) \\ 
& \iff  & P({\sf ev}_{\mf F}(\vv u)) \\
& \iff  & P({\sf ev}_{\sigma\circ F}(\vv u)) \\
& \iff  & P(\sigma({\sf ev}_{F}(\vv u))) \\
& \iff & P({\sf ev}_F(\vv u)) \\
& \iff & P(G(\vv u))
\end{eqnarray*}
\end{proof}

\begin{lem}[Forward Clause]
Let $G$ be a good function and let $u$ be an element of $\mathbb A^\alpha$.
Then, there is a good function $H \supseteq G$, such that $u$ is in the domain of
$H$.
\end{lem}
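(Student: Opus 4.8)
The plan is to extend the good function $G$ to include $u$ in its domain by first extending the witnessing function $F$ so that all the $\mathbb{A}$-internal variables appearing in $u$ get sent to pseudo-variables, and then setting $H(u) := {\sf ev}_F(u)$ for this extended $F$. Concretely, let $F$ witness the goodness of $G$, and let $S := {\sf supp}^\alpha(u) \setminus {\sf dom}(F)$, the finite set of $\mathbb{A}^\alpha$-internal variables occurring in $u$ but not yet handled by $F$. I need to adjoin to $F$ values for the variables in $S$, chosen among pseudo-variables of $\mathbb{A}$, in a way that respects the $\ll$-constraint defining a witnessing function, namely that the $\ll$-order on the range matches the $q$-order on the indices.

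The key step is to invoke the Extension Lemma machinery from Subsection \ref{sect_ext}. The current range of $F$ is a finite independent set $W$ of pseudo-variables of $\mathbb{A}$ (independence here meaning distinct ${\sf rank}_0$, which is exactly what the $\ll$-chain condition gives). By Theorem~\ref{smurfoludens}, applied repeatedly (once for each new variable we want to interpolate), we can enlarge $W$ to a larger finite independent set $U$ of pseudo-variables so that we have enough ``room'': for each variable in $S$, depending on where its index $q$ falls relative to the indices already in ${\sf dom}(F)$, we pick a pseudo-variable from $U$ whose ${\sf rank}_0$ slots into the corresponding gap of the $\ll$-chain. Since $\mathbb{Q}$ is dense and the pseudo-variables of $\mathbb{A}$ realize cofinally and densely many ${\sf rank}_0$ values (by Theorem~\ref{gargamel} the pseudo-variables are exactly the $\X_q + y$ in basic form, and the $\X_q$ alone already give us all rational ranks), we can always find such slotting pseudo-variables; then the Extension Lemma guarantees the enlarged set remains independent. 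This yields an extended witnessing function $F' \supseteq F$ with ${\sf dom}(F') \supseteq {\sf supp}^\alpha(u)$ and ${\sf dom}(F') \supseteq {\sf supp}^\alpha({\sf dom}(G))$. Setting $H := G \cup \{(u, {\sf ev}_{F'}(u))\}$ and checking that $F'$ still witnesses the goodness of $H$ — which requires $G(v) = {\sf ev}_{F'}(v)$ for all $v \in {\sf dom}(G)$, and this holds because $F \subseteq F'$ and ${\sf ev}$ only depends on the values of the variables actually occurring in $v$ — completes the argument.

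The main obstacle is the bookkeeping around the $\ll$-chain condition: when $u$ mentions a variable $\X_q^\alpha$ whose index $q$ lies strictly between two consecutive indices $q' < q''$ already handled by $F$, I must produce a pseudo-variable strictly $\ll$-between $F(\X_{q'}^\alpha)$ and $F(\X_{q''}^\alpha)$, and simultaneously keep the whole range independent. This is where density of $\mathbb{Q}$ and the flexibility granted by Theorem~\ref{autosmurf} and Theorem~\ref{smurfoludens} are essential: one first normalizes (via an $\mathbb{A}$-automorphism coming from an order-automorphism of $\mathbb{Q}$, as in the proof of Theorem~\ref{autosmurf}) so that the current range is literally a set of indeterminates $\{\X_{r_i}\}$, then picks rationals interpolating the $r_i$ for the new variables, and finally transports back. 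The remaining verifications — that ${\sf ev}_{F'}$ agrees with $\mf F$ on the relevant elements, and that goodness is preserved — are routine given the Atomic Clause and the definitions.
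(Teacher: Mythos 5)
Your high-level idea is right — extend the witnessing function $F$ so that ${\sf dom}(F)$ covers ${\sf supp}^\alpha(u)$, pick the new images by interpolating in $\mathbb{Q}$, and set $H(u) := {\sf ev}_{F'}(u)$ — and this matches the paper's proof. However, your appeal to Theorem~\ref{smurfoludens} (the extension result of Subsection~\ref{sect_ext}) is misplaced and, as used, does not supply what you need. That theorem takes a finite independent set $W$ of pseudo-variables and a fixed element $x\in\mathbb{A}$, and enlarges $W$ to an independent $U$ from which $x$ is polynomially definable; it gives no control whatsoever over the ${\sf rank}_0$ values of the new members of $U$, so there is no guarantee that $U$ contains pseudo-variables that ``slot into'' the prescribed gaps of the $\ll$-chain. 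Moreover the theorem's hypothesis is an element of $\mathbb{A}$, not a variable code in $\mathbb{A}^\alpha$, so ``applying it once per new variable to interpolate'' does not even type-check. Theorem~\ref{smurfoludens} is precisely the tool for the Backward Clause (where one is handed an arbitrary $x\in\mathbb{A}$ and must find a code that evaluates to it); the Forward Clause does not need it at all.

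The correct (and much simpler) step, which the paper uses, is: list $W := V \cup {\sf supp}^\alpha(u)$ in increasing $\ll^\alpha$-order as $w_0,\dots,w_{m-1}$, then — using density of $\mathbb{Q}$ — choose rationals $q_0 < \dots < q_{m-1}$ so that for $w_j\in V$ we have ${\sf rank}_0(F(w_j))=q_j$, and set $F^\star(w_j):=\X_{q_j}$ for the new $w_j$. Since these new images are literal indeterminates with distinct, interpolated ranks, independence and the $\ll$-chain condition are automatic; no extension lemma is required to guarantee them. Your detour through Theorem~\ref{autosmurf} to normalize the range to a set of indeterminates is likewise unnecessary for the same reason — only the ranks ${\sf rank}_0(F(w_j))$ matter for the interpolation, and one can read them off directly. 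Strip out those two appeals and your argument reduces to the paper's proof.
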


\begin{proof}
Let $F$ be a witnessing function for $G$ with domain $V$. Let
$W:= V \cup\, {\sf supp}^\alpha(u)$ given by $w_0\ll^\alpha \dots \ll^\alpha w_{m-1}$. We can effectively find rational numbers
$q_0 <\dots < q_{m-1}$ such that, if $w_j\in V$, then  ${\sf rank}_0(F(w_j)) = q_j$. We set $F^\star(w_j) := F(w_j)$, if
$w_j\in V$ and $F^\star(w_j) = \X_{q_j}$, otherwise. 
Finally, we take
$H := G \cup \verz{\tupel{u, {\sf ev}_{F^\star}(u)}}$.
\end{proof}

\begin{lem}[Backwards Clause]
Let $G$ be a good function and let $x$ be an element of $\mathbb A$.
Then, there is a good function $H \supseteq G$, such that $x$ is in the range of
$H$.
\end{lem}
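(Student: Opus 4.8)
The plan is to run the mirror image of the Forward Clause argument, with the Extension Lemma (Theorem~\ref{smurfoludens}) supplying what was available for free in the forward direction. Fix a witnessing function $F$ for $G$, with domain $V=\{\X^\alpha_{q_0},\dots,\X^\alpha_{q_{k-1}}\}$ where $q_0<\dots<q_{k-1}$, and put $w_i:=F(\X^\alpha_{q_i})$, so that $W:={\sf range}(F)=\{w_0,\dots,w_{k-1}\}$ is an independent finite set of \psvar\ of $\mathbb{A}$ with $w_0\ll\dots\ll w_{k-1}$. Apply Theorem~\ref{smurfoludens} to $W$ and the given $x$: this produces an independent finite set of \psvar\ $U\supseteq W$ such that $x$ is polynomially definable from $U$. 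Since $U$ is independent, $\ll$ linearly orders it; list it as $u_0\ll\dots\ll u_{\ell-1}$. Let $\iota\colon\{0,\dots,k-1\}\to\{0,\dots,\ell-1\}$ be the (strictly increasing) map with $w_i=u_{\iota(i)}$, and fix a polynomial expression with integer coefficients witnessing $x=t(u_0,\dots,u_{\ell-1})$.

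I would then build the internal witnessing function. Since the index set of the indeterminates of $\mathbb{A}^\alpha$ is internally a dense order without endpoints, choose rationals $r_0<\dots<r_{\ell-1}$ (indexing indeterminates of $\mathbb{A}^\alpha$) with $r_{\iota(i)}=q_i$ for every $i<k$, i.e.\ interpolate fresh indices strictly between and beyond the $q_i$'s so that $\X^\alpha_{r_j}$ is the indeterminate already called $\X^\alpha_{q_i}$ exactly when $j=\iota(i)$. Let $F^\star$ have domain $\{\X^\alpha_{r_0},\dots,\X^\alpha_{r_{\ell-1}}\}$ and be given by $F^\star(\X^\alpha_{r_j}):=u_j$. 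Then $F^\star$ is a witnessing function, since $r_j<r_{j'}$ forces $u_j\ll u_{j'}$ by our listing of $U$; and $F^\star\supseteq F$, since $F^\star(\X^\alpha_{q_i})=u_{\iota(i)}=w_i=F(\X^\alpha_{q_i})$. Let $v$ be the element of $\mathbb{A}^\alpha$ named by the internal polynomial expression $t(\X^\alpha_{r_0},\dots,\X^\alpha_{r_{\ell-1}})$, and set $H:=G\cup\{\tupel{v,x}\}$.

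What remains is to check that $H$ is good, witnessed by $F^\star$, and that $v\in\mathbb{A}^\alpha$. For the old pairs $\tupel{u',G(u')}$ in $G$ we have ${\sf supp}^\alpha(u')\subseteq V\subseteq{\sf dom}(F^\star)$ and ${\sf ev}_{F^\star}(u')={\sf ev}_F(u')=G(u')$ because $F^\star$ extends $F$. For the new pair, ${\sf supp}^\alpha(v)\subseteq{\sf dom}(F^\star)$ and ${\sf ev}_{F^\star}(v)=t(u_0,\dots,u_{\ell-1})=x$, since ${\sf ev}_{F^\star}$ is a ring homomorphism sending $\X^\alpha_{r_j}$ to $u_j$ and the identity $x=t(u_0,\dots,u_{\ell-1})$ holds in $\mathbb{A}$ by choice of $U$. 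That $v\geq 0$ follows exactly as in the Atomic Clause: by Theorem~\ref{autosmurf} there is an $\mathbb{A}$-automorphism carrying each $u_j$ to an indeterminate $\X_{s_j}$ with $s_0<\dots<s_{\ell-1}$, so composing it with ${\sf ev}_{F^\star}$ yields a mere relabelling of indeterminates, which is order-preserving; hence ${\sf ev}_{F^\star}$ is order-preserving and $x\geq 0$ gives $v\geq 0$. Thus $x\in{\sf range}(H)$, as required.

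The one point that genuinely needs care is not mathematical but concerns internalisation: the whole construction — the use of Theorem~\ref{smurfoludens}, the interpolation of the $r_j$, and the formation of $v$ from the expression $t$ — must be carried out inside $\mathbb{A}$ using only a fixed finite fragment of ${\sf Th}(\mathbb{A})$, so that, together with the Atomic and Forward Clauses, it feeds into a single sentence $\Psi$ witnessing the ${\sf INT}^{\sf fw}_3$-retract. This is exactly why Theorem~\ref{smurfoludens} was recorded as a first-order statement about $\mathbb{A}$; given the sequentiality of $\mathbb{A}$ for coding the finite objects $F$, $F^\star$, $U$ and $t$, it goes through routinely, and I would note this without elaborating.
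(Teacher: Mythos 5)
Your proposal is correct and follows essentially the same route as the paper: apply Theorem~\ref{smurfoludens} to the range of the witnessing function $F$ and the target $x$, interpolate fresh rational indices consistently with the old ones, extend $F$ to $F^\star$ accordingly, and let $u$ be the code of the polynomial defining $x$. The one thing you supply that the paper leaves tacit is the verification that the resulting code $v$ actually denotes a \emph{non-negative} polynomial (so that $v\in\mathbb{A}^\alpha$), which you handle correctly by composing ${\sf ev}_{F^\star}$ with an automorphism from Theorem~\ref{autosmurf} sending the $u_j$ to indeterminates; this is a worthwhile explicit check.
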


\begin{proof}
Let $F$ witness that $G$ is good and let $X$ be the range of $F$. 
By Theorem~\ref{smurfoludens}, there is an independent set of 
\psvar\ $Y\supseteq X$, such that $x$ is polynomially definable from $Y$.
Let $Y$ be given by $y_0 \ll \dots \ll y_{m-1}$. We can find rational numbers $q_0<\dots < q_{m-1}$, such that, if
$y_j \in X$, then $F(\X^\alpha_{q_j}) = y_j$. We set $F^\star(\X^\alpha_{q_j}) := y_j$. Let $u$ be a code of the polynomial defining $x$ from $W$ employing the variables $\X_{q_i}$. Then, we can set $H = G\cup \verz{\tupel{u,x}}$.
The goodness of $H$ is witnessed by $F^\star$. 
\end{proof}

By standard arguments, we have the following lemma.
\begin{lem} \label{finite_witnessing}Let $\Phi$ be given as the conjunction of \pam, $({\pam})^\alpha$, and the atomic, backwards and forwards clauses. Then, $\Phi \vdash \phi\iff \phi^\alpha$. 
\end{lem}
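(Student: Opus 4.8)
The plan is to run the standard Ehrenfeucht-Fra\"{\i}ss\'{e} back-and-forth argument, but entirely inside a model of $\Phi$, tracking carefully that only finitely much of ${\sf Th}(\mathbb A)$ is used. Fix a model $\mc{N}\models\Phi$; then $\mc N$ carries the two structures $\mathbb A$ (its ambient structure, a model of $\pam$) and $\mathbb A^\alpha$ (its internal copy, a model of $(\pam)^\alpha$), together with the class of \emph{good functions} $G$ and the atomic, forwards and backwards clauses asserting respectively: (i) a good $G$ preserves each atomic predicate $P$ between $\mathbb A^\alpha$ and $\mathbb A$; (ii) every good $G$ extends to a good $H$ containing any prescribed $u\in\mathbb A^\alpha$ in its domain; (iii) every good $G$ extends to a good $H$ containing any prescribed $x\in\mathbb A$ in its range. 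These are exactly the clauses verified in the three lemmas above, each of which is a first-order statement and hence can be taken as a conjunct of $\Phi$.

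First I would show, by external induction on the $\Xi$-formula $\phi(v_0,\dots,v_{n-1})$, the following strengthened statement, provable in $\Phi$: for every good function $G$ whose domain contains $\vv u$ and every tuple $\vv u$ from $\mathbb A^\alpha$, we have $\mathbb A^\alpha\models\phi^\alpha(\vv u)$ iff $\mathbb A\models\phi(G(\vv u))$. The atomic case is precisely the Atomic Clause. The propositional connectives are immediate since $(\cdot)^\alpha$ commutes with them and $G(\vv u)$ is fixed. For the quantifier cases, say $\phi=\exists v_n\,\psi$: if $\mathbb A^\alpha\models\phi^\alpha(\vv u)$, pick a witness $u_n\in\delta_\alpha$; by the Forwards Clause extend $G$ to a good $H$ with $u_n\in{\sf dom}(H)$; the induction hypothesis applied to $\psi$ and $H$ gives $\mathbb A\models\psi(H(\vv u),H(u_n))$, hence $\mathbb A\models\phi(G(\vv u))$ since $H$ agrees with $G$ on $\vv u$. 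Conversely, from a witness $x_n\in\mathbb A$ use the Backwards Clause and the same reasoning. The universal quantifier is dual (or obtained via negation). All of this uses only $\Phi$: the clauses are conjuncts of $\Phi$, and the only ``semantic'' facts invoked are that $\mathbb A\models\pam$ and $\mathbb A^\alpha\models(\pam)^\alpha$, again conjuncts of $\Phi$.

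Next I would handle the base case of the back-and-forth: $\Phi$ proves that the empty function is good (its witnessing function is the empty function, whose required cofinality constraint is vacuous), and that $\exists v\,v=v$ holds in $\mathbb A^\alpha$ — this is part of $(\fol\Xi)^\alpha$, needed so that the argument ``gets off the ground.'' Then for a \emph{sentence} $\phi$ (so $n=0$), applying the displayed equivalence to the empty good function $G=\varnothing$ yields $\mathbb A^\alpha\models\phi^\alpha$ iff $\mathbb A\models\phi$, i.e. $\Phi\vdash\phi^\alpha\iff\phi$, which is the claim. The main point requiring care — and the only place where one must resist waving hands — is the bookkeeping that the external induction is legitimate: $\phi$ ranges over genuine (standard, metatheoretic) formulas, so the induction is a metatheoretic induction producing, for each fixed $\phi$, a single derivation from $\Phi$; there is no need for an internal induction or an internal truth predicate, and in particular no circularity. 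I expect the genuine obstacle to be purely expository: stating the strengthened inductive claim with the right handling of free variables and of the substitution $G(\vv u)$ so that the quantifier steps go through cleanly; the mathematical content is entirely supplied by the three preceding lemmas.
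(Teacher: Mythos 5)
Your argument is correct and fills in precisely the ``standard arguments'' the paper's proof invokes without writing out: an external, metatheoretic induction on $\phi$, with the Atomic Clause handling the base case, the propositional connectives handled directly, and the Forward and Backward Clauses handling the quantifier steps, all carried out inside an arbitrary model of $\Phi$ and using only conjuncts of $\Phi$. The one minor quibble is notational: writing $\mathbb A^\alpha\models\phi^\alpha(\vv u)$ conflates levels — what one establishes from $\Phi$, for each fixed $\phi$, is the single first-order sentence asserting that for every good $G$ with $\vv u$ in its domain, $\phi^\alpha(\vv u)$ holds iff $\phi(G(\vv u))$ holds, after which the case of a sentence follows by specializing to the (provably good) empty function — but the mathematical content is exactly right.
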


\subsection{Completing \pam\ plus Collection}
We  apply Corollary~\ref{cor_mod_retract} with:
\begin{itemize}
    \item 
$\mathbb A$ in the role of $\mc M$, 
\item 
$\leq$ in the role of $\belowz$, 
\item 
$\mathbb N$ in the role of
$\mc N$, 
\item 
{\sf N} in the role of $\imath$,
\item 
{\sf A} in the role of $\jmath$. 
\item
$\omega_{\mbbs A}^{\sf c}$ in the role of $\theta$.
\item 
The finite statement $\Phi$ (as in Lemma \ref{finite_witnessing}) of the atomic and back and forth clauses
for the good functions together with \pam\ and ${\pam}^\alpha$  delivers 
our desired witness of a ${\sf INT}^{\sf fw}_3$-retract.
\end{itemize}

It is easy to see that our choices realise the conditions of the Corollary \ref{cor_mod_retract}.
We note that $\tau_{\sf N}$ and $\omega_{\mbbs A}^{\sf c}$ are $\Delta_0$.
So it follows that we have a sentence $\Phi$, and a set $\mc X$
of $\Delta_1$-formulas  (over $\Phi$), such 
that $\Phi + \mc X$ axiomatises  $\mathsf{Th}(\mathbb{A})$. 

We conclude:

\begin{theorem}\label{uitslagcollsmurf}
    The theory $\pam+{\sf Coll}$ has a completion that is axiomatised by a sentence $\Phi$ plus a set of $\Sigma_1$-sentences. 
\end{theorem}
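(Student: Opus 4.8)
The plan is to invoke Corollary~\ref{cor_mod_retract} with $\mathbb A$ playing the role of $\mc M$, the standard model $\mathbb N$ playing the role of $\mc N$, the interpretation {\sf N} (domain restriction to $\omega_{\mbbs A}$) in the role of $\imath$, the interpretation {\sf A} in the role of $\jmath$, the relation $\leq$ in the role of $\belowz$, the $\Delta_0$-formula $\omega_{\mbbs A}^{\sf c}$ in the role of $\theta$, and the formula class $\Gamma := \Delta_0$. To apply the corollary we must check that its two hypotheses hold: first, that {\sf N} and {\sf A} witness that $\mathbb A$ is an ${\sf INT}_3^{\sf fw}$-retract of $\mathbb N$; and second, that $\mathbb A\models \exists x\, \theta(x) \wedge \forall x\bin \theta\,\forall y \bin \delta_{\sf N}\, \below yx$, i.e., that there is a nonstandard element of $\mathbb A$ bounding all of $\omega_{\mbbs A}$.

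First I would dispatch the second hypothesis: it is immediate, since $\mathbb A$ is nonstandard (any $\X_q$ is above $\omega_{\mbbs A}$), $\omega_{\mbbs A}^{\sf c}$ is nonempty and consists exactly of the elements above $\omega_{\mbbs A}$, and $\delta_{\sf N}$ is $\omega_{\mbbs A}$, so every element of $\theta$ bounds every element of $\delta_{\sf N}$. For the first hypothesis, the retract is witnessed as follows. Since $\mathbb A \models (\fol{\mathbb N})^{\tau_{\sf N}}$ and $\mathbb A \models (\fol{\mathbb A})^{\tau_{\sf A}\circ \tau_{\sf N}}$ — these are just the translated identity axioms, which hold because the relevant internal structures are genuine models — we have interpretations ${\sf N}:\mathbb N \to \mathbb A$ and ${\sf A}\circ{\sf N}:\mathbb A \to \mathbb A$. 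It then remains to exhibit a finite subtheory $\Phi$ of ${\sf Th}(\mathbb A)$ such that $\Phi \vdash \phi \iff \phi^{\alpha}$ for every sentence $\phi$, where $\alpha = \tau_{\sf A}\circ\tau_{\sf N}$ is the translation carrying $\mathbb A$ to its internal copy $\mathbb A^{\alpha}$. This is precisely Lemma~\ref{finite_witnessing}: take $\Phi$ to be the conjunction of \pam, $(\pam)^{\alpha}$, and the finitely many atomic, forward, and backward clauses governing the good functions; the Atomic, Forward, and Backward Clause lemmas together with the standard back-and-forth argument show $\Phi \vdash \phi \iff \phi^{\alpha}$. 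Hence {\sf N}, {\sf A} witness an ${\sf INT}_3^{\sf fw}$-retract.

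With both hypotheses verified, Corollary~\ref{cor_mod_retract} yields a $\Theta$-sentence $\Phi$ and a set $\mc X$ of $\Delta_1[\Phi](\Delta_0)$-formulas such that $\Phi + \mc X$ axiomatises ${\sf Th}(\mathbb A)$. Since each member of $\mc X$ is in particular $\Delta_1[\Phi]$, it is $\Phi$-provably equivalent to a $\Sigma_1$-sentence; replacing each such formula by its $\Sigma_1$-equivalent gives an axiomatisation of ${\sf Th}(\mathbb A)$ by $\Phi$ together with a set of $\Sigma_1$-sentences. Finally, because $\mathbb A$ is a model of $\pam + {\sf Coll}$ (Theorems~\ref{eee_for_A} and~\ref{coll_in_A}), ${\sf Th}(\mathbb A)$ is a consistent completion of $\pam + {\sf Coll}$ of the asserted restricted complexity, which establishes Theorem~\ref{uitslagcollsmurf}.

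The only genuine content here has already been absorbed into the lemmas feeding this proof, so the remaining ``obstacle'' is purely bookkeeping: one must be sure that the translation $\alpha$ in Lemma~\ref{finite_witnessing} and the interpretations {\sf N}, {\sf A} plugged into Corollary~\ref{cor_mod_retract} are literally the same data, that $\tau_{\sf N}$ and $\omega_{\mbbs A}^{\sf c}$ are indeed $\Delta_0$ (so that $\Gamma = \Delta_0$ is a legitimate choice and the conclusion is about $\Sigma_1$-sentences rather than some larger class), and that the passage from $\Delta_1[\Phi]$-formulas to honestly $\Sigma_1$-sentences preserves the axiomatisation. None of this is difficult; the heavy lifting — constructing $\mathbb A$, proving it satisfies ${\sf Coll}$, proving the automorphism and extension lemmas, and running the Ehrenfeucht--Fra\"{\i}ss\'{e} argument — is all done before the statement of the theorem.
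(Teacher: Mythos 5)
Your proof is correct and follows essentially the same route as the paper's: apply Corollary~\ref{cor_mod_retract} with $\mathbb A$, $\mathbb N$, $\mathsf N$, $\mathsf A$, $\leq$, $\omega_{\mbbs A}^{\sf c}$ in the indicated roles, take $\Gamma=\Delta_0$, and obtain the required finite ${\sf INT}^{\sf fw}_3$-witness from Lemma~\ref{finite_witnessing} (the Ehrenfeucht--Fra\"{\i}ss\'{e}-style atomic/forward/backward clauses). The only blemishes are minor notational: the translation producing the internal copy $\mathbb A^\alpha$ is the composite that first applies $\tau_{\sf A}$ and then $\tau_{\sf N}$ (i.e.\ $\nu\tau$ with $\nu=\tau_{\sf A}$, $\tau=\tau_{\sf N}$ in the conventions of Theorem~\ref{grotesmurf}), not $\tau_{\sf A}\circ\tau_{\sf N}$ read as function composition, but this does not affect the argument.
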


\subsection{Free Riders}
We note that, by a compactness argument, the ordered ring $\hat{\mathbb A}$ is a Unique Factorisation Domain. Hence, our model $\mathbb A$ of $\pam+{\sf Coll}$ has additional salient properties, like the GCD property (existence of greatest common divisors). However, the GCD property is not provable in $\pam+{\sf Coll}$.\footnote{Indeed the GCD property is not even provable in $\pam+{\sf Coll}$ since the iterated Shepherdson model constructed in the next section (which satisfies $\mathsf{Coll}$) inherits certain algebraic `defects' of the Shepherdson model, including the failure of factorization into irreducibles.} 
%\alinote{The footnote is new.}
To see this, let $\mathcal{R}$ be the analogue of $\hat{\mathbb A}$, where we allow coefficients in $\mathbb Q$ rather than in $\mathbb{Z}$, except for the case
of the coefficient with no variables. It is evident that $\mathcal{R}^{\mathsmaller{\geq 0}}$ is a model of $\pam$. Our method of verification of $\mathsf{Coll}$ in $\mathbb{A}$ can be used here as well to show that $\mathcal{R}^{\mathsmaller{\geq 0}}$ satisfies $\mathsf{Coll}$. However, $\mathcal{R}$ is not even a factorization domain (let alone a unique factorization domain) and therefore it does not have the GCD property. This is based on the following observation: any $\X_q$ can be written as the product of $(1/2)\X_q$ and $2$ in $\mathcal{R}$, neither of which is a unit (invertible). Another way to see that the GCD property fails in $\mathcal{R}$ is to note that the elements $\X_0$ and $\X_1$ do not have a greatest common divisor in $\mathcal{R}$.

\section{A restricted Completion of {\sf IOpen} plus full Collection}\label{iterated_shepherdson}

In this section, we construct a recursive model, denoted $\QS$, of $\mathsf{IOpen + Coll}$ whose 
complete theory is axiomatized by a collection of axioms consisting of a single 
sentence together with an infinite number of $\Sigma_1$-sentences. 
Intuitively speaking, the relationship of $\QS$ to $\mathbb{S}$
(the Shepherdson model of {\sf IOpen}) is that of 
 the model $\mathbb{A}$ of the previous section to the model $\mathbb Z[\X]^{\mathsmaller{\geq 0}}$. For this reason we sometimes refer to $\mathbb{B}$ as the $\mathbb{Q}$\emph{-iterated Shepherdson model}.\footnote{As we shall see, $\mathbb{B}$ has the `fractal property' of being is isomorphic to many submodels of itself, some of which are proper initial segments of $\mathbb{B}.$} Given that ${\sf IOpen}$ is a much stronger theory than ${\sf PA}^-$, the model $\QS$ is far more complicated than $\mathbb{A}$.

\subsection{Basic properties of the $\mathbb{Q}$-iterated Shepherdson model}  
Assume that $\mc R$ is an arbitrary discretely ordered commutative ring. $\mathsf{rc}(\mc R)$ denotes the real closure of the fraction field of $\mc R$. See Subsection \ref{realsmurf} for the details of the construction. Note that we assume that $\mc R$ is a subring of ${\sf rc}(\mc R)$. Given an indeterminate $\X$ (for our purposes $\X$ will be of the form $\X_q$ for some $q \in \mathbb{Q}$), the \emph{Shepherdson extension of $\mc R$}, denoted $\mathcal{S}(\mc R,\X)$ is the ordered commutative subring of $\mathsf{rc}(\mc R[\X])$ that consists of fractional polynomials of the following form, often referred to as Puiseux polynomials:
    \[a_n\X^{n/q}+a_{n-1}\X^{n-1/q} + \ldots + a_1 \X^{1/q} + a_0,\]
    where 
    \begin{enumerate}[$1$.]
        \item    $n,q\in \mathbb{N}$, $q>0$.
        \item $a_0\in \mc R$; and
        \item $a_i\in \mathsf{rc}(\mc R)$ for $1\leq i\leq n.$
       
    \end{enumerate} 
    The order is determined by making $\X$ infinite relative to the elements of $\mc R$. 
Thus, $\shef(\mathbb{Z},\X)$ is the Shepherdson model $\mathbb{S}$ of $\mathsf{IOpen}$ we considered in Section \ref{openitself}.
We have the following basic lemma.
\begin{lem}\label{kleinesmurf}
Given a discretely ordered commutative ring $\mc R$, there is a unique isomorphism of ${\sf rc}(\shef(\mc R,\X))$ to ${\sf rc}(\mc R[\X])$ over
$\mc R[\X]$.
\end{lem}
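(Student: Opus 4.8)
The plan is to identify $\rc(\shef(\mc R,\X))$ with $\rc(\mc R[\X])$ by observing that the latter field is \emph{already} a real closure of the fraction field of $\shef(\mc R,\X)$, and then to invoke the uniqueness of real closures (Theorem~\ref{rc_is_unique}); the uniqueness of the isomorphism will come from the rigidity of a real closed field over a subfield over which it is algebraic.

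\emph{Set-up.} By the definition of the Shepherdson extension we have inclusions of ordered rings $\mc R[\X]\subseteq \shef(\mc R,\X)\subseteq \rc(\mc R[\X])$ (taking $q=1$ with coefficients in $\mc R$ exhibits the ordinary polynomials as Puiseux polynomials, and $\shef(\mc R,\X)$ was defined as a subring of $\rc(\mc R[\X])$). Forming fraction fields inside the field $\rc(\mc R[\X])$ yields ordered-field inclusions
\[\mathsf{fc}(\mc R[\X])\ \subseteq\ \mathsf{fc}(\shef(\mc R,\X))\ \subseteq\ \rc(\mc R[\X]).\]

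\emph{Existence.} First I would check that $\rc(\mc R[\X])$ is a real closure of $\mathsf{fc}(\shef(\mc R,\X))$ in the sense of Subsection~\ref{realsmurf}: it extends $\mathsf{fc}(\shef(\mc R,\X))$ as an ordered field; it is real closed by definition; and every one of its elements is algebraic over $\mathsf{fc}(\mc R[\X])$, hence \emph{a fortiori} algebraic over the intermediate field $\mathsf{fc}(\shef(\mc R,\X))$. Since $\rc(\shef(\mc R,\X))$ is, by the notation adopted in Subsection~\ref{realsmurf}, also a real closure of $\mathsf{fc}(\shef(\mc R,\X))$, Theorem~\ref{rc_is_unique} supplies an isomorphism $\Phi\colon \rc(\shef(\mc R,\X))\to\rc(\mc R[\X])$ that is the identity on $\mathsf{fc}(\shef(\mc R,\X))$, and in particular on $\mc R[\X]$.

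\emph{Uniqueness.} The key point is that $\rc(\shef(\mc R,\X))$ is algebraic over $\mathsf{fc}(\mc R[\X])$. Indeed $\shef(\mc R,\X)$ itself is algebraic over $\mc R[\X]$: each coefficient $a_i$ of a Puiseux polynomial lies in $\rc(\mc R)$ and is therefore algebraic over $\mathsf{fc}(\mc R)\subseteq \mathsf{fc}(\mc R[\X])$, each $\X^{1/q}$ is a root of $T^{q}-\X\in\mc R[\X][T]$, and the elements of a field extension algebraic over a given subfield form a subfield; hence every Puiseux polynomial is algebraic over $\mathsf{fc}(\mc R[\X])$, so $\mathsf{fc}(\shef(\mc R,\X))$, and then its real closure $\rc(\shef(\mc R,\X))$, is algebraic over $\mathsf{fc}(\mc R[\X])$. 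Now suppose $\Phi,\Psi\colon\rc(\shef(\mc R,\X))\to\rc(\mc R[\X])$ both fix $\mc R[\X]$ pointwise; then $\Psi^{-1}\circ\Phi$ is an automorphism of the real closed field $\rc(\shef(\mc R,\X))$ fixing $\mathsf{fc}(\mc R[\X])$ pointwise. Because the ordering of a real closed field $K$ is definable (namely $x\geq 0\iff\exists y\; x=y^{2}$), every field automorphism of $K$ is order preserving; and an order preserving automorphism fixing a subfield $K_0$ over which $K$ is algebraic must fix each element of $K$, since it permutes the (finitely many, linearly ordered) roots in $K$ of the minimal polynomial of that element over $K_0$. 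Hence $\Psi^{-1}\circ\Phi=\id$, i.e.\ $\Phi=\Psi$.

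\emph{Where the care is needed.} The argument is routine once this viewpoint is adopted; the only slightly delicate ingredient is the transitivity of algebraicity, which is what lets $\rc(\mc R[\X])$ double as a real closure of the intermediate field $\mathsf{fc}(\shef(\mc R,\X))$ (used for existence) and lets $\rc(\shef(\mc R,\X))$ be algebraic over $\mathsf{fc}(\mc R[\X])$ (used for uniqueness). Everything else is an appeal to Theorem~\ref{rc_is_unique} together with the definability of the order in a real closed field.
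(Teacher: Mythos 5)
Your proof is correct and takes essentially the same approach as the paper's (much terser) proof: both rest on the inclusions $\mc R[\X]\subseteq\shef(\mc R,\X)\subseteq\rc(\mc R[\X])$, which realize $\rc(\shef(\mc R,\X))$ and $\rc(\mc R[\X])$ as real closures of a common ordered field, and then invoke Theorem~\ref{rc_is_unique}. You additionally spell out the algebraicity of $\shef(\mc R,\X)$ over $\mathsf{fc}(\mc R[\X])$ (which the paper leaves implicit) and supply a self-contained argument for uniqueness of the isomorphism — a sensible precaution, since the paper's statement of Theorem~\ref{rc_is_unique} omits the word ``unique'', although the cited theorem in Lang does assert it.
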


\begin{proof} This follows from Theorem \ref{rc_is_unique},
since $\mc R[\X]$ is a subring of $\shef(\mc R,\X)$, and $\shef(\mc R,\X)$ is a subring of $\mathsf{rc}(\mc R[\X])$.
\end{proof}

The following theorem is needed in the proof of Theorem \ref{Marker_thm}. Recall that a subring $\mc R$ of an ordered field $\mc K$ is 
said to be an \emph{integer part} of $\mc K$ if for all $k\in \mc K$ there is $r\in \mc R$ such that $\left\vert r-k\right\vert <1$. Let us mention that 
Shepherdson established the equivalence of (1) and (2). The equivalence of (2) and (3) follows from the well-known fact that if 
$\mc R$ is an ordered subring of a real closed field $\mc K$, then there is a (unique) order preserving field embedding 
$e:\mathsf{rc}(\mc R)\rightarrow \mc K$ such that $e(x)=x$ for all $x\in \mc R$.

\begin{theorem} [Shepherdson \cite{shep:nons64}] \label{Shepherdson_thm} The following are equivalent for a discrete ordered commutative ring $\mc R$:
\begin{enumerate}[$1$.]
    \item  $\mc R^{\mathsmaller{\geq 0}}\models {\sf IOpen}$.
    \item $\mc R$ is an integer part of $\mathsf{rc}(\mc R).$
    \item $\mc R$ is an integer part of a real closed field.
\end{enumerate}
\end{theorem}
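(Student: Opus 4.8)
The plan is to separate the two genuinely independent equivalences: the cycle $(2)\Rightarrow(3)\Rightarrow(2)$, which is essentially formal, and $(1)\Leftrightarrow(2)$, which is Shepherdson's theorem and carries all the content. The implication $(2)\Rightarrow(3)$ is immediate because $\mathsf{rc}(\mc R)$ is itself a real closed field. For $(3)\Rightarrow(2)$ I would take a real closed field $\mc K$ of which $\mc R$ is an integer part and use the embedding property recalled just before the theorem (which rests on Theorem~\ref{rc_is_unique}): there is a unique order-preserving field embedding $e\colon \mathsf{rc}(\mc R)\hookrightarrow \mc K$ fixing $\mc R$ pointwise, so after identifying $\mathsf{rc}(\mc R)$ with $e[\mathsf{rc}(\mc R)]$ it becomes a subfield of $\mc K$ containing $\mc R$. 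Then for each $\alpha\in\mathsf{rc}(\mc R)\subseteq\mc K$ the integer-part property of $\mc R$ \emph{inside} $\mc K$ supplies an $r\in\mc R$ with $|r-\alpha|<1$, which is exactly what $(2)$ asserts.

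For $(1)\Rightarrow(2)$ I would show that $\mc R^{\mathsmaller{\geq 0}}\models\iopen$ lets us locate the floor of an arbitrary $\alpha\in\mathsf{rc}(\mc R)$. Since $\alpha$ is algebraic over $\mathsf{fc}(\mc R)$, a standard root bound yields $N\in\mc R^{\mathsmaller{\geq 0}}$ with $|\alpha|<N$, and we may assume $\alpha\geq 0$. Fixing $f\in\mc R[\X]$ with $f(\alpha)=0$ together with an index pinning down which root of $f$ the element $\alpha$ is, Tarski's quantifier elimination for real closed fields shows that the relation ``$x\le\alpha$'' (for $x$ ranging over $\mathsf{rc}(\mc R)$) is equivalent to an \emph{open} formula $\phi(x)$ whose parameters are the coefficients of $f$, hence lie in $\mc R$. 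As $\phi$ is quantifier-free and $\mc R$ is a substructure of $\mathsf{rc}(\mc R)$ as an ordered ring, $\phi$ is absolute, so $\{x\in\mc R^{\mathsmaller{\geq 0}}:\mc R^{\mathsmaller{\geq 0}}\models\phi(x)\}=\{x\in\mc R^{\mathsmaller{\geq 0}}:x\le\alpha\}$, and in $\mc R^{\mathsmaller{\geq 0}}$ we have $\phi(0)$ and $\neg\phi(N)$. Finally I would invoke the discrete intermediate value property for open formulas: over $\pam$, open induction proves that if $\psi$ is open with $\psi(0)$ true and $\psi(b)$ false, then there is $r<b$ with $\psi(r)\wedge\neg\psi(r+1)$ --- apply open induction to the open, successor-closed formula $\psi'(x):=(x\le b\wedge\psi(x))\vee(x>b\wedge\psi(b))$ and note $\psi'(b)=\psi(b)$. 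Applied to $\phi$ and $N$ this gives $r<N$ with $r\le\alpha<r+1$, i.e.\ $|r-\alpha|<1$, so $(2)$ holds.

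For $(2)\Rightarrow(1)$, set $\mc K:=\mathsf{rc}(\mc R)$ and assume $\mc R$ is an integer part of $\mc K$; then $\mc R$ is discretely ordered, so $\mc R^{\mathsmaller{\geq 0}}\models\pam$, and it suffices to verify the least-number principle for open formulas, since this implies $\iopen$ over $\pam$ (given a successor-closed open $\phi$ with $\phi(0)$ and $\neg\phi(b)$, the least $c$ with $\neg\phi(c)$ is a successor $d+1$ with $\phi(d)$, a contradiction). So let $\phi(x,\vv p)$ be open with $\vv p\in\mc R^{\mathsmaller{\geq 0}}$ and $A:=\{x\in\mc R^{\mathsmaller{\geq 0}}:\phi(x,\vv p)\}$ nonempty. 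Over $\mc K$ the truth value of $\phi(\cdot,\vv p)$ is constant on each cell of the partition of $\mc K$ cut out by the finitely many roots of the polynomials occurring in $\phi$, so $\widehat A:=\{t\in\mc K^{\mathsmaller{\geq 0}}:\mc K\models\phi(t,\vv p)\}$ is a finite union of points and intervals with endpoints among $\{0\}$ and those roots, and $A=\widehat A\cap\mc R^{\mathsmaller{\geq 0}}$ by absoluteness. Using hypothesis $(2)$, let $C\subseteq\mc R^{\mathsmaller{\geq 0}}$ be the finite set consisting of $0$, the floors of the nonnegative roots, and their successors. The point is that any $d\in A$ is either an isolated point of $\widehat A$, hence a root, hence equal to its own floor and so in $C$; or it lies in an interval component $(\rho,\rho')$ of $\widehat A$, and then --- because $\mc R$ is discretely ordered --- the least element of $\mc R$ exceeding $\rho$ is exactly $\lfloor\rho\rfloor+1$, whence $\lfloor\rho\rfloor+1\in A\cap C$ (or $0\in A\cap C$ if $\rho=-\infty$, or if $\lfloor\rho\rfloor+1<0$). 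The same case analysis applied to a hypothetical $d\in A$ with $d<\min(A\cap C)$ produces an element of $A\cap C$ below $d$, a contradiction; hence $\min A=\min(A\cap C)$ exists.

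The main obstacle throughout is the $(1)\Leftrightarrow(2)$ equivalence, i.e.\ reconciling the syntactic weakness of $\iopen$ with the semialgebraic geometry of $\mathsf{rc}(\mc R)$: one must keep the reasoning confined to \emph{finitely many} explicit witnesses --- precisely the floors of the relevant roots --- so that it can be carried out with the meagre resources of $\iopen$, and one must lean on quantifier elimination for real closed fields together with absoluteness of quantifier-free formulas to pass between $\mc R^{\mathsmaller{\geq 0}}$ and $\mc K$. The $(2)\Leftrightarrow(3)$ part, by contrast, is a routine consequence of the uniqueness of real closures.
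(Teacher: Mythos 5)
The paper does not actually give a proof of this theorem: it cites Shepherdson's paper for the equivalence $(1)\Leftrightarrow(2)$, and the sentence immediately preceding the statement supplies only a one-line hint for $(2)\Leftrightarrow(3)$, namely the canonical embedding $\mathsf{rc}(\mc R)\hookrightarrow \mc K$ over $\mc R$. Your proposal is correct and fills in what the paper leaves to reference. Your handling of $(2)\Leftrightarrow(3)$ uses exactly the embedding fact the paper alludes to. Your proof of the substantive equivalence $(1)\Leftrightarrow(2)$ --- Tarski quantifier elimination to present ``$x\le\alpha$'' as an open formula with parameters in $\mc R$, then the discrete intermediate-value principle provable from open induction for the $(1)\Rightarrow(2)$ direction, and the finite cell decomposition of a quantifier-free-definable subset of $\mathsf{rc}(\mc R)$, tracked by the floors of the finitely many relevant roots, for $(2)\Rightarrow(1)$ --- is essentially Shepherdson's original argument. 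Two small points worth tidying: since $\pam$ is formulated in the signature $\{0,1,+,\times,<\}$ of the nonnegative semiring, you should remark that the open formula produced by quantifier elimination (whose coefficients live in $\mc R$ and may be negative) must be rewritten as an open formula with parameters in $\mc R^{\mathsmaller{\geq 0}}$ by moving terms across the inequality, which is routine given $\pam$'s subtraction axiom and is needed both for absoluteness between $\mc R^{\mathsmaller{\geq 0}}$ and $\mathsf{rc}(\mc R)$ and for applying open induction; and the root bound for non-monic $f\in\mc R[\X]$ should be phrased so that the bound itself lands in $\mc R^{\mathsmaller{\geq 0}}$ (e.g.\ $|\alpha|\le\max(1,\sum_i|a_i|)$ using $|a_n|\ge 1$ in a discretely ordered ring), which you implicitly rely on.
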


\begin{theorem}[Marker \cite{markerIOpen}] \label{Marker_thm}
    If $\mc R^{\mathsmaller{\geq 0}}\models {\sf IOpen}$, then $\shef(\mc R,\X)^{\mathsmaller{\geq 0}}\models {\sf IOpen}$.\footnote{Note that this theorem shows that every model of $\mathsf{IOpen}$ (of any cardinality) has a proper end extension to a model of $\mathsf{IOpen}$.}
\end{theorem}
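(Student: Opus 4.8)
The plan is to invoke Theorem~\ref{Shepherdson_thm} to reduce the claim to a statement about integer parts of real closed fields, and then to show that if $\mc R$ is an integer part of $\mathsf{rc}(\mc R)$, then $\shef(\mc R,\X)$ is an integer part of $\mathsf{rc}(\shef(\mc R,\X))$. By Lemma~\ref{kleinesmurf}, $\mathsf{rc}(\shef(\mc R,\X))$ is (canonically, over $\mc R[\X]$) isomorphic to $\mathsf{rc}(\mc R[\X])$, so it is enough to show that $\shef(\mc R,\X)$ is an integer part of $\mathsf{rc}(\mc R[\X])$. Concretely, given an element $k\in \mathsf{rc}(\mc R[\X])$, I must produce a Puiseux polynomial $p\in \shef(\mc R,\X)$ (of the prescribed shape: all coefficients in $\mathsf{rc}(\mc R)$ except the constant term, which lies in $\mc R$, and only finitely many fractional powers of $\X$) with $|p-k|<1$.

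First I would recall the standard description of $\mathsf{rc}(\mc R[\X])$ as a field of (formal) Puiseux series in $\X$ over $\mathsf{rc}(\mc R)$: every element $k$ has an expansion $k=\sum_{i\geq i_0} c_i \X^{i/q}$ for some fixed $q\in\mathbb{N}^{>0}$, $i_0\in\mathbb{Z}$, and $c_i\in \mathsf{rc}(\mc R)$, where the ordering makes $\X$ infinite (so the dominant term is the one with the most negative exponent $i/q$, matching the paper's convention that $\X$ is treated as infinite). Given such a $k$, I would split off the ``large'' part: let $p_0 := \sum_{i_0 \leq i < 0} c_i \X^{i/q}$ be the sum of the terms with strictly negative exponent, so $k - p_0 = c_0 + \sum_{i>0} c_i \X^{i/q}$ has a finite (non-infinite) value; in fact $|k-p_0 - c_0|$ is infinitesimal. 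The remaining obstruction is the constant term $c_0\in\mathsf{rc}(\mc R)$, which need not lie in $\mc R$. Here I use the hypothesis that $\mc R$ is an integer part of $\mathsf{rc}(\mc R)$: choose $r\in\mc R$ with $|r - c_0|<1$. Then $p := p_0 + r \in \shef(\mc R,\X)$ (it is a Puiseux polynomial with finitely many terms, coefficients in $\mathsf{rc}(\mc R)$, and constant term $r\in\mc R$), and $|p - k| = |r - c_0 - \sum_{i>0}c_i\X^{i/q}| \leq |r-c_0| + |\text{infinitesimal}| < 1$, since $|r-c_0|<1$ and the tail is infinitesimal, so the sum stays strictly below $1$ (one should check the non-strict/strict bookkeeping, but the infinitesimal slack makes this harmless; if $|r-c_0|$ happens to be close to $1$, the infinitesimal tail cannot push the total to $1$). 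This shows $\shef(\mc R,\X)$ is an integer part of $\mathsf{rc}(\mc R[\X]) \cong \mathsf{rc}(\shef(\mc R,\X))$, so by Theorem~\ref{Shepherdson_thm} (the equivalence of (1) and (2), or of (1) and (3)), $\shef(\mc R,\X)^{\mathsmaller{\geq 0}}\models\mathsf{IOpen}$.

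The main obstacle, and the step deserving the most care, is the justification that $\mathsf{rc}(\mc R[\X])$ really does admit Puiseux-series expansions over $\mathsf{rc}(\mc R)$ with coefficients in $\mathsf{rc}(\mc R)$ and a \emph{common} denominator $q$ for the exponents of any fixed element. This is the classical Newton--Puiseux theorem, but it is usually stated over an algebraically closed coefficient field of characteristic $0$ (giving the field of Puiseux series as the algebraic closure of the field of formal Laurent series); here we are in the \emph{ordered}/real-closed setting, so one wants the real-closed analogue: the real closure of $\mc K(\X)$ (with $\X$ infinite over an ordered field $\mc K$) is the field of formal Puiseux series $\bigcup_q \mc K((\X^{-1/q}))$ — actually one needs $\mc K$ real closed for this, which is why passing through $\mathsf{rc}(\mc R)$ rather than $\mc R$ is essential. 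Alternatively, and perhaps more cleanly, I would avoid committing to the full series picture and argue directly: given $k\in\mathsf{rc}(\mc R[\X])$, $k$ is algebraic over $\mc R[\X]$, hence over $\mathsf{rc}(\mc R)[\X]$, and one can use the Newton--Puiseux algorithm over the real closed field $\mathsf{rc}(\mc R)$ to extract enough leading terms of $k$ (all the terms with exponent $\leq 0$, finitely many) to write $k = p_0 + c_0 + \varepsilon$ with $p_0$ a finite-support Puiseux polynomial over $\mathsf{rc}(\mc R)$, $c_0\in\mathsf{rc}(\mc R)$, and $\varepsilon$ infinitesimal. Either way, once the structure of $\mathsf{rc}(\mc R[\X])$ as Puiseux series is in hand, the rest of the argument is the short estimate above. (One also notes in passing that the theorem immediately yields that every model of $\mathsf{IOpen}$ has a proper end extension to a model of $\mathsf{IOpen}$, since $\mc R^{\mathsmaller{\geq 0}}$ is a proper initial segment of $\shef(\mc R,\X)^{\mathsmaller{\geq 0}}$, the elements of $\shef(\mc R,\X)\setminus\mc R$ all being infinite over $\mc R$.)
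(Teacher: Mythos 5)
Your proposal is correct and follows the same broad strategy as the paper: reduce to the integer-part criterion of Theorem~\ref{Shepherdson_thm}, pass to a Puiseux-series picture over $\mathsf{rc}(\mc R)$, and round the constant coefficient using the hypothesis that $\mc R$ is an integer part of $\mathsf{rc}(\mc R)$. The crucial rounding step is literally identical. But the paper arrives there by a slightly slicker route that avoids the Newton--Puiseux characterization you flag as your main obstacle. Rather than arguing inside $\mathsf{rc}(\mc R[\X])\cong\mathsf{rc}(\shef(\mc R,\X))$ (clause~(2) of Theorem~\ref{Shepherdson_thm}), the paper takes the full formal Puiseux field $\mc K^{*}=\bigcup_{n>0}\mathsf{rc}(\mc R)(\!(\X^{1/n})\!)$, invokes the classical fact (citing Walker) that the Puiseux extension of a real closed field is real closed, and then checks that $\shef(\mc R,\X)$ is an integer part of $\mc K^{*}$; this suffices by clause~(3), which only asks for an integer part of \emph{some} real closed field. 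This dispenses with any need to identify $\mathsf{rc}(\mc R[\X])$ inside $\mc K^{*}$ or to justify that each of its elements admits a Puiseux expansion with a common exponent denominator --- a real concern, since $\mathsf{rc}(\mc R[\X])$ is a \emph{proper} subfield of $\mc K^{*}$ (only the algebraic Puiseux series lie in it, and one has to argue that every element of the real closure embeds there). Your ``Alternatively'' aside gestures in exactly this direction.

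One small bookkeeping slip worth fixing: the paper treats $\X$ as infinite, so the Puiseux expansion of an element of $\mc K^{*}$ has exponents bounded above with the series trailing off to $-\infty$; the terms with \emph{positive} exponent form the infinite part, those with \emph{negative} exponent form the infinitesimal tail, and $\shef(\mc R,\X)$ consists of finite sums with nonnegative fractional exponents. Your indexing $\sum_{i\geq i_0}$ with the infinite part carrying negative exponents is the $\X$-infinitesimal convention; taken as written it would put your $p_0$ outside $\shef(\mc R,\X)$. Once you flip to the paper's convention, the estimate $|p-k|\leq|r-c_0|+|\text{infinitesimal}|<1$ goes through exactly as you say.
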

\begin{proof}
  Let $\mc R$ be a discrete ordered commutative ring such that $\mc R^{\mathsmaller{\geq 0}}\models {\sf IOpen}$, and let $\mc K=\mathsf{rc}(\mc R)$. Note that by Theorem \ref{Shepherdson_thm} $\mc R$ is an integer part of $\mc K$. Let $\mc K^{*}$ be the \emph{Puiseux extension} of $\mc K$, i.e., 
 $$\mc K^{*}:= 
\bigcup\limits_{n\in \mathbb{N}^{>0}}\mc K(\!(\X^{1/n})\!),$$
where $\X$ is infinite with respect to $\mc K$, and $\mc K(\!(\Y)\!)$ is the field of formal Laurent series in the indeterminate $\Y$. Thus elements of $\mc K^{*}$ are formal infinite series of the form 
\[k^{*}=a_n\X^{n/q}+a_{n-1}\X^{n-1/q} + \ldots + a_1 \X^{1/q} + a_0 + a_{-1}\X^{-1/q}+a_{-2}\X^{-2/q}+\cdots\]
    where each $a_i\in \mc K$ and $n,q\in \mathbb{N}$, with $q>0$. It is well-known that $\mc K^{*}$ is a real closed field (see, e.g., Robert Walker's monograph \cite{Walker1978}).\footnote{Most sources (including \cite{Walker1978}) formulate this result as: \emph{If $\mc K$ is algebraically closed, then so is the Puiseux extension $\mc K^{*}$ of $\mc K$.} As noted by Shepherdson \cite{shep:nons64}, by putting together this formulation with the fact that an ordered field $\mc F$ is real closed iff $\mc F[i]$ is algebraically closed (where $i^2=-1$), one can readily verify that the Puiseux extension of a real closed field is real closed.}
    
    Given $k^*$ as above, let  \[[k^{*}]=a_n\X^{n/q}+a_{n-1}\X^{n-1/q} + \ldots + a_1 \X^{1/q} + [a_0],\] where $[a_0]$ is the unique element of $\mc R$ of distance less than 1 from $a_0$ such that $[a_0]\leq a_0$; note that $[a_0]$ is well-defined since $\mc R$ is an integer part of $\mc K$. 
        Observe that $[k^{*}]\in \shef (\mc R,\X)$ and $$0< k^*<[k^*]+1,$$ which makes it clear that $\shef (\mc R,\X)$ is an integer part of $\mc K^{*}$. Thus by Theorem \ref{Shepherdson_thm}, $\mc R^{\mathsmaller{\geq 0}}\models {\sf IOpen}$. 
\end{proof}
Recall from Section \ref{smurferella} that $\hat{\mathbb{A}}= \mathbb{Z}[\X_{q}\mathop\mid q\in \mathbb{Q}]$, and for $\fsvar \in [\mathbb{Q}]^{<\omega}$, $\hat{\mathbb{A}}(\fsvar)$ is the subring of $\hat{\mathbb{A}}$ generated by $\{\X_q \mathop\mid q\in \fsvar\}$.
\begin{definition} \label{def_of F, B-hat(r)}
   For $\fsvar \in [\mathbb{Q}]^{<\omega}$, the ordered commutative ring $\hat{\mathbb{B}}(\fsvar)$ is defined as follows: 
    %by induction on the size of $I$

\begin{itemize}
\item
    $\hat{\mathbb{B}}({\varnothing}) := \mathbb{Z}$,
        \item
    $\hat{\mathbb{B}}({\fsvar\cup \verz q}) : = \shef(\hat{\mathbb{B}}(\fsvar), \X_{q})$, where $q>q'$ for each $q'\in \fsvar$.
\end{itemize}
\end{definition}
\begin{lem}\label{lem_shep_ind}
    Suppose $\fsvar\in [\mathbb{Q}]^{<\omega}$ and $q\in \mathbb{Q}$ such that $q>q'$ for all $q'\in \fsvar$. Then we have:
    $$\hat{\mathbb{B}}({\fsvar\cup \verz q})\cong \shef(\hat{\mathbb {B}} (\fsvar) ,\X_q).$$
    Moreover, the isomorphism is unique provided that it fixed each $\X_{q'}$ for all $q' \in \fsvar$.
\end{lem}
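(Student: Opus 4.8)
The plan is to read the isomorphism essentially straight off Definition~\ref{def_of F, B-hat(r)}, and to obtain the uniqueness clause from the fact that a Shepherdson extension is rigid under order-preserving maps that fix the relevant indeterminates.

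First, for the existence of the isomorphism: since $q>q'$ for every $q'\in\fsvar$, the element $q$ is the largest member of $\fsvar\cup\verz q$, so the recursive clause of Definition~\ref{def_of F, B-hat(r)} applies to $\fsvar\cup\verz q$ with exactly this decomposition and gives $\hat{\mathbb{B}}(\fsvar\cup\verz q)=\shef(\hat{\mathbb{B}}(\fsvar),\X_q)$ literally; the identity map is then the required isomorphism, and it fixes every indeterminate $\X_{q'}$ with $q'\in\fsvar$ (as well as $\X_q$). If one wishes to avoid leaning on the particular presentation of $\hat{\mathbb{B}}(\fsvar)$, one can instead note, via Lemma~\ref{kleinesmurf} and Theorem~\ref{rc_is_unique}, that the ambient real closure $\mathsf{rc}(\hat{\mathbb{B}}(\fsvar)[\X_q])$ within which $\shef(\hat{\mathbb{B}}(\fsvar),\X_q)$ is carved out is canonically determined over $\hat{\mathbb{B}}(\fsvar)[\X_q]$, so the construction is choice-free.

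For uniqueness I would reduce to showing that $\shef(\hat{\mathbb{B}}(\fsvar),\X_q)$ admits no nontrivial ordered-ring automorphism $\psi$ fixing each indeterminate $\X_{q'}$ with $q'\in\fsvar\cup\verz q$; composing an arbitrary isomorphism as in the statement with the inverse of the canonical one produces such a $\psi$. The first step is to observe that $\shef(\hat{\mathbb{B}}(\fsvar),\X_q)$ is \emph{algebraic} over the field $\mc K:=\mathbb{Q}(\X_{q'}\mathop\mid q'\in\fsvar\cup\verz q)$, by induction on $\lvert\fsvar\rvert$: the base case $\hat{\mathbb{B}}(\varnothing)=\mathbb{Z}$ is algebraic over $\mathbb{Q}$, and at the inductive step an element of $\shef(\hat{\mathbb{B}}(\fsvar),\X_q)$ is a Puiseux polynomial $\sum_i a_i\X_q^{i/m}$ whose coefficients lie in $\mathsf{rc}(\hat{\mathbb{B}}(\fsvar))$, which is algebraic over $\mathbb{Q}(\X_{q'}\mathop\mid q'\in\fsvar)$ by the inductive hypothesis (the real closure of an algebraic extension being algebraic), while each $\X_q^{i/m}$ is algebraic over $\mc K$ as a root of $T^{m}-\X_q^{i}$. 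The second step is that $\psi$, extended to the fraction field of the integral domain $\shef(\hat{\mathbb{B}}(\fsvar),\X_q)$, fixes the prime subring $\mathbb{Z}$ and all the $\X_{q'}$, hence fixes $\mc K$ pointwise. The conclusion then follows: given $\xi\in\shef(\hat{\mathbb{B}}(\fsvar),\X_q)$ with minimal polynomial $p\in\mc K[T]$, the image $\psi(\xi)$ is again a root of $p$, and since $\psi$ preserves $<$ it permutes the finite, linearly ordered set of roots of $p$ lying in $\shef(\hat{\mathbb{B}}(\fsvar),\X_q)$ while preserving their order, hence fixes $\xi$; so $\psi$ is the identity.

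I do not expect a genuine obstacle here, since the work is bookkeeping; the one point to get right is the convention under which the uniqueness assertion holds. Fixing only the $\X_{q'}$ with $q'\in\fsvar$ is \emph{not} enough to pin the isomorphism down, because the power-substitution automorphisms $\X_q\mapsto c\,\X_q^{r}$ (for $c\in\mathsf{rc}(\mathbb{Q})^{\mathsmaller{>0}}$ and $r\in\mathbb{Q}^{\mathsmaller{>0}}$) fix $\hat{\mathbb{B}}(\fsvar)$ while moving $\X_q$. The statement must be read with isomorphisms of Shepherdson extensions that respect the distinguished indeterminate, that is, fix \emph{every} $\X_{q'}$ with $q'\in\fsvar\cup\verz q$; with that reading the argument above settles it.
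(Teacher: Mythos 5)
Your argument is correct, and the most valuable part of it is the observation that the lemma's uniqueness clause, as literally worded, is too weak. Taking $\fsvar=\varnothing$, the statement would assert that $\shef(\mathbb{Z},\X_q)$ has no nontrivial ordered-ring automorphism whatsoever; but the power substitutions $\X_q\mapsto c\,\X_q^{r}$ with $c\in\mathsf{rc}(\mathbb{Q})^{\mathsmaller{>0}}$ and $r\in\mathbb{Q}^{\mathsmaller{>0}}$ are exactly the automorphisms exhibited (in greater generality) in Theorem~\ref{lem_aut_IOpen+Coll}, and they fix $\hat{\mathbb{B}}(\fsvar)$ pointwise. So the hypothesis must be read as requiring the isomorphism to fix $\X_{q'}$ for all $q'\in\fsvar\cup\verz q$; that is also what is needed downstream, since Corollary~\ref{flatting_B(r)} asserts uniqueness of the embedding $\hat{\mathbb{B}}(\fsvar)\hookrightarrow\mathsf{rc}(\hat{\mathbb{A}})$ under the hypothesis that \emph{every} $\X_{q'}$ with $q'\in\fsvar$ is fixed.

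Your route to uniqueness is genuinely different from the paper's. The paper's one-line proof invokes Lemma~\ref{kleinesmurf} inductively, i.e.\ the uniqueness of the isomorphism $\mathsf{rc}(\shef(\mc R,\X))\to\mathsf{rc}(\mc R[\X])$ over $\mc R[\X]$, which packages the Artin--Schreier uniqueness of real closures. You instead argue directly: you show $\shef(\hat{\mathbb{B}}(\fsvar),\X_q)$ is algebraic over $\mathbb{Q}(\X_{q'}\mathop\mid q'\in\fsvar\cup\verz q)$, and then use that an order-preserving automorphism fixing this subfield permutes the finite, linearly ordered set of roots of each minimal polynomial and so must be the identity. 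Both arguments rest on the same phenomenon --- the ordering rigidifies algebraic extensions --- but yours makes the mechanism explicit rather than delegating it to the canonicity of the ambient real closure. The induction on $\lvert\fsvar\rvert$ is the same in both; the difference is where the rigidity is cashed out. Either packaging is fine; yours has the merit of exposing exactly what the hypothesis on the isomorphism must be.
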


\begin{proof}
We prove this by an easy induction on the cardinality of $\fsvar$ using Lemma~\ref{kleinesmurf}.
\end{proof}

\begin{cor} \label{flatting_B(r)}
    For every $\fsvar\in [\mathbb{Q}]^{<\omega}$, there is an order preserving ring embedding 
    $f: \hat{\mathbb{B}}(\fsvar)\hookrightarrow \mathsf{rc}\hat{(\mathbb{A})}$. Moreover, such an embedding is unique if it fixes each $\X_q$ for $q\in \fsvar$.
\end{cor}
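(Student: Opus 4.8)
The plan is to obtain $f$ by factoring it through the real closure of $\hat{\mathbb{B}}(\fsvar)$: I would first build an order-preserving ring embedding $\hat{\mathbb{B}}(\fsvar)\hookrightarrow\mathsf{rc}(\hat{\mathbb{A}}(\fsvar))$ fixing each $\X_q$ with $q\in\fsvar$, and then compose it with the canonical embedding $\mathsf{rc}(\hat{\mathbb{A}}(\fsvar))\hookrightarrow\mathsf{rc}(\hat{\mathbb{A}})$, which exists because $\hat{\mathbb{A}}(\fsvar)\subseteq\hat{\mathbb{A}}$ (this last embedding being the one furnished by the discussion at the end of Subsection~\ref{realsmurf}, via quantifier elimination for real closed fields). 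The crux is therefore the first embedding, which I would extract from the following statement, proved by induction on $|\fsvar|$: \emph{$\hat{\mathbb{A}}(\fsvar)$ is an ordered subring of $\hat{\mathbb{B}}(\fsvar)$, and the fraction field of $\hat{\mathbb{B}}(\fsvar)$ is algebraic over $\mathsf{fc}(\hat{\mathbb{A}}(\fsvar))=\mathbb{Q}(\X_q\mathop\mid q\in\fsvar)$.}

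For the induction, the base case $\fsvar=\varnothing$ is immediate since $\hat{\mathbb{B}}(\varnothing)=\mathbb{Z}$. For the step, write $\fsvar=\fsvar_0\cup\{q\}$ with $q>q'$ for all $q'\in\fsvar_0$, so that $\hat{\mathbb{B}}(\fsvar)=\shef(\hat{\mathbb{B}}(\fsvar_0),\X_q)$ sits, by construction, inside $\mathsf{rc}(\hat{\mathbb{B}}(\fsvar_0)[\X_q])$. The containment $\hat{\mathbb{A}}(\fsvar)\subseteq\hat{\mathbb{B}}(\fsvar)$ follows because $\hat{\mathbb{B}}(\fsvar_0)\subseteq\shef(\hat{\mathbb{B}}(\fsvar_0),\X_q)$ (constant Puiseux polynomials) and $\X_q\in\shef(\hat{\mathbb{B}}(\fsvar_0),\X_q)$, together with the induction hypothesis $\hat{\mathbb{A}}(\fsvar_0)\subseteq\hat{\mathbb{B}}(\fsvar_0)$; the orderings agree because in both $\hat{\mathbb{A}}$ and $\shef(\cdot,\X_q)$ the element $\X_q$ dominates everything below it and comparisons of polynomials are read off their leading coefficients. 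For algebraicity, every element of $\hat{\mathbb{B}}(\fsvar)$ is a finite sum of terms $a_i\X_q^{i/k}$ with $a_i\in\mathsf{rc}(\hat{\mathbb{B}}(\fsvar_0))$; by the induction hypothesis $\mathsf{fc}(\hat{\mathbb{B}}(\fsvar_0))$, hence its real closure, is algebraic over $\mathbb{Q}(\X_{q'}\mathop\mid q'\in\fsvar_0)$, while each $\X_q^{i/k}$ is algebraic over $\mathbb{Q}(\X_q)$; so each term, and thus each element of $\hat{\mathbb{B}}(\fsvar)$, is algebraic over $\mathbb{Q}(\X_q\mathop\mid q\in\fsvar)$.

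With this in hand, $\mathsf{rc}(\hat{\mathbb{B}}(\fsvar))$ is a real closed ordered field that is algebraic over, and order-extends, $\mathsf{fc}(\hat{\mathbb{A}}(\fsvar))$ --- that is, it is a real closure of the ordered field $\mathsf{fc}(\hat{\mathbb{A}}(\fsvar))$ --- so Theorem~\ref{rc_is_unique} gives an isomorphism $\mathsf{rc}(\hat{\mathbb{B}}(\fsvar))\cong\mathsf{rc}(\hat{\mathbb{A}}(\fsvar))$ over $\mathsf{fc}(\hat{\mathbb{A}}(\fsvar))$, in particular fixing each $\X_q$ with $q\in\fsvar$; restricting it to $\hat{\mathbb{B}}(\fsvar)\subseteq\mathsf{rc}(\hat{\mathbb{B}}(\fsvar))$ and composing with $\mathsf{rc}(\hat{\mathbb{A}}(\fsvar))\hookrightarrow\mathsf{rc}(\hat{\mathbb{A}})$ yields the desired $f$, which is an order-preserving ring embedding fixing each $\X_q$, $q\in\fsvar$, because every map in the chain is. For uniqueness, if $f,g\colon\hat{\mathbb{B}}(\fsvar)\hookrightarrow\mathsf{rc}(\hat{\mathbb{A}})$ are order-preserving ring embeddings each fixing every $\X_q$ with $q\in\fsvar$, then both fix $\mathbb{Q}(\X_q\mathop\mid q\in\fsvar)$ pointwise and hence induce order-preserving embeddings of $\mathsf{fc}(\hat{\mathbb{B}}(\fsvar))$, an algebraic ordered extension of $\mathsf{fc}(\hat{\mathbb{A}}(\fsvar))$, into the real closed field $\mathsf{rc}(\hat{\mathbb{A}})$ agreeing on $\mathsf{fc}(\hat{\mathbb{A}}(\fsvar))$; these coincide by the rigidity of real closures, since each extends to the real closure of $\mathsf{fc}(\hat{\mathbb{B}}(\fsvar))$, whose image is the relative algebraic closure of $\mathsf{fc}(\hat{\mathbb{A}}(\fsvar))$ in $\mathsf{rc}(\hat{\mathbb{A}})$ regardless of the embedding, so the two extensions differ by an $\mathsf{fc}(\hat{\mathbb{A}}(\fsvar))$-automorphism of a real closure, which Theorem~\ref{rc_is_unique} forces to be trivial; hence $f=g$.

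An alternative I would keep in reserve is to run everything as a single induction on $|\fsvar|$, at each step using Lemma~\ref{kleinesmurf} to identify $\mathsf{rc}\bigl(\mathsf{rc}(\hat{\mathbb{A}}(\fsvar_0))[\X_q]\bigr)$ with $\mathsf{rc}(\hat{\mathbb{A}}(\fsvar))$ over $\hat{\mathbb{A}}(\fsvar)$ and then transporting the embedding $\hat{\mathbb{B}}(\fsvar_0)\hookrightarrow\mathsf{rc}(\hat{\mathbb{A}}(\fsvar_0))$ forward through the Shepherdson-extension operation; this is essentially the same computation organized differently. The step I expect to be the main obstacle is making the identifications of the various real closures genuinely canonical and mutually compatible --- in particular verifying that $\mathsf{rc}(\hat{\mathbb{B}}(\fsvar))$ really is a real closure of $\mathsf{fc}(\hat{\mathbb{A}}(\fsvar))$ (so that Theorem~\ref{rc_is_unique} applies) and that all the maps in the chain respect the ordering and fix the indeterminates. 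Once the algebraicity-over-$\mathbb{Q}(\X_q\mathop\mid q\in\fsvar)$ statement is secured, the rest is bookkeeping.
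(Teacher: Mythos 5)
Your proof is correct and follows essentially the same route as the paper: the paper leaves this corollary without an explicit proof, relying on Lemma~\ref{lem_shep_ind} (whose one-line proof iterates Lemma~\ref{kleinesmurf}), which amounts to the same invocation of Theorem~\ref{rc_is_unique} after tracing through the chain of Shepherdson extensions. Your primary presentation consolidates the induction into a single algebraicity claim and then applies Theorem~\ref{rc_is_unique} once, while your ``alternative in reserve'' --- stepping through Lemma~\ref{kleinesmurf} at each extension --- is precisely the paper's intended argument; the two are the same computation packaged differently.
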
 

\begin{rem} \label{directedness_of_B-hat(r)}
    
In light of Corollary \ref{flatting_B(r)}, from now on we 
identify each ring $\hat{\mathbb{B}}(\fsvar)$ for $\fsvar\in[\mathbb{Q}]^{<\omega}$ with the corresponding 
subring of $\mathsf{rc}\hat{(\mathbb{A})}$. Moreover, note that 
$$q\in \fsvar\textit{~iff~}{\sf X}_q\in \hat{\mathbb{B}}(\fsvar).$$

Moreover, as noted in Remark \ref{B(Y) is monotone}, for $\fsvar,\fsvar' \in [\mathbb{Q}]^{<\omega}$, we have:$$\fsvar\subseteq \fsvar'
\textit{~iff~} \hat{\mathbb{B}}(\fsvar)\textit{~is~a~subring~of~} \hat{\mathbb{B}}(\fsvar').$$
\end{rem}

\begin{definition}\label{B-definition}
$\mathbb{B}$ (the $\mathbb{Q}$-iterated Shepherdson model) is defined as $\hat{\mathbb{B}}^{\mathsmaller{\geq 0}}$, where $$\hat{\mathbb{B}}:=\bigcup\limits_{\fsvar\in [\mathbb{Q}]^{<\omega}}\hat{\mathbb{B}}(\fsvar).$$

The \emph{support} of an element $b\in\hat{\mathbb{B}}$, written $\mathsf{supp}(b)$, is the smallest 
$\fsvar\in[\mathbb{Q}]^{<\omega}$ (in the sense of inclusion) such that $b\in \hat{\mathbb{B}}(\fsvar)$. 
As shown in Appendix \ref{Section_on_supports}, $\mathsf{supp}(b)$ is well-defined. 
\end{definition}

\begin{cor}  \label{flatting}There is an order preserving ring embedding $F:\hat{\mathbb{B}}\hookrightarrow \mathsf{rc}\hat{(\mathbb{A})}$. Moreover, such an embedding is unique if it fixes each $\X_q$ for $q\in \mathbb{Q}$.   
\end{cor}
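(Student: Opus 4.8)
The plan is to build $F$ as the directed union of the embeddings supplied by Corollary~\ref{flatting_B(r)}. For each $\fsvar\in[\mathbb{Q}]^{<\omega}$ let $f_\fsvar\colon\hat{\mathbb{B}}(\fsvar)\hookrightarrow\mathsf{rc}\hat{(\mathbb{A})}$ be the unique order-preserving ring embedding fixing $\X_q$ for every $q\in\fsvar$, as given by that corollary. Recalling from Definition~\ref{B-definition} that $\hat{\mathbb{B}}=\bigcup_{\fsvar\in[\mathbb{Q}]^{<\omega}}\hat{\mathbb{B}}(\fsvar)$, and that this union is directed (the finite subsets of $\mathbb{Q}$ being directed under inclusion, with $\hat{\mathbb{B}}(\fsvar)$ a subring of $\hat{\mathbb{B}}(\fsvar')$ whenever $\fsvar\subseteq\fsvar'$, by Remark~\ref{directedness_of_B-hat(r)}), it suffices to glue the $f_\fsvar$ coherently.

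First I would establish coherence: for $\fsvar\subseteq\fsvar'$, the restriction $f_{\fsvar'}\restriction\hat{\mathbb{B}}(\fsvar)$ is an order-preserving ring embedding of $\hat{\mathbb{B}}(\fsvar)$ into $\mathsf{rc}\hat{(\mathbb{A})}$ that fixes $\X_q$ for all $q\in\fsvar$, so by the uniqueness half of Corollary~\ref{flatting_B(r)} it coincides with $f_\fsvar$. Consequently the assignment $F(b):=f_\fsvar(b)$, for any $\fsvar$ with $b\in\hat{\mathbb{B}}(\fsvar)$ (for instance $\fsvar=\mathsf{supp}(b)$), is well-defined, i.e.\ independent of the choice of $\fsvar$.

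Next I would verify the properties of $F$. Any finite collection of elements of $\hat{\mathbb{B}}$ lies in a common $\hat{\mathbb{B}}(\fsvar)$ by directedness, and on each such $\hat{\mathbb{B}}(\fsvar)$ the map $F$ agrees with $f_\fsvar$; since each $f_\fsvar$ is an injective order-preserving ring homomorphism, every instance of the ring-homomorphism equations, of order preservation, and of injectivity that one needs to check for $F$ is already witnessed inside some $\hat{\mathbb{B}}(\fsvar)$. Hence $F$ is an order-preserving ring embedding, and it fixes each $\X_q$ because every $f_\fsvar$ does.

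For uniqueness, suppose $G\colon\hat{\mathbb{B}}\hookrightarrow\mathsf{rc}\hat{(\mathbb{A})}$ is an order-preserving ring embedding fixing each $\X_q$. Then for every $\fsvar\in[\mathbb{Q}]^{<\omega}$ the restriction $G\restriction\hat{\mathbb{B}}(\fsvar)$ is an order-preserving ring embedding of $\hat{\mathbb{B}}(\fsvar)$ fixing $\X_q$ for $q\in\fsvar$, hence equals $f_\fsvar$ by Corollary~\ref{flatting_B(r)}; as $\hat{\mathbb{B}}$ is the union of the $\hat{\mathbb{B}}(\fsvar)$, we conclude $G=F$. The only step that needs genuine attention is the coherence claim, and that is handled entirely by the uniqueness clause of Corollary~\ref{flatting_B(r)}; the rest is the standard passage to a directed colimit.
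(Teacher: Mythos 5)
Your proof is correct and follows essentially the same route as the paper, which likewise obtains $F$ by piecing together the embeddings supplied by Corollary~\ref{flatting_B(r)}; the paper phrases this as a recursive construction along a countable enumeration of $\hat{\mathbb{B}}$, whereas you glue along the directed system $[\mathbb{Q}]^{<\omega}$ and use the uniqueness clause of Corollary~\ref{flatting_B(r)} to secure coherence --- a minor presentational difference that also dispenses with any appeal to countability. You moreover spell out the uniqueness assertion explicitly, which the paper's one-line proof leaves implicit.
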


\begin{proof}
Enumerate $\hat{\mathbb{B}}$ as a countable sequence, and recursively construct $F$ by successive applications of Corollary \ref{flatting_B(r)}.
\end{proof}

\begin{rem} \label{rc(A)=rc(B)}In light of Corollary \ref{flatting} we shall identify $\hat{\mathbb{B}}$ with the corresponding subring of $\mathsf{rc}\hat{(\mathbb{A})}$. Thus $\hat{\mathbb{A}}\subseteq \hat{\mathbb{B}} \subseteq \mathsf{rc}(\hat{\mathbb{A}})$, which makes it clear that $\mathsf{rc}\hat{(\mathbb{A})}=\mathsf{rc}\hat{(\mathbb{B})}.$
\end{rem}

\begin{rem} \label{directness_of_B(r)}Let $~\mathbb B(\fsvar) :=\hat{\mathbb{B}}(\fsvar)^{\mathsmaller{\geq 0}}$. Note that $\mathbb{B}=\bigcup\limits_{\fsvar\in [\mathbb{Q}]^{<\omega}}\mathbb{B}(\fsvar).$ 
Coupled with Remark \ref{directedness_of_B-hat(r)}, this implies that for $\fsvar,\fsvar' \in [\mathbb{Q}]^{<\omega}$, we have:
$$\fsvar\subseteq \fsvar'\textit{~iff~} \mathbb{B}(\fsvar)\textit{~is~a~submodel~of~} \mathbb{B}(\fsvar').$$
\end{rem}

\begin{lem} \label{B_directed_union}$\mathbb{B}$ is the directed union of its constituent models $\mathbb{B}(\fsvar)$, 
as $\fsvar$ ranges in $[\mathbb{Q}]^{<\omega}.$ 
\end{lem}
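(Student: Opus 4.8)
The plan is to simply assemble ingredients that are already in place, so the proof will be short. Recall that a structure $\mathcal{M}$ is the directed union of a family $\{\mathcal{M}_d : d\in \mathbb{D}\}$ of submodels indexed by a directed poset $\mathbb{D}$ provided $\mathcal{M}=\bigcup_{d\in\mathbb{D}}\mathcal{M}_d$ and $\mathcal{M}_d$ is a submodel of $\mathcal{M}_{d'}$ whenever $d\leq_{\mathbb{D}} d'$. We take $\mathbb{D}=[\mathbb{Q}]^{<\omega}$, ordered by inclusion; this is a directed poset, since for any $\fsvar,\fsvar'\in[\mathbb{Q}]^{<\omega}$ the finite set $\fsvar\cup\fsvar'$ is a common upper bound. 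Thus only the two displayed conditions need to be checked, and both have effectively been recorded already.

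For the first condition, Definition~\ref{B-definition} sets $\hat{\mathbb{B}}=\bigcup_{\fsvar}\hat{\mathbb{B}}(\fsvar)$ and $\mathbb{B}=\hat{\mathbb{B}}^{\mathsmaller{\geq 0}}$, while $\mathbb{B}(\fsvar)=\hat{\mathbb{B}}(\fsvar)^{\mathsmaller{\geq 0}}$ by Remark~\ref{directness_of_B(r)}. Under the identifications furnished by Corollary~\ref{flatting}, all the rings $\hat{\mathbb{B}}(\fsvar)$ are subrings of the single ordered field $\mathsf{rc}(\hat{\mathbb{A}})$ with a coherent ordering; hence an element of $\hat{\mathbb{B}}$ is non-negative iff it is non-negative in whichever $\hat{\mathbb{B}}(\fsvar)$ contains it, so $\hat{\mathbb{B}}^{\mathsmaller{\geq 0}}=\bigcup_{\fsvar}\hat{\mathbb{B}}(\fsvar)^{\mathsmaller{\geq 0}}$, i.e. $\mathbb{B}=\bigcup_{\fsvar}\mathbb{B}(\fsvar)$, which is exactly the first displayed line of Remark~\ref{directness_of_B(r)}.

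For the second condition, I would invoke Remark~\ref{directedness_of_B-hat(r)}, which records that for $\fsvar\subseteq\fsvar'$ the ring $\hat{\mathbb{B}}(\fsvar)$ is a subring of $\hat{\mathbb{B}}(\fsvar')$; this is in turn obtained by iterating the Shepherdson extension and using the uniqueness clauses of Lemma~\ref{lem_shep_ind} and Corollary~\ref{flatting_B(r)} to guarantee that the embeddings fixing the indeterminates are mutually compatible. Passing to non-negative parts, $\mathbb{B}(\fsvar)=\hat{\mathbb{B}}(\fsvar)^{\mathsmaller{\geq 0}}$ is a submodel (indeed a downwards closed initial segment) of $\mathbb{B}(\fsvar')=\hat{\mathbb{B}}(\fsvar')^{\mathsmaller{\geq 0}}$, which is the second displayed line of Remark~\ref{directness_of_B(r)}. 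Combining the two conditions with the directedness of $[\mathbb{Q}]^{<\omega}$ finishes the argument; this mirrors, almost verbatim, the proof of the analogous Lemma~\ref{hatA-directed_union} for $\hat{\mathbb{A}}$.

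There is essentially no genuine obstacle here. The only point requiring a modicum of care --- and the only place where the Shepherdson-specific machinery enters --- is making sure the system of identifications is coherent, so that ``$\subseteq$'' and ``$\bigcup$'' can be read literally rather than up to isomorphism. That coherence is precisely what the uniqueness assertions in Lemma~\ref{lem_shep_ind} and Corollaries~\ref{flatting_B(r)} and~\ref{flatting} are designed to supply, so once those are in hand the lemma is immediate.
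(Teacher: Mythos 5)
Your proof is correct and follows the paper's route exactly: the paper's own proof is a one-line appeal to Remark~\ref{directness_of_B(r)} (itself resting on Remark~\ref{directedness_of_B-hat(r)} and the uniqueness clauses of Lemma~\ref{lem_shep_ind} and Corollaries~\ref{flatting_B(r)} and~\ref{flatting}), and you have simply unpacked that remark. One small caveat: your parenthetical claim that $\mathbb{B}(\fsvar)$ is a \emph{downwards closed initial segment} of $\mathbb{B}(\fsvar')$ is false for general $\fsvar\subseteq\fsvar'$ (e.g.\ $\X_0\in\mathbb{B}(\{0,1\})\setminus\mathbb{B}(\{1\})$ lies below $\X_1\in\mathbb{B}(\{1\})$), but only the submodel relation is needed, so the argument stands.
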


\begin{proof}This is an immediate consequence of Remark \ref{directness_of_B(r)}.
\end{proof}

\begin{lem}\label{B(r)modelsIOpen}
   $\mathbb{B}(\fsvar)\models {\sf IOpen}$ for every $\fsvar\in[\mathbb{Q}]^{<\omega}.$
\end{lem}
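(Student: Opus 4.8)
The plan is to prove the lemma by induction on the cardinality of $\fsvar$, exploiting the recursive definition of $\hat{\mathbb{B}}(\fsvar)$ in Definition \ref{def_of F, B-hat(r)} together with Marker's preservation theorem (Theorem \ref{Marker_thm}). There is essentially no technical obstacle here: once the recursive structure of $\hat{\mathbb{B}}(\fsvar)$ is unwound, the lemma falls out of a result already established.

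For the base case $\fsvar = \varnothing$, we have $\hat{\mathbb{B}}(\varnothing) = \mathbb{Z}$, so $\mathbb{B}(\varnothing) = \mathbb{Z}^{\mathsmaller{\geq 0}}$ is (isomorphic to) the standard model $\mathbb{N}$, which satisfies $\isigma{}$ for all $n$ and in particular ${\sf IOpen}$. For the inductive step, suppose $\fsvar \neq \varnothing$ and that the claim holds for all finite subsets of $\mathbb{Q}$ of smaller cardinality. Since $\fsvar$ is a finite nonempty subset of $\mathbb{Q}$, it has a largest element $q$; put $\fsvar' := \fsvar \setminus \{q\}$, so that $q > q'$ for every $q' \in \fsvar'$ and $|\fsvar'| < |\fsvar|$. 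By Definition \ref{def_of F, B-hat(r)} we then have $\hat{\mathbb{B}}(\fsvar) = \shef(\hat{\mathbb{B}}(\fsvar'), \X_q)$, hence $\mathbb{B}(\fsvar) = \shef(\hat{\mathbb{B}}(\fsvar'), \X_q)^{\mathsmaller{\geq 0}}$.

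By the induction hypothesis, $\mathbb{B}(\fsvar') = \hat{\mathbb{B}}(\fsvar')^{\mathsmaller{\geq 0}} \models {\sf IOpen}$; in particular $\hat{\mathbb{B}}(\fsvar')$ is a discretely ordered commutative ring, so that the Shepherdson extension $\shef(\hat{\mathbb{B}}(\fsvar'), \X_q)$ is legitimately formed. Applying Theorem \ref{Marker_thm} with $\mc R := \hat{\mathbb{B}}(\fsvar')$ and $\X := \X_q$ yields $\shef(\hat{\mathbb{B}}(\fsvar'), \X_q)^{\mathsmaller{\geq 0}} \models {\sf IOpen}$, i.e., $\mathbb{B}(\fsvar) \models {\sf IOpen}$, completing the induction. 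The only point requiring (routine) care is checking that the hypotheses of Theorem \ref{Marker_thm} are met at each stage — namely that the ring being extended is discretely ordered — but this is automatic from the $\isopen$-side of the induction hypothesis, since ${\sf IOpen}$ extends $\pam$ and $\pam$ axiomatizes the non-negative parts of discretely ordered commutative rings.
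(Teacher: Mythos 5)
Your proof is correct and follows essentially the same route as the paper's, which dispatches the lemma as $|\fsvar|$ applications of Marker's Theorem \ref{Marker_thm} (together with Lemma \ref{lem_shep_ind}). Your induction-on-cardinality formulation, removing the maximum element of $\fsvar$ at each step so that the decomposition matches Definition \ref{def_of F, B-hat(r)} exactly, is just a careful unwinding of that same idea, and your observation that discreteness of $\hat{\mathbb{B}}(\fsvar')$ (needed to form the Shepherdson extension) comes for free from the ${\sf IOpen}$ induction hypothesis is the right sanity check.
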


\begin{proof}This follows from $|\fsvar|$ applications of Theorem \ref{Marker_thm} and Lemma \ref{lem_shep_ind}.
\end{proof}

\begin{theorem}\label{B_models_IOpen}
    $\QS\models {\sf IOpen}$.
\end{theorem}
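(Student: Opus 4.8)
The plan is to sidestep the open induction scheme altogether and verify instead the \emph{algebraic} characterization of models of $\mathsf{IOpen}$ provided by Shepherdson's Theorem~\ref{Shepherdson_thm}: it suffices to exhibit a real closed field of which $\hat{\mathbb B}$ is an integer part. The natural candidate is $\rc(\hat{\mathbb B})$, which by Remark~\ref{rc(A)=rc(B)} equals $\rc(\hat{\mathbb A})$ and so is available to us. Thus the whole theorem reduces to the claim: for every $k\in\rc(\hat{\mathbb B})$ there is $b\in\hat{\mathbb B}$ with $|b-k|<1$. (One also needs $\hat{\mathbb B}$ to be a discrete ordered commutative ring in order to invoke Theorem~\ref{Shepherdson_thm} at all, but this is immediate from Lemma~\ref{B_directed_union} together with the fact that the axioms for discretely ordered commutative rings are $\forall\exists$ — indeed mostly universal — and hence pass to directed unions; the same observation re-proves $\QS\models\pam$.)

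First I would localize the element $k$. Since $k$ is algebraic over $\mathsf{fc}(\hat{\mathbb B})$, its minimal polynomial over $\mathsf{fc}(\hat{\mathbb B})$ has only finitely many coefficients, each supported on a finite set of indeterminates; choosing $\fsvar\in[\mathbb Q]^{<\omega}$ large enough to contain all those supports, $k$ becomes algebraic over $\mathsf{fc}(\hat{\mathbb B}(\fsvar))$. I then claim that $k$ in fact lies in $\rc(\hat{\mathbb B}(\fsvar))$, where, following the identifications of Subsection~\ref{realsmurf} and Corollary~\ref{flatting}, all real closures are regarded as subfields of $\rc(\hat{\mathbb B})$. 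The point is that the relative algebraic closure $L$ of $\mathsf{fc}(\hat{\mathbb B}(\fsvar))$ inside $\rc(\hat{\mathbb B})$ is real closed and algebraic over $\mathsf{fc}(\hat{\mathbb B}(\fsvar))$, hence is \emph{a} real closure of $\hat{\mathbb B}(\fsvar)$; by the uniqueness theorem (Theorem~\ref{rc_is_unique}) it coincides, under the standing identification, with $\rc(\hat{\mathbb B}(\fsvar))$, and it contains $k$. Equivalently — and this is the clean statement I would record — one has $\rc(\hat{\mathbb B})=\bigcup_{\fsvar\in[\mathbb Q]^{<\omega}}\rc(\hat{\mathbb B}(\fsvar))$, an increasing union whose transition maps are the (elementary) inclusions noted in Subsection~\ref{realsmurf}.

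With $k$ thus localized, the argument closes by the finite case. By Lemma~\ref{B(r)modelsIOpen}, $\mathbb B(\fsvar)=\hat{\mathbb B}(\fsvar)^{\mathsmaller{\geq 0}}\models\mathsf{IOpen}$, so by Theorem~\ref{Shepherdson_thm} the ring $\hat{\mathbb B}(\fsvar)$ is an integer part of $\rc(\hat{\mathbb B}(\fsvar))$. Applying this to $k\in\rc(\hat{\mathbb B}(\fsvar))$ produces $b\in\hat{\mathbb B}(\fsvar)\subseteq\hat{\mathbb B}$ with $|b-k|<1$ (the inequality, computed in $\rc(\hat{\mathbb B}(\fsvar))$, persists in $\rc(\hat{\mathbb B})$ because the inclusion is order preserving). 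Since $k$ was arbitrary, $\hat{\mathbb B}$ is an integer part of the real closed field $\rc(\hat{\mathbb B})$, and the implication $(3)\Rightarrow(1)$ of Theorem~\ref{Shepherdson_thm} gives $\QS=\hat{\mathbb B}^{\mathsmaller{\geq 0}}\models\mathsf{IOpen}$.

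The main obstacle is precisely the localization step: one must be careful to conclude that an element of $\rc(\hat{\mathbb B})$ already belongs to the real closure of a \emph{finite-support} subring, and not merely that it is algebraic over that subring's fraction field. This is where uniqueness of real closures (Theorem~\ref{rc_is_unique}) is essential — ``real closure'' is only well-defined up to isomorphism, whereas here we need honest equality of two subfields of the fixed ambient field $\rc(\hat{\mathbb B})$. An alternative packaging would be to argue directly that the integer-part property transfers along the directed union $\mathbb B=\bigcup_\fsvar\mathbb B(\fsvar)$ once $\rc(\hat{\mathbb B})=\bigcup_\fsvar\rc(\hat{\mathbb B}(\fsvar))$ is known, but this comes down to the same bookkeeping about real closures of a directed system.
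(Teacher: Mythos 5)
Your proof is correct, but it takes a genuinely different route from the paper's. The paper's proof is a one-liner: since $\QS$ is the directed union of the $\mathbb{B}(\fsvar)$ (Lemma~\ref{B_directed_union}), each of which models $\mathsf{IOpen}$ (Lemma~\ref{B(r)modelsIOpen}), and $\mathsf{IOpen}$ is $\forall\exists$-axiomatizable and therefore preserved under directed unions, one is done. You instead go through Shepherdson's algebraic characterization (Theorem~\ref{Shepherdson_thm}), showing directly that $\hat{\mathbb{B}}$ is an integer part of $\rc(\hat{\mathbb{B}})$ by localizing an arbitrary $k\in\rc(\hat{\mathbb{B}})$ into some $\rc(\hat{\mathbb{B}}(\fsvar))$ and then invoking the integer-part property of $\hat{\mathbb{B}}(\fsvar)$. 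Your localization step is sound: the relative algebraic closure $L$ of $\mathsf{fc}(\hat{\mathbb{B}}(\fsvar))$ inside $\rc(\hat{\mathbb{B}})$ is real closed and algebraic over $\mathsf{fc}(\hat{\mathbb{B}}(\fsvar))$, and since the canonical copy of $\rc(\hat{\mathbb{B}}(\fsvar))$ inside $\rc(\hat{\mathbb{B}})$ is already real closed while $L$ is an algebraic ordered extension of it, they must coincide (one could also phrase this as a direct consequence of Theorem~\ref{rc_is_unique}, as you do, with a little extra bookkeeping to pass from ``isomorphic over the base'' to ``equal as subfields''). Both proofs lean on Lemma~\ref{B(r)modelsIOpen} and the directed-union structure as the essential inputs. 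What your longer route buys you is a direct proof of the identity $\rc(\hat{\mathbb{B}})=\bigcup_{\fsvar}\rc(\hat{\mathbb{B}}(\fsvar))$ and of the fact that $\hat{\mathbb{B}}$ is an integer part of $\rc(\hat{\mathbb{A}})$ --- a fact the paper records in Remark~\ref{integer_parts of_F_and_F(Y)} but derives there \emph{from} Theorem~\ref{B_models_IOpen} rather than en route to it; your argument is also closer in spirit to the paper's proof of Marker's Theorem~\ref{Marker_thm}. Note, though, that you don't fully sidestep the $\forall\exists$-preservation principle: you still invoke it to see that $\hat{\mathbb{B}}$ is a discretely ordered commutative ring (a hypothesis of Theorem~\ref{Shepherdson_thm}), and once one has accepted that principle, the paper's observation that $\mathsf{IOpen}$ itself is $\forall\exists$-axiomatizable finishes the theorem at a stroke.
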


\begin{proof}This follows by putting Lemma \ref{B_directed_union}, and Lemma \ref{B(r)modelsIOpen} together with the fact that ${\sf IOpen}$ is $\forall\exists$-axiomatizable and $\forall\exists$-sentences are preserved under directed unions.
\end{proof}

\begin{rem} \label{integer_parts of_F_and_F(Y)}Note that in light of \textup(Shepherdson's\textup) 
Theorem \ref{Shepherdson_thm}, Theorem \ref{B(r)modelsIOpen} implies that $\hat{\mathbb{B}}(Y)$ is an integer part of $\mathsf{rc}(\hat{\mathbb{A}}(Y))$ for every 
$\fsvar\in[\mathbb{Q}]^{<\omega}$, and Theorem \ref{B_models_IOpen} implies that $\hat{\mathbb{B}}$ is an integer part of $\mathsf{rc}(\hat{\mathbb{A}})$. 
This observation comes handy in the proof of Theorem \ref{B(Y) in rc(B(Y))} in the Appendix.
\end{rem}

\begin{definition}Recall (from Section \ref{smurferella}) that for $q \in \mathbb{Q}$, $\mathbb{Q}_q:=\{q'\in \mathbb{Q}\mathop\mid q'<q\}$. We put
 $$\mathbb{B}_{q}:=\bigcup\limits_{\fsvar\in [\mathbb{Q}_{q}]^{<\omega}}\mathbb{B}(\fsvar).$$ 
\end{definition}

It can be readily checked that $\mathbb{B}_{q}\subsetneq_{\mathrm{end}} \mathbb{B}_{q'}$ whenever $q<q'$. Indeed, as shown in Theorem \ref{eee_for_B}, $\mathbb{B}_{q}\prec_{\mathrm{end}} \mathbb{B}$ for each $q\in \mathbb{Q}$, which makes it clear that $\mathbb{B}_{q}\prec_{\mathrm{end}} \mathbb{B}_{q'}$ for $q<q'$.

Recall that $\shef(\mc R, \X)$ is the Shepherdson extension of $\mc R$ that adjoins an indeterminate $\X$ to $\mc R$. Note that for $q<q'$ we have: $$\mathbb{B}_q\subsetneq_{\mathrm{end}} \shef(\hat{\mathbb{B}}_q, \X_q)^{\mathsmaller{\geq 0}}\subsetneq_{\mathrm{end}} \mathbb{B}_{q'}.$$ 

We also observe that $\{\X_{q}^{n} \mathop\mid n\in \mathbb{N}\}$ is cofinal in $\shef(\hat{\mathbb{B}}_q, \X_q)^{\mathsmaller{\geq 0}}$, and 
$\{\X_{q}^{1/n} \mathop\mid n\in \mathbb{N}\}$ is downward cofinal in $\shef(\hat{\mathbb{B}}_q, \X_q)^{\mathsmaller{\geq 0}} \setminus \mathbb{B}_q.$

\begin{theorem}\label{eee_for_B}
   The model $\mathbb B_q$ is a downwards closed cut of $\mathbb B$ which is closed under the arithmetical
   operations. Moreover, $\mathbb B_q \prec_{\mathrm{end}} \mathbb{B}$.
\end{theorem}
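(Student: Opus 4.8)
The plan is to mimic the proof of Theorem \ref{eee_for_A} as closely as possible, with Shepherdson extensions playing the role that polynomial extensions played there. First I would dispense with the easy part: that $\mathbb{B}_q$ is a downwards closed cut of $\mathbb{B}$ closed under $+$ and $\times$. This is immediate from the definition $\mathbb{B}_q = \bigcup_{\fsvar\in[\mathbb{Q}_q]^{<\omega}}\mathbb{B}(\fsvar)$ together with Remark \ref{directness_of_B(r)}: every element of $\mathbb{B}$ lies in some $\mathbb{B}(\fsvar)$ with $\fsvar$ finite, so if an element of $\mathbb{B}$ is below an element of $\mathbb{B}_q$, a support computation (using that divisibility, and more basically the ordering on Shepherdson extensions, does not introduce new indeterminates of index $\geq q$) shows it already lies in some $\mathbb{B}(\fsvar')$ with $\fsvar'\subseteq\mathbb{Q}_q$. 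Closure under the arithmetical operations is similar: $\mathsf{supp}(x+y)\cup\mathsf{supp}(x\cdot y)\subseteq \mathsf{supp}(x)\cup\mathsf{supp}(y)$, so sums and products of elements of $\mathbb{B}_q$ stay in $\mathbb{B}_q$.

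The substantive claim is $\mathbb{B}_q\prec\mathbb{B}$. Here I would replicate the automorphism argument from Theorem \ref{eee_for_A}. Fix a formula $\varphi(p_0,\dots,p_{k-1})$ with parameters $p_i\in\mathbb{B}_q$. Since each $\mathsf{supp}(p_i)$ is a finite subset of $\mathbb{Q}_q$ (this uses the well-definedness of supports established in Appendix \ref{Section_on_supports}), there is some $q'<q$ such that $\mathbb{Q}_{q'}$ contains $\mathsf{supp}(p_i)$ for all $i$. By a standard back-and-forth argument there is an order isomorphism $f:\mathbb{Q}_q\to\mathbb{Q}$ that is the identity on $\mathbb{Q}_{q'}$. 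The key step is to promote $f$ to a ring isomorphism $F:\hat{\mathbb{B}}_q\to\hat{\mathbb{B}}$ with $F(\X_{q''})=\X_{f(q'')}$ for $q''\in\mathbb{Q}_q$, and then to its non-negative part $\mathbb{B}_q\to\mathbb{B}$. Granting this, $F$ fixes $\hat{\mathbb{B}}_{q'}$ pointwise, hence fixes each $p_i$, and therefore $\mathbb{B}_q\models\varphi(\bar p)$ iff $\mathbb{B}\models\varphi(\bar p)$, which is elementarity.

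The construction of $F$ is where the real work lies, and I expect it to be the main obstacle. The point is that an order isomorphism of index sets does not, a priori, lift to a ring isomorphism of the iterated Shepherdson rings, because $\shef(-,\X_q)$ is not a functor in any cheap way: it is defined as a specific subring of $\mathsf{rc}(\mc R[\X_q])$, and a ring isomorphism $\mc R\to\mc R'$ must first be shown to extend canonically to $\mathsf{rc}(\mc R[\X])\to\mathsf{rc}(\mc R'[\X])$ (using Theorem \ref{rc_is_unique} and the fact that real closure is unique over the base), and then to restrict to the Puiseux-polynomial subrings. I would proceed by transfinite/stepwise induction along a well-ordering of $\mathbb{Q}_q$ compatible with $f$: given the isomorphism already built on $\hat{\mathbb{B}}_{q''}$ for the portion below a given index, extend across one more indeterminate $\X_{q''}$ using Lemma \ref{lem_shep_ind} (which gives $\hat{\mathbb{B}}(\fsvar\cup\{q''\})\cong\shef(\hat{\mathbb{B}}(\fsvar),\X_{q''})$ uniquely over the $\X_{q'}$) together with Lemma \ref{kleinesmurf} to carry the isomorphism through the real closure, and take unions at limits via the directed-union description (Lemma \ref{B_directed_union}, restricted to $\mathbb{Q}_q$). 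Since $\mathbb{B}$ is countable, a countable recursion suffices; the uniqueness clauses in Corollary \ref{flatting_B(r)} and Lemma \ref{lem_shep_ind} guarantee the stages cohere. Once $F$ exists, restricting to $\{x:x\geq 0\}$ gives the isomorphism $\mathbb{B}_q\to\mathbb{B}$ of $\mathsf{PA}^-$-models, and the argument of the previous paragraph concludes. Finally, $\mathbb{B}_q\prec_{\mathrm{end}}\mathbb{B}$ because $\mathbb{B}_q$ is moreover a downwards closed cut, as established at the outset.
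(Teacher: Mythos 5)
Your proposal is correct and follows the same approach as the paper, which simply says the proof is similar to that of Theorem \ref{eee_for_A} and leaves it to the reader. In particular, you have correctly identified the genuine technical content the paper glosses over — namely, that the order isomorphism $f:\mathbb{Q}_q\to\mathbb{Q}$ must be lifted to a ring isomorphism $F:\hat{\mathbb{B}}_q\to\hat{\mathbb{B}}$ by induction on finite supports using the uniqueness clauses of Lemma \ref{kleinesmurf}, Lemma \ref{lem_shep_ind}, and Corollary \ref{flatting_B(r)} to ensure coherence, whereas in the $\mathbb{A}$ case the lift is immediate by the functoriality of polynomial rings.
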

\begin{proof} 
The proof is similar to the proof of Theorem \ref{eee_for_A}, so it is left to the reader. 
 \end{proof}
 
\begin{theorem}
    $\mathbb B$ satisfies {\sf Coll}.
\end{theorem}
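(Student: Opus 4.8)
The plan is to mimic exactly the proof of Theorem~\ref{coll_in_A}, replacing $\mathbb{A}$ with $\mathbb{B}$ and appealing to the analogous structural facts that have just been established for $\mathbb{B}$. The key ingredients are already in place: Theorem~\ref{eee_for_B} provides, for each $q\in\mathbb{Q}$, an elementary initial segment $\mathbb{B}_q\prec_{\mathrm{end}}\mathbb{B}$, and we have noted that $\{\X_q^n\mathop\mid n\in\mathbb{N}\}$ is cofinal in $\shef(\hat{\mathbb{B}}_q,\X_q)^{\mathsmaller{\geq 0}}$, hence $\X_q$ is an element of $\mathbb{B}$ lying above all of $\mathbb{B}_q$.

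First I would fix an instance of collection, say $\forall x\bles a\,\exists y\;\phi(x,y,\vv b)$, with parameters $a,\vv b$ from $\mathbb{B}$. Since each element of $\mathbb{B}$ has finite support, there is some $q\in\mathbb{Q}$ large enough that $a$ and all the $b_i$ lie in $\mathbb{B}_q$. Then I would run the three-line argument: assuming $\mathbb{B}\models\forall x\bles a\,\exists y\;\phi(x,y,\vv b)$, the elementarity $\mathbb{B}_q\prec\mathbb{B}$ gives $\mathbb{B}_q\models\forall x\bles a\,\exists y\;\phi(x,y,\vv b)$; applying it again (reading the witnesses back up into $\mathbb{B}$ and bounding them by $\X_q$, which dominates $\mathbb{B}_q$) yields $\mathbb{B}\models\forall x\bles a\,\exists y\bles \X_q\;\phi(x,y,\vv b)$; and one final use of $\mathbb{B}_q\prec\mathbb{B}$ — now with the bound $\X_q$ absorbed into an existential $\exists w$ — gives $\mathbb{B}\models\exists w\,\forall x\bles a\,\exists y\bles w\;\phi(x,y,\vv b)$, as desired. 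Alternatively, one can simply cite Remark~\ref{eee_and_coll}, which records that any ordered structure possessing an elementary initial segment satisfies the collection scheme; combined with Theorem~\ref{eee_for_B} this is immediate.

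There is no real obstacle here: all the work has been front-loaded into Theorem~\ref{eee_for_B} (the elementarity of the cuts $\mathbb{B}_q$), whose proof is stated to be "similar to the proof of Theorem~\ref{eee_for_A}" and rests on the order-automorphisms of $\mathbb{Q}$ inducing automorphisms of $\mathbb{B}$ fixing the relevant finitely-supported parameters. The only thing to be careful about is the direction of the back-substitution of witnesses — one needs that $\mathbb{B}_q$, being an initial segment, reflects the bounded-witness statement correctly — but this is exactly what "$\prec_{\mathrm{end}}$" delivers. I would therefore present the proof in the same three-line format as Theorem~\ref{coll_in_A}.

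\begin{proof}
Fix $q\in\mathbb{Q}$. By Theorem~\ref{eee_for_B}, $(\dag)$ $\mathbb{B}_q\prec_{\mathrm{end}}\mathbb{B}$, so it suffices to show that $\mathbb{B}_q$ satisfies {\sf Coll}. Suppose $\mathbb{B}_q\models\forall x\bles a\,\exists y\;\phi(x,y,\vv b)$. By $(\dag)$, and since $\X_q$ dominates every element of $\mathbb{B}_q$, we obtain $\mathbb{B}\models\forall x\bles a\,\exists y\bles\X_q\;\phi(x,y,\vv b)$. Hence, again by $(\dag)$, $\mathbb{B}_q\models\exists w\,\forall x\bles a\,\exists y\bles w\;\phi(x,y,\vv b)$. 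Since every element of $\mathbb{B}$ has finite support, every instance of {\sf Coll} with parameters in $\mathbb{B}$ has all its parameters in some $\mathbb{B}_q$; applying the above together with $(\dag)$ once more shows that {\sf Coll} holds in $\mathbb{B}$. Alternatively, invoke Remark~\ref{eee_and_coll} together with Theorem~\ref{eee_for_B}.
\end{proof}
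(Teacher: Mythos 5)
Your proposal is correct and takes essentially the same route as the paper: the paper's proof is simply the one-line citation of Remark~\ref{eee_and_coll} together with Theorem~\ref{eee_for_B}, which you also offer as the "alternative" at the end, and your spelled-out three-line argument is just the content of that remark (i.e., the argument from Theorem~\ref{coll_in_A}) transplanted to $\mathbb{B}$.
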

\begin{proof} In light of Remark \ref{eee_and_coll}, this follows from Theorem \ref{eee_for_B}.
    \end{proof}

\begin{rem} The results of this section can be readily generalized as follows. Given any linearly ordered set $\mathbb{L}$ one can define the $\mathbb{L}$-iterated Shepherdson model $\mathbb{S}_{\mathbb{L}}$ by replacing $\mathbb{Q}$ with $\mathbb{L}$ in the definition of $\mathbb{B}$; thus $\mathbb{S}_{\mathbb{Q}} = \mathbb{B}$. $\mathbb{S}_{\mathbb{L}}$ satisfies $\mathsf{IOpen}$ for all linear orders $\mathbb{L}$. Moreover, $\mathbb{S}_{\mathbb{L}}$ satisfies $\mathsf{Coll}$ if $\mathbb{L}$ has the property that for any $\ell'\in \mathbb{L}$, there is some $\ell >_{\mathbb{L}} \ell'$ in $\mathbb{L}$ such that there is an order isomorphism $f: \mathbb{L}_{\ell} \rightarrow \mathbb{L}$ with the property that $f(x)=x$ for all $x<_{\mathbb{L}} \ell',$ where $\mathbb{L}_{\ell}$ is the initial segment of elements of $\mathbb{L}$ that are dominated by $\ell$. Of course $\mathbb{S}_{\mathbb{L}}$ also satisfies $\mathsf{Coll}$ if $\mathbb{L}$ is $\kappa$-like for some regular uncountable cardinal $\kappa$ (i.e., the cardinality of $\mathbb{L}$ is $\kappa$ but the cardinality of every proper initial segment of $\mathbb{L}$ is $\kappa$).
\end{rem}

\subsection{Further analysis of $\mathbb{B}$ } \label{subsection_on_further-analysis_of_B}
Let $\mathbb{F} = \rc(\hat{\mathbb{A}})$, and for each $q\in \mathbb{Q}$, let $\mathbb{F}_{q}:=\rc(\hat{\mathbb{A}}_q)$ (where $\mathbb{\hat{A}}$ and 
$\mathbb{\hat{A}}_{q}$ are as in Section \ref{smurferella}). The definitions and results of the previous section make it clear that 
$\hat{\mathbb{B}}$ can be described as the subring of $\mathbb{F}$ consisting of all the finite sums of the form $$a_n\X_{q_n}^{p_n}+\ldots+a_1\X_{q_1}^{p_1}+a_0,$$
where
\begin{enumerate}[$1$.]
    \item $q_i\in\mathbb{Q}$ and $p_i\in\mathbb{Q}^{\mathsmaller{> 0}}$, for $1\leq i \leq n;$
    \item $a_i \in \mathbb{F}_{q_i}$ for $1\leq i \leq n$;
    \item $a_0 \in \mathbb{Z}$.
\end{enumerate}
As before, the above representation is non-unique; for example, the expressions $\X_0^{2/4} + 0\cdot\X_0^{1/4}$ and $\X_0^{1/2}$ 
represent the same element. To obtain a unique representation, we introduce the notion of \emph{a standard form of an element of $\hat{\mathbb{B}}$} to be an expression of  form:
 
\begin{equation*}\label{QS_SF}
    a_n\X_{q_n}^{p_n} + a_{n-1}\X^{p_{n-1}}_{q_{n-1}} + \ldots + a_0,
\end{equation*}
where
\begin{enumerate}[$1$.]
\item 
$q_i\in \mathbb{Q}$, and $p_i\in \mathbb{Q}^{\mathsmaller{> 0}}$, for $1\leq i\leq n$;
\item 
$a_i\in \mathbb{F}_{q_i}$ and $a_i \neq 0$, for $1\leq i\leq n$;
\item 
for $n \geq i>j \geq 1$, we have $q_i\geq q_j$ and, if $q_i = q_j$, then $p_i>p_j$.
\end{enumerate}

To see why every element of $\mathbb{B}$ can be so presented, one simply ignores all the summands with coefficient $0$.

\begin{itemize}
    \item A standard form expression   $a_n\X_{q_n}^{p_n} + a_{n-1}\X^{p_{n-1}}_{q_{n-1}} + \ldots + a_0$ is the \emph{standard form for an element of $\mathbb{B}$} if $a_n\geq 0.$
\end{itemize}

The following definitions assume that $a\in \mathbb{B}$ is written in standard form.  
\begin{itemize}
    \item ${\sf rank}_0(a) = q_n$ if $n>0$ and $-\infty$ otherwise.
\item  ${\sf rank }(a) = (q_n,p_n)$, if $n>0$ and $-\infty$ otherwise.
\end{itemize}

\begin{rem}
Note that, for $x\in\mathbb{B}$, ${\sf rank}_0(x) := q$ iff $q$ is the least element of $$\{q'\in \mathbb{Q}  \mathop\mid x\in \shef(\hat{\mathbb{B}}_{q'},\X_{q'})\}.$$  Thus if ${\sf rank}_0(x) := q$, then $x$ can be written in the basic form $a\X_q^p+b$, where $p\in\mathbb{Q}^{\mathsmaller{> 0}}$, $a$ is a positive element of $\hat{\mathbb{B}}_{q}$, and $b$ is a Puiseux polynomial in $\shef(\hat{\mathbb{B}}_q, \X_q)$ of degree less than $p$. 
\end{rem}

\subsubsection{Definable generators of $\QS$}
The first two definitions are as in Section~\ref{smurferella}: 

\begin{itemize} 
    \item $x\ll y$ iff $\forall n\bin\omega\; (x+2)^n<y$.
    \item $x\approx y$ iff $\neg\, x\ll y \wedge \neg\, y \ll x$.\\
    \item $\m(x)$ iff  $\forall b\, \bigl(0<b\ll x\rightarrow b\mathop\mid x\bigr)$.
    %\albnote{I forgot what the reason was for the name {\sf m}. Perhaps a word about it?}
Here {\sf m} stands for `multiple'.
\end{itemize}
As in the case of the model $\mathbb{A}$ of the previous section, we have:
\begin{prop}
  ${\sf rank}_0(a)<{\sf rank}_0(b)$ iff $a\ll b$.  
\end{prop}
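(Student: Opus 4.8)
The plan is to mimic the proof of the analogous fact for $\mathbb{A}$ in Section~\ref{smurferella} (the theorem asserting $y\ll x\iff{\sf rank}_0(y)<{\sf rank}_0(x)$), working with basic forms and exploiting that, in any Shepherdson extension $\shef(\mc R,\X)$ occurring in the construction of $\QS$, one has $z>0$ precisely when the coefficient of the highest power of $\X$ in the Puiseux expansion of $z$ is positive. First I would dispose of the case that $a$ or $b$ is a standard natural number. If both are standard, then ${\sf rank}_0(a)={\sf rank}_0(b)=-\infty$ and, since $(a+2)^n$ is cofinal in $\omega$, we have $(a+2)^n\geq b$ for some $n$, so both sides fail. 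If exactly one of $a,b$ is standard, then --- using that a nonstandard element of $\mathbb{B}$ exceeds every standard natural (again by the leading-coefficient criterion, applied to its basic form minus a constant) --- one checks directly that both sides again agree.

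From now on assume ${\sf rank}_0(a)=q^\ast$ and ${\sf rank}_0(b)=r^\ast$ are in $\mathbb{Q}$, and write basic forms $a=c\,\X_{q^\ast}^{p}+d$, $b=e\,\X_{r^\ast}^{p'}+f$, so that $c,e>0$, $p,p'\in\mathbb{Q}^{\mathsmaller{> 0}}$, ${\sf rank}(a)=(q^\ast,p)$, ${\sf rank}(b)=(r^\ast,p')$, and $d$ (resp.\ $f$) has $\X$-degree below $p$ (resp.\ $p'$). For the implication ${\sf rank}_0(a)<{\sf rank}_0(b)\Rightarrow a\ll b$: since $q^\ast<r^\ast$, the element $a$, which lies in $\shef(\hat{\mathbb{B}}_{q^\ast},\X_{q^\ast})$, has support below $r^\ast$, so $a\in\hat{\mathbb{B}}_{r^\ast}$ and hence $(a+2)^n\in\hat{\mathbb{B}}_{r^\ast}$ for every $n\in\omega$; on the other hand $b\in\shef(\hat{\mathbb{B}}_{r^\ast},\X_{r^\ast})$ has leading $\X_{r^\ast}$-coefficient $e>0$. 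Subtracting an element of $\hat{\mathbb{B}}_{r^\ast}$ changes only the constant term of this Puiseux expansion, so $b-(a+2)^n$ still has leading $\X_{r^\ast}$-coefficient $e>0$; thus $b>(a+2)^n$ for all $n$, i.e.\ $a\ll b$.

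For the converse I argue contrapositively: assuming ${\sf rank}_0(a)\geq{\sf rank}_0(b)$ I exhibit $n$ with $(a+2)^n\geq b$. If $q^\ast>r^\ast$, then as before $b\in\hat{\mathbb{B}}_{q^\ast}$, so $a-b$ has leading $\X_{q^\ast}$-coefficient $c>0$, hence $a>b$ and already $(a+2)^1>b$. If $q^\ast=r^\ast=:q$, choose $n\geq1$ with $np>p'$; expanding $a^n=(c\X_q^{p}+d)^n$, its highest $\X_q$-power is $np$ with coefficient $c^n>0$, while all remaining terms of $a^n$ and all of $b$ have $\X_q$-power $<np$, so $a^n>b$; since $0\leq a\leq a+2$ this gives $(a+2)^n\geq a^n>b$. (The remaining case $q^\ast=r^\ast=-\infty$ was handled above.) Together the two implications give the Proposition.

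The one genuinely new feature relative to the $\mathbb{A}$-argument is that exponents are now rationals and coefficients live in the real closures $\mathbb{F}_q=\rc(\hat{\mathbb{A}}_q)$ rather than in $\mathbb{Z}$; accordingly the step requiring the most care is the bookkeeping --- resting on the support facts recorded in the Remark preceding the Proposition and in Appendix~\ref{Section_on_supports} --- that the elements being compared genuinely lie in the relevant $\shef(\hat{\mathbb{B}}_{q},\X_{q})$ and that after subtraction the leading $\X_q$-coefficient is exactly as claimed, so that Shepherdson's definition of the order applies. I do not expect any serious obstacle beyond carrying out this routine verification.
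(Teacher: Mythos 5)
The paper gives no explicit proof here, simply noting that this is ``as in the case of the model $\mathbb{A}$'' (Theorem~\ref{gargamel}'s neighbourhood), and your argument is exactly the intended transplant of that proof to $\mathbb{B}$: same case split on ${\sf rank}_0$, same use of basic forms and leading-coefficient comparison, with the degree bookkeeping adapted to rational exponents and $\mathbb{F}_q$-coefficients. The proof is correct and matches the paper's approach.
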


We say that $a\in \QS$ is a \textit{pseudo-variable} iff for some $q\in \mathbb{Q}$, $p\in\mathbb{Q}^{\mathsmaller{> 0}}$, and $b\in \QS_q^{\mathsmaller{> 0}}$, we have that
$b\X_q^p$ is the standard form of $a$. Crucially, for our purposes, this property is definable in $\QS$, as is shown in
the following theorem.

\begin{theorem}
    For every $a\in \QS$, $a$ is a pseudo-variable iff both of the following hold \textup(in $\QS$\textup):
    \begin{enumerate}[$1$.]
        \item $\m(a)$;
        \item $\forall b,c\, \bigl((a=bc \wedge a\approx b\approx c)\rightarrow (2\mid b\wedge 2\mid c)\bigr).$  
    \end{enumerate}
\end{theorem}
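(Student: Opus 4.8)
The plan is to prove the two implications separately by relating the definable conditions (1) and (2) to the shape of the standard form of $a$, using the explicit description of $\hat{\mathbb{B}}$ from the previous subsection (every element is a finite sum $a_n\X_{q_n}^{p_n}+\dots+a_0$ with $a_i\in\mathbb{F}_{q_i}$ for $i\geq 1$ and $a_0\in\mathbb{Z}$), together with the preceding Proposition that ${\sf rank}_0(u)<{\sf rank}_0(v)\iff u\ll v$. Being a pseudo-variable means the standard form of $a$ is a \emph{single} monomial $b\X_q^p$ with $p>0$. Two preliminary observations will be used repeatedly: (i) when ${\sf rank}_0(a)=q\in\mathbb{Q}$, the element $a$ lies in the Shepherdson extension $\shef(\hat{\mathbb{B}}_q,\X_q)$, so it has an expansion $a=\sum_j\alpha_j\X_q^{r_j}$ with $0\leq r_0<r_1<\cdots$, constant term $\alpha_0\in\hat{\mathbb{B}}_q$, and higher coefficients in $\rc(\hat{\mathbb{B}}_q)=\mathbb{F}_q$; (ii) divisibility by $2$ in $\mathbb{B}$ is governed by the parity of the integer constant term, since dividing a standard form by $2$ keeps every coefficient in $\mathbb{F}_{q_i}$ but moves $a_0$ to $a_0/2$, which lies in $\mathbb{Z}$ iff $a_0$ is even.

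\emph{Forward direction, and the degenerate case of the converse.} If ${\sf rank}_0(a)=-\infty$, i.e.\ $a\in\mathbb{Z}^{\geq 0}$, then $a$ is not a pseudo-variable, and (2) fails via the witness $b=1$, $c=a$: both are standard naturals, hence $a\approx b\approx c$ by the Proposition, while $2\nmid 1$. Now suppose $a$ is a pseudo-variable, say $a=e\X_q^p$ with $p>0$, so ${\sf rank}_0(a)=q$. For (1): if $0<d\ll a$ then ${\sf rank}_0(d)<q$, so $d\in\hat{\mathbb{B}}_q$, and $a/d=(e/d)\X_q^p$ is again a positive single $\X_q$-monomial of positive degree with coefficient in the field $\mathbb{F}_q$, hence lies in $\mathbb{B}$; thus $d\mid a$. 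For (2): if $a=bc$ with $a\approx b\approx c$, then ${\sf rank}_0(b)={\sf rank}_0(c)=q$, so $b,c\in\shef(\hat{\mathbb{B}}_q,\X_q)$ are positive; comparing the $\X_q$-order and the $\X_q$-degree of $a$ (both additive over products, since $\mathbb{F}_q$ is a field and hence has no zero divisors) forces $b=\beta\X_q^s$ and $c=\gamma\X_q^t$ with $\beta,\gamma>0$ and $s,t>0$, whence $b/2=(\beta/2)\X_q^s\in\mathbb{B}$ and likewise $c/2\in\mathbb{B}$, so $2\mid b$ and $2\mid c$.

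\emph{Converse, main case.} Assume (1) and (2) and ${\sf rank}_0(a)=q\in\mathbb{Q}$; I will show $a$ is a single monomial. First, (1) forces the $\X_q$-constant term $\alpha_0$ of $a$ to vanish: if $\alpha_0\neq 0$, pick a rational $q''$ with ${\sf rank}_0(\alpha_0)<q''<q$; then $\X_{q''}\ll a$, but $\X_{q''}\nmid a$, because $\X_{q''}\mid a$ would (matching $\X_q$-constant terms) give $\X_{q''}\mid\alpha_0$ in $\hat{\mathbb{B}}_q$, impossible since ${\sf rank}_0$ of any multiple of $\X_{q''}$ is at least $q''>{\sf rank}_0(\alpha_0)$, contradicting $\m(a)$. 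So $a=\alpha_1\X_q^{r_1}+\dots+\alpha_m\X_q^p$ with $0<r_1<\dots<p$; if $m\geq 2$ I produce a factorization violating (2) by peeling off the lowest monomial: set one factor to be $|\alpha_1|\X_q^{r_1}$ (positive, ${\sf rank}_0=q$) and the other to be $\pm\bigl(1+\sum_{j\geq 2}(\alpha_j/\alpha_1)\X_q^{r_j-r_1}\bigr)$ with the sign chosen to make the leading coefficient $\alpha_m/|\alpha_1|$ positive (so it lies in $\mathbb{B}$, and has ${\sf rank}_0=q$ because $r_2-r_1>0$). These witness $a\approx b\approx c$, yet the second factor has integer constant term $\pm 1$, hence is not divisible by $2$, contradicting (2). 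Therefore $m=1$, i.e.\ $a=\alpha_1\X_q^{r_1}$ is a pseudo-variable.

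\emph{Main obstacle.} The delicate step is the converse in the main case: one must find a factorization $a=bc$ that \emph{simultaneously} keeps both factors at ${\sf rank}_0=q$ — so that the hypothesis $a\approx b\approx c$ is genuinely available as a constraint — and yields a factor with odd integer constant term. The key idea making this work is that once (1) has forced the $\X_q$-constant term to be $0$, peeling off the lowest surviving monomial $\alpha_1\X_q^{r_1}$ leaves a cofactor whose constant term is exactly $\pm 1$. A secondary, purely bookkeeping burden throughout is verifying $\rc(\hat{\mathbb{B}}_q)=\mathbb{F}_q$ and checking that each Puiseux polynomial produced by these divisions actually lands in $\hat{\mathbb{B}}$ (constant term in $\hat{\mathbb{B}}_q$, higher coefficients in $\mathbb{F}_q$, correct sign).
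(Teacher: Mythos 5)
Your proof is correct, and its overall architecture matches the paper's: both directions are reduced to the shape of the standard form, clause (1) for a pseudo-variable $a=e\X_q^p$ is verified exactly as in the paper (the coefficient ring $\mathbb{F}_q$ is a field, so $e/d\in\mathbb{F}_q$ and $(e/d)\X_q^p\in\shef(\hat{\mathbb{B}}_q,\X_q)$), and your converse is essentially verbatim the paper's argument: first kill the rank-$<q$ part of $a$ using $\m(a)$, then peel off the lowest surviving $\X_q$-monomial to exhibit a factorization $a=|\alpha_1|\X_q^{r_1}\cdot(\pm(1+\cdots))$ in which the cofactor has constant term $\pm1$ and hence is odd, contradicting (2). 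Where you genuinely diverge is the forward direction of clause (2). The paper argues by contradiction: assuming a factor $b$ is odd, it computes the lowest summand of the product $bc$ in standard form, shows it is a nonzero term of rank strictly below $(q,r)$ for a suitable $r<p$, and concludes that $a$ would then fail to be divisible by $\X_q^r$, contradicting $a=e\X_q^p$. You instead observe that the $\X_q$-order and $\X_q$-degree are both additive over products (no zero divisors among the coefficients), so from $\mathrm{ord}(b)+\mathrm{ord}(c)=p=\deg(b)+\deg(c)$ and $\mathrm{ord}\leq\deg$ you conclude directly that $b$ and $c$ are single monomials of positive degree, hence both divisible by $2$. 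Your route is more direct and positive (it identifies the factors completely rather than just refuting oddness), and it sidesteps the paper's somewhat delicate bookkeeping with the ``last summand'' $(bc)^*$; the paper's route has the minor advantage of invoking only divisibility by powers of $\X_q$, a tool it has already set up for clause (1). Your flagged ``bookkeeping burdens'' ($\rc(\hat{\mathbb{B}}_q)=\mathbb{F}_q$, membership of the constructed quotients in $\mathbb{B}$, and the characterization of divisibility by $2$ via the integer constant term) are all routine and check out.
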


\begin{proof}
    Fix $a$ and assume that $a$ is written in the standard form.
    Assume first that $a$ is a pseudo-variable, hence $a = a_1\X^{p}_q$ for $a_1\in \QS_q^{\mathsmaller{> 0}}$. We first show that
    $a$ satisfies $(1)$.  Suppose 
    $0< b\ll a$. Then, both  $b$ and $a_1$ are in $\QS_q$. Since $b$ is nonzero, 
    $a_1b^{-1}\X_q^p\in \shef(\mathbb{B}_q, \X_q)\subseteq \mathbb{B}$. 
    So, $b$ divides $a$. 
    
    We show that $a$ satisfies $(2)$. We use the following convention: the last summmand in the standard form of an element $b\in\mathbb{B}$ will be denoted $b^*$. Assume that $a = bc$ and $a\approx b\approx c$. Aiming at a contradiction assume that $b$ is odd, i.e., there is $d$ such that $b=2d+1$ (here we are using the fact that in $\iopen$ every number is odd or even, see \cite[Chapter I, Lemma 1.17]{haje:meta91}). Note that this means that $b_0$ (the unique summand in the standard form not multiplied by a variable) is odd and $b^*=b_0$. Since $a\approx b\approx c$ and $a=bc$, it follows that ${\sf rank}(a)>{\sf rank}(b)$ and ${\sf rank}(a)>{\sf rank}(c)$. By the definition of the standard form, the last summand in the standard form of $bc$ is $c^*(b_i\X^{p_i}_{q_i} + \ldots +b_1\X^{p_1}_{q_1} +b_0)$, where $i$ is the greatest index in the standard form of $b$ such that ${\sf rank}_0(c^*)>q_i$ (we allow $i=0$ and hence $(bc)^* = b_0c^*$). However, ${\sf rank}(|(bc)^*|) = {\sf rank}(|c^*|) \leq {\sf rank}(c)<(q,p).$ Let $r$ be such that ${\sf rank}(c^*)<(q,r)<(q,p)$. This means that $a$ is not divisible by $\X^r_q$ - a contradiction.

    Now assume $a$ satisfies $(1)$ and $(2)$. We observe that by 2. $a$ must be nonstandard. Let $a_n\X_q^{p_n} + \ldots + a_1\X_q^{p_1} + a'$ 
    be the standard form of $a$ and ${\sf rank}_0(a')<q$.  If $a'\neq 0$, then, for some $q'<q$, the element $a$ is not divisible by $\X_{q'}$, 
    contradicting $\m(a)$. So, $a' = 0$. We claim that also $a_{n-1}, \ldots, a_1$ are all equal to $0$. 
    Assume not and let $i$ be the least such that $a_i\neq 0$. Then
    \[a=a_n\X_q^{p_n} + \ldots + a_i\X_q^{p_i} = |a_i|\X_q^{p_i}(a_n'\X_q^{p_n'} + \ldots +a_{i+1}'\X_q^{p_{i+1}'} +a_i/|a_i|),\]
    where $p_n',\ldots, p_{i+1}'$ are in all in $\mathbb{Q}^{\mathsmaller{> 0}}$. Note that $a_i/|a_i|\in\{1,-1\}$.
    However, $$a_n'\X_q^{p_n'} + \ldots +a_{i+1}\X_q^{p_{i+1}'} +a_i/|a_i|$$ is odd, 
    contradicting that $a$ satisfies $(2)$.
\end{proof}

\begin{cor}
    The Dorroh witnessing ideal $J$ of $\QS$ is $\QS$-definable without parameters.
\end{cor}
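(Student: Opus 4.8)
The plan is to read the corollary off the preceding theorem by identifying $J$ with the set of elements divisible by a pseudo-variable. Since the witnessing ideal $J$ of the Dorroh ring $\hat{\mathbb{B}}$ is closed under $x\mapsto -x$, and $\hat{\mathbb{B}}$ is interpreted in $\QS = \hat{\mathbb{B}}^{\mathsmaller{\geq 0}}$ in the standard way (cf.\ Example~\ref{pamsyn}), it suffices to show that the non-negative part $J^{\mathsmaller{\geq 0}}:=J\cap\QS$ is $\QS$-definable without parameters. I claim that
\[J^{\mathsmaller{\geq 0}}=\bigl\{a\in\QS\mathop\mid \exists v\;\bigl(v\text{ is a pseudo-variable}\ \wedge\ \exists c\in\QS\;\; a=vc\bigr)\bigr\}.\]
As the preceding theorem provides a parameter-free $\QS$-formula equivalent to ``$v$ is a pseudo-variable'' (namely $\m(v)$ together with clause~(2) of that theorem), the right-hand side is defined by a parameter-free $\QS$-formula, so establishing this equality completes the proof.

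The inclusion from right to left is immediate: a pseudo-variable $v$ has standard form $b\X_q^p$ with $p\in\mathbb{Q}^{\mathsmaller{> 0}}$, so its constant term vanishes, whence $v\in J$; since $J$ is an ideal, $vc\in J$ for every $c$, and $vc\in J^{\mathsmaller{\geq 0}}$ whenever $vc\geq 0$. (The element $0$ is captured by taking, say, $v=\X_0$ and $c=0$.) For the converse, let $a\in J^{\mathsmaller{\geq 0}}$ with $a\neq 0$, and write $a=\sum_{i=1}^{n}a_i\X_{q_i}^{p_i}$ in standard form, where $n\geq 1$ since $a$ is nonzero with zero constant term. Let $q^{\ast}$ be the least of the $q_i$, let $p^{\ast}$ be the least exponent among those summands with $q_i=q^{\ast}$, and put $v:=\X_{q^{\ast}}^{p^{\ast}/2}$; this is a pseudo-variable, its standard form being $1\cdot \X_{q^{\ast}}^{p^{\ast}/2}$ with $1\in\QS_{q^{\ast}}^{\mathsmaller{> 0}}$ and $p^{\ast}/2\in\mathbb{Q}^{\mathsmaller{> 0}}$. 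I will check that $a/v=\sum_{i}a_i\X_{q_i}^{p_i}\X_{q^{\ast}}^{-p^{\ast}/2}$ lies in $\hat{\mathbb{B}}$ by examining its summands. When $q_i>q^{\ast}$, the factor $\X_{q^{\ast}}^{-p^{\ast}/2}$ belongs to $\mathbb{F}_{q_i}=\rc(\hat{\mathbb{A}}_{q_i})$ and can be absorbed into the coefficient, leaving a legitimate summand with the variable $\X_{q_i}$ raised to the positive power $p_i$. When $q_i=q^{\ast}$, the summand becomes $a_i\X_{q^{\ast}}^{p_i-p^{\ast}/2}$, and $p_i-p^{\ast}/2\geq p^{\ast}/2>0$ by the choice of $p^{\ast}$. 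In either case one obtains a genuine summand of a standard form, with nonzero variable part and coefficient in the appropriate $\mathbb{F}_{q_i}$; in particular no constant summand with a possibly non-integral coefficient is created, so $a/v\in\hat{\mathbb{B}}$. Since $a\geq 0$ and $v>0$, in fact $a/v\in\QS$, i.e.\ $v\mid a$ in $\QS$, as required.

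The step I expect to require the most care is this final verification, and in particular the choice to divide by the \emph{half}-power $\X_{q^{\ast}}^{p^{\ast}/2}$ rather than by $\X_{q^{\ast}}^{p^{\ast}}$ itself. Dividing by the full power would turn the least-exponent summand at rank $q^{\ast}$ into the bare coefficient $a_i\in\mathbb{F}_{q^{\ast}}$, which need not lie in $\hat{\mathbb{B}}$, so that $a/\X_{q^{\ast}}^{p^{\ast}}\notin\hat{\mathbb{B}}$ in general; the half-power instead keeps every resulting exponent strictly positive and hence never manufactures such a spurious constant term. Once this bookkeeping with standard forms (all of which is available from Subsection~\ref{subsection_on_further-analysis_of_B}) is pinned down, the corollary follows.
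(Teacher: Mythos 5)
Your proof is correct but takes a genuinely different route from the paper. The paper's one-line proof defines $J$ by ``$x$ is the sum of a standardly finite sequence of pseudo-variables,'' while you define $J\cap\QS$ as the set of elements of $\QS$ divisible by a pseudo-variable, and verify this by dividing by the half-power $\X_{q^\ast}^{p^\ast/2}$ of the lowest-ranked, lowest-degree summand --- a move that keeps every resulting exponent strictly positive so that $a/v$ remains inside $\hat{\mathbb{B}}$. Your remark about why dividing by the full power $\X_{q^\ast}^{p^\ast}$ would fail (it would leave the bare coefficient $a_1\in\mathbb{F}_{q^\ast}$ as a constant term, which need not lie in $\hat{\mathbb{B}}$) is exactly right. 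In fact your characterization appears to be the more robust of the two: since pseudo-variables are nonnegative and their coefficients are required to lie in $\QS_q$ rather than in the full real closure $\mathbb{F}_q$, it is not clear how elements such as $\X_1-\X_0$ (whose standard-form coefficient at $\X_0$ is $-1$) or $\sqrt{2}\,\X_0$ (whose coefficient $\sqrt{2}$ does not lie in $\hat{\mathbb{B}}_0$) could be written as sums of pseudo-variables, even though both are in $J\cap\QS$; your divisibility criterion covers them directly, in each case with $v=\X_0^{1/2}$. So the corollary as stated is correct, and your argument gives a cleaner witness to it than the formula sketched in the paper.
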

\begin{proof}
Put $J(x)$ to be the formula ``there is a sequence of pseudo-variables whose length is in $\omega$ such that $x$ is the sum of this sequence''.
\end{proof}

\subsubsection{Automorphisms} 

In this subsection, we develop the basics of automorphisms of $\mathbb{B}$.

\begin{theorem}\label{lem_aut_IOpen+Coll}
    For any two sequences of pseudo-variables $x_0\ll x_1\ll \ldots \ll x_{\ell-1}$, 
    $y_0\ll y_1\ll \ldots \ll y_{\ell-1}$ 
    there is an automorphism $\sigma$ of $\QS$ which moves $x_i$ to $y_i$. Moreover, 
    this automorphism extends to an automorphism of $\hat {\mathbb B}$;
    indeed the automorphism can be extended to $\mathsf{rc}\hat{(\mathbb{B})}.$
\end{theorem}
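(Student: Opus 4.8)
The plan is to mimic the proof of Theorem~\ref{autosmurf}, replacing polynomial substitution in $\hat{\mathbb{A}}$ with the richer operation of substituting into Puiseux-style expansions in $\hat{\mathbb{B}}$, and exploiting the fact (Remark~\ref{rc(A)=rc(B)}) that $\mathsf{rc}(\hat{\mathbb{A}}) = \mathsf{rc}(\hat{\mathbb{B}})$ is a single real closed field in which everything lives. First I would reduce to a normal form for the data: using order-automorphisms of $\mathbb{Q}$ (which lift to automorphisms of $\hat{\mathbb{B}}$ fixing $\mathbb{Z}$, just as in Theorem~\ref{autosmurf} via the directed-union description) together with the structure theorem for pseudo-variables, I may assume $x_i = \X_{q_i}$ for a fixed increasing tuple $q_0 < \dots < q_{\ell-1}$, and $y_i = b_i \X_{q_i}^{p_i}$ in standard form with $b_i \in \hat{\mathbb{B}}_{q_i}^{\mathsmaller{>0}}$, $p_i \in \mathbb{Q}^{\mathsmaller{>0}}$ and $b_i\X_{q_i}^{p_i}$ the standard form of a pseudo-variable; moreover I may further normalize so that, level by level, only the ``leading'' adjustment is present. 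The crucial point is that $x_i \ll x_j$ and $y_i \ll y_j$ for $i<j$ guarantees the $q_i$ are strictly increasing (by the Proposition relating $\ll$ to $\mathsf{rank}_0$), so the data is ``triangular'': each $y_i$ lives over the ring generated by $\{\X_{q_j}: j<i\}$ after the substitutions for lower indices have been performed.

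Next I would construct $\sigma$ by iterated Shepherdson substitution. Working inside $\mathsf{rc}(\hat{\mathbb{B}})$, define $\sigma$ on generators recursively: on all variables $\X_r$ with $r \notin \{q_0,\dots,q_{\ell-1}\}$ let $\sigma$ act as the identity (or as the $\mathbb{Q}$-automorphism from the first reduction), and set $\sigma(\X_{q_i})$ to be the element of $\hat{\mathbb{B}}$ obtained by applying $\sigma$ to the coefficients of $y_i$ (which involve only lower-index generators and untouched variables) — this is well-defined by the triangular structure. The key is that this map extends to a ring embedding of $\hat{\mathbb{B}}(\fsvar)$ for each finite $\fsvar$, and is order-preserving because substituting a pseudo-variable of the appropriate rank for $\X_{q_i}$ preserves the rank ordering that controls $<$ in a Shepherdson extension (this is exactly what Theorem~\ref{Marker_thm} and the integer-part analysis guarantee: $\mathsf{rank}_0(\sigma(\X_{q_i})) = q_i$, so the infinitude of $\X_{q_i}$ relative to $\hat{\mathbb{B}}_{q_i}$ is respected). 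Taking the directed union over $\fsvar \in [\mathbb{Q}]^{<\omega}$ gives an order-preserving ring endomorphism $\sigma$ of $\hat{\mathbb{B}}$, which restricts to $\mathbb{B} = \hat{\mathbb{B}}^{\mathsmaller{\geq 0}}$ since it preserves $0$ and $<$; and since real closures are functorial for order-preserving field embeddings (Theorem~\ref{rc_is_unique}), $\sigma$ extends uniquely to $\mathsf{rc}(\hat{\mathbb{B}})$.

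Finally, to see $\sigma$ is an automorphism rather than merely an embedding, I would exhibit an inverse $\mu$ exactly as in Theorem~\ref{autosmurf}: define $\mu$ recursively on the $\X_{q_i}$ so that $\mu(\X_{q_i})$ is the element of $\mathsf{rc}(\hat{\mathbb{B}})$ satisfying $\sigma(\mu(\X_{q_i})) = \X_{q_i}$, which exists because inverting $w \mapsto b_i w^{p_i} + (\text{lower-order terms})$ amounts to taking a $p_i$-th root and dividing by $b_i$ — operations available in the real closed field $\mathsf{rc}(\hat{\mathbb{B}})$ — and because, by the triangular structure, at stage $i$ the relevant coefficients have already been handled. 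One then checks $\sigma\mu = \mu\sigma = \mathrm{id}$ on generators by induction on $i$, using the inductive hypothesis for $j<i$ to conclude $\sigma\mu$ and $\mu\sigma$ fix the coefficients, precisely as in the computation at the end of the proof of Theorem~\ref{autosmurf}. The main obstacle I anticipate is verifying that $\mu(\X_{q_i})$ actually lands back in $\hat{\mathbb{B}}$ (not merely in $\mathsf{rc}(\hat{\mathbb{B}})$) when we want the automorphism to restrict to $\hat{\mathbb{B}}$ and $\mathbb{B}$: this requires checking that the $p_i$-th root of a positive element of the appropriate Shepherdson layer, times $b_i^{-1}$, is again a Puiseux polynomial of the right shape — i.e., that $\shef(\hat{\mathbb{B}}_{q_i}, \X_{q_i})$ is closed under the needed fractional-power operations on positive elements with leading coefficient a unit in the real closure — which follows from the definition of the Shepherdson extension but must be spelled out carefully, together with the bookkeeping that the supports stay finite at each stage.
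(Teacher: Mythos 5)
You follow the same overall route the paper takes — normalize to $x_i=\X_{q_i}$ and $y_i=b_i\X_{q_i}^{p_i}$, build the map layer by layer through the directed union of Shepherdson extensions, invert using positive fractional powers available in the real closure, and extend to $\rc(\hat{\mathbb{B}})$ — but there is a concrete error in how you set $\sigma$ on the generators. You write that $\sigma(\X_{q_i})$ should be ``the element obtained by applying $\sigma$ to the coefficients of $y_i$,'' i.e., $\sigma(\X_{q_i})=\sigma(b_i)\X_{q_i}^{p_i}$. Nothing forces $b_i$ to avoid the other target ranks: being a pseudo-variable only requires $b_i\in\mathbb{B}_{q_i}^{>0}$, so $b_i$ may well involve $\X_{q_j}$ for $j<i$, in which case $\sigma(b_i)\neq b_i$ and your $\sigma$ fails to send $x_i$ to $y_i$. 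For a concrete counterexample take $\ell=2$, $q_0=0$, $q_1=1$, $y_0=\X_0^2$, $y_1=\X_0\X_1$: your recipe gives $\sigma(\X_1)=\sigma(\X_0)\X_1=\X_0^2\X_1\neq y_1$. The paper instead sets $\sigma(\X_{q_i}):=y_i=b_i\X_{q_i}^{p_i}$ with the coefficient left untouched (via an auxiliary substitution $\nu$), and applies the previously-constructed lower-level automorphism $\tau$ only to the \emph{coefficients of a general element being mapped}, not to the coefficient of $\nu(\X_{q_i})$. The asymmetry that likely tripped you up also appears in Theorem~\ref{autosmurf}: there the inverse $\mu$ does satisfy $\mu(\X_{q_i})=\X_{q_i}-\mu(a_i)$, while the forward $\sigma$ does not re-apply $\sigma$ to $a_i$.

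Once this slip is corrected, the rest of your outline lines up with the paper's proof. Two minor further points. First, since $y_i$ is a pseudo-variable its standard form $b_i\X_{q_i}^{p_i}$ is a single monomial, so the ``$+(\text{lower-order terms})$'' in your description of $w\mapsto b_iw^{p_i}$ is vacuous. Second, the ``main obstacle'' you correctly flag — that $\mu(\X_{q_i})$ must land in $\hat{\mathbb{B}}$ and not merely in $\rc(\hat{\mathbb{B}})$ — is precisely what the paper handles with the explicit formula $\mu(\X_{q_i})=\tau^{-1}(b_i)^{-1/p_i}\X_{q_i}^{1/p_i}$ and the monomial-by-monomial check that the forward and backward maps compose to the identity; this is where closure of $\shef(\hat{\mathbb{B}}_{q_i},\X_{q_i})$ under positive fractional powers of elements with coefficients in the real closure of the lower layer is actually used.
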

\begin{proof}
    Clearly, as in the proof of Theorem \ref{autosmurf}, we can assume that $x_i = \X_{q_{i}}$ and 
    $y_i = r_i\X_{q_i}^{p_i}$, where $q_i<q_j$ if $i<j<\ell $. 
    We note that,
    by the definition of $\QS$, 
    the $r_i$ are positive. Let $\{q'_0,\ldots, q'_{k-1}\} = \fsvard\subseteq \mathbb{Q}$ be a finite set
    such that all $r_i\X_{q_i}^{p_i}$ are in $\mathbb{B}(\fsvard)$, where 
    $q'_i < q'_j$, for $i<j<k$. We note that the $q_i$ will be in $Q$.
    For $q' \in \fsvard$, we set:
    \begin{itemize}
        \item 
    $\nu(\X_{q'}) := r_i \X_{q'}^{p_i}$ if $q'=q_i$ for $i< \ell$, 
    and  $\nu(\X_{q'}) := \X_{q'}$, otherwise.
    \end{itemize}
    
    For $k'\leq k$, 
    let $\fsvard_{k'} = \{q'_0,\ldots, q'_{k'-1}\}$.  By induction on $k'\leq k$, we show that 
    there is a unique automorphism $\sigma_{q'_{k'}}$ of $\mathbb{B}({\fsvard_{k'}})$ which 
    extends $\nu$.
    
    Assume that our claim holds for $k'$ and set $q' = q'_{k'}$. Let $\sigma_{\fsvard_{k'}}$ be the 
    automorphism of $\mathbb{B}({\fsvard_{k'}})$ 
    given by the inductive assumption. Let $\tau$ be the unique extension of $\sigma_{\fsvard_{k'}}$ to 
    $\rc(\mathbb{B}(\fsvard_{k'}))$. Finally, we define $\sigma_{\fsvard_{k'+1}}$. Given an element $z$ of the form
    \[a_n\X_{q'}^{n/m}+a_{n-1}\X_{q'}^{(n-1)/m} + \ldots + a_1 \X_{q'}^{1/m} + a_0,\]
    put $\sigma_{\fsvard_{k'+1}}(z)$ to be equal to
    \[\tau(a_n)\nu(\X_{q'})^{n/m} + \tau(a_{n-1})\nu(\X_{q'})^{(n-1)/m} + 
    \ldots + \tau(a_1)\nu(\X_{q'})^{1/m} + \tau(a_0).\]
        The function we defined clearly preserves addition and multiplication. 
   
    We check that $\sigma_{\fsvar_{n+1}}$ is a bijection: we define its 
    inverse, say $\rho$. We first define the auxiliary function $\mu$. Let $q \in \fsvard$.
    \begin{itemize}
        \item 
    $\mu(\X_{q'}) = \tau^{-1}(r_i)^{-\frac{1}{p_i}}\X_{q_i}^{\frac{1}{p_i}}$ if $q' = q_i$ for $i< \ell$, and
    $\mu(\X_{q'}) = \X_{q'}$, otherwise.
    \end{itemize}
    For an element $w$ in the form
    \[b_n\X_{q'}^{n/m}+b_{n-1}\X_{q'}^{(n-1)/m} + \ldots + b_1 \X_{q'}^{1/m} + b_0,\]
    we define $\rho(w)$ to be {\small
    \[\tau^{-1}(b_n)\mu(\X_{q'})^{n/m} + \tau^{-1}(b_{n-1})\mu(\X_{q'})^{(n-1)/m} + 
    \ldots + \tau^{-1}(b_1)\mu(\X_{q'})^{1/m} + \tau^{-1}(b_0),\]
    }
    
     We check that $\rho$ is indeed the inverse of $\sigma_{\fsvar_{n+1}}$, for the case that $q'=q_i$. The other
     case is simpler. Since $\sigma_{\fsvar_{n+1}}$ and $\rho$ clearly commute with addition, it suffices
     to do the computation for a single summand $a_j\X_{q_i}^{j/m}$ or $b_j\X_{q_i}^{j/m}$.
     We have:   
\begin{eqnarray*}
\rho \sigma_{\fsvar_{n+1}}(a_j\X_{q_i}^{j/m}) & = & 
\rho(\tau(a_j)(r_i\X_{q_i}^{p_i})^{j/m}) \\
 & = &  \tau^{-1}(\tau(a_j)r_i^{j/m})((\tau^{-1}(r_i)^{-\frac{1}{p_i}}\X_{q_i}^{\frac{1}{p_i}})^{p_i})^{j/m} \\
 & = &  a_j\tau^{-1}(r_i^{j/m})\tau^{-1}(r_i)^{-j/m}X_{q_i}^{j/m} \\
 & = & a_j\X^{j/m}_{q_i}\\
 \sigma_{\fsvar_{n+1}}\rho( b_j\X_{q_i}^{j/m}) & = & 
 \sigma_{\fsvar_{n+1}}( \tau^{-1}(b_j)(\tau^{-1}(r_i)^{-\frac{1}{p_i}}\X_{q_i}^{\frac{1}{p_i}})^{j/m})\\
 & = &  \tau(\tau^{-1}(b_j)(\tau^{-1}(r_i)^{-\frac{1}{p_i}})^{j/m})
 ((r_i\X_{q_i}^{p_i})^{\frac{1}{p_i}})^{j/m} \\
 &=&  b_j(r_i^{-\frac{1}{p_i}})^{j/m}(r_i^{\frac{1}{p_i}})^{j/m}\X_{q_i}^{j/m}\\
 &=& b_j\X_{q_i}^{j/m}
\end{eqnarray*}

    Clearly, the automorphism thus defined is unique.

    Since for each $\fsvard\in[\mathbb{Q}]^{<\omega}$, the function $\sigma_\fsvard$ is unique, we obtain that if $\fsvard\in[\mathbb{Q}]^{<\omega}$ contains all the $r_i\X_i^{q_i}$'s and 
    $\fsvard\subseteq \fsvard'\in[\mathbb{Q}]^{<\omega}$, 
    then $\sigma_\fsvard\subseteq \sigma_{\fsvard'}$. Hence,
    by the directedness of $\{\mathbb{B}(\fsvard): \mathbb{Q}^{<\omega}\}$, we can define an automorphism $\sigma$ of 
    $\mathbb{B}$ as the union of $\sigma_\fsvard$'s, for all sufficiently large $\fsvard$'s.

    We prove that $\sigma$ generates also an automorphism $\widetilde{\sigma}$ of $\mathsf{rc}\hat{(\mathbb{B})}$. Recall from Subsection \ref{subsection_on_further-analysis_of_B} that $\mathbb{F}:= \rc(\hat{\mathbb{A}})$.   Take any $\alpha\in\mathbb{F} $.
    Then, there is $\fsvar\in[\mathbb{Q}]^{<\omega}$, 
    %taki że
    such that $\alpha\in\rc(\hat{\mathbb{A}}(\fsvar))$. Obviously $\alpha\in \rc(\hat{\mathbb{B}}({\fsvar}))$ 
    and we know that there is a unique extension $\sigma_1$ of $\sigma_{\fsvar}$ to $\rc(\hat{\mathbb{B}}(\fsvar))$. 
    We put $\widetilde{\sigma}(\alpha) = \sigma_1(\alpha)$.
\end{proof}

\subsection{Interpreting $\mathbb{N}$ in $\mathbb{B}$}
It suffices to show that $\hat{\mathbb{B}}$ is a Dorroh ring, which is 
readily seen from the definition of the elements of $\mathbb{B}$: if $J$ is the ideal generated by all the elements of the form $\{\X_q^p : q\in\mathbb{Q}, p\in \mathbb{Q}^{\mathsmaller{> 0}}\}$, then $\mathbb{B}/J$ is isomorphic to $\mathbb{Z}.$ We let $\omega(x)$ be the $\Delta_0$ arithmetical formula defining $\mathbb{N}$ in $\mathbb{B}$. We call the resulting interpretation {\sf N}.

\subsection{Interpreting $\QS$ in $\mathbb{N}$}
We use the coding from Section \ref{realsmurf} to represent arithmetically the structure of $\QS$: an element of $\QS$ 
is represented via its standard form as a sequence of triples 
$\{\langle a_i, q_i, p_i\rangle\}_{i\leq n}$ where,  for each $i$, 
$a_i\in \mathbb{F}_{q_i}$,
 $q_i$'s are rational numbers (representing the indices of variables) and $p_i$'s are non-negative rational numbers (representing the exponents; we assume that $q_0 = 0$). We note that $a_i$'s are represented as in Section \ref{realsmurf}, so each $a_i$ is a pair $(p,l)$, where $p$ is a polynomial with coefficients in $\mathbb{Z}[\X_q:q<q_i] (= \hat{\mathbb{A}}_{q_i})$.  We assume that the sequences of $a_i$'s, $q_i$'s and $p_i$'s meet the requirements defining the standard form of an element of $\mathbb{B}$. 

We shall often view the sequence of triples $\{\langle a_i, q_i, p_i\rangle\}_{i\leq n}$ as the triple of sequences $\langle \bar{a},\bar{q},\bar{p}\rangle$. For $q\in\mathbb{Q}$, let 
$\sim_q$ abbreviate $$\sim_{\mathbb{Z}[\{\X_r:r<q\}]},$$and let 
$\sim$ abbreviate $$\sim_{\mathbb{Z}[\{\X_r:r\in\mathbb{Q}\}]},$$ 
where for a ring $\mc R$, $\sim_{\mc R}$ is defined in Section \ref{realsmurf}.  We say that sequences $s=\langle \bar{a}_1,\bar{p}_1,\bar{q}_1\rangle$, and $t=\langle \bar{a}_2, \bar{p}_2, \bar{q}_2\rangle$ represent the same element of $\mathbb{B}$ iff ${\sf length}(s) = {\sf length}(t)$ and for each $i<{\sf length}(s)$, $\bar{a}_1(i)\sim \bar{a}_2(i)$, $\bar{p}_1(i) = \bar{p}_2(i)$, $\bar{q}_1(i) = \bar{q}_2(i)$. By the results of Section \ref{realsmurf} such a presentation gives rise to a recursive ring, which is isomorphic to $\mathbb{B}$.

As previously, we will use $\mathbb{B}^{\alpha}$ to denote that $\mathbb{N}$-internal representation of $\mathbb{B}$.
We call the resulting interpretation {\sf B}.

 \subsection{ Back-and-forth in $\mathbb{B}$}
We start by defining the support function between
%\albnote{I am going to skip the finesses of support. I will read the various messages about it when I finished the present round of reading.}
 elements of $\mathbb{B}$ and finite sets of variables. It is shown in Appendix \ref{Section_on_supports} (see Theorem \ref{properties_of_B-support} and the definition preceding it) that the support of an element $b\in \mathbb{B}$ (defined right after Definition \ref{B-definition}) coincides with the smallest set of variables $\mathfrak{X}$ such that $b\in \rc(\mathbb{Z}[\mathfrak{X}])$.
 
 Suppose that $b=a_{n}\X_{q_n}^{p_n} + a_{n-1}\X_{q_{n-1}}^{p_{n-1}} + \ldots + a_0$ is the standard form of an element from $\mathbb{B}$. A set $\mathfrak{X}$ is the support of $b$ iff $\mathfrak{X}$ is the least such that for all $i$, $\X_{q_i}\in \mathfrak{X}$ and each $a_i$ is algebraic over $\hat{\mathbb{A}}(\mathfrak{X})$. Since each element $b\in \mathbb{B}^{\alpha}$ is a polynomial, whose coefficients are given by some concrete notations, we shall take ${\sf supp}^{\alpha}(b)$ to be the set of variables that occur in $b$ or the notations for coefficients of $b$. This is clearly not invariant under the equality relation on $\mathbb{B}^{\alpha}$, but this is not problematic for our purposes. 

Let us now define the appropriate variants of the {\sf ev} and {\sf app} functions from Section \ref{sect_repr_real_alg}. This is needed, since now we need to represent not only the real algebraic numbers, but also the elements of $\rc(\mathbb{Z}[\{\X_q : q\in\mathbb{Q}\}]).$ To this we will need: 
\begin{itemize}
    \item a code of a notation $(f,l)$ as in the Section \ref{realsmurf} for $\mc R = \mathbb{Z}[\X_q:q\in\mathbb{Q}]$, where $f$ is a polynomial 
    with coefficients in 
    $\mathbb{Z}[\X_q : q\in\mathbb{Q}]$. We shall represent $f$ as a sequence 
    $(f_1,\ldots,f_n)$, where each $f_i\in \mathbb{Z}[\X_q : q\in\mathbb{Q}]$.
    \item a finite tuple of variables $\bar{\X}:=\tupel{\X_{q_0},\ldots,\X_{q_{k-1}}}$, 
    where $k$ is the number of variables occurring in the $f_i$'s;
    \item a variable $\X_q$, where $q>q_i$.
\end{itemize} 

Having the above fixed, we can write the formula
\[Q_{\bar{\X},f,x}(\Y) := f_n[\bar{\X}] \Y^n + f_{n-1}[\bar{\X}]\Y^{n-1}x + \ldots + f_1[\bar{\X}]\Y x^{n-1} + f_0[\bar{\X}]x^{n},\]
where $f_i[\bar{\X}]$ results from $f_i$ by replacing the $i$-th variable with $\X_{q_i}$. The point of $Q_{\bar{\X},f,x}(\Y)$ is that $a$ is a root of $f[\bar{\X}]$ iff $a\X_q^{p}$ is a root of $Q_{\bar{\X}, f,\X_q^p}(\Y)$ in $\Y$ (assuming the sanity condition that $q$ is greater than all the indices in $\bar{\X}$). As in Section \ref{sect_repr_real_alg}, for a given $p$, we can actually turn this into parameter-free $\mathbb{B}$-definable function 
${\sf app}^*_{x,s}(f,y) = Q_{s,f,x}(y)$.
Now given the notation $(f,k)$, sequence $s$ of length $k$ and arbitrary $x$

\begin{itemize}
    \item 
    ${\sf ev}_{x,s}((f,k)) = z$ iff there is an $s'\in {\sf seq}$ such that:
    \begin{itemize}
    \item
    ${\sf length}(s')=k$,
        \item 
    $\pi(s',k-1) = z$,
    \item 
    for all $i<k$, we have ${\sf app}^*_{x,s}(f,\pi(s',i)) =0 $,
    \item
    for all $i<j<k$, we have $\pi(s',i) < \pi(s',j)$,
    \item 
    if $y \bleq z$ and 
    ${\sf app}^*_{x,s}(f,y) =0$, then, for some  $j<k$, we have $\pi(s',j)= y$.
    \end{itemize}
\end{itemize}
With these definitions, we can reprove a version of the extension lemma. We say that an element $a\in\mathbb{B}$ is \textit{real-algebraic} definable from a set $A$, if for some polynomial $f\in\mathbb{Z}[A][\Y]$ and some $k\in\mathbb{N}$, $a$ is the $k$-th root of $f$.

\begin{lem}\label{spelendesmurfIOPEN}
Let $W$ be a finite, independent set of \psvar. Then, there is a finite set of variables $V$ such 
that $W\cup V$ is independent and ${\sf supp}(W)$ is \textup(pointwise\textup) 
real-algebraic definable from $W\cup V$.
 \end{lem}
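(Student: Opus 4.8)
The plan is to mimic the proof of Lemma~\ref{spelendesmurf} (the $\mathbb{A}$-analogue), with the role of ``polynomially definable'' now played by ``real-algebraic definable'' to account for the coefficients that live in the real closures $\mathbb{F}_{q_i}$ rather than in $\hat{\mathbb{A}}$ itself. So first I would unwind the structure: let $W = \{w_0,\dots,w_{k-1}\}$ be an independent finite set of pseudo-variables, ordered so that ${\sf rank}_0(w_i) < {\sf rank}_0(w_{i+1})$. Writing each $w_i$ in standard form, independence gives ${\sf rank}_0(w_i) = q_i$ with $q_0 < q_1 < \dots < q_{k-1}$, and by the characterisation of pseudo-variables each $w_i$ has the form $w_i = a_i \X_{q_i}^{p_i}$ with $a_i \in \QS_{q_i}^{\mathsmaller{>0}}$ and $p_i \in \mathbb{Q}^{\mathsmaller{>0}}$. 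Set $W^\star := \{\X_{q_i} \mathop\mid i < k\}$, take $V := {\sf supp}(W) \setminus W^\star$, and put $Z := W \cup V$. Since $V$ consists of genuine variables and $W$ is independent with ${\sf rank}_0$'s equal to the $q_i$ (which, being the indices of the $\X_{q_i}\in W^\star$, do not reappear among the indices in $V$), the set $Z$ is independent.

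It remains to show that every $\X_{q_i} \in W^\star$ is real-algebraic definable from $Z$; then the whole of ${\sf supp}(W) = W^\star \cup V \subseteq Z$ is pointwise real-algebraic definable from $Z$ (each element of $V$ trivially so). I would argue by course-of-values induction on $i$. Suppose $\X_{q_j}$ is real-algebraic definable from $Z$ for all $j < i$. The coefficient $a_i$ lies in $\mathbb{F}_{q_i} = \rc(\hat{\mathbb{A}}_{q_i})$, hence is algebraic over $\hat{\mathbb{A}}_{q_i}$, i.e. over $\mathbb{Z}[\X_{q'} \mathop\mid q' < q_i]$; by the support analysis in Appendix~\ref{Section_on_supports}, the only variables actually needed are those in ${\sf supp}(w_i) \setminus \{\X_{q_i}\} \subseteq V \cup \{\X_{q_0},\dots,\X_{q_{i-1}}\}$, all of which are real-algebraic definable from $Z$ by the inductive hypothesis (variables in $V$ directly, the $\X_{q_j}$ for $j<i$ by induction). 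So $a_i$ is real-algebraic definable from $Z$. Now $w_i = a_i \X_{q_i}^{p_i}$ with $w_i, a_i, p_i$ all available, and $\X_{q_i} = (w_i/a_i)^{1/p_i}$; expressing $p_i = m/n$, the element $\X_{q_i}$ is the (unique positive, hence suitably indexed) root of the polynomial $a_i^n \Y^m - w_i^n$ over the ring generated by the already-obtained generators, and this presentation is first-order in $\QS$ via the ${\sf ev}$ and ${\sf app}^\ast$ machinery just set up. Hence $\X_{q_i}$ is real-algebraic definable from $Z$, completing the induction.

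The one genuinely new wrinkle compared to Lemma~\ref{spelendesmurf} is that in the $\mathbb{A}$-case the coefficients were integers and the generators $\X_{q_i}$ were recovered by a plain polynomial identity ($w_i = \X_{q_i} + b_i$), whereas here the coefficients $a_i$ themselves need to be reconstructed, and they are only algebraic over the lower ring, not polynomial in it. This is exactly why the conclusion has to be phrased with ``real-algebraic definable'' and why the composition of finitely many such definitions must be shown to still be real-algebraic definable — but this closure is immediate, since a root of a polynomial whose coefficients are themselves roots of polynomials over a base set is, after clearing the nested algebraic dependencies (taking resultants), again a root of a single polynomial over that base set. The main obstacle, then, is bookkeeping: making sure the inductive ordering respects the $\mathbb{Q}$-stratification (each $a_i$ uses only strictly-lower-index variables, by the support lemma of the Appendix) so that the recursion is well-founded and non-circular, exactly as in the proof of Lemma~\ref{spelendesmurf}. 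With that in hand, course-of-values induction closes the argument.
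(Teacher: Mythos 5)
Your proof is correct and follows essentially the same approach as the paper's: same choice of $V := {\sf supp}(W)\setminus W^\star$, same course-of-values induction on the index $i$, and the same key point that the coefficient $a_i$ is algebraic over only the lower-indexed variables in ${\sf supp}(W)$. The only (cosmetic) divergence is in the final algebraic step: the paper works entirely inside the ${\sf ev}$ machinery and exhibits a single polynomial over $\mathbb{Z}[\bar\X, w_i]$ of which $\X_{q_i}^{p_i}$ is a root, whereas you first recover $a_i$ and then use $w_i = a_i \X_{q_i}^{p_i}$ to get $\X_{q_i}$ as a root of $a_i^n \Y^m - w_i^n$; both variants rest on the same (implicit in the paper, explicitly flagged by you) transitivity of ``real-algebraic definable'' via resultants, so the content is the same.
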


 \begin{proof}
     We borrow the notation from the proof of 
    Lemma \ref{spelendesmurf} and let $W^*$, $V$ and $Z$ be defined as 
    before.
    As previously, it suffices to show that the elements in $W^*$ are real-algebraic definable from $Z$. Let 
    $\X_{q_0}, \ldots, \X_{q_{k-1}}$ be all such elements. We use induction on $i\leq k-1$. 

    Suppose the claim holds for all $j<i$. Note that the $i$-th element of $W$ must be of the 
    form $a\X_{q_i}^p$. We know that $a\in \rc(\mathbb{Z}[\{\X_r\in {\sf supp}(W) : r<q_i\}])$, 
    so there is a notation $(f,l)$ and a sequence of variables $\bar{\X}\in {\sf supp}(W)$ with 
    indices less than $q_i$ such that ${\sf ev}_{\X_{q_i}^p, \bar{\X}}((f,l)) = a\X_{q_i}^p$ 
    and $\X_{q_i}$ is the unique element $y$ for which ${\sf ev}_{\X_{q_i}^p, \bar{\X}}((f,l)) = a\X_{q_i}^p$. 
    Unraveling the definition of ${\sf ev}$ this means that $\X_{q_i}^p$ is a root of
    \[f_n[\bar{\X}] (a\X_{q_i}^p)^n + f_{n-1}[\bar{\X}](a\X_{q_i}^p)^{n-1}\Y + \ldots + f_1[\bar{\X}](a\X_{q_i}^p)\Y^{n-1} + f_0[\bar{\X}]\Y^{n},\]
    seen as a polynomial in $\Y$. In particular, $\X_{q_i}$ is real algebraic definable over $Z\cup \bar{\X}$. 
    By inductive assumption $\bar{\X}$ is real-algebraic definable over $Z$, which concludes our proof.
 \end{proof}

 We shall keep the definition of a witnessing function unchanged, of course given the current meaning of ``being a pseudo-variable''. 
 We observe that given a representation $u\in\mathbb{B}^{\alpha}$ and a witnessing function $F$, we can uniformly define an element ${\sf val}_F(u)$. We note that the definition of ${\sf val}_F$ makes use of the ${\sf ev}$ function. We say that a partial function $G: \mathbb{B}^{\alpha}\rightarrow \mathbb{B}$ is good is there is a witnessing function $F$ such that for each $u\in{\sf dom}(G)$, $G(u) = {\sf val}_F(u).$

 \begin{theorem}\label{thm_extension lemma_IOP}
Suppose $W$ is an independent set of \psvar\ and $x$ is any element of $\mathbb{B}$. Then, there is a witnessing function $F$ such that ${\sf rg}(F)\supseteq W$ and for some $u\in\mathbb{B}^{\alpha}$, $x= {\sf val}_F(u)$.
\end{theorem}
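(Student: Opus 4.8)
The plan is to mirror the proof of the extension lemma for $\mathbb{A}$ (Theorem~\ref{smurfoludens}), replacing \emph{polynomial} definability by \emph{real-algebraic} definability, and then to use an automorphism of $\mathbb{B}$ to convert the resulting data into a genuine element of $\mathbb{B}$ whose support is small enough to be encoded directly in $\mathbb{B}^{\alpha}$. We may assume $W$ is finite, since a witnessing function has finite range. First I would apply Lemma~\ref{spelendesmurfIOPEN} to $W$ to obtain a finite set of variables $V$ with $W\cup V$ independent and ${\sf supp}(W)$ pointwise real-algebraic definable from $W\cup V$. Writing $x$ in standard form, put $S_x:={\sf supp}(x)$ and $Z:=W\cup V\cup(S_x\setminus{\sf supp}(W))$. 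A short check (as for $U$ in Theorem~\ref{smurfoludens}) shows that $Z$ is a finite independent set of pseudo-variables and that every element of $S_x$ is real-algebraic definable from $Z$: the members of $S_x\setminus{\sf supp}(W)$ lie in $Z$ (each is the unique root of $\Y-\X_r$ over $\mathbb{Z}[Z]$), while those in $S_x\cap{\sf supp}(W)\subseteq{\sf supp}(W)$ are real-algebraic definable from $W\cup V\subseteq Z$ by the lemma, real-algebraic definability being preserved under enlarging the parameter set.

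Next I would upgrade this to the assertion that $x$ \emph{itself} is real-algebraic definable from $Z$. By the analysis of supports in the Appendix (Theorem~\ref{properties_of_B-support}), $x\in\rc(\mathbb{Z}[S_x])=\rc(\mathbb{Q}(S_x))$, where $\mathbb{Q}(S_x)$ denotes the subfield of $\rc(\hat{\mathbb{A}})=\rc(\hat{\mathbb{B}})$ generated by $S_x$ (using Remark~\ref{rc(A)=rc(B)}). Since each variable of $S_x$ is algebraic over the subfield $\mathbb{Q}(Z)$ generated by $Z$, the field $\mathbb{Q}(S_x\cup Z)$ is algebraic over $\mathbb{Q}(Z)$; hence, by the uniqueness of real closures inside a fixed real closed field (Theorem~\ref{rc_is_unique}), $\rc(\mathbb{Q}(S_x))\subseteq\rc(\mathbb{Q}(S_x\cup Z))=\rc(\mathbb{Q}(Z))$. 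In particular $x$ is, say, the $l$-th root of some $g\in\mathbb{Z}[Z][\Y]$.

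Now enumerate $Z=\verz{z_0,\ldots,z_{m-1}}$ with $z_0\ll z_1\ll\cdots\ll z_{m-1}$, choose rationals $r_0<r_1<\cdots<r_{m-1}$, and let $F$ be the witnessing function with domain $\verz{\X^{\alpha}_{r_0},\ldots,\X^{\alpha}_{r_{m-1}}}$ and $F(\X^{\alpha}_{r_j}):=z_j$; its range is $Z\supseteq W$, and the order condition on $F$ holds by the choice of the $r_j$. By Theorem~\ref{lem_aut_IOpen+Coll} there is an automorphism $\sigma$ of $\hat{\mathbb{B}}$, extending to $\rc(\hat{\mathbb{B}})$, with $\sigma(z_j)=\X_{r_j}$ for $j<m$. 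Set $x^{\ast}:=\sigma(x)$. Then $x^{\ast}\in\hat{\mathbb{B}}$, and since $x\in\rc(\mathbb{Q}(Z))$ we get $x^{\ast}\in\rc(\mathbb{Q}(\verz{\X_{r_0},\ldots,\X_{r_{m-1}}}))$, so ${\sf supp}(x^{\ast})\subseteq\verz{\X_{r_0},\ldots,\X_{r_{m-1}}}$. Thus $x^{\ast}$ lies in the iterated Shepherdson ring $\hat{\mathbb{B}}(\verz{r_0,\ldots,r_{m-1}})$ (Definition~\ref{B-definition}, Remark~\ref{directedness_of_B-hat(r)}), and hence has a standard form (in the sense of Subsection~\ref{subsection_on_further-analysis_of_B}) in which every variable is among $\X_{r_0},\ldots,\X_{r_{m-1}}$ and each coefficient is real-algebraic over the $\X_{r_j}$ of strictly smaller index. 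Let $u\in\mathbb{B}^{\alpha}$ be the representation obtained from this standard form by renaming each $\X_{r_j}$ to the internal variable $\X^{\alpha}_{r_j}$ and recording each coefficient by an ${\sf ev}$-notation over the lower internal variables; this is a bona fide element of $\mathbb{B}^{\alpha}$.

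It then remains to check that ${\sf val}_F(u)=x$. By construction ${\sf val}_F$ evaluates $u$ by substituting $F(\X^{\alpha}_{r_j})=z_j$ for $\X^{\alpha}_{r_j}$ throughout and resolving the coefficient notations via the ${\sf ev}$ and ${\sf app}^{\ast}$ functions; since ${\sf ev}$ returns the appropriately indexed root of the substituted defining polynomial and order is respected, this computes precisely the value of $x^{\ast}$ under the order-preserving ring automorphism of $\rc(\hat{\mathbb{B}})$ that sends $\X_{r_j}\mapsto z_j$, namely $\sigma^{-1}(x^{\ast})=x$. I expect the main obstacle to be exactly this last step, together with the support bookkeeping of the previous paragraph: one must verify that the internal ${\sf ev}$/${\sf app}^{\ast}$ machinery — with its sanity conditions requiring the evaluation variable to dominate the variables occurring in a notation — faithfully implements substitution into, and resolution of, the nested real-algebraic representation of $x^{\ast}$, and that it agrees there with the ring automorphism $\sigma^{-1}$. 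The purely algebraic ingredients (the tower of real closures, uniqueness inside $\rc(\hat{\mathbb{A}})$, and the existence of $\sigma$) are, by comparison, routine.
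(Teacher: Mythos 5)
Your proof is correct and in substance mirrors the paper's: the set $Z=W\cup V\cup(S_x\setminus{\sf supp}(W))$ you build is, after a short unwinding, literally the set $U=W\cup V\cup\mathfrak{X}'$ (with $\mathfrak{X}'=\mathfrak{X}\setminus W^\star$) that the paper uses, so the first half of the argument coincides. The difference lies in how the internal representation $u$ is produced. The paper, having observed that every coefficient of the standard form of $x$ lies in $\rc(\mathbb{Z}[U])$, simply asserts that suitable notations $\{(f_i,n_i)\}_i$ and rationals $\{q_i\}_i,\{p_i\}_i$ exist with $x={\sf val}_F(u)$, leaving the reader to see why. You make this step explicit: you pick rationals $r_0<\cdots<r_{m-1}$, invoke Theorem~\ref{lem_aut_IOpen+Coll} to obtain an automorphism $\sigma$ of $\hat{\mathbb{B}}$ (extending to $\rc(\hat{\mathbb{B}})$) carrying the $j$-th pseudo-variable of $Z$ to $\X_{r_j}$, and then read $u$ off the standard form of $x^\ast=\sigma(x)\in\hat{\mathbb{B}}(\{r_0,\dots,r_{m-1}\})$, whose variables are honest $\X_{r_j}$'s and whose coefficients are algebraic over the lower ones. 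This automorphism step is a genuine clarification: it explains where the notations come from and why the sanity/dominance conditions on ${\sf ev}$ and ${\sf app}^\ast$ are met (they are inherited from the genuine standard form of $x^\ast$). The price is that your argument leans directly on Theorem~\ref{lem_aut_IOpen+Coll}, whereas the paper's only uses the support analysis; but since that theorem is already established in the paper, nothing is lost. The final verification that ${\sf val}_F(u)=\sigma^{-1}(x^\ast)$, i.e. that the internal evaluation machinery agrees with the inverse automorphism, is the one point you flag as requiring care, and indeed both your write-up and the paper treat it somewhat informally; it is the natural place to add detail if a fully spelled-out proof is wanted.
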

\begin{proof}
    Let  $W$ and $x$ be as in the assumption. Let $\mathfrak{X}$ support $x$ and let $$\mathfrak{X}' = \mathfrak{X}\setminus \{\X_q : a\X_q^p\in W\}.$$ Let $V$ be as promised in Lemma~\ref{spelendesmurfIOPEN} for $W$. 
We take $U := W \cup V \cup \mathfrak{X}'$.  Since $V$ consists of variables,
$U$ is independent. Since ${\sf supp}(W)$ is real-algebraic definable from $W\cup V$,
 it follows that every element of $\mathfrak{X}$ is real algebraic definable from $U$. In particular, every coefficient of the standard form of $x$ is in the real closure of $\mathbb{Z}[U]$. Hence we can fix a sequence of notations $\{(f_i,n_i)\}_{i\leq k}$, rational numbers $\{p_i\}_{i\leq k}, \{q_i\}_{i\leq k}$ such that by putting $u = \tupel{\{(f_i,n_i)\}_{i\leq k}, \{q_i\}_{i\leq k}, \{p_i\}_{i\leq k}}$ we have $x = {\sf val}_F(u),$ where $F$ interprets the variables of $f_i$'s as the appropriate elements from $U$. 
\end{proof}

  With these definitions the Atomic, Backward and Forward Clauses of the Ehrenfeucht-Fra\"{\i}ss\'{e} game can be proved exactly as in Section \ref{sect_pam_ehren}. 
  
  The following question is prompted by the fact that $\mathbb{B}$ is a recursive model of $\mathsf{IOpen}$; see also Question \ref{IOpen_question}.
  \begin{question} \label{last_question}Is there a model of $\mathsf{IOpen + Coll}$ whose complete theory has a restricted axiomatization, and which satisfies normality and other desirable algebraic properties that are known to hold in some recursive (computable) nonstandard model of $\mathsf{IOpen}$?   
  \end{question}

\subsection{Completing {\sf IOpen} plus Collection}
We  apply Corollary~\ref{cor_mod_retract} with:
\begin{itemize}
    \item 
$\mathbb B$ in the role of $\mc M$, 
\item 
$\leq$ in the role of $\belowz$, 
\item 
$\mathbb N$ in the role of
$\mc N$, 
\item 
{\sf N} in the role of $\imath$,
\item 
{\sf B} in the role of $\jmath$. 
\item
$\omega_{\mbbs B}^{\sf c}$ in the role of $\theta$.
\item 
The finite statement of the atomic and back and forth clauses
for the good functions together with \pam\ and ${\pam}^\alpha$ delivers 
our desired witness of a ${\sf INT}^{\sf fw}_3$-retract.
\end{itemize}

It is easy to see that our choices realise the conditions of the Corollary \ref{cor_mod_retract}.
We note that $\tau_{\sf N}$ and $\omega_{\mbbs B}^{\sf c}$ are $\Delta_0$.
So, it follows that we have a sentence $\Phi$, and a set $\mc X$
of $\Delta_1$-formulas $X$  (over $\Phi$), such 
that $\Phi + \mc X$ axiomatises  $\mathsf{Th}(\mathbb{B})$.

We conclude:

\begin{theorem}\label{iopencollsmurf}
    The theory ${\sf IOpen}+{\sf Coll}$ has a completion that is axiomatised by a sentence $\Phi$ plus a set of $\Sigma_1$-sentences. 
\end{theorem}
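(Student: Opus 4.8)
The plan is to apply Corollary~\ref{cor_mod_retract} to the model $\mathbb{B}$, in complete analogy with the treatments of $\mathbb{M}$, $\mathbb{S}$ and $\mathbb{A}$ in the previous sections. Concretely, take $\mathbb{B}$ in the role of $\mc M$, the standard model $\mathbb{N}$ in the role of $\mc N$, $\leq$ in the role of $\belowz$, the interpretation ${\sf N}$ (domain restriction to the $\Delta_0$-definable cut $\omega_{\mbbs B}$) in the role of $\imath$, the interpretation ${\sf B}$ (the arithmetical coding of standard forms of elements of $\mathbb{B}$) in the role of $\jmath$, and $\omega_{\mbbs B}^{\sf c}$ in the role of $\theta$. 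The side condition of the Corollary is immediate: $\omega_{\mbbs B}^{\sf c}$ is non-empty in $\mathbb{B}$ (any $\X_q$ lies in it), and, $\omega_{\mbbs B}$ being an initial segment, every element of $\delta_{\sf N}=\omega_{\mbbs B}$ is $\leq$ every element of $\omega_{\mbbs B}^{\sf c}$. Moreover $\mathbb{B}\models\mathsf{IOpen}+\mathsf{Coll}$ has already been established (Theorem~\ref{B_models_IOpen}, together with the subsequent theorem establishing $\mathsf{Coll}$ in $\mathbb{B}$), so any completion of $\mathrm{Th}(\mathbb{B})$ that the Corollary produces is automatically a completion of $\mathsf{IOpen}+\mathsf{Coll}$.

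The heart of the matter is to verify the remaining hypothesis: that ${\sf N}$ and ${\sf B}$ witness that $\mathbb{B}$ is an ${\sf INT}_3^{\sf fw}$-retract of $\mathbb{N}$, i.e., that there is a finite subtheory $\Phi_0$ of $\mathrm{Th}(\mathbb{B})$ with $\Phi_0\vdash\phi\iff\phi^\alpha$ for every sentence $\phi$, where $\alpha=\tau_{\sf B}\circ\tau_{\sf N}$ is the composite translation giving the $\mathbb{B}$-internal copy $\mathbb{B}^\alpha$. I would obtain this by running the Ehrenfeucht-Fra\"{\i}ss\'{e} argument of Section~\ref{sect_pam_ehren} in its $\mathbb{B}$-version. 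One defines \emph{witnessing functions} --- finite assignments of internal variables $\X_q^\alpha$ to pseudo-variables of $\mathbb{B}$ respecting the domination relation $\ll$ --- and \emph{good functions} --- finite partial maps $\mathbb{B}^\alpha\to\mathbb{B}$ of the form $u\mapsto{\sf val}_F(u)$ for a witnessing function $F$, where ${\sf val}_F$ is built from the $\mathbb{B}$-variants of the ${\sf ev}$ and ${\sf app}^\ast$ functions introduced just before Lemma~\ref{spelendesmurfIOPEN}. The atomic clause, $P^\alpha(\vv u)\iff P(G(\vv u))$ for good $G$, follows from the automorphism theorem~\ref{lem_aut_IOpen+Coll}: given a witnessing function $F$, that theorem supplies an automorphism of $\mathbb{B}$ sending each $F(\X_q^\alpha)$ to $\X_q$, and the computation is then identical to the atomic-clause proof of Section~\ref{sect_pam_ehren}. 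The forward and backward clauses --- extending a good $G$ to cover a prescribed element of $\mathbb{B}^\alpha$, respectively of $\mathbb{B}$ --- follow from the extension lemma~\ref{thm_extension lemma_IOP}. Letting $\Phi_0$ be the conjunction of $\pam$, ${\pam}^\alpha$, and the atomic, forward and backward clauses, the standard back-and-forth argument gives $\Phi_0\vdash\phi\iff\phi^\alpha$.

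With the retract established, Corollary~\ref{cor_mod_retract} supplies a sentence $\Phi$ (essentially $\Phi_0$ conjoined with the bound clause for $\omega_{\mbbs B}^{\sf c}$) and a set $\mc X$ of $\Delta_1[\Phi]$-formulas with $\Phi+\mc X$ axiomatising $\mathrm{Th}(\mathbb{B})$; the only complexity input is that $\tau_{\sf N}$ and $\omega_{\mbbs B}^{\sf c}$ are $\Delta_0$, which is clear, so that Lemma~\ref{bescheidensmurf} applies. Replacing each $\Delta_1[\Phi]$-axiom in $\mc X$ by a $\Sigma_1$-formula $\Phi$-provably equivalent to it then yields an axiomatisation of $\mathrm{Th}(\mathbb{B})$ by $\Phi$ together with a set of $\Sigma_1$-sentences, which --- being a complete theory extending $\mathsf{IOpen}+\mathsf{Coll}$ --- is the desired completion.

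The main obstacle does not lie in this final assembly, which is a faithful repetition of Sections~\ref{pamitself}--\ref{smurferella}, but in the preparatory apparatus it rests on: the automorphism theorem~\ref{lem_aut_IOpen+Coll} and the extension lemma~\ref{thm_extension lemma_IOP}, together with the analysis of standard forms and supports in $\mathbb{B}$ and the $\mathbb{B}$-definability of the pseudo-variables. The one point requiring genuine care in the proof itself is that all the Ehrenfeucht-Fra\"{\i}ss\'{e} clauses must be expressible and provable \emph{$\mathbb{B}$-internally}: one must define ${\sf ev}$, ${\sf app}^\ast$, and an internal support function ${\sf supp}^\alpha$ on the coded polynomials of $\mathbb{B}^\alpha$ --- the latter not $=$-invariant, but adequate for the game --- using only the $\Delta_0$-definable cut $\omega_{\mbbs B}$ and the standard-length sequence coding available in $\mathbb{B}$. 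This parallels Section~\ref{sect_pam_ehren} for $\mathbb{A}$; the only new feature is that coefficients now range over the real-algebraic elements of $\rc(\hat{\mathbb{A}})$ rather than over $\mathbb{Z}$, which is precisely what the $\mathbb{B}$-variants of ${\sf ev}$ and ${\sf app}^\ast$ were set up to accommodate.
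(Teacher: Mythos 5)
Your proof is correct and follows exactly the same route as the paper: apply Corollary~\ref{cor_mod_retract} to $\mathbb{B}$ with the roles you list, and supply the ${\sf INT}_3^{\sf fw}$-retract witness via the Ehrenfeucht-Fra\"{\i}ss\'{e} apparatus developed for $\mathbb{B}$ (the atomic clause from Theorem~\ref{lem_aut_IOpen+Coll}, the forward/backward clauses from Lemma~\ref{spelendesmurfIOPEN} and Theorem~\ref{thm_extension lemma_IOP}), with $\Phi_0$ being the conjunction of $\pam$, ${\pam}^\alpha$ and the game clauses. Where the paper is terse --- simply pointing to the preceding subsections --- you spell out the assembly and the internal-definability prerequisites, but the decomposition, the key lemmas invoked, and the final complexity bookkeeping are identical.
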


\subsection{Free Riders}
As in the case of the Shepherdson model $\mathbb{S}$, there seem to be no free riders since the iterated Shepherdson model inherits the algebraic defects of $\mathbb{S}$.

\section{A Final Word} 
In this section we first collect the open questions that arose throughout the paper, and then we briefly describe the shape and content of our projected sequel(s) of this work.

\subsection{Open Problems}
\begin{question} \label{Q_about_IDelta_0}
Is there a consistent completion of $\mathrm{I}\Delta _{0}$ that is of restricted complexity?

As indicated in the discussion following Question \ref{motivatingQ} (in the introduction), new ideas and machinery seem to be needed to resolve the above question.
\end{question}
\begin{question}\label{Q_separation} Are there examples that show that the class $\mathcal{T}_{\mathrm{nrestr}}$ of arithmetical theories extending $\mathsf{Q}$ that 
do not have a restricted completion can be separated from the classes $\mathcal{T}_{\mathrm{MRDP}}$,    $\mathcal{T}_{\mathrm{truth}}$, and  
$\mathcal{T}_{\mathrm{nrec}}$ (defined towards the end of the introduction)?
\end{question}

\begin{question}
 Is there yet another way to construct a witnessing sentence for an ${\sf INT}_3^{\sf fw}$-Retract?

The above question was posed as  Question \ref{Q_about_retracts}. See subsection \ref{Sec_witness} for the motivation of this question.
\end{question}

\begin{question}
   Is there a characterization of the authomorphism group  $\mathrm{Aut}(\mathbb{A})$ of  $\mathbb{A}$? (a similar question can be asked about 
$\mathrm{Aut}(\mathbb{B})$).

The above question below was posed as Question \ref{Question_Aut(A)}. Note that $\mathrm{Aut}(\mathbb{A})$ contains a 
copy of the automorphism group $\mathrm{Aut}({\mathbb{Q}})$ of the linear order $\mathbb{Q}$, and therefore it is of size continuum. 
\end{question}

\begin{question}  
Is there a model of $\mathsf{IOpen + Coll}$ whose complete theory has a restricted axiomatization and which satisfies normality and other desirable algebraic properties that are known to hold in some recursive (computable) nonstandard model of $\mathsf{IOpen}$?   

The above question was posed as 
Question \ref{last_question}. We suspect that the answer is in the positive.
  \end{question}

\subsection{Future Work}
  Our work arose from the quest for a more balanced and nuanced understanding of the manifestations of incompleteness
  in the form of \emph{restricted complexity of the completions}, as in \cite{Enayat-Visser2024}. In contrast with the focus of this paper on weak arithmetical theories based on the $\Sigma_n$-hierarchy, in future work we aim to pursue the program of employing \emph{new measures
of complexity} to probe stronger theories. 

For example, let $\pa_n$ denotes the set of theorems of $\pa$ (Peano Arithmetic) of complexity at most $\Pi_n$, and consider the $\Delta_0(\Sigma_{n})$ hierarchy of arithmetical formulae, i.e., formulae that are $\Delta_0$ relative to some $\Sigma_{n}$-definable oracle. We have been able to establish the following result.
  
 \begin{theorem}\label{thm_pan}
    For every sufficiently large $n$ there is a $k$ such that, $\pa_n$ has a completion which is axiomatizable by a single sentence and an infinite set of $\Delta_0(\Sigma_{k})$-sentences. 
    \end{theorem}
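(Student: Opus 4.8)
\textbf{Proof plan for Theorem \ref{thm_pan}.} The plan is to mimic the retract-based strategy of the paper, but with $\pa$ playing the role that $\pam$, $\iopen$, etc.\ played before, and with a carefully chosen nonstandard model in place of $\mathbb{M},\mathbb{S},\mathbb{A},\mathbb{B}$. The governing principle remains Corollary~\ref{cor_mod_retract}: if we can exhibit a model $\mc M$ of $\pa_n$ together with parameter-free interpretations $\imath:\mc N\to \mc M$ and $\jmath:\mc M\to \mc N$ witnessing that $\mc M$ is an ${\sf INT}_3^{\sf fw}$-retract of $\mc N$, and a $\Gamma$-formula $\theta$ defining a nonempty set of elements of $\mc M$ above $\delta_\imath$, then $\mathrm{Th}(\mc M)$ is axiomatized by a single sentence together with a set of $\Delta_1[\Phi](\Gamma)$-sentences; taking $\Gamma$ to be (a suitable fragment inside) $\Delta_0(\Sigma_k)$ for an appropriate $k$ then yields the statement. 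Since $\mathrm{Th}(\mc M)\supseteq \pa_n$ iff $\mc M\models \pa_n$, it suffices to build such an $\mc M$.

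\emph{First} I would identify the right model. Unlike the weak theories of the paper, $\pa_n$ is not finitely axiomatizable but \emph{is} a genuine theory, and the obstruction of Theorem~\ref{motivatingTheorem} does not apply because $\pa_n$ has bounded quantifier-alternation complexity only as a \emph{set} of sentences, not axiom-by-axiom in the $\Sigma_n$-sense we started from — however the relevant point is that we are now changing the measuring hierarchy to $\Delta_0(\Sigma_k)$. The natural candidate for $\mc M$ is a model of $\pa$ itself (so that $\pa_n$ holds \emph{a fortiori}) that is at the same time interpretable in $\mathbb{N}$ in the strong, bi-interpretable-with-a-cut sense needed to run the framework: concretely, one wants $\mc M\models\pa$ with a $\Delta_0(\Sigma_k)$-definable cut $\mc I\cong \mathbb{N}$ (for some fixed $k$), with a definable nonempty class of elements above $\mc I$, and with an internal copy of $\mc M$ built inside $\mc I$ that is definably isomorphic to $\mc M$ (or back-and-forth equivalent to it in a finitely witnessed way). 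The standard cut of a model of $\pa$ is of course not definable, so $\mc I$ must be a \emph{nonstandard} cut; the device for getting a definable cut closed under all $\pa$-provably total functions while still supporting an internal model of $\pa$ is to take $\mc I$ to be a cut of the form $\{x : x < \mathrm{(some~definable~slowly~growing~bound)}\}$, e.g.\ a cut below a point $c$ such that the elements $<c$ model $\pa$; such cuts exist by the usual overspill/indicator arguments, and "$x$ lies below the largest $c$ with $(\pa)^{<c}$" is $\Delta_0(\Sigma_k)$ for $k$ bounding the complexity of a single "$(\pa$-up-to-$\Pi_n)^{<c}$" statement. The number $k$ in the theorem is exactly this: the complexity of the truth predicate for the relevant bounded fragment plus a fixed overhead, which is why we only claim existence of \emph{some} $k$, and why $n$ must be "sufficiently large" (so that the cut exists and the partial truth predicates line up).

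\emph{Then} I would run the two standard moves. (1) Interpret $\mathbb{N}$ in $\mc M$ by restricting to $\mc I$; this is $\mathsf{N}$, with $\delta_{\mathsf N}$ equal to the $\Delta_0(\Sigma_k)$-definition of the cut. (2) Inside $\mc I$ — which satisfies enough arithmetic, being a model of $\pa$ — build an internal isomorphic copy $\mc M^\ast$ of $\mc M$; this uses that $\mc M$ is countable and recursively (indeed arithmetically) presentable, exactly as in \S4--\S6, except that now the presentation of $\mc M$ is itself of complexity $\Delta_k$ or so, which is fine since $\mc I\models \pa$ can handle $\Sigma_k$-oracles. (3) Produce the witness of an ${\sf INT}_3^{\sf fw}$-retract: either a definable (with a parameter $p$ above the cut) isomorphism $\digamma_p$ between $\mc M^\ast$ and $\mc M$, following the \S4/\S5 pattern, \emph{or} a back-and-forth system à la \S3.3/\S5/\S6; which one is available depends on how homogeneous the chosen $\mc M$ is, and the back-and-forth route is the safer default. (4) Take $\theta$ to define the nonempty complement of $\mc I$ in $\mc M$. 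Corollary~\ref{cor_mod_retract} then delivers $\Phi$ and $\mc X\subseteq\Delta_1[\Phi](\Delta_0(\Sigma_k))$, and since $\Delta_1[\Phi](\Delta_0(\Sigma_k))\subseteq\Delta_0(\Sigma_k)$ up to $\Phi$-provable equivalence (absorbing one more quantifier into the oracle, possibly increasing $k$ by a constant), we get the stated form.

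\emph{The main obstacle} will be step (3) together with the calibration of $k$: one must verify that the chosen model $\mc M$ of $\pa$ genuinely is an ${\sf INT}_3^{\sf fw}$-retract of $\mathbb{N}$, i.e.\ that the back-and-forth (or the definable isomorphism) between $\mc M$ and its internally-built copy can be \emph{finitely} axiomatized over $\mc M$ — this is delicate because $\pa$ is not finitely axiomatizable, so the "atomic + back-and-forth clauses" sentence $\Phi$ must be genuinely finite while still forcing $\phi\iff\phi^\alpha$ for all $\phi$; the trick (as in Lemma~\ref{finite_witnessing}) is that the back-and-forth clauses are schematic over \emph{atomic} formulas only, so finiteness is automatic once the game is set up, but one must ensure the game's correctness proof does not secretly use infinitely much of $\mathrm{Th}(\mc M)$. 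A related subtlety is making the cut $\mc I$ simultaneously (a) definable by a $\Delta_0(\Sigma_k)$-formula, (b) a model of $\pa$, and (c) leaving a definable nonempty exterior; resolving (a)--(c) is where the hypothesis "for every sufficiently large $n$" is spent, since for small $n$ the partial-truth machinery needed to even state "$(\pa_n)^{<c}$" inside the language may not close up correctly, and pinning down the precise $k=k(n)$ amounts to bounding the complexity of the $\Sigma_k$-oracle that recovers the elementary diagram of $\mc M$ from that of $\mathbb{N}$.
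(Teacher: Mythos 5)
The paper does not supply a proof of Theorem~\ref{thm_pan}; it is announced as future work at the very end of Section~7, so there is no paper argument to compare yours against, and I will assess your proposal on its own terms.

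You correctly identify the intended macro-strategy (Corollary~\ref{cor_mod_retract} with $\Delta_0$ replaced by $\Delta_0(\Sigma_k)$, and a back-and-forth witness rather than a definable isomorphism, since full $\mathsf{Coll}$ holds), and you correctly sense that "sufficiently large $n$" is spent calibrating partial satisfaction predicates. But the model you propose to feed into the framework cannot exist. You want $\mc M\models\pa$ together with a parameter-free $\Delta_0(\Sigma_k)$-definable proper cut $\mc I$ serving as $\delta_\imath$, with $\theta$ defining a nonempty set above it. Every $\Delta_0(\Sigma_k)$-formula is arithmetical, and in a model of $\pa$ a parameter-free arithmetical $\phi(x)$ satisfying $\phi(0)$, $\forall x(\phi(x)\to\phi(Sx))$ and $\neg\forall x\,\phi(x)$ would violate the very induction instance you are assuming $\mc M$ satisfies; so no such cut exists. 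Your fallback, taking $\mc I$ below "the largest $c$ such that the initial segment below $c$ models a fixed finite fragment of $\pa$", does not fix this: that property of $c$ is not downward-persistent, the described set of $c$'s is not a cut, and in a model of full $\pa$ there is in any case no definable bounded witness to pick. The construction must therefore start from a model $\mc M\models\pa_n$ that \emph{fails} $\pa$, where the definable cut arises from a failure of induction strictly above level $n$; correspondingly $\mc N$ cannot be $\mathbb N$ (your sketch oscillates between $\mc N=\mathbb N$ and $\mc N$ a nonstandard initial segment), and the internal presentation of $\mc M$ has to be built inside a nonstandard model of enough arithmetic, not inside $\mathbb N$. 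None of this is worked out in your sketch, and it is precisely where the theorem would be won or lost.

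A secondary but real loose end is the complexity bookkeeping you dispose of in a parenthetical. Corollary~\ref{cor_mod_retract} delivers axioms in $\Sigma_1(\Delta_0(\Gamma))$, not in $\Delta_0(\Gamma)$; for $\Gamma=\Delta_0$ these coincide, which is why Theorems~\ref{uitslagsmurf}--\ref{iopencollsmurf} come out as stated, but for $\Gamma$ built over $\Sigma_k$ they do not: stacking bounded quantifier blocks over $\Sigma_k$-cells raises the quantifier-alternation count without bound, and prefixing an unbounded existential does not let you land back in $\Delta_0(\Sigma_{k'})$ for any fixed $k'$ syntactically. The obvious remedy, collection to collapse $\Delta_0(\Sigma_k)$ into $\Delta_{k+1}$, is hemmed in from the other side: if the single sentence $\Phi$ alone proved enough collection to perform this collapse uniformly, the whole completion would become $\Sigma_{k+1}$-axiomatized over $\Phi$ and would therefore be of restricted complexity, contradicting Theorem~\ref{motivatingTheorem} since $\pa_n\supseteq\ea$ for large $n$. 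So the bookkeeping is delicate rather than routine, and your "absorbing one more quantifier into the oracle" needs a genuine argument rather than an aside.
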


    Note that in contrast to \iopen, no ${\mathsf{I}}\Sigma_n$ with nonzero $n$ can be axiomatized with a set of $\Sigma_n$ formulae since as shown in \cite{Enayat-Visser2024} no consistent completion of $\mathrm{I}\Delta_0+\mathsf{Exp}$ is of restricted complexity. We conjecture that actually $\pa_{n+2}$ admits a completion which is axiomatizable by a single sentence and a set of $\Delta_0(\Sigma_{n+1})$ sentences.

   We have also obtained an analogue of Theorem \ref{thm_pan} for $\mathsf{ZF}_n$ (the $\Pi_n$-consequences of $\mathsf{ZF}$-set theory).

In our future work we also aim to apply similar ideas to explore incompleteness phenomena for \emph{second order theories} of arithmetic and set theory.

\appendix

\section{Proof of Theorem~\ref{nijveresmurf}}\label{robuustesmurf}

We show the following in the Shepherdson model $\mathbb S$:
    if $\alpha$ is irreducible in $\mathbb{S}$, then $\alpha\in \mathbb{N}$.

We write $\sg(a) := a/|a|$, in case $a\neq 0$. In other words, $\sg(a)=1$, if $a>0$, and
${\sf sg}(a)=-1$ if $a<0$.

\begin{proof}
    Suppose $\alpha$ is non-standard. Say $\alpha = a_n\X^{n/q}+ \ldots +a_0$, where $n>0$.
    We show that $\alpha$ has a non-trivial factorisation.
    
    \skippy
    We first consider the case that $a_0=0$. Suppose $a_0= \ldots = a_m=0$, where $m<n$. 
    Then, 
     \[\alpha = \X^{1/(2q)} (a_n\X^{(2n-1)/(2q)}+ \ldots + a_{m+1}\X^{(2m+1)/(2q)}). \]
     
    \skippy
    We  treat the case that $a_0\neq 0$. We can safely assume that $a_0 = \pm1$, since otherwise 
    we can find a prime $p$ that divides $a_0$, which allows us to factor $\alpha$ as a 
    product of two nonunit elements of $\mathbb{S}$ by writing $p (\frac{1}{p}\alpha)$. 
    Consider $\beta := a_n\X^{3n}+ a_{n-1}\X^{3(n-1)} + \ldots + a_0$.
    
    Suppose $\beta=0$ has a real solution, say $b$. Then, 
    \[\beta = (\X -b) (c_{n-1}\X^{3n-1} +c_{n-2}\X^{3n-4} \ldots + c_0).\]
    We note that $-bc_0 = a_0$ and $c_{n-1}=a_n>0$.  We find:
    \[  \alpha = (|c_0|\X^{1/(3q)} +\sg(c_0)a_0)((c_{n-1}/|c_0|) \X^{(3n-1)/(3q)} + \ldots + \sg(c_0)).  \] %
  
    Clearly, both components are in $\mathbb S$ and unequal to $1$.
   
       Suppose $\beta=0$ has a complex, non-real solution, say $\mf d$. Then, $\overline {\mf d}$, the conjugate of $\mf d$, is also
    a solution which is unequal to $\mf d$. Let $(\X- \mf d)(\X - \overline{\mf d})$ be $\X^2+ d_1\X + d_0$, where the $d_i$ are real.
    Then,
    \[\beta = (\X^2+ d_1\X + d_0)(c_{n-2}\X^{3n-2} +c_{n-3}\X^{3n-5} \ldots + c_0).\]
    We note that $d_0c_0 = a_0$ and $c_{n-2}=a_n >0$.  We find:
    \[  \alpha = (|c_0|\X^{2/(3q)} +|c_0|d_1 \X^{1/(3q)} +\sg(c_0)a_0)((c_{n-2}/|c_0|) \X^{(3n-2)/(3q)} 
    + \ldots + \sg(c_0)).  \]
    Clearly, both components are in $\mathbb S$ and unequal to $1$.
\end{proof}

\begin{rem}
Of course, in most cases our decomposition is very inefficient.
For, example, if $a_0>1$, we have the simple decomposition
$\alpha = a_0 ((a_n/a_0)\X^{n/q}+ \ldots + 1)$. If
$a_0 < -1$, we have $\alpha = -a_0 ((-a_n/a_0)\X^{n/q}+ \ldots + ({-}1))$.
\end{rem}

\section{Where does Collection get its Bite?}\label{collectionsmurf}
In this appendix, we give a sharp proof of the well-known insight that collection implies
induction over a sufficiently strong arithmetical base theory. Specifically, we show that
we can choose this base theory to be  $\mathrm I \mathsf E_1$, to wit the theory of bounded existential induction.
We have:
 
 \begin{theorem}
 $\mathrm I \mathsf E_1 + \forall_1 \hyph {\sf Coll}\vdash \mathrm I\exists_1$.
 \end{theorem}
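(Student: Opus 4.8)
The plan is to prove the contrapositive-style collection argument in the style of Kaye's Lemma~7.4, but carefully tracking that only $\mathrm I\mathsf E_1$ is needed as the ambient base theory rather than $\mathrm I\Delta_0$. We want to derive an arbitrary instance of $\Sigma_1$-induction, so fix a $\Sigma_1$-formula $\sigma(x,\vv z)$, say $\sigma(x,\vv z) \equiv \exists y\, \theta(x,y,\vv z)$ with $\theta$ bounded, and suppose $\sigma(0,\vv z)$ holds and $\forall x\,(\sigma(x,\vv z)\to\sigma(x+1,\vv z))$. We argue in $\mathrm I\mathsf E_1 + \forall_1\hyph{\sf Coll}$ that $\sigma(v,\vv z)$ holds for every $v$. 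Toward this, suppose for contradiction that $\neg\sigma(v,\vv z)$ for some $v$; the key move is to apply collection to the inductive hypothesis along the interval $[0,v]$.

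The central step is the following. From the induction step $\forall x\,(\sigma(x)\to\sigma(x+1))$ together with $\sigma(0)$, a standard least-number-type argument (or, more carefully here, a $\Sigma_1$-least argument, which is available since $\mathrm I\mathsf E_1$ proves the least-number principle for bounded existential formulae — this is exactly the content of $\mathrm I\mathsf E_1$) would normally give induction directly, but the subtlety is that $\sigma$ and $\neg\sigma$ are not in $\mathsf E_1$. So instead I would first use collection to bound the existential witnesses. Consider the formula $\forall x\bles v\,\exists y\,\psi(x,y)$ where $\psi(x,y)$ expresses ``$y$ codes a witness showing $\sigma(x)$ holds, \emph{or} $y$ codes a witness (a counterexample $x'\le x$ with a witness $y'$ for $\theta(x',y',\vv z)$ such that $\neg\theta(x'+1,\cdot)$ is forced) witnessing the failure of the induction step below $x$''. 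One then wants $\psi$ to be $\Delta_0$ (or at least $\mathsf E_1$) in $y$ and to be provably satisfied for every $x\le v$ by the induction hypothesis: either $\sigma(x)$ does hold and there is a witness, or the least place where the implication chain breaks lies at or below $x$. Applying $\forall_1\hyph{\sf Coll}$ (or $\Delta_0\hyph{\sf Coll}$, which follows) yields a single bound $w$ so that $\forall x\bles v\,\exists y\bles w\,\psi(x,y)$.

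Once we have such a uniform bound $w$, the formula $\sigma'(x) :\equiv \exists y\bles w\,\theta(x,y,\vv z)$ is genuinely in $\mathsf E_1$, and below $v$ it agrees with $\sigma$ on the set of $x$ for which $\sigma$ actually holds (because if $\sigma(x)$ holds then, by the collection step applied to the full chain $0,1,\dots,x$, there is a witness bounded by $w$). Now I would apply $\mathrm I\mathsf E_1$ (bounded existential induction, relativised to the interval $[0,v]$) to $\sigma'$: we have $\sigma'(0)$ since the witness for $\sigma(0)$ is $\le w$, and the induction step $\sigma'(x)\to\sigma'(x+1)$ for $x<v$ follows by composing the original induction step with the observation that the resulting witness is again bounded by $w$ (here the uniform bound from collection is exactly what makes the passage from $\sigma(x+1)$ to a $w$-bounded witness legitimate). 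Hence $\sigma'(v)$, and in particular $\sigma(v)$, contradicting the assumption $\neg\sigma(v,\vv z)$. This establishes $\forall v\,\sigma(v,\vv z)$, i.e., the desired instance of $\mathrm I\exists_1$.

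The main obstacle I anticipate is the precise packaging of the auxiliary formula $\psi$ so that (i) it is provably in $\mathsf E_1$ — in particular the ``counterexample'' disjunct must be phrased with \emph{bounded} quantifiers, which is possible because any putative breaking point $x'$ and its witness $y'$ are all $\le x$ or codeable below an already-available bound — and (ii) $\mathrm I\mathsf E_1$ genuinely proves $\forall x\bles v\,\exists y\,\psi(x,y)$ \emph{before} collection is applied, since we cannot yet assume $\sigma$-induction. The delicate point is circularity-avoidance: we must extract the bound $w$ using only what $\mathrm I\mathsf E_1$ proves outright about the chain $0,1,\dots,x$, and only afterwards run $\mathsf E_1$-induction with the now-legitimate bounded formula $\sigma'$. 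Verifying that $\mathrm I\mathsf E_1$ suffices at step (ii), rather than needing $\mathrm I\Delta_0$, is the crux of the sharpening over Kaye's original argument, and I would check carefully that all the coding of finite approximations to the induction chain can be carried out with bounded-existential induction, appealing to the sequence-coding machinery of $\pam$ recalled in Subsection~\ref{pamandextensions}.
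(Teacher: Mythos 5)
Your high-level strategy (use collection to extract a uniform bound $w$, then run $\mathsf E_1$-induction on the $w$-bounded version of the existential formula) matches the paper, but the crucial middle step — \emph{what} formula collection is applied to — is wrong, and the two ``anticipated obstacles'' you flag are genuine gaps that your plan does not close.

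First, your auxiliary formula $\psi(x,y)$ has a disjunct that says ``$y$ codes a counterexample to the induction step below $x$''. Being a counterexample at $x'$ means $\sigma(x')\wedge\neg\sigma(x'+1)$, and $\neg\sigma(x'+1)$ is $\forall y''\,\neg\theta(x'+1,y'',\vv z)$, an \emph{unbounded} universal statement. Your parenthetical ``$\neg\theta(x'+1,\cdot)$ is forced'' cannot be captured by bounded quantifiers: bounding the witness $y'$ for $\sigma(x')$ is easy, but there is no bound that makes ``$\sigma(x'+1)$ fails'' a $\Delta_0$ or $\mathsf E_1$ assertion. So $\psi$ is not in the class you need it to be in, and the proposal collapses at the outset.

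Second, even setting the complexity issue aside, you still owe a proof of the collection antecedent $\forall x\bles v\,\exists y\,\psi(x,y)$ in $\mathrm I\mathsf E_1$ alone, by the case split ``either $\sigma(x)$ holds, or the chain breaks at or below $x$''. But the second alternative is exactly a least-number/break-point principle for $\neg\sigma$, i.e.\ for a $\Pi_1$-formula, which is \emph{equivalent in strength} to the $\exists_1$-induction you are trying to derive. You acknowledge this circularity worry at the end but do not resolve it; in fact it cannot be resolved along the route you sketch. (There is also a smaller slip: you take $\theta$ to be bounded, hence $\sigma\in\Sigma_1$, whereas the theorem is about $\exists_1 = \{\exists\vv y\,\psi : \psi\text{ open}\}$; the distinction matters for the complexity of the collection instance.)

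The paper's proof sidesteps both problems by applying $\forall_1$-collection to a formula that has \emph{nothing to do with the inductive hypothesis}: namely, to
\[
\forall x\bles v\;\exists\vv y\;\bigl(\psi(x,\vv y)\ \vee\ \forall\vv z\;\neg\psi(x,\vv z)\bigr),
\]
with $\psi$ open. This is trivially true in every model (either a witness exists, or the second disjunct holds and any $\vv y$ will do), so no induction or chain-coding is required to establish the antecedent. Since $\psi$ is open, the matrix is $\forall_1$, so $\forall_1$-Coll applies directly and yields a bound $w$ with $\forall x\bles v\,(\exists\vv y\,\psi(x,\vv y) \iff \exists\vv y\bles w\,\psi(x,\vv y))$. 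Only \emph{after} this bound is in hand does one import the inductive hypotheses, replace the unbounded existentials by $\exists\vv y\bles w$, and run plain $\mathsf E_1$-induction on $x<v\to\exists\vv y\bles w\,\psi(x,\vv y)$. No coding of break-points, no contradiction argument, and no circularity. You should rebuild your argument around that single observation; the back half of your sketch (bounding, then $\mathsf E_1$-induction on $\sigma'$) is then essentially correct.
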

 
 The proof is a direct adaptation of the proof of \cite[Lemma 7.4, p84]{kaye:mode91}

\begin{proof}
We reason in $\mathrm I \mathsf E_1 + \forall_1 \hyph {\sf Coll}$. Let $\psi(x,\vv y)$ be open. We allow further parameters in $\psi$.
Consider any number $v$. Clearly,
$\forall x \bles v\,  \exists \vv y \, (\psi( x,\vv y) \vee \forall \vv z\, \neg\, \psi(x,\vv z))$. The formula
$ (\psi(x,\vv y) \vee \forall \vv z\, \neg\, \psi(x,\vv z))$ is, modulo provable equivalence, a $\forall_1$-formula.
So, by $\forall_1$-Collection, we have:
\[ \exists w\, \forall  x \bles v\,  \exists \vv y\bles w \, (\psi( x,\vv y) \vee \forall \vv z\, \neg\, \psi(x,\vv z)).\]
It follows that, for some $w$,
\[   \forall  x \bles v\, (\exists \vv y\, \psi( x,\vv y) \iff \exists \vv y\bles w\, \psi( x,\vv y)). \]

We assume $\exists \vv y\,\psi(0,\vv y)$ and $\forall x\, (\exists \vv y\,\psi(x,\vv y) \to \exists \vv y\,\psi({\sf s}x,\vv y))$.
We may conclude  that $0< v\to \exists \vv y\, \psi(0,\vv y)$ and \[\forall x\, ((x<v\to \exists \vv y\,\psi(x,\vv y)) \to ({\sf s}x< v\to \exists \vv y\,\psi({\sf s}x,\vv y))).\]
So, for some $w$, we have $0< v\to \exists \vv y\bles w\, \psi(0,\vv y)$ and 
 \[\forall x\, ((x<v\to \exists \vv y\bles w\,\psi(x,\vv y)) \to ({\sf s}x< v\to \exists \vv y\bles w\,\psi({\sf s}x,\vv y))).\]
By ${\sf E}_1$-induction, we find $\forall x\bles v\, \exists \vv y \bles w\, \psi(x,\vv y)$. Since $v$ was arbitrary, it follows that
$\forall x\, \exists \vv y \, \psi(x,\vv y)$. 
  \end{proof}
  
  \begin{theorem}
 $\mathrm I \mathsf E_1 +  {\sf Coll}\vdash {\sf PA}$.
 \end{theorem}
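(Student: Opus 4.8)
The plan is to lift the preceding theorem to arbitrary formula complexity, following the proof of \cite[Lemma~7.4]{kaye:mode91}. Concretely, I would prove by induction on $n$ that $\mathrm I\mathsf E_1+\coll$ proves the full induction scheme restricted to $\Sigma_n$-formulae; since every arithmetical formula is, in predicate logic, equivalent to a $\Sigma_n$-formula for some $n$ (put it in prenex normal form, trading bounded quantifiers for unbounded ones), this yields $\mathrm I\mathsf E_1+\coll\vdash{\sf PA}$. The case $n=1$ is the preceding theorem: $\coll$ contains $\forall_1\hyph\coll$, and the scheme $\mathrm I\exists_1$ already secures $\mathrm I\Sigma_1$, and hence $\mathrm I\Delta_0$.

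For the inductive step I would repeat, one level up, the two moves of the preceding proof. Given an inductive formula $\exists\vv y\,\psi(x,\vv y)$ with $\psi\in\Pi_n$ and a bound $v$, one has trivially $\forall x\bleq v\,\exists\vv y\,\bigl(\psi(x,\vv y)\vee\forall\vv z\,\neg\psi(x,\vv z)\bigr)$, whose matrix is provably equivalent to a $\Pi_{n+1}$-formula; the pertinent instance of $\coll$ then yields a single $w$ with $\forall x\bleq v\,\bigl(\exists\vv y\,\psi(x,\vv y)\iff\exists\vv y\bleq w\,\psi(x,\vv y)\bigr)$, so it suffices to perform induction up to $v$ on $\chi(x):\iff\exists\vv y\bleq w\,\psi(x,\vv y)$. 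The new ingredient, absent at the base level (where $\psi$ is open and $\chi\in\mathsf E_1$), is a further appeal to $\coll$ to bring $\chi$ down to a $\Pi_n$-formula: writing $\psi=\forall\vv u\,\psi'$ with $\psi'\in\Sigma_{n-1}$, one trades the finitely many candidate tuples $\vv u$ attached to the finitely many $\vv y\bleq w$ for a single sequence parameter $s$, obtaining $\chi(x)\iff\forall s\,\exists\vv y\bleq w\,\psi'(x,\vv y,(s)_{\vv y})$; the bounded block $\exists\vv y\bleq w$ now absorbs into the leading existential block of $\psi'$, so the matrix is $\Sigma_{n-1}$ and $\chi$ is $\Pi_n$. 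Then $\mathrm I\Pi_n$ --- equivalently $\mathrm I\Sigma_n$, by the usual duality, and available by the induction hypothesis --- closes the argument in the standard way (induction on $x\bles v\to\chi(x)$): $\chi(0)$ holds, $x\bles v\wedge\chi(x)\to\chi(x+1)$, so $\forall x\bleq v\,\chi(x)$, so $\exists\vv y\,\psi(v,\vv y)$; and $v$ was arbitrary.

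The main obstacle I anticipate is the legitimacy of that conversion step together with the demand that the entire argument run over the weak base $\mathrm I\mathsf E_1$ rather than $\mathrm I\Delta_0$. The conversion conceals a ``collection into a sequence'': $\coll$ bounds all the relevant witnesses $\vv u$ by a single number $z$, but assembling them into $s$ (and, dually, minimising to select canonical witnesses) requires a modest amount of induction, and one must check that only the induction already secured at the current stage is used --- a least-number principle one level down, together with the $\Delta_0$-recursion behind the sequence coding of Section~\ref{pamandextensions} --- so that no circular appeal to the still-unproved $\Sigma_{n+1}$-induction creeps in. Verifying that each such use can in the end be anchored at $\mathrm I\mathsf E_1$, so that the preceding theorem is the only place where the particular weakness of $\mathrm I\mathsf E_1$ is really exploited, is precisely the content of the remark that a ``careful inspection'' of \cite[Lemma~7.4]{kaye:mode91} suffices to replace $\mathrm I\Delta_0$ by $\mathrm I\mathsf E_1$ as the base theory.
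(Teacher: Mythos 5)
Your plan and the paper's proof diverge sharply, and your version has a genuine gap at the base case. The paper's proof of this theorem is a two-line citation chain: from the preceding theorem $\mathrm I\mathsf E_1 + \forall_1\hyph\coll \vdash \mathrm I\exists_1$, then the nontrivial theorem of Kaye~\cite{kaye:diop90} that $\mathrm I\exists_1 \vdash {\sf EA}$ (i.e.\ $\mathrm I\Delta_0 + \mathsf{Exp}$), and finally the classical fact ${\sf EA} + \coll \vdash \pa$. All the hard work is in the middle citation.

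Your proposal silently skips exactly that middle step. You assert that ``the scheme $\mathrm I\exists_1$ already secures $\mathrm I\Sigma_1$, and hence $\mathrm I\Delta_0$.'' This is not a triviality and, as stated, is not justified: $\exists_1$ formulae have a \emph{quantifier-free} matrix, while $\Delta_0$ formulae may contain arbitrary bounded universal quantifiers, so $\Delta_0 \not\subseteq \exists_1$ and the schema inclusion does not follow from inspection. That $\mathrm I\exists_1$ proves $\mathrm I\Delta_0$ (indeed $\mathrm I\Delta_0 + \mathsf{Exp}$) is a deep consequence of the exponential-Diophantine machinery of~\cite{kaye:diop90}, not a routine schema comparison; and $\mathrm I\exists_1 \vdash \mathrm I\Sigma_1$ is, if anything, stronger still (the paper never claims it, and it is not what it uses). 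Without citing Kaye's result, your base case does not go through.

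Your anxiety about ``running the entire argument over $\mathrm I\mathsf E_1$'' is, by contrast, a red herring: once the base case delivers $\mathrm I\Delta_0$ (or ${\sf EA}$), the iterated step $\mathrm I\Sigma_n + \coll \vdash \mathrm I\Sigma_{n+1}$ proceeds over that base, and the peculiar weakness of $\mathrm I\mathsf E_1$ plays no further role --- which is presumably why the paper simply invokes ${\sf EA} + \coll \vdash \pa$ as a black box rather than redoing Kaye's Lemma~7.4 at each level as you propose. Your inductive step itself (collection-bound the witnesses, then commute the bounded block past the universal one using a sequence parameter and collection one level down) is essentially correct, but it adds no leverage at the sole place where leverage is actually needed.
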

 
 \begin{proof}
 By the results in \cite{kaye:diop90}, we know that $\mathrm I \mathsf \exists_1$
 extends {\sf EA}, so:\qedright
 \begin{eqnarray*}
 \mathrm I \mathsf E_1 +  {\sf Coll} & \vdash & \mathrm I\exists_1 + {\sf Coll} \\
 & \vdash & {\sf EA} + {\sf Coll} \\
 & \vdash &  {\sf PA}
 \end{eqnarray*}
 \end{proof}

 \section{The support function}\label{Section_on_supports}

Recall that $\hat{\mathbb{A}}:=\mathbb{Z}[\,\mathsf{X}_{q}\mathop\mid q\in \mathbb{Q]}$%
, and $\hat{\mathbb{B}}$ is the $\mathbb{Q}$-iterated Shepherdson model, as
defined in Sections \ref{smurferella} and \ref{iterated_shepherdson} (respectively). In this subsection we develop the basic properties of the support function applied to the rings $\hat{%
\mathbb{A}}$ and $\hat{\mathbb{B}}$. This is straightforward for the case of 
$\hat{\mathbb{A}}$, but the case of $\hat{\mathbb{B}}$ takes more work,
which is to be expected, since $\hat{\mathbb{B}}$ is a far more complicated
structure. As we shall see, it will be useful to take advantage of an appropriate support
function for the field $\mathbb{F}:=\mathsf{rc}(\hat{\mathbb{A}})=%
\mathsf{rc}(\hat{\mathbb{B}})$ in order to develop the basic properties of
the support function in the ring $\hat{\mathbb{B}}.$

Given a linear order $\mathbb{L}$, all of the results in this section
readily generalize to the ring $\mathbb{Z}[\,\mathsf{X}_{\ell}\mathop\mid\ell\in \mathbb{L]%
}$, and to the $\mathbb{L}$-iterated Shepherdson model $\mathbb{S}_{\mathbb{L}}$, 
as here we only rely on the fact that $\mathbb{Q}$ is a linear order.

\subsection{The support function in $\hat{\mathbb{A}}$}
By definition, every element $a\in \hat{\mathbb{A}}$ either can be put in the form $m$
for some $m\in \mathbb{Z}$ (in which case $a$ is referred to as a \textit{%
constant polynomial}), or in the form $m+\pi _{1}+\cdot \cdot \cdot +\pi _{n}
$, where $m\in \mathbb{Z},$ and each $\pi _{i}$ is a \textit{neat monomial},
i.e., is a product of the form $b\mathsf{X}_{q_{1}}^{m_{1}}...\mathsf{X}%
_{q_{k}}^{m_{k}}$, where $b\in \mathbb{Z}$ is nonzero, $q_{i}\in \mathbb{Q}$%
, with $q_{1}>\cdot \cdot \cdot >q_{n}$, and $k$ and each $m_{i}$ are nonzero natural numbers.
Let us refer to such a representation of a nonconstant polynomial as a \textit{neat%
} \textit{monomial representation}.

%\alinote{\tiny{The notion of "neatness" of a monomial is not essential, but it made the writing of the proof a little easier (by avoiding talking about two monomials being the same up to the order of the presentation of their ingredients).}}
\begin{lemma} Any two neat monomial representations of the same element of 
$\hat{\mathbb{A}}$ are equal up to the summing order of the neat monomials.
\end{lemma}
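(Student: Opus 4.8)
The plan is to prove uniqueness of the neat monomial representation by a standard "highest variable first" induction argument on the finite set of variables occurring in the polynomial. The key observation is that $\hat{\mathbb A} = \mathbb Z[\X_q \mathop\mid q \in \mathbb Q]$ is an honest polynomial ring over $\mathbb Z$ in the indeterminates $\X_q$, and in any polynomial ring the monomial representation is unique up to reordering — this is essentially the definition of the polynomial ring as a free commutative $\mathbb Z$-algebra (equivalently, the monomials form a $\mathbb Z$-basis of the underlying module). So at bottom the lemma is just recording this familiar fact in the notation of neat monomials.

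More concretely, I would argue as follows. Suppose $a\in\hat{\mathbb A}$ has two neat monomial representations. Only finitely many variables $\X_q$ occur in these two representations; let $q^\ast$ be the largest rational index among them. Collect in each representation the neat monomials that involve $\X_{q^\ast}$, and group them by the power of $\X_{q^\ast}$: this writes $a$ in the form $a = \sum_{j\geq 0} c_j\, \X_{q^\ast}^{\,j}$, where each $c_j$ lies in the subring generated by the strictly smaller-index variables, i.e. $c_j\in\hat{\mathbb A}_{q^\ast}$, and $c_j$ inherits a neat monomial representation from the original one (for $j\geq 1$ the term $c_0$-part is folded in as the coefficient of $\X_{q^\ast}^0$). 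Because $\hat{\mathbb A} = \hat{\mathbb A}_{q^\ast}[\X_{q^\ast}]$ is a univariate polynomial ring over $\hat{\mathbb A}_{q^\ast}$ (an integral domain), the coefficients $c_j$ are uniquely determined by $a$; hence the two groupings agree term-by-term, i.e. the $c_j$ from the first representation equals the $c_j$ from the second, for every $j$. Now each $c_j$ uses only variables of index $<q^\ast$, so by the induction hypothesis (the ambient variable set has strictly decreased in its maximum, or one may instead induct on the total number of distinct variables appearing) the two neat monomial representations of $c_j$ coincide up to reordering. Reassembling — multiplying the $j$-th piece by $\X_{q^\ast}^{\,j}$ and summing — shows the two original representations of $a$ coincide up to reordering. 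The base case is a polynomial in no variables, i.e. an element $m\in\mathbb Z$, which visibly has a unique representation.

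One technical point worth stating carefully is what plays the role of the "constant term" $m$: in the grouping $a=\sum_j c_j \X_{q^\ast}^{\,j}$, the coefficient $c_0$ is itself of the form $m' + (\text{neat monomials in variables of index}<q^\ast)$, and the induction should be set up on the class of all elements of $\hat{\mathbb A}$ (both constant and nonconstant), with the statement being "the representation as $m + \pi_1 + \cdots + \pi_n$ is unique up to reordering of the $\pi_i$", so that the inductive hypothesis applies to $c_0$ as well as to $c_1,c_2,\dots$. I do not expect any genuine obstacle here: the only thing to be careful about is bookkeeping — ensuring that when we extract the coefficient of $\X_{q^\ast}^{\,j}$ we genuinely land in $\hat{\mathbb A}_{q^\ast}$ and that the neat-monomial conditions ($q_1>\cdots>q_k$, nonzero coefficients and exponents) are preserved under this extraction. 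The mild subtlety of a "main obstacle" is purely notational: translating the abstract freeness of the polynomial ring into the explicit neat-monomial formalism without circularity, which is handled by the univariate-polynomial-ring step applied at the top variable $\X_{q^\ast}$ and then recursing.
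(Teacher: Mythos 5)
Your proof is correct, and it takes a genuinely more explicit route than the paper's. The paper argues directly by contradiction: if the two representations $e$ and $e'$ are not reorderings of each other, then the sets of neat monomials occurring in them differ, so $e-e'$ is visibly a nonzero polynomial, contradicting $e-e'=0$ in $\hat{\mathbb A}$. That is a one-line invocation of the fact that $\hat{\mathbb A}$ is a free commutative $\mathbb Z$-algebra on the $\X_q$ (equivalently, that monomials form a $\mathbb Z$-basis of the underlying module) — the very fact you identify at the outset. You instead derive this fact by induction, peeling off the top-index variable $\X_{q^\ast}$, using uniqueness of coefficients in the univariate polynomial ring over the integral domain $\hat{\mathbb A}_{q^\ast}$, and then recursing on the strictly smaller variable set. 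Your version is more self-contained and pedagogically explicit (it also flags the right induction measure — number of distinct occurring variables rather than max index, since the latter alone does not decrease along a well-ordering); the paper's version is shorter because it simply cites the defining property of a polynomial ring. One small slip in your write-up: $\hat{\mathbb A}$ is not literally equal to $\hat{\mathbb A}_{q^\ast}[\X_{q^\ast}]$, since variables of index greater than $q^\ast$ also exist; what you want is that the finitely generated subring containing both representations sits inside $\hat{\mathbb A}_{q^\ast}[\X_{q^\ast}]$, and uniqueness of coefficients is applied there. This is clearly what you mean, and it does not affect the argument.
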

\begin{proof} If $e$ and $e'$ are neat monomial representations of the same element in $\hat{\mathbb{A}}$, then 

$(*)$ $e-e'=0$ holds in $\hat{\mathbb{A}}.$

If  $e'$ is not obtainable from $e$ by changing the summing order
of the neat monomials of $e$, then the set of neat monomials that occur in $e$ does not
coincide with the set of neat monomials that occur in $e'$. This shows
that the left-hand side of the equation $(*)$ is a nonzero polynomial, thus
contradicting $(*)$.
\end{proof}

The above lemma assures us that the definition below yields a well-defined function  $%
\mathsf{supp}:\hat{\mathbb{A}}\rightarrow \lbrack \mathbb{Q}]^{<\omega }$. We often refer to $\mathsf{supp}%
(a)$ as `the support of $a$'.

\begin{definition} Given $a\in \hat{\mathbb{A}},$ $\mathsf{supp}%
(a)=\varnothing $ if $a$ is a constant polynomial, and otherwise $\mathsf{%
supp}(a)$ is the set of $q\in \mathbb{Q}$ such that $\mathsf{X}_{q}$ occurs
in a neat monomial representation of $a$.
\end{definition}
Recall that for $Y\in \lbrack \mathbb{Q}]^{<\omega },$ $\hat{\mathbb{A}}(Y):=%
\mathbb{Z}[\,\mathsf{X}_{q}\mathop\mid q\in Y\mathbb{]}.$ The following characterization
of the support function can be readily established using the definitions
involved.

\begin{theorem} The following statements are equivalent for $a\in \hat{\mathbb{A}}$, and $Y\in \lbrack \mathbb{Q}%
]^{<\omega }$.
\begin{enumerate}[$1$.]
    \item  $\mathsf{supp}(a)=Y.$

 \item $Y$ is a minimal element of $\lbrack \mathbb{Q}]^{<\omega}$ such that $a\in \hat{\mathbb{A}}(Y)$.

 \item $Y$ is the minimum element of $\lbrack \mathbb{Q}]^{<\omega}$ such that $a\in \hat{\mathbb{A}}(Y)$.
\end{enumerate}
\end{theorem}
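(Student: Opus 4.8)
The plan is to prove the cycle of implications $(1)\Rightarrow(3)\Rightarrow(2)\Rightarrow(1)$. Of these, $(3)\Rightarrow(2)$ is immediate, since the minimum of a partially ordered set is in particular a minimal element; and $(2)\Rightarrow(1)$ will be a short formal consequence once $(1)\Leftrightarrow(3)$ is available. So the substantive work is the implication $(1)\Rightarrow(3)$, i.e.\ showing that $\mathsf{supp}(a)$ is actually the \emph{least} finite subset $Y$ of $\mathbb{Q}$ with $a\in\hat{\mathbb{A}}(Y)$.

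For $(1)\Rightarrow(3)$ I would first dispose of the degenerate case in which $a$ is a constant polynomial: then $\mathsf{supp}(a)=\varnothing$ and $a\in\mathbb{Z}=\hat{\mathbb{A}}(\varnothing)\subseteq\hat{\mathbb{A}}(Y')$ for every $Y'\in[\mathbb{Q}]^{<\omega}$, so $\varnothing$ is trivially the minimum. Otherwise, fix a neat monomial representation $a=m+\pi_1+\cdots+\pi_n$ of $a$; by the definition of the support, every variable occurring in it carries an index in $\mathsf{supp}(a)$, so $a\in\hat{\mathbb{A}}(\mathsf{supp}(a))$. It then remains to show that $\mathsf{supp}(a)\subseteq Y'$ whenever $a\in\hat{\mathbb{A}}(Y')$. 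Given such a $Y'$, I would take any expression of $a$ as a $\mathbb{Z}$-polynomial in the indeterminates $\{\mathsf{X}_q : q\in Y'\}$ (possible since $\hat{\mathbb{A}}(Y')=\mathbb{Z}[\,\mathsf{X}_q\mathop\mid q\in Y']$ by definition), collect the monomials that are equal as products of powers of the $\mathsf{X}_q$, and delete all summands with zero coefficient; the result is a neat monomial representation of $a$ using only variables indexed in $Y'$. By the uniqueness lemma proved just above, this representation coincides with the one fixed earlier up to the order of the summands, so every variable occurring in the latter is indexed in $Y'$, giving $\mathsf{supp}(a)\subseteq Y'$. Hence $\mathsf{supp}(a)$ is the minimum of $\{Y''\in[\mathbb{Q}]^{<\omega}: a\in\hat{\mathbb{A}}(Y'')\}$, which is $(3)$.

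Finally, for $(2)\Rightarrow(1)$: applying $(1)\Rightarrow(3)$ to $a$ shows that $Y_0:=\mathsf{supp}(a)$ is the minimum of the family $\mathcal{F}_a:=\{Y''\in[\mathbb{Q}]^{<\omega}: a\in\hat{\mathbb{A}}(Y'')\}$. So if $Y$ is a minimal element of $\mathcal{F}_a$, then $Y_0\subseteq Y$, and minimality of $Y$ forces $Y=Y_0=\mathsf{supp}(a)$, which is $(1)$.

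I do not expect any serious obstacle, since the whole argument rests on the uniqueness of reduced (neat monomial) representations, which is precisely the lemma immediately preceding the theorem. The only two points meriting a word of care are the constant-polynomial case and the verification that the term-collecting step genuinely yields a \emph{neat} monomial representation — i.e.\ that after discarding vanishing coefficients the surviving monomials are pairwise distinct and each has nonzero integer coefficient — so that the uniqueness lemma is applicable.
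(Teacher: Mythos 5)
Your proof is correct, and it takes the route the paper clearly intends but leaves unstated: the paper merely asserts the theorem ``can be readily established using the definitions involved'' after proving the uniqueness lemma for neat monomial representations, and your argument — reading off $\mathsf{supp}(a)$ from the unique neat monomial representation and using term-collection to reduce any $\hat{\mathbb{A}}(Y')$-representation to a neat one in the same variables — is exactly the intended fill-in. Your two flagged points of care (the constant case and checking that the reduced representation is genuinely neat so the uniqueness lemma applies) are the right ones.
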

\subsection{The support function in $\hat{\mathbb{B}}$}
In order to develop the support function for $\hat{\mathbb{B}},$ we recall
some basic facts from field theory that allow us to get hold of an appropriate support
function for the field $\mathbb{F}:=\mathsf{rc}(\hat{\mathbb{A}})=%
\mathsf{rc}(\hat{\mathbb{B}})$.

Given any field $\mathcal{K}$, $\mathsf{acl}_{\mathcal{K}}:\mathcal{%
P}(\mathcal{K})\rightarrow \mathcal{P}(\mathcal{K})$ is the relative-to-$\mc K$ algebraic closure function  defined as follows: given  $S\subseteq 
\mathcal{K}$, $\mathsf{acl}_{\mathcal{K}}(S)=\{a\in \mathcal{K}:$ $a$ is
algebraic over $S\}$, where $a$ is said to be algebraic over $S$, if there
are $b_{1},\cdot \cdot \cdot ,b_{n}$ in $S$, and some nonzero polynomial $p(%
\mathsf{X}_{1},\cdot \cdot \cdot ,\mathsf{X}_{n},\mathsf{X}_{n+1})\in 
\mathbb{Q}[\mathsf{X}_{1},\cdot \cdot \cdot ,\mathsf{X}_{n},\mathsf{X}_{n+1}]
$ such that $p(b_{1},\cdot \cdot \cdot ,b_{n},a)=0$ holds in $\mathcal{K}$. It is a basic fact of field theory that $\mathsf{acl}_{\mathcal{K}}(S)$ is a subfield of $\mathcal{K}$ for all $S\subseteq \mathcal{K}$.

Under the above definition, $\mathsf{acl}_{\mathcal{K}}$ satisfies the
 properties given in Definition (\ref{olijkesmurf}) when $%
\mathsf{cl}$ is interpreted as $\mathsf{acl}_{\mathcal{K}}$, known as the axioms of \textit{pregeometry}\footnote{Pregeometries are also referred to as `finitely based matroids' in the literature. We found the chapter on Geometric Model Theory in the lecture notes by Manuel Bodirsky to be useful in the preparation of this section; see:\\
\hspace*{2cm}\tt{https://wwwpub.zih.tu-dresden.de/$\sim$bodirsky/Model-theory.pdf}}. The idempotence axiom in this context is often paraphrased by algebraists as the principle: \emph{algebraic over algebraic is algebraic}; and the exchange axiom is commonly known as the \emph{Steinitz Exchange Lemma}, which is a basic result in field theory. The rest of the axioms are simple consequences of the definitions involved.

\begin{definition}\label{olijkesmurf} Suppose $X$ is a set, and $\mathsf{cl}:\mathcal{P}(X)\rightarrow \mathcal{P}(X)$. $(X,\mathsf{cl})$ is pregeometry if the following statements hold for all subsets $S$ and $S'$ of $X$:
    
\begin{itemize}
\item Expansion: $S\subseteq \mathsf{cl}(S).$

\item Idempotence: $\mathsf{cl}(\mathsf{cl}(S))=\mathsf{cl}(S).$

\item Monotonicity: $S\subseteq S'\Rightarrow \mathsf{cl}%
(S)\subseteq \mathsf{cl}(S').$

\item Finite character: If $a\in \mathsf{cl}(S)$, then $a\in \mathsf{cl}(S')$,
for some finite $S'\subseteq S$.

\item Exchange axiom: If $a\in \mathsf{cl}(S\cup \{b\})$ and $a\notin 
\mathsf{cl}(S)$, then $b\in \mathsf{cl}(S\cup \{a\}).$
\end{itemize}

\end{definition}
Familiar facts from the elementary theory of vector spaces regarding the
notions of spanning, independence, basis, and dimension can be developed
within pregeometries (with similar proofs). More specifically, given
a pregeometry $(X,\mathsf{cl})$,  consider the following dictionary:
 
\begin{itemize}
\item 
For $S\subseteq X$, $\mathsf{span}(S):=\mathsf{cl}(S)$.
\item For $S\subseteq X$, $S$ is \textit{independent} if 
$a\notin \mathsf{cl}(S\backslash \{a\})$ for all $a\in S$.
\item A subset $B$ of $X$ is a \textit{basis} if $\mathsf{span}(S)=X$ and $S$ is independent.
\end{itemize}

The following encapsulates the basic results concerning the above notions.

\begin{theorem}\label{basic_thm_pregeometry}The statements below hold in every pregeometry $(X,\mathsf{cl}).$

\begin{enumerate}[$1$.]
\item  $S$ is a basis iff $\mathsf{span}(S)=X$ and there is no 
$S^{\prime}\subsetneq S$ such that $\mathsf{span}(S^{\prime})=X$.
\item  If $S$ is independent, then there is some basis $B$ such that 
$S\subseteq B.$
\item  If $\mathsf{span}(S)=X$, then there is some basis $B$ such that $%
B\subseteq S.$
\item  If $B$ and $B^{\prime }$ are bases, then $B$ and $B^{\prime }$ have
the same cardinality \textup(referred to as the \textit{dimension} of $(X,\mathsf{cl})$\textup).
\item  If $B$ is a basis and $a\in X$, then there is a unique minimal finite subset $Z$ 
of $B$ such that $a\in \mathsf{span}(Z)$.
\item If $Z$ and $Z'$ are subsets of a basis $B$, then $%
Z\subseteq Z'$ iff $\mathsf{span}(Z)\subseteq \mathsf{span}%
(Z')$.
\end{enumerate}
\end{theorem}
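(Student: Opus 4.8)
The plan is to develop the six items exactly as one develops the elementary theory of bases in linear algebra, only replacing linear span by the closure operator $\mathsf{cl}$ and replacing appeals to Gaussian elimination by the exchange axiom. Throughout, the workhorses are: (i) idempotence, which lets us collapse $\mathsf{cl}(\mathsf{cl}(S))$ to $\mathsf{cl}(S)$ and hence show that adjoining elements of $\mathsf{cl}(S)$ to $S$ does not change $\mathsf{span}(S)$; (ii) the exchange axiom, which supplies the ``swap one generator for another'' step; and (iii) finite character, which lets Zorn's Lemma apply to the poset of independent sets — a union of a chain of independent sets is independent, since any witnessing dependence involves only finitely many elements and hence sits inside a single member of the chain.

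First I would prove item $1$. For the forward direction: if $S$ is a basis and $S'\subsetneq S$ spanned $X$, pick $a\in S\setminus S'$; then $a\in X=\mathsf{span}(S')\subseteq\mathsf{cl}(S\setminus\verz a)$ by monotonicity, contradicting independence. For the backward direction, if $\mathsf{span}(S)=X$ but $S$ were dependent, say $a\in\mathsf{cl}(S\setminus\verz a)$, then $S\subseteq\mathsf{cl}(S\setminus\verz a)$, so by monotonicity and idempotence $\mathsf{cl}(S)=\mathsf{cl}(S\setminus\verz a)$, whence $S\setminus\verz a$ spans $X$, contradicting minimality. Items $2$ and $3$ follow the same template: apply Zorn's Lemma to obtain a maximal independent set $B$ containing (resp.\ contained in) the given set, then use the exchange axiom to show a maximal independent set must span. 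Concretely, if $B$ is maximal independent and some $a\notin\mathsf{cl}(B)$, one checks $B\cup\verz a$ is independent — using exchange to rule out any $b\in B$ becoming dependent on $(B\setminus\verz b)\cup\verz a$ — contradicting maximality; and when $B$ is maximal independent inside a spanning set $S$, for each $a\in S\setminus B$ maximality forces a dependence in $B\cup\verz a$, which via exchange gives $a\in\mathsf{cl}(B)$, so $S\subseteq\mathsf{cl}(B)$ and $\mathsf{span}(B)\supseteq\mathsf{cl}(S)=X$.

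For item $4$ I would first treat a finite basis $B$ with $|B|=m$: by the Steinitz replacement argument (repeated use of exchange) every independent set has at most $m$ elements, so any two bases are equinumerous. For the infinite case, given bases $B,B'$ with $B$ infinite, finite character yields for each $b'\in B'$ a finite $F_{b'}\subseteq B$ with $b'\in\mathsf{span}(F_{b'})$; then $\bigcup_{b'\in B'}F_{b'}\subseteq B$ spans $X$, so by item $1$ it equals $B$, giving $|B|\le|B'|\cdot\aleph_0=|B'|$ (the subcase $B'$ finite being excluded by the finite-basis argument applied symmetrically), and symmetry concludes. Items $5$ and $6$ are then short consequences of independence of $B$ plus exchange. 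For item $5$, existence of a finite $Z$ is finite character; for minimality and uniqueness one shows that if $Z\subseteq B$ is minimal with $a\in\mathsf{span}(Z)$ then $z\in\mathsf{span}((Z\setminus\verz z)\cup\verz a)$ for every $z\in Z$ (exchange), so that for two such minimal $Z_1,Z_2$ and $z\in Z_1\setminus Z_2$ we would get $z\in\mathsf{cl}((Z_1\setminus\verz z)\cup Z_2)\subseteq\mathsf{cl}(B\setminus\verz z)$, contradicting independence. Item $6$ is monotonicity in one direction, and in the other the observation that $z\in\mathsf{span}(Z)\subseteq\mathsf{span}(Z')$ with $z\notin Z'$ would force $z\in\mathsf{cl}(B\setminus\verz z)$.

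The only genuinely delicate point is item $4$ in the infinite case — keeping the cardinal arithmetic honest and correctly reducing to the finite-basis subcase — together with the bookkeeping in the Steinitz replacement underlying the finite case; everything else is a routine transcription of linear algebra, and I would compress items $2$, $3$, $5$, $6$ accordingly.
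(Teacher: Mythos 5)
Your proof is correct. For context, the paper does not actually prove this theorem: it is stated as a package of classical pregeometry (matroid) facts, with the preceding text remarking only that familiar notions from linear algebra ``can be developed within pregeometries (with similar proofs)'' and a footnote pointing to standard lecture notes. Your write-up is precisely the transcription-of-linear-algebra argument the paper gestures at, and each step checks out. In particular: the Zorn argument in items $2$--$3$ correctly uses finite character to show that unions of chains of independent sets remain independent, and the exchange axiom is applied correctly to show both that maximal independent sets span and that, inside a spanning set $S$, a maximal independent subset $B$ forces $S\subseteq\mathsf{cl}(B)$. The finite-versus-infinite split in item $4$ is handled cleanly, including the observation that $B$ being the union of the finite sets $F_{b'}$ forces $B'$ to be infinite, after which $|B'|\cdot\aleph_0 = |B'|$ closes the estimate. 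Your item-$5$ uniqueness argument via exchange (showing $Z_1\setminus Z_2 = \varnothing$ by deriving $z\in\mathsf{cl}(B\setminus\{z\})$) is exactly the right move, and item $6$ follows. The one cosmetic remark: in item $5$ you might make explicit that existence of \emph{some} finite spanning subset (via finite character, since $a\in\mathsf{cl}(B)$) together with finiteness immediately yields a \emph{minimal} such subset, before the uniqueness argument begins; you clearly intend this, but it is worth a sentence.
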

Note that if $B$ is a basis for a pregeometry $(X,\mathsf{cl})$, then part (5) of the above theorem assures us that we can meaningfully define the support function $$\mathsf{supp}_{(X,%
\mathsf{cl},B)}:\mathcal{P}(X)\rightarrow \lbrack B]^{<\omega },$$ by $\mathsf{
supp}_{(X,
\mathsf{cl},B)}(a)=Z$ if $Z$ is the unique minimal subset of $B$ such that $a\in \mathsf{span}(Z)$. 

The following result readily follows from Theorem \ref{basic_thm_pregeometry} and the definitions involved.
\begin{theorem}\label{support_in-pregeometry}Suppose $B$ is a basis for a pregeometry $(X,\mathsf{cl})$. The
following statements are equivalent for $a\in X$ and $Z\in \lbrack
B]^{<\omega }.$
\begin{enumerate}[$1$.]
\item 
$Z=\mathsf{supp}_{(X,\mathsf{cl},B)}(a)$.
\item $Z$ is the minimum element of $\lbrack
B]^{<\omega }$ such that $a\in \mathsf{cl}(Z).$
\end{enumerate}
\end{theorem}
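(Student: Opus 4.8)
The plan is to read off the equivalence directly from parts~(5) and~(6) of Theorem~\ref{basic_thm_pregeometry} together with the definition of $\mathsf{supp}_{(X,\mathsf{cl},B)}$. Recall that, by that definition, $\mathsf{supp}_{(X,\mathsf{cl},B)}(a)$ is the \emph{unique} $\subseteq$-minimal finite subset $Z$ of $B$ with $a\in\mathsf{span}(Z)=\mathsf{cl}(Z)$, and part~(5) of Theorem~\ref{basic_thm_pregeometry} is precisely what guarantees that such a $Z$ exists and is unique, so that the function is well defined. Hence the only real content of the theorem is the upgrade from ``unique $\subseteq$-minimal witness'' to ``$\subseteq$-least witness''.

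For $(1)\Rightarrow(2)$ I would argue as follows. Put $Z_0:=\mathsf{supp}_{(X,\mathsf{cl},B)}(a)$ and let $Z'\in[B]^{<\omega}$ be arbitrary with $a\in\mathsf{cl}(Z')$. Among the finitely many subsets $W\subseteq Z'$ with $a\in\mathsf{cl}(W)$ (there is at least one, namely $Z'$ itself) choose one, say $Z''$, of least cardinality. If some $W\subsetneq Z''$ had $a\in\mathsf{cl}(W)$, then $W$ would be a subset of $Z'$ of strictly smaller cardinality with the same property, contradicting the choice of $Z''$; so $Z''$ is $\subseteq$-minimal \emph{among all} finite subsets of $B$ whose $\mathsf{cl}$ contains $a$. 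By the uniqueness clause of Theorem~\ref{basic_thm_pregeometry}(5) this forces $Z''=Z_0$, whence $Z_0=Z''\subseteq Z'$. Since $Z'$ was arbitrary, $Z_0$ is the minimum element of $[B]^{<\omega}$ with $a\in\mathsf{cl}(Z_0)$. For $(2)\Rightarrow(1)$, if $Z$ is such a minimum, then $Z$ is in particular $\subseteq$-minimal in that family (a proper subset $W\subsetneq Z$ with $a\in\mathsf{cl}(W)$ would force $Z\subseteq W$ by minimality of $Z$, which is absurd), so by uniqueness in Theorem~\ref{basic_thm_pregeometry}(5) we conclude $Z=\mathsf{supp}_{(X,\mathsf{cl},B)}(a)$.

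There is no serious obstacle here; the one step deserving a word of care — and the step I would highlight — is that Theorem~\ref{basic_thm_pregeometry}(5) only provides a \emph{unique minimal} witness rather than a \emph{least} one, so before invoking uniqueness one must first produce a minimal witness \emph{inside} an arbitrary $Z'$, which the argument above does by passing to a subset of $Z'$ of least cardinality (using that $\subseteq$ is well founded on finite sets). One could alternatively try to package $(1)\Rightarrow(2)$ via part~(6), noting that once $Z_0\subseteq Z'$ is established the inclusion $\mathsf{span}(Z_0)\subseteq\mathsf{span}(Z')$ is automatic; but part~(6) does not by itself deliver $Z_0\subseteq Z'$ from $a\in\mathsf{span}(Z_0)\cap\mathsf{span}(Z')$, so the cardinality argument remains the economical route.
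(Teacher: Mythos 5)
Your proof is correct and spells out exactly the argument the paper leaves implicit when it says the result "readily follows from Theorem~\ref{basic_thm_pregeometry} and the definitions involved": the key step, which you rightly flag, is passing from the \emph{unique $\subseteq$-minimal} witness of Theorem~\ref{basic_thm_pregeometry}(5) to a \emph{$\subseteq$-least} witness, which you do by extracting a cardinality-minimal (hence $\subseteq$-minimal) subset $Z''\subseteq Z'$ with $a\in\mathsf{cl}(Z'')$ and appealing to uniqueness to identify $Z''$ with $\mathsf{supp}_{(X,\mathsf{cl},B)}(a)$. Your remark about why part~(6) alone does not shortcut this is also accurate, so no changes are needed.
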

Putting the fact that $(\mathcal{K},\mathsf{acl}_{\mathcal{K}})$ is a pregeometry together with the above, we obtain:

\begin{theorem} \label{support_in_field}Suppose $\mathcal{K}$ is a field, and $B$\ is a basis\footnote{
In this context $B$ is commonly referred to as a \textit{transcendence basis/base} for $\mathcal{K}$.} for the pregeometry  $(\mathcal{K},\mathsf{acl}_{\mathcal{K}})$.
The following statements are equivalent for $a\in \mc K$ and $Z\in  \lbrack B]^{<\omega }$:
\begin{enumerate}[$1$.]
   \item $Z$ is a  minimal element of $  \lbrack B]^{<\omega }$ such that $a\in \mathsf{acl}_{\mathcal{K}}(Z)$, in other words  
$Z=\mathsf{supp}_{(\mathcal{K},\mathsf{acl}_{\mathcal{K}},B)}(a)$. 

\item $Z$ is the minimum element of $ \lbrack B]^{<\omega }$ such that $a\in \mathsf{acl}_{\mathcal{K}}(Z).$
  
 %  \albnote{How can this be without a minimality clause?}

\end{enumerate}
\end{theorem}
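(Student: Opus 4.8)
The plan is to deduce the theorem as an immediate instance of Theorem~\ref{support_in-pregeometry}, applied to the particular pregeometry $(\mathcal{K},\mathsf{acl}_{\mathcal K})$. First I would invoke the facts recorded just after Definition~\ref{olijkesmurf}: that $(\mathcal{K},\mathsf{acl}_{\mathcal K})$ satisfies the five pregeometry axioms (expansion, idempotence, monotonicity, finite character, and the exchange axiom, the last being the Steinitz Exchange Lemma), and that a transcendence basis $B$ of $\mathcal K$ is precisely a basis of this pregeometry in the sense of the dictionary preceding Theorem~\ref{basic_thm_pregeometry}. Taking $X:=\mathcal K$, $\mathsf{cl}:=\mathsf{acl}_{\mathcal K}$, and hence $\mathsf{span}(S)=\mathsf{acl}_{\mathcal K}(S)$, Theorem~\ref{support_in-pregeometry} then yields directly the equivalence of ``$Z=\mathsf{supp}_{(\mathcal K,\mathsf{acl}_{\mathcal K},B)}(a)$'' with ``$Z$ is the minimum element of $[B]^{<\omega}$ such that $a\in\mathsf{acl}_{\mathcal K}(Z)$'', which is exactly clause~(2).

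The only remaining point is to check that clause~(1) --- that $Z$ be a \emph{minimal} element of $[B]^{<\omega}$ with $a\in\mathsf{acl}_{\mathcal K}(Z)$ --- coincides with being the support, i.e.\ the \emph{minimum} such element. One direction is trivial: a minimum is a fortiori minimal. For the converse I would argue that the family $\mathcal{Z}_a:=\{Z\in[B]^{<\omega}\mathop\mid a\in\mathsf{acl}_{\mathcal K}(Z)\}$ actually has a least element. By finite character $\mathcal Z_a$ is nonempty, and since the subsets of any given finite set form a finite poset under $\subseteq$, every member of $\mathcal Z_a$ contains a $\subseteq$-minimal member of $\mathcal Z_a$; by Theorem~\ref{basic_thm_pregeometry}(5) (uniqueness of the minimal finite subset of $B$ whose span contains $a$) there is exactly one such minimal member, call it $Z_0$. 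Hence $Z_0\subseteq Z$ for every $Z\in\mathcal Z_a$, so $Z_0$ is the minimum of $\mathcal Z_a$, and $Z_0=\mathsf{supp}_{(\mathcal K,\mathsf{acl}_{\mathcal K},B)}(a)$ by the definition of the support function. Consequently ``minimal'' and ``minimum'' pick out the same $Z$, which completes the equivalence of (1) and (2).

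Since essentially all the work is already done in Theorems~\ref{basic_thm_pregeometry} and~\ref{support_in-pregeometry}, there is no serious obstacle here; the one place meriting a word of care is the upgrade from ``minimal'' to ``minimum''. This is not a formality about finite posets --- a finite poset can of course have several distinct minimal elements --- but rather genuinely uses the exchange axiom, via Theorem~\ref{basic_thm_pregeometry}(5), to force the family $\mathcal Z_a$ to have a unique minimal (hence least) element. Everything else is a straightforward translation of the pregeometry vocabulary into the field-theoretic vocabulary of algebraic dependence and transcendence bases.
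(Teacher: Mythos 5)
Your proposal is correct and follows exactly the route the paper takes: the paper states that $(\mathcal{K},\mathsf{acl}_{\mathcal{K}})$ is a pregeometry and then asserts Theorem~\ref{support_in_field} as an immediate consequence of Theorem~\ref{support_in-pregeometry}, offering no further proof. Your extra paragraph unpacking why ``minimal'' upgrades to ``minimum'' --- via the uniqueness in Theorem~\ref{basic_thm_pregeometry}(5), which in turn rests on the exchange axiom --- is a correct and worthwhile clarification of a step the paper leaves implicit in the phrase ``in other words'' within clause~(1).
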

This concludes our review of the relevant basic facts about pregeometries and
their relationship to fields. We will now apply the above machinery to the field 
$\mathbb{F}=\mathsf{rc}(\hat{\mathbb{A}})=\mathsf{rc}(\hat{\mathbb{B}})$ (see Remark \ref {rc(A)=rc(B)}).

\begin{itemize}
\item In
what follows, $\frak{X}:=\{\,\mathsf{X}_{q}\mathop\mid q\in \mathbb{Q}\}$,  and for 
$Y\in \lbrack \mathbb{Q}]^{<\omega }$, 
$\frak{X}_{Y}:=\{\,\mathsf{X}_{q}\mathop\mid q\in Y\}.$
\end{itemize}

\begin{theorem}\label{linking_thm}For every $a\in \mathbb{F}$ there is a unique $Y\in
\lbrack \mathbb{Q}]^{<\omega }$ such that $a\in \mathsf{rc}(\hat{\mathbb{B}}(Y))=\mathsf{acl}_{\mathbb{F}}(\frak{X}_{Y})$.
\end{theorem}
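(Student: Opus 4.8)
The plan is to reduce everything to the pregeometry facts already established (Theorems~\ref{basic_thm_pregeometry} and~\ref{support_in_field}) applied to the field $\mathbb{F}=\mathsf{rc}(\hat{\mathbb{A}})=\mathsf{rc}(\hat{\mathbb{B}})$ with $\mathsf{cl}=\mathsf{acl}_{\mathbb{F}}$. First I would record that $\mathfrak{X}=\{\mathsf{X}_q\mathop\mid q\in\mathbb{Q}\}$ is a transcendence basis of $\mathbb{F}$: it is independent in $(\mathbb{F},\mathsf{acl}_{\mathbb{F}})$ because the $\mathsf{X}_q$ are algebraically independent over $\mathbb{Q}$ in $\hat{\mathbb{A}}=\mathbb{Z}[\mathsf{X}_q\mathop\mid q\in\mathbb{Q}]$, and it spans because $\mathbb{F}=\mathsf{rc}(\hat{\mathbb{A}})$ is algebraic over $\mathsf{fc}(\hat{\mathbb{A}})=\mathbb{Q}(\mathfrak{X})$. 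Hence Theorem~\ref{support_in_field}, applied with $B=\mathfrak{X}$, gives that every $a\in\mathbb{F}$ has a \emph{unique minimum} $Z\in[\mathfrak{X}]^{<\omega}$ with $a\in\mathsf{acl}_{\mathbb{F}}(Z)$; writing $Z=\mathfrak{X}_Y$ for the corresponding $Y\in[\mathbb{Q}]^{<\omega}$ will produce the set claimed in the theorem, together with its uniqueness as a minimal such set. (Note that the displayed equality $\mathsf{rc}(\hat{\mathbb{B}}(Y))=\mathsf{acl}_{\mathbb{F}}(\mathfrak{X}_Y)$ in the statement holds for \emph{every} $Y$, so the content is precisely the existence of a minimum $Y$ with $a\in\mathsf{rc}(\hat{\mathbb{B}}(Y))$.)

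The substantive point is then the identity $\mathsf{rc}(\hat{\mathbb{B}}(Y))=\mathsf{acl}_{\mathbb{F}}(\mathfrak{X}_Y)$ for each fixed $Y\in[\mathbb{Q}]^{<\omega}$, which I would prove in two moves. First, $\mathsf{rc}(\hat{\mathbb{B}}(Y))=\mathsf{rc}(\hat{\mathbb{A}}(Y))$: by induction on $|Y|$ using the definition of the Shepherdson extension together with Lemma~\ref{lem_shep_ind} and Corollary~\ref{flatting_B(r)}, one checks that, inside $\mathbb{F}$, $\hat{\mathbb{A}}(Y)\subseteq\hat{\mathbb{B}}(Y)\subseteq\mathsf{rc}(\hat{\mathbb{A}}(Y))$; the standard squeeze argument then applies, namely a real closure of any ring intermediate between $\mathcal{R}$ and $\mathsf{rc}(\mathcal{R})$ is again $\mathsf{rc}(\mathcal{R})$, by uniqueness of the real closure (Theorem~\ref{rc_is_unique}). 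Second, $\mathsf{rc}(\hat{\mathbb{A}}(Y))=\mathsf{acl}_{\mathbb{F}}(\mathfrak{X}_Y)$: by the elementary-submodel observation of Subsection~\ref{realsmurf}, $\mathsf{rc}(\hat{\mathbb{A}}(Y))$ is identified with a real closed subfield of $\mathbb{F}$ that is algebraic over $\mathsf{fc}(\hat{\mathbb{A}}(Y))=\mathbb{Q}(\mathfrak{X}_Y)$; since a real closed field is relatively algebraically closed in every ordered field extension (its only proper algebraic extension, adjoining $\sqrt{-1}$, is not orderable), this subfield is exactly the relative algebraic closure of $\mathbb{Q}(\mathfrak{X}_Y)$ in $\mathbb{F}$, i.e., $\mathsf{acl}_{\mathbb{F}}(\mathfrak{X}_Y)$.

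Combining the two displays, $a\in\mathsf{rc}(\hat{\mathbb{B}}(Y))$ iff $a\in\mathsf{acl}_{\mathbb{F}}(\mathfrak{X}_Y)$, and the monotonicity $Y\subseteq Y'\Leftrightarrow \hat{\mathbb{B}}(Y)\subseteq\hat{\mathbb{B}}(Y')$ (Remark~\ref{directedness_of_B-hat(r)}) identifies the poset $([\mathbb{Q}]^{<\omega},\subseteq)$ with $([\mathfrak{X}]^{<\omega},\subseteq)$; so the unique minimum $Z$ from the first paragraph transports to the unique minimum $Y$ with $a\in\mathsf{rc}(\hat{\mathbb{B}}(Y))$, which is the assertion. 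I expect the one place needing genuine care rather than bookkeeping is the first move of the second paragraph: keeping the several real-closure operations mutually compatible inside the single ambient field $\mathbb{F}$, so that ``$\hat{\mathbb{B}}(Y)$'' really denotes the intended subring of $\mathbb{F}$ and ``$\mathsf{rc}(\hat{\mathbb{B}}(Y))$'' the intended subfield; this is exactly what Corollary~\ref{flatting_B(r)} (uniqueness of the order-preserving embedding fixing each $\mathsf{X}_q$) is designed to supply, and I would invoke it explicitly at that step.
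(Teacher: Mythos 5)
Your proof is correct and takes essentially the same approach as the paper's: both hinge on $\frak{X}$ being a transcendence basis for $(\mathbb{F},\mathsf{acl}_{\mathbb{F}})$ so that Theorem~\ref{support_in_field} supplies the unique minimal $Y$, combined with the identification $\mathsf{acl}_{\mathbb{F}}(\frak{X}_Y)=\mathsf{rc}(\hat{\mathbb{B}}(Y))$. The paper states that identification as a one-line ``Note'' justified by $\mathbb{F}$ being real closed, whereas you spell it out via the squeeze $\hat{\mathbb{A}}(Y)\subseteq\hat{\mathbb{B}}(Y)\subseteq\mathsf{rc}(\hat{\mathbb{A}}(Y))$ and relative algebraic closedness of real closed subfields, which is a worthwhile elaboration but not a different argument.
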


\begin{proof}$\frak{X}$ is clearly a basis for $(\mathbb{F},\mathsf{acl}_{\mathbb{F}})$, so we can invoke Theorem \ref{support_in_field}. 
Note that since $\mathbb{F}$ is real closed,  $\mathsf{acl}_{\mathbb{F}}(\frak{X}_{Y})=\mathsf{rc}(\hat{\mathbb{B}}(Y))$ for all 
$Y\in \lbrack \mathbb{Q}]^{<\omega }$.
\end{proof}

\begin{theorem} \label{B(Y) in rc(B(Y))} $\hat{\mathbb{B}}(Y)=\hat{\mathbb{B}}\cap \mathsf{rc}(\hat{\mathbb{B}}(Y))$ for each $Y\in \lbrack \mathbb{Q}]^{<\omega }.$
\end{theorem}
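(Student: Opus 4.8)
The inclusion $\hat{\mathbb{B}}(Y)\subseteq \hat{\mathbb{B}}\cap \mathsf{rc}(\hat{\mathbb{B}}(Y))$ is immediate, since by construction $\hat{\mathbb{B}}(Y)\subseteq \hat{\mathbb{B}}$ and $\hat{\mathbb{B}}(Y)\subseteq \mathsf{rc}(\hat{\mathbb{B}}(Y))$. So the work is in the reverse inclusion: given $b\in \hat{\mathbb{B}}$ with $b\in \mathsf{rc}(\hat{\mathbb{B}}(Y))$, I must show $b\in \hat{\mathbb{B}}(Y)$. The plan is to fix a finite set $Z\in[\mathbb{Q}]^{<\omega}$ with $b\in \hat{\mathbb{B}}(Z)$ (such $Z$ exists by the directed-union definition of $\hat{\mathbb{B}}$, and indeed the smallest such is $\mathsf{supp}(b)$), and to argue that $\mathsf{supp}(b)\subseteq Y$. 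Combined with Remark~\ref{directedness_of_B-hat(r)} (monotonicity of $\fsvar\mapsto\hat{\mathbb{B}}(\fsvar)$), this gives $b\in\hat{\mathbb{B}}(\mathsf{supp}(b))\subseteq \hat{\mathbb{B}}(Y)$, as desired.

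The key step is to link the ``ring support'' $\mathsf{supp}(b)$ inside $\hat{\mathbb{B}}$ to the ``field support'' computed in the pregeometry $(\mathbb{F},\mathsf{acl}_{\mathbb{F}})$ with transcendence basis $\mathfrak{X}=\{\X_q\mathop\mid q\in\mathbb{Q}\}$, where $\mathbb{F}=\mathsf{rc}(\hat{\mathbb{A}})=\mathsf{rc}(\hat{\mathbb{B}})$. First I would record (this is the content of Theorem~\ref{properties_of_B-support} cited in the text, which presumably is proved just before this theorem in the Appendix) that for $b\in\hat{\mathbb{B}}$ the set $\mathsf{supp}(b)$ coincides with the unique $W\in[\mathbb{Q}]^{<\omega}$ such that $b\in\mathsf{acl}_{\mathbb{F}}(\mathfrak{X}_W)$ — i.e.\ with $\mathsf{supp}_{(\mathbb{F},\mathsf{acl}_{\mathbb{F}},\mathfrak{X})}(b)$ transported along $\X_q\leftrightarrow q$. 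Granting this, the hypothesis $b\in\mathsf{rc}(\hat{\mathbb{B}}(Y))=\mathsf{acl}_{\mathbb{F}}(\mathfrak{X}_Y)$ (the last equality by Theorem~\ref{linking_thm}, using that $\mathbb{F}$ is real closed) says precisely that $b\in\mathsf{acl}_{\mathbb{F}}(\mathfrak{X}_Y)$; and by Theorem~\ref{support_in_field}(2) the field support of $b$ is the \emph{minimum} finite subset of $\mathfrak{X}$ whose $\mathsf{acl}_{\mathbb{F}}$-span contains $b$, hence $\mathsf{supp}(b)\subseteq Y$. This is exactly what was needed.

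The main obstacle is the bridge lemma: that for an element of $\hat{\mathbb{B}}$ its combinatorial support (least $\fsvar$ with $b\in\hat{\mathbb{B}}(\fsvar)$) agrees with its algebraic-closure support in $\mathbb{F}$. One direction is easy — $\hat{\mathbb{B}}(\fsvar)\subseteq\mathsf{rc}(\hat{\mathbb{B}}(\fsvar))=\mathsf{acl}_{\mathbb{F}}(\mathfrak{X}_\fsvar)$ gives $\mathsf{supp}_{\mathbb{F}}(b)\subseteq\mathsf{supp}_{\hat{\mathbb{B}}}(b)$. For the other direction one uses the iterated Shepherdson description: writing $b$ in standard form $a_n\X_{q_n}^{p_n}+\cdots+a_0$ (as in Subsection~\ref{subsection_on_further-analysis_of_B}), each $q_i$ visibly lies in $\mathsf{supp}_{\mathbb{F}}(b)$, and the coefficients $a_i\in\mathbb{F}_{q_i}$ are algebraic over $\mathfrak{X}_{\mathbb{Q}_{q_i}}$, so by ``algebraic over algebraic is algebraic'' (idempotence of $\mathsf{acl}_{\mathbb{F}}$) one shows $b$ lies in $\mathsf{acl}_{\mathbb{F}}(\mathfrak{X}_W)$ only for $W\supseteq$ the finite set of all indices appearing in the standard form and in the (recursively unwound) coefficients — and that finite set of indices is exactly $\mathsf{supp}_{\hat{\mathbb{B}}}(b)$. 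Since both directions hold, the two supports coincide, and the theorem follows. I would keep the exposition light here, citing Theorem~\ref{properties_of_B-support} for the bridge lemma rather than reproving it, since the present theorem is explicitly flagged in Remark~\ref{integer_parts of_F_and_F(Y)} as a consequence available once the support machinery of the Appendix is in place.
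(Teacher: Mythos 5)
Your proposed proof is circular: you invoke Theorem~\ref{properties_of_B-support}, but in the paper that theorem comes \emph{after} Theorem~\ref{B(Y) in rc(B(Y))} and is explicitly derived from it (the text says ``Theorems~\ref{linking_thm}, \ref{support_in_field}, and~\ref{B(Y) in rc(B(Y))} readily imply the following characterization of the support function in $\hat{\mathbb{B}}$''). The dependency goes exactly the other way from what you assumed. Worse, even when you offer to prove the bridge lemma directly, the hard direction you sketch --- that $\mathsf{supp}_{\hat{\mathbb{B}}}(b)\subseteq\mathsf{supp}_{\mathbb{F}}(b)$, i.e., $b\in\hat{\mathbb{B}}(W)$ for $W=\mathsf{supp}_{\mathbb{F}}(b)$ --- is just Theorem~\ref{B(Y) in rc(B(Y))} applied with $Y=W$: you already know $b\in\hat{\mathbb{B}}\cap\mathsf{rc}(\hat{\mathbb{B}}(W))$, and concluding $b\in\hat{\mathbb{B}}(W)$ is the content of the theorem you are trying to prove. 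The claim that ``each $q_i$ visibly lies in $\mathsf{supp}_{\mathbb{F}}(b)$'' is not a substitute for this; verifying it requires showing that deleting $\X_{q_i}$ from the transcendence base destroys algebraicity of $b$, which is not immediate from the standard-form presentation and is never established.

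You also misread Remark~\ref{integer_parts of_F_and_F(Y)}: it does not say the present theorem becomes available ``once the support machinery is in place,'' it says the integer-part observation ``comes handy in the proof of Theorem~\ref{B(Y) in rc(B(Y))}.'' That remark is signposting the paper's actual, much shorter argument, which avoids pregeometries entirely: given $b\in\hat{\mathbb{B}}\cap\mathsf{rc}(\hat{\mathbb{B}}(Y))$, Shepherdson's characterization (Theorem~\ref{Shepherdson_thm}, via Lemma~\ref{B(r)modelsIOpen}) makes $\hat{\mathbb{B}}(Y)$ an integer part of $\mathsf{rc}(\hat{\mathbb{B}}(Y))$, so there is $b'\in\hat{\mathbb{B}}(Y)$ with $|b-b'|<1$; since $b,b'$ both lie in the discretely ordered ring $\hat{\mathbb{B}}$, discreteness forces $b=b'\in\hat{\mathbb{B}}(Y)$. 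You should replace your support-comparison plan with this two-line integer-part argument.
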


\begin{proof} The direction $\hat{\mathbb{B}}(Y)\subseteq \hat{\mathbb{B}}%
\cap \mathsf{rc}(\mathbb{B}(Y))$ is straightforward. The other direction follows from the 
fact that $\hat{\mathbb{B}}(Y)$ is an integer part of $\mathsf{rc}(\hat{\mathbb{B}}(Y))$ for 
each $Y\in \lbrack \mathbb{Q}]^{<\omega}$ (see Remark \ref{integer_parts of_F_and_F(Y)}).

More explicitly, suppose $b\in \hat{\mathbb{B}}\cap \mathsf{rc}(\hat{\mathbb{B}}(Y))$. 
Then, since $\hat{\mathbb{B}}(Y)$ is an integer part of $\mathsf{rc}(\hat{\mathbb{B}}(Y))$, 
there is some $b' \in \hat{\mathbb{B}}(Y)$ such that $\left\vert b-b'\right\vert <1$,
which implies $b=b'$, thus $b\in \hat{\mathbb{B}}(Y)$.
\end{proof}

Thanks to Theorem \ref{support_in_field}, we can meaningfully define the rank function for 
$b\in \hat{\mathbb{B}}$ as follows:
\begin{itemize}
\item   $\mathsf{supp}_{
\hat{\mathbb{B}}}(b)=\{q\in \mathbb{Q}\mathop\mid\mathsf{X}_{q}\in \mathsf{supp}_{(\mathbb{F},
\mathsf{acl},\frak{X}\}}(b)\}$.
%\albnote{I added a (b). Correct?} 
%\alinote{Yes, thanks.}
\end{itemize}
Theorems \ref{linking_thm}, \ref{support_in_field}, and  \ref{B(Y) in rc(B(Y))} readily imply the following characterization of the support function in $\hat{\mathbb{B}}$.

\begin{theorem} \label{properties_of_B-support}The following are equivalent for $b\in \hat{\mathbb{B}}$ and $Y\in
\lbrack \mathbb{Q}]^{<\omega }.$
\begin{enumerate}[$1$.]
\item $Y=\mathsf{supp}_{\hat{\mathbb{B}}}(b)$.
\item   $Y$ is a minimal element of 
$\lbrack \mathbb{Q}]^{<\omega }$ such that $a\in \hat{\mathbb{B}}%
(Y).$

\item   $Y$ is the minimum element of 
$\lbrack \mathbb{Q}]^{<\omega }$ such that $a\in \hat{\mathbb{B}}%
(Y).$

\end{enumerate}
\end{theorem}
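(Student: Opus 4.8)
The plan is to obtain the theorem by assembling Theorem~\ref{support_in_field} (the pregeometry characterization of support), Theorem~\ref{linking_thm} (which identifies $\mathsf{acl}_{\mathbb{F}}(\frak{X}_Y)$ with $\mathsf{rc}(\hat{\mathbb{B}}(Y))$), and Theorem~\ref{B(Y) in rc(B(Y))} (which lets one pass from $\mathsf{rc}(\hat{\mathbb{B}}(Y))$ back down to $\hat{\mathbb{B}}(Y)$). The only preliminary observation needed is that $\frak{X}=\{\mathsf{X}_q\mathop\mid q\in\mathbb{Q}\}$ is a basis for the pregeometry $(\mathbb{F},\mathsf{acl}_{\mathbb{F}})$ --- i.e.\ a transcendence basis for $\mathbb{F}=\mathsf{rc}(\hat{\mathbb{A}})=\mathsf{rc}(\hat{\mathbb{B}})$ over $\mathbb{Q}$ --- which was already noted in the proof of Theorem~\ref{linking_thm}. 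Throughout one uses that $Y\mapsto\frak{X}_Y$ is an order isomorphism from $([\mathbb{Q}]^{<\omega},\subseteq)$ onto $([\frak{X}]^{<\omega},\subseteq)$, so the words ``minimal'' and ``minimum'' transfer without change between the two index sets.

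First I would fix $b\in\hat{\mathbb{B}}$ (hence $b\in\mathbb{F}$) and apply Theorem~\ref{support_in_field} with $\mathcal{K}=\mathbb{F}$ and $B=\frak{X}$. This yields, for $Y\in[\mathbb{Q}]^{<\omega}$, the equivalence of: (i) $\frak{X}_Y=\mathsf{supp}_{(\mathbb{F},\mathsf{acl}_{\mathbb{F}},\frak{X})}(b)$; (ii) $\frak{X}_Y$ is a minimal element of $[\frak{X}]^{<\omega}$ with $b\in\mathsf{acl}_{\mathbb{F}}(\frak{X}_Y)$; (iii) $\frak{X}_Y$ is the minimum element of $[\frak{X}]^{<\omega}$ with $b\in\mathsf{acl}_{\mathbb{F}}(\frak{X}_Y)$. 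By the very definition of $\mathsf{supp}_{\hat{\mathbb{B}}}$, namely $\mathsf{supp}_{\hat{\mathbb{B}}}(b)=\{q\mathop\mid\mathsf{X}_q\in\mathsf{supp}_{(\mathbb{F},\mathsf{acl}_{\mathbb{F}},\frak{X})}(b)\}$, condition (i) is equivalent to clause (1) of the theorem, $Y=\mathsf{supp}_{\hat{\mathbb{B}}}(b)$.

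Next I would rewrite the closure clauses inside $\hat{\mathbb{B}}$. By Theorem~\ref{linking_thm} (and the remark in its proof that $\mathbb{F}$ is real closed), $\mathsf{acl}_{\mathbb{F}}(\frak{X}_Y)=\mathsf{rc}(\hat{\mathbb{B}}(Y))$ for every $Y\in[\mathbb{Q}]^{<\omega}$, so (ii) and (iii) become ``$Y$ is minimal (respectively, minimum) in $[\mathbb{Q}]^{<\omega}$ with $b\in\mathsf{rc}(\hat{\mathbb{B}}(Y))$''. The step carrying the real content --- the only point where more than pregeometry bookkeeping is used --- is replacing $\mathsf{rc}(\hat{\mathbb{B}}(Y))$ by $\hat{\mathbb{B}}(Y)$: here I invoke Theorem~\ref{B(Y) in rc(B(Y))}, $\hat{\mathbb{B}}(Y)=\hat{\mathbb{B}}\cap\mathsf{rc}(\hat{\mathbb{B}}(Y))$, which itself rests on $\hat{\mathbb{B}}(Y)$ being an integer part of $\mathsf{rc}(\hat{\mathbb{B}}(Y))$ (Remark~\ref{integer_parts of_F_and_F(Y)}). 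Since $b\in\hat{\mathbb{B}}$ by hypothesis, this gives $b\in\hat{\mathbb{B}}(Y)\iff b\in\mathsf{rc}(\hat{\mathbb{B}}(Y))$ uniformly in $Y$, so (ii) and (iii) turn into precisely clauses (2) and (3) of the theorem. Chaining the equivalences --- (1)$\,\Leftrightarrow\,$(i)$\,\Leftrightarrow\,$(ii)$\,\Leftrightarrow\,$(iii)$\,\Leftrightarrow\,$(2)$\,\Leftrightarrow\,$(3) --- completes the argument. I do not expect any genuine obstacle: the whole proof is an assembly of already-established facts, and the only place demanding care is verifying that the integer-part property really licenses the descent from the real closure to $\hat{\mathbb{B}}(Y)$, which is exactly what Theorem~\ref{B(Y) in rc(B(Y))} supplies.
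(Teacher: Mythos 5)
Your proposal is correct and is exactly the argument the paper intends: the paper gives no explicit proof, stating only that Theorems~\ref{linking_thm}, \ref{support_in_field}, and \ref{B(Y) in rc(B(Y))} "readily imply" the result, and your assembly — translating clause (1) via the definition of $\mathsf{supp}_{\hat{\mathbb{B}}}$, rewriting $\mathsf{acl}_{\mathbb{F}}(\frak{X}_Y)$ as $\mathsf{rc}(\hat{\mathbb{B}}(Y))$, and descending to $\hat{\mathbb{B}}(Y)$ via the integer-part fact $\hat{\mathbb{B}}(Y)=\hat{\mathbb{B}}\cap\mathsf{rc}(\hat{\mathbb{B}}(Y))$ — is precisely how those ingredients combine.
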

\begin{rem} \label{B(Y) is monotone}Using Theorems \ref{support_in_field} and \ref{B(Y) in rc(B(Y))} we can prove that $\mathbb{B}(Y)\subseteq \mathbb{B}(Y^{\prime })$
implies $Y\subseteq Y^{\prime }$ \textup(the converse is of course trivial\textup).  
\end{rem}

\begin{thebibliography}{CMY24}

\bibitem[AKP12]{adam:truth12}
Zofia Adamowicz, Leszek~A. Ko{\l}odziejczyk, and Jeff Paris.
\newblock Truth definitions without exponentiation and the {{$\Sigma_1$}}
  collection scheme.
\newblock {\em The Journal of Symbolic Logic}, 77(02):649--655, 2012.

\bibitem[CMY24]{Chen_Muller_Yokoyama}
Yijia Chen, Moritz M{\"u}ller, and Keita Yokoyama.
\newblock A parametrized halting problem, {$\Delta _0$} truth and the {MRDP}
  theorem.
\newblock {\em Journal of Symbolic Logic \textup(published online\textup)},
  pages 1--26, 2024.

\bibitem[Dmi19]{dimi:firs19}
Anna Dmitrieva.
\newblock {\em {First Order Theories Related to Elementary Geometry and Their
  Interpretations}}.
\newblock BA Thesis. Federal State Autonomous Educational Institution for the
  Higher Education, National Research University, Higher School of Economics,
  Faculty of Mathematics, Moscow, 2019.

\bibitem[Dor32]{dorr:conc32}
John~L. Dorroh.
\newblock Concerning adjunctions to algebras.
\newblock {\em {Bulletin of the American Mathematical Society}}, 38(2):85--88,
  1932.

\bibitem[EL24]{enlel:cat24}
Ali Enayat and Mateusz \L{}e\l{}yk.
\newblock Categoricity-like properties in the first-order realm.
\newblock {\em Journal for the Philosophy of Mathematics}, 1:63–98, Sep.
  2024.

\bibitem[EV24]{Enayat-Visser2024}
Ali Enayat and Albert Visser.
\newblock Incompleteness of boundedly axiomatizable theories.
\newblock {\em Proc. Amer. Math. Soc.}, 152(11):4923--4932, 2024.

\bibitem[FF02]{fern:groun22}
Ant{\'o}nio~{M.} Fernandes and Fernando Ferreira.
\newblock Groundwork for weak analysis.
\newblock {\em The Journal of Symbolic Logic}, 67(2):557--578, 2002.

\bibitem[FF13]{ferr:inte13}
Fernando Ferreira and Gilda Ferreira.
\newblock Interpretability in {R}obinson's {{\textsf Q}}.
\newblock {\em Bulletin of Symbolic Logic}, 19(3):289--317, 2013.

\bibitem[FV25]{viss:biin25}
Harvey~M. Friedman and Albert Visser.
\newblock When bi-interpretability implies synonymy.
\newblock {\em arXiv:2506.01028}, 2025.

\bibitem[GD82]{gaif:frag82}
Haim Gaifman and Costas Dimitracopoulos.
\newblock Fragments of {P}eano's arithmetic and the {M}{D}{R}{P} theorem.
\newblock In {\em Logic and Algorithmic}, pages 319--329. l'Enseignement
  Math\'ematique, monographie nr. 30, 1982.

\bibitem[Hod93]{hodg:mode93}
Wilfrid~Hodges.
\newblock {\em {Model theory}}.
\newblock Encyclopedia of Mathematics and its Applications, vol. 42. Cambridge
  University Press, Cambridge, 1993.

\bibitem[HP93]{haje:meta91}
Petr H{\'a}jek and Pavel Pudl{\'a}k.
\newblock {\em Metamathematics of First-Order Arithmetic}.
\newblock Perspectives in Mathematical Logic. Springer, Berlin, 1993.

\bibitem[Je{\v{r}}12]{jera:sequ12}
Emil Je{\v{r}}{\'{a}}bek.
\newblock Sequence encoding without induction.
\newblock {\em Mathematical Logic Quarterly}, 58(3):244--248, 2012.

\bibitem[Kay90]{kaye:diop90}
Richard Kaye.
\newblock Diophantine induction.
\newblock {\em Annals of Pure and Applied Logic}, 46(1):1--40, 1990.

\bibitem[Kay91]{kaye:mode91}
Richard Kaye.
\newblock {\em Models of {P}eano {A}rithmetic}.
\newblock Oxford Logic Guides. Oxford University Press, Oxford, 1991.

\bibitem[Kay11]{Kaye_Tennenbaum}
Richard Kaye.
\newblock Tennenbaum's theorem for models of arithmetic.
\newblock In {\em Set theory, arithmetic, and foundations of mathematics:
  theorems, philosophies}, volume~36 of {\em Lect. Notes Log.}, pages 66--79.
  Assoc. Symbol. Logic, La Jolla, CA, 2011.

\bibitem[Lan02]{lang:algebra2002}
Serge~Lang.
\newblock {\em Algebra}.
\newblock Springer New York, NY, 2002.

\bibitem[Mad70]{madison:comrcf70}
Eugene W. Madison.
\newblock A note on computable real fields.
\newblock {\em The Journal of Symbolic Logic}, 35(2):239--241, 1970.

\bibitem[Mar91]{markerIOpen}
David Marker.
\newblock End extensions of normal models of {O}pen {I}nduction.
\newblock {\em Notre Dame Journal of Formal Logic}, 32:426--431, 1991.

\bibitem[Moh06]{MohseniIOpen}
Shahram Mohsenipour.
\newblock A recursive nonstandard model for open induction with gcd property
  and cofinal primes.
\newblock In {\em Logic in Tehran, Lect. Notes Log., vol.~26}, page 227–238.
  Assoc. Symbol. Logic, 2006.

\bibitem[MPS90]{myci:latt90}
Jan Mycielski, Pavel Pudl{\'a}k, and Alan~S. Stern.
\newblock {\em A lattice of chapters of mathematics (interpretations between
  theorems)}, volume~84 of {\em Memoirs of the American Mathematical Society}.
\newblock AMS, Providence, Rhode Island, 1990.

\bibitem[She64]{shep:nons64}
John~C. Shepherdson.
\newblock A non-standard model for a free variable fragment of number theory.
\newblock {\em Bulletin de l'Acad\'emie Polonaise de Sciences}, XII(2):79--86,
  1964.

\bibitem[Smo91]{Smorynski_Logical_NT}
Craig Smory\'{n}ski.
\newblock {\em Logical number theory. {I}}.
\newblock Universitext. Springer-Verlag, Berlin, 1991.

\bibitem[Vis06]{viss:cate06}
Albert Visser.
\newblock Categories of {T}heories and {I}nterpretations.
\newblock In Ali Enayat, Iraj Kalantari, and Mojtaba Moniri, editors, {\em
  {Logic in {T}ehran. {P}roceedings of the workshop and conference on {L}ogic,
  {A}lgebra and {A}rithmetic, held {O}ctober 18--22, 2003}}, volume~26 of {\em
  Lecture {N}otes in {L}ogic}, pages 284--341. ASL, A.K. Peters, Ltd.,
  Wellesley, Mass., 2006.

\bibitem[Vis13]{viss:what13}
Albert Visser.
\newblock What is sequentiality?
\newblock In P.~C{\'{e}}gielski, Ch. Cornaros, and C.~Dimitracopoulos, editors,
  {\em New Studies in Weak Arithmetics}, volume 211 of {\em CSLI Lecture
  Notes}, pages 229--269. CSLI Publications and Presses Universitaires du
  P{\^{o}}le de Recherche et d'Enseingement Sup{\'{e}}rieur Paris-est,
  Stanford, 2013.

\bibitem[Wal78]{Walker1978}
Robert~J. Walker.
\newblock {\em Algebraic Curves}.
\newblock Springer-Verlag, 1978.

\bibitem[Wil80]{Wilkie-Delta_0-def}
Alex~J. Wilkie.
\newblock Applications of complexity theory to {$\Sigma \sb{0}$}-definability
  problems in arithmetic.
\newblock In {\em Model theory of algebra and arithmetic ({P}roc. {C}onf.,
  {K}arpacz, 1979)}, volume 834 of {\em Lecture Notes in Math.}, pages
  363--369. Springer, Berlin, 1980.

\bibitem[Zda21]{Zdanowski-Delta_0-truth}
Konrad Zdanowski.
\newblock Truth definition for {$\Delta_0$} formulas and {PSPACE} computations.
\newblock {\em Fund. Math.}, 252(1):1--38, 2021.

\end{thebibliography}
\end{document}